\newcommand{\BA}{{\mathbb {A}}}
\newcommand{\BB}{{\mathbb {B}}}
\newcommand{\BC}{{\mathbb {C}}}
\newcommand{\BF}{{\mathbb {F}}}
\newcommand{\BH}{{\mathbb {H}}}
\newcommand{\BP}{{\mathbb {P}}}
\newcommand{\BQ}{{\mathbb {Q}}}
\newcommand{\BR}{{\mathbb {R}}}
\newcommand{\BZ}{{\mathbb {Z}}}
\newcommand{\CA}{{\mathcal {A}}}
\newcommand{\CH}{{\mathcal {H}}}
\newcommand{\CL}{{\mathcal {L}}}
\newcommand{\CP}{{\mathcal {P}}}
\newcommand{\CR }{{\mathcal {R}}}
\newcommand{\CS}{{\mathcal {S}}}
\newcommand{\CW}{{\mathcal {W}}}
\newcommand{\RN}{{\mathrm {N}}}
\newcommand{\ad}{{\mathrm{ad}}}
\newcommand{\Aut}{{\mathrm{Aut}}}
\newcommand{\Cl}{{\mathrm{Cl}}}
\newcommand{\End}{{\mathrm{End}}}
\newcommand{\Gal}{{\mathrm{Gal}}}
\newcommand{\GL}{{\mathrm{GL}}}
\newcommand{\Hom}{{\mathrm{Hom}}}
\newcommand{\JL}{{\mathrm{JL}}}
\newcommand{\ord}{{\mathrm{ord}}}
\newcommand{\inv}{{\mathrm{inv}}}
\newcommand{\rank}{{\mathrm{rank}}}
\newcommand{\Pet}{{\mathrm{Pet}}}
\newcommand{\Res}{{\mathrm{Res}}}
\newcommand{\Sel}{{\mathrm{Sel}}}
 \font\cyr=wncyr10
    \newcommand{\Sha}{\hbox{\cyr X}}
\newcommand{\Spec}{{\mathrm{Spec}}}
\newcommand{\SO}{{\mathrm{SO}}}
\newcommand{\tor}{{\mathrm{tor}}}
\newcommand{\tr}{{\mathrm{Tr}}}
\newcommand{\vol}{{\mathrm{vol}}}
\newcommand{\ab}{{\mathrm{ab}}}
\renewcommand{\mod}{\, \mathrm{mod}\, }
\renewcommand{\pmod}[1]{\, (\mathrm{mod}\, #1) }
\newcommand{\matrixx}[4]{\begin{pmatrix}
#1 & #2 \\ #3 & #4
\end{pmatrix} }
\newcommand{\wt}{\widetilde}
\newcommand{\wh}{\widehat}
\newcommand{\pair}[1]{\langle {#1} \rangle}
\newcommand{\sfrac}[2]{\left( \frac {#1}{#2}\right)}
\newcommand{\ds}{\displaystyle}
\newcommand{\ov}{\overline}
\newcommand{\lra}{\longrightarrow}
\newcommand{\lla}{\longleftarrow}
\newcommand{\ra}{\rightarrow}
\newcommand{\lto}{\longmapsto}
\newcommand{\bs}{\backslash}
\newtheorem{thm}{Theorem}[section]
\newtheorem{cor}[thm]{Corollary}
\newtheorem{lem}[thm]{Lemma}
\newtheorem{prop}[thm]{Proposition}
\theoremstyle{definition}
\newtheorem{definition}[thm]{Definition}
\theoremstyle{remark}
\newtheorem{remark}[thm]{Remark}
\numberwithin{equation}{subsection}
\begin{document}

\title{Genus Periods, Genus Points and
\\Congruent Number Problem}
\author{Ye Tian, Xinyi Yuan, and Shou-Wu Zhang}
\maketitle

\abstract

In this paper, based on  an ideal of Tian we will  establish a new sufficient condition for a positive integer  $n$ to be congruent 
 in terms of the Legendre symbols $\sfrac pq$, with $p$ and $q$ running over the prime factors of $n$.
 Our criterion generalizes previous criterions of  Heegner,  and Birch--Stephens,  Monsky, and   Tian,
 and conjecturally provides a list of positive density of  congruent numbers.
Our method of proving our criterion is  to give  formulae for the analytic Tate--Shafarevich number 
 $\CL(n)$ in terms of the so-called genus periods and genus points.
These formulae  are  derived from the Waldspurger formula and  the generalized Gross--Zagier formula of Yuan--Zhang--Zhang.

\tableofcontents

\section{Introduction}
A positive integer $n$  is called a {\em  congruent number} if it is the area of a right-angled triangle, all of whose
 sides have rational lengths.  The congruent number problem, which is the oldest unsolved major problem in number theory,  
 is the question of finding an algorithm for deciding in finite number of steps whether or not a given integer is a congruent number.
In this paper, based on  an ideal of Tian \cite{Tian} we will  establish a new {\em sufficient} condition for $n$ to be congruent 
 in terms of the Legendre symbols $\sfrac pq$, with $p$ and $q$ running over the prime factors of $n$.

This type of criterion was first given by Heegner \cite{Heegner} and Birch and Stephens \cite{BS} for some $n$ with a single odd prime factor,
 and by  Monsky \cite{Monsky}  for  some $n$ with two odd prime factors, and finally  Tian \cite{Tian} saw how to extend it to  $n$ with an 
 arbitrary number of  prime factors.
Our criterion generalizes all of this work, and we believe that it has potential applications to  the following  {\em  distribution conjecture of congruent numbers}:
{\em all $n\equiv 5, 6, 7\mod 8$ are congruent  and all but  density $0$ of $n\equiv 1,2,3$ are not congruent.}
Note that in \cite{Tu2}, Tunnell  gave a {\em necessary} condition for $n$ to be
 congruent   in terms of numbers of solutions of some equations  $n=Q(x, y, z)$ with positive  definite quadratic forms $Q(x, y, z)$ over $\BZ$.
Tunnell's criterion is also sufficient if the rank part of the  BSD conjecture is assumed.

In the following, we would like to describe our main results. Let us first consider  the elliptic curve
$$E_n: \quad ny^2=x^3-x,$$
where $n$ is assumed to be a \emph{square-free positive} integer. Then it is well known that $n$ is congruent if  and only if $E_n (\BQ)$ has positive
rank. This is  equivalent to the vanishing of $L(E_n, 1)$ under the rank part of  BSD conjecture
 $$
 \rank\ E_n(\BQ)=\ord_{s=1}L(E_n, s).
$$
By Birch--Stephens \cite{BS}, the root number
$$
\epsilon(E_n)=
\begin{cases}
1
& \mbox{if } n\equiv 1, 2, 3\pmod 8, \\
-1
& \mbox{if } n\equiv 5, 6, 7\pmod 8.
\end{cases}
$$
It follows that $\ord _{s=1}L(E_n, s)$ is even (resp. odd) if and only if $n\equiv 1, 2, 3\mod 8$ (resp. $n\equiv 5, 6, 7\mod 8$).
The density conjecture of congruent numbers follows from rank part of the BSD and the following {\em density conjecture of $L$-functions}:
 $\ord _{s=1}L(E_n, s)\le 1$ for all but density $0$ of $n$'s.

By work of  Coates--Wiles \cite{Coates-Wiles}, Rubin \cite{Rubin}, Gross--Zagier \cite{GZ} and Kolyvagin \cite{K1}, the rank part of the BSD conjecture holds for $E_n$
if  $\ord_{s=1}L(E_n, s)\leq 1$ and the Tate--Shafarevich group $\Sha(E_n)$ is finite.
Thus we define an invariant $\CL(n)$ of $E_n$ as follows:
$$
\CL(n):=
\begin{cases} \left[L(E_n, 1)/(2^{2k(n)-2-a(n)} \Omega _{n, \infty})\right]^{1/2}
& \mbox{if } \ord _{s=1}L(E_n, s)=0, \\
\left[L'(E_n, 1)/(2^{2k(n)-2-a(n)}\cdot \Omega _{n, \infty}R_n)\right]^{1/2}
& \mbox{if } \ord _{s=1}L(E_n, s)=1, \\
0 & \mbox{if } \ord _{s=1}L(E_n, s)>1.
\end{cases}
$$
Here
 \begin{itemize}
 \item $k(n)$ is the number of odd prime factors of $n$;
 \item $a(n)=0$ if $n$ is even, and $1$ if $n$ is odd;
\item the real period
$$ \Omega _{n, \infty}=\frac 2{\sqrt {n}}\int_1^\infty \frac {dx}{\sqrt{x^3-x}},$$
\item $R_n$ is twice of the N\'{e}ron--Tate height of a generator of $E_n(\BQ)/E_n(\BQ)_\tor$ (in the case of rank one).
\end{itemize}
The definition is made so that the full BSD conjecture for $E_n$ in the case $\ord _{s=1}L(E_n, s)\leq 1$ writes as
\begin{equation}\label{bsd}
\#\Sha(E_n)= \CL(n)^2.
\end{equation}
The density conjecture of congruent numbers  is equivalent to  {\em non-vanishing $\CL(n)$ for density one of $n$'s}.

The number $\CL (n)$ is a priori a complex number defined up to a sign.
In this paper, we show that $\CL (n)$ is an integer, and give a criterion for when it is odd  in terms of the parities of the genus class numbers
$$g(d):=\#(2 \Cl (\BQ(\sqrt {-d})))$$
of positive divisors $d$ of $n$.
It is clear that $g(d)$ is odd if and only if $\Cl (\BQ(\sqrt {-d}))$ has no element of exact order $4$.
Thus by R\'edei \cite{Re}, the parity of $g(d)$ can be computed in terms of the R\'edei matrix of the Legendre symbols $\ds\sfrac pq$ of  prime factors $p, q$  of $d$. 
The choice of the sign of $\CL (n)$ is not an issue in this paper since we are mainly interested in its parity.
We divide our results  naturally into two cases by the root number $\epsilon (E_n)$.

\begin{thm}\label{lg}
Let $n\equiv 1, 2, 3\pmod 8$ be a positive and square-free integer.
Then $\CL (n)$ is an integer, and
$$\CL (n)\equiv \sum_{\substack {n=d_0d_1\cdots d_\ell\\
d_i\equiv 1\pmod 8,\ i>0}}
\prod_i g(d_i)
\quad\pmod 2.$$
Here all decompositions $n=d_0\cdots d_\ell$ are non-ordered with $d_i>1$ for all $i\geq 0$. The right-hand side is considered to be 1 if $n=1$.
\end{thm}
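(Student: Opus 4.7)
\medskip
\noindent
\emph{Proof plan.} The plan is to express $\CL(n)$ via the Waldspurger formula as an explicit integer-valued ``genus period'' on a definite quaternionic Shimura set, and then to reduce the resulting sum modulo $2$ by genus theory \`a la R\'edei. Since $n\equiv 1,2,3\pmod 8$ gives $\epsilon(E_n)=+1$, we are in the analytic rank $0$ or $\ge 2$ regime. If $\ord_{s=1}L(E_n,s)\ge 2$, then $\CL(n)=0$ and one must check separately that the right-hand side of the congruence is also even; this should follow because every toric period that enters the formula will vanish in this case.

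First, I would apply the Waldspurger formula to the newform attached to $E_n$, using the imaginary quadratic field $K=\BQ(\sqrt{-n})$ and the definite quaternion algebra $B$ ramified at $\infty$ and at the primes where the local $\epsilon$-factors are $-1$ (determined by $n\bmod 8$). Choosing an integrally normalized test vector $\phi$ on the quaternionic Shimura set and a CM $0$-cycle $C$ indexed by $\Cl(K)$, I expect an identity of the shape
$$
\frac{L(E_n,1)}{\Omega_{n,\infty}}\;=\;\frac{1}{2^{2k(n)-2-a(n)}}\,|P(\phi)|^2,
\qquad
P(\phi)\;=\;\sum_{[\mathfrak{a}]\in\Cl(K)}\phi([\mathfrak{a}]),
$$
so that $\CL(n)=\pm P(\phi)\in\BZ$. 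The power of $2$ in the denominator should match precisely the index between the natural integral normalizations of the test vector and of the CM cycle; this is where the combinatorial factor $2^{2k(n)-2-a(n)}$ enters.

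Next, I would reorganize $P(\phi)$ by genera. The group $2\Cl(K)$ has order $g(n)$, and the genus group $\Cl(K)/2\Cl(K)$ is dual to the group of genus characters. By R\'edei's theory, each genus character of $K$ corresponds to a coprime decomposition $n=d_0 d_1$; by induction (passing to the genus field at each step), compatible towers of genus characters correspond to decompositions $n=d_0 d_1\cdots d_\ell$. The local conditions at $2$ needed for each intermediate genus character to be unramified of the correct type translate into $d_i\equiv 1\pmod 8$ for $i>0$, while $d_0$ absorbs the $2$-adic contribution and carries no congruence constraint. Reducing $P(\phi)$ modulo $2$ using this filtration, the contribution of each decomposition becomes the cardinality of the associated fiber, whose parity is $\prod_i g(d_i)$.

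The main obstacle will be the second step: identifying the correct integrally normalized local test vectors --- especially at $2$ and at the primes ramified in $B$ --- so that the Waldspurger formula produces $\CL(n)$ on the nose, with exactly the normalization in its definition. This requires explicit local computations of Shimizu lifts in the style of Gross--Prasad at each bad place, and it is here that the asymmetric congruence condition on $d_0$ versus $d_i$ for $i>0$ must emerge. The genus-theoretic bookkeeping in the third step, while combinatorial, should follow by induction on the number of prime factors of $n$ once the local pictures at $2$ and the ramified primes are pinned down.
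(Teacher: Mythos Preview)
Your general framework---the Waldspurger formula on the definite quaternion algebra ramified at $2$ and $\infty$, an integrally normalized test vector, and a toric period over $\Cl(K_n)$---matches the paper's setup. But there is a genuine gap in how you expect the multi-factor decompositions $n=d_0d_1\cdots d_\ell$ and the products $\prod_i g(d_i)$ to emerge.

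You propose to compute a single period $P(\phi)=\sum_{[\mathfrak a]\in\Cl(K_n)}\phi([\mathfrak a])$ and then ``reduce modulo $2$ using a genus filtration'' so that each fiber contributes $\prod_i g(d_i)$. This cannot work as stated: the numbers $g(d_i)=\#\bigl(2\Cl(\BQ(\sqrt{-d_i}))\bigr)$ are orders of $2$-divisible parts of class groups of \emph{different} imaginary quadratic fields $\BQ(\sqrt{-d_i})$, not fibers of any filtration on the single group $\Cl(K_n)$. No amount of genus theory inside $K_n$ alone will produce them. Relatedly, your ``towers of genus characters by passing to the genus field at each step'' is not the mechanism: genus characters of $K_n$ are indexed by \emph{binary} decompositions $n=d_1d_2$, and there is no canonical ``tower'' on $\Cl(K_n)$ indexed by longer decompositions.

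What the paper actually does (following Tian's induction idea) is apply Waldspurger not just to the trivial character but to \emph{every} genus character $\chi_{d_1,d_2}$ of $K_n$. Since $L(E_{K_n},\chi_{d_1,d_2},s)=L(E_{d_1},s)L(E_{d_2},s)$, each such period factors as
\[
P(d_1,d_2)\;=\;\pm\,2^{k-a}w_K\,\CL(d_1)\CL(d_2),
\]
and this is nonzero only when $\chi$ is trivial at the place $2$, i.e.\ $d_2\equiv 1\pmod 8$ (this is where the asymmetric congruence condition really comes from). Summing $P(d_1,d_2)$ over all genus characters via orthogonality yields $2^{h_2(n)}Q(n)$, where $Q(n)=\sum_{t\in 2\Cl_n}f_n(t)$ is the genus period. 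After cancelling powers of $2$, one obtains a \emph{recursion}
\[
\pm\CL(n)\;=\;Q(n)\;-\!\!\sum_{\substack{n=d_1d_2\\ d_2\equiv 1\pmod 8,\ d_2>1}}\!\!\pm\,\CL(d_1)\CL(d_2).
\]
Iterating this recursion (applying it again to each $\CL(d_1)$, $\CL(d_2)$, etc.) is what produces the multi-factor decompositions $n=d_0d_1\cdots d_\ell$ with $d_i\equiv 1\pmod 8$ for $i>0$, and replaces each $\CL$ by a $Q$. The final step, $Q(d)\equiv g(d)\pmod 2$, follows because the explicit primitive test vector takes odd values on squares in $\wh B^\times$. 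The recursion also gives integrality of $\CL(n)$ and handles the case $L(E_n,1)=0$ uniformly, so no separate argument is needed there.
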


For $n\equiv 5, 6, 7$,  we introduce an integer $\rho (n)\geq 0$ by
$$2^{\rho(n)} =[E_n(\BQ): \varphi _n (A_n(\BQ))+E_n[2]],$$
where $\varphi_n: A_n \to E_n$ is a 2-isogeny from
$A_n: 2nv^2=u^3+u$
to $E_n: ny^2=x^3-x$
defined  by
$$\varphi _n (u, v)=\left(\frac 12 \left(u+\frac 1u\right ), \frac v{2u}\left(u-\frac 1u\right)\right).$$

\begin{thm}\label{lg'}
Let $n\equiv 5, 6, 7\pmod 8$ be a positive and square-free integer.
Then $\CL (n)$ is an integer.
If $n\equiv 5, 7\pmod 8$, then $2^{-\rho(n)}\CL(n)$ is even only if
$$\sum_{\substack{{n=d_0\cdots d_\ell}\\ {d_i\equiv 1\pmod 8,\ i>0}}} \prod_i
g(d_i)\quad \equiv \sum_{\substack{{n=d_0\cdots d_\ell,}\\
{d_0\equiv 5, 6, 7\pmod 8}\\ {d_1\equiv 1, 2 ,3 \pmod 8} \\ {d_i\equiv 1\pmod 8,\ i>1}}} \prod_i g(d_i) \ \equiv 0
\quad\pmod 2.$$
If $n\equiv 6\pmod 8$, then $2^{-\rho(n)}\CL(n)$ is even only if
$$
 \sum_{\substack{{n=d_0\cdots d_\ell,}\\
{d_0\equiv 5, 6, 7\pmod 8}\\ {d_1\equiv 1, 2 ,3 \pmod 8} \\ {d_i\equiv 1\pmod 8,\ i>1}}} \prod_i g(d_i) \ \equiv 0
\quad\pmod 2.$$
Here all decompositions  $n=d_0\cdots d_\ell$ are non-ordered with all $d_i>1$.
\end{thm}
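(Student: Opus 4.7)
The strategy is to apply the Yuan--Zhang--Zhang generalization of the Gross--Zagier formula to express $L'(E_n,1)$, for $n \equiv 5,6,7 \pmod 8$, as a N\'eron--Tate height pairing of certain CM points (the ``genus points'') on a Shimura curve attached to an incoherent quaternion algebra over $\BQ$. Decomposing this height along characters of $\Cl(\BQ(\sqrt{-n}))/2\Cl(\BQ(\sqrt{-n}))$ writes $\CL(n)$ as an integer linear combination of heights of rational points on $E_n$, modulo a power-of-$2$ ambiguity captured precisely by the index $2^{\rho(n)} = [E_n(\BQ): \varphi_n(A_n(\BQ)) + E_n[2]]$. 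This index appears because the genus points live most naturally on $A_n$, and $\varphi_n$ introduces the factor $2^{\rho(n)}$ when pushed forward to $E_n$.

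First I would adapt the genus-period formalism used for Theorem~\ref{lg} to the odd root-number setting: for each non-ordered factorization $n = d_0 d_1 \cdots d_\ell$ satisfying the residue conditions, construct a generalized Heegner contribution by combining a rank-one Gross--Zagier point for the twist $E_{d_0}$, where $d_0 \equiv 5,6,7 \pmod 8$ forces root number $-1$, with Waldspurger-type genus periods for the remaining $d_i$, where $d_i \equiv 1 \pmod 8$ forces root number $+1$ and non-vanishing of the central value. Summing over all admissible factorizations then expresses $L'(E_n,1)/(\Omega_{n,\infty} R_n)$, and hence $\CL(n)$ up to sign, as such a sum of products of heights and periods.

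Next I would establish integrality and compute the $2$-adic valuation. Integrality of $\CL(n)$ follows from the standard integral structure on Heegner points together with the rationality part of the Gross formula. The parity analysis reduces to the fact that a Waldspurger-type genus period attached to $d_i \equiv 1 \pmod 8$ is odd if and only if $\Cl(\BQ(\sqrt{-d_i}))$ has no element of order $4$, i.e., $g(d_i)$ is odd. For $n \equiv 5,7 \pmod 8$ both the pure genus-period sum of Theorem~\ref{lg} (arising here from a ``Waldspurger after a root-number shift'' contribution) and the mixed Gross--Zagier/Waldspurger sum appear independently, giving the two separate mod-$2$ conditions; for $n \equiv 6 \pmod 8$ the local constraints at the prime $2$ force the first type of sum to vanish, so only the mixed sum survives.

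The main obstacle will be controlling the $2$-adic normalization throughout the comparison of the Shimura curve uniformization, the N\'eron differential on the Weierstrass model of $E_n$, and the Petersson norms entering both the Gross--Zagier and Waldspurger formulae. One must show that precisely the index $2^{\rho(n)}$ absorbs every local $2$-adic discrepancy coming from the isogeny $\varphi_n$ and from the local Tamagawa factors at primes dividing $2n$. A second delicate step is the R\'edei-reciprocity translation of the parity of $g(d_i)$ into the Legendre-symbol patterns implicit in the genus-point construction, so that the mod-$2$ reduction of the height expression aligns exactly with the combinatorial sums in the theorem statement.
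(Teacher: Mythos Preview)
Your high-level outline (apply the Yuan--Zhang--Zhang formula, decompose along genus characters of $K_n=\BQ(\sqrt{-n})$, combine a rank-one Gross--Zagier contribution for $d_0\equiv 5,6,7\pmod 8$ with Waldspurger/genus-class data for the other factors, and run Tian's induction) matches the paper's strategy. But the actual mechanism you propose for the parity step is not the one that works, and two of your key claims are incorrect.

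\textbf{The parity mechanism.} You reduce the parity of $2^{-\rho(n)}\CL(n)$ to ``a Waldspurger-type genus period attached to $d_i\equiv 1\pmod 8$ is odd iff $g(d_i)$ is odd.'' That reasoning is adequate in the root-number $+1$ setting of Theorem~\ref{lg}, where the genus period is a \emph{number} and its mod-$2$ value makes sense. Here the object is an algebraic \emph{point}, and there is no direct way to read off a parity from a height. The paper instead constructs an explicit point $P(n)\in A(\BH_n')$ (on the curve $A:2y^2=x^3+x$, not on $E_n$) representing $\CP(n)=2^{-1-\rho(n)}\CL(n)\alpha_n$, proves a congruence $P(n)\equiv \sum \epsilon(d_0,d_1)\bigl(\prod_{i\geq1}g(d_i)\bigr)Z(d_0)\pmod{2A(\BH_n')}$, and then chooses specific Galois elements $\beta\in\Gal(\BQ^{\ab}/\BQ)$ (lifts of $r_\infty(-1)r_2(-2)$ or $r_\infty(-1)r_2(6)$) so that $(\beta+1)Z(d_0)$ is an \emph{explicit torsion point} in $A[4]$, computed by hand from the action of $\sigma_\varpi$ and complex conjugation on the CM point $z_{d_0}$. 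The numbers $g(d_i)$ enter not via R\'edei reciprocity but simply because $Z(d_0)$ is a sum of $g(d_0)=\#(2\Cl_{d_0})$ Galois conjugates; no Legendre-symbol bookkeeping is needed. The whole argument rests on the precise determination $A(\BH_n')_{\tor}=A[(1+i)^3]$ for $n$ odd and $A[4]$ for $n$ even, which is itself a nontrivial lemma requiring a careful choice of auxiliary primes.

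\textbf{The $n\equiv 6$ case.} You say ``local constraints at $2$ force the first type of sum to vanish.'' That is not the reason. What happens is that for $n\equiv 6\pmod 8$ the torsion group is the full $A[4]$ rather than $A[(1+i)^3]$, so one only gets $(\beta+1)P(n)\in A[2]$ instead of $\BZ\tau(1)$; this weaker containment yields only one linear congruence among the coefficients rather than two, hence only the second sum is constrained.

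\textbf{Integrality.} Your ``standard integral structure on Heegner points'' is not enough: the paper needs a genuine descent argument. One shows $4R(n,1)=\pm 2^{h_2(n)+2}R(n)$, then uses the Kummer map and the inflation--restriction sequence for $\Gal(L_n'/K_n'')$ acting on $E[2^{h_2(n)+2}]$ (exploiting that this Galois group has exponent $2$) to force $2R(n)\in E(K_n'')$, and then a case-by-case Galois analysis to land in $E(K_n)^-$. The factor $2^{\rho(n)}$ is built into the \emph{definition} of $\CP(n)$, not recovered from Tamagawa factors.
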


Our method of proving these theorems is  to give  formulae of $\CL(n)$ in terms of the so-called genus periods and genus points (cf. Theorems \ref{LQ} and Theorem \ref{m3}).
These formulae  are  derived from the Waldspurger formula in \cite{Wa} and  the generalized Gross--Zagier formula of Yuan--Zhang--Zhang \cite{YZZ} using an induction argument of Tian \cite{Tian}.

\begin{remark}  For each residue class in $\{1, 2, 3, 5, 6, 7\mod 8\}$,
 we believe that our formulae in Theorems \ref{lg}, \ref{lg'} give a positive density of $n$ with $\CL (n)$ odd.
 For $n\equiv 1, 2, 3\mod 8$, this is already implied by the  BSD formula \ref{bsd} modulo $2$ and  the work of Heath-Brown \cite{Heath-Brown}.
 Moreover the BSD formula \ref{bsd} modulo 2 can be checked case by case. In fact, in \cite{Monsky3}, the $\BF_2$-rank of
 $\Sel _2(E_n)/E_n (\BQ)[2]$ can be  also calculated in terms of Legendre symbols for every  $n$. 
\end{remark}

In the following we want to give some some criterions of congruent and non-congruent numbers  extending  Tian
\cite{Tian} in terms of a single genus class number.

\begin{cor}
 Let $n$ be a square-free positive integer   such that $\BQ(\sqrt{-n})$ has no ideal classes of exact order $4$.
For any integer $r$, let $A_r, B_r$ denote the following property of $n$:
\begin{align*}
&A_r(n):  \quad \#\{p\mid n: \ p\equiv 3\pmod 4\}\le r;\\
&B_r(n):\quad \#\{p\mid n: \ p\equiv \pm 3\pmod 8\}\le r.\end{align*}
Then in the following case, $n$ is a non-congruent number:
\begin{itemize}
\item  $n\equiv 1\pmod 8$ with  $A_2(n)$ or $B_2(n)$,
\item  $n\equiv  2\pmod{8}$ with   $A_0(n)$ or $B_2(n)$,
\item $n\equiv  3\pmod 8$ with   $A_1(n)$ or $B_1(n)$.
\end{itemize}
In the following case, $n$ is a congruent number:
\begin{itemize}
\item  $n\equiv 5\pmod 8$ with $A_0(n)$ or $B_1(n)$,
\item  $n\equiv 7\pmod 8$ with  $A_1(n)$ or $B_0(n)$.
\end{itemize}
\end{cor}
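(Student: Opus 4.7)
The plan is to combine Theorems~\ref{lg} and~\ref{lg'} with the theorems of Coates--Wiles--Rubin~\cite{Coates-Wiles,Rubin} and Gross--Zagier~\cite{GZ}--Kolyvagin~\cite{K1} to translate the parity computation of $\CL(n)$ into the desired rank conclusions about $E_n(\BQ)$. In each sub-case of the corollary I will show that $\CL(n)$ (or $2^{-\rho(n)}\CL(n)$) is odd, hence nonzero, and then deduce $\rank E_n(\BQ) = \ord_{s=1}L(E_n,s) \in \{0,1\}$ according to the sign of $\epsilon(E_n)$.

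The hypothesis that $\BQ(\sqrt{-n})$ has no ideal classes of exact order $4$ is equivalent to $g(n)$ being odd. By R\'edei~\cite{Re}, the parities of $g(d)$ for divisors $d\mid n$ are determined by principal sub-matrices of the R\'edei matrix attached to $n$, and the full-rank hypothesis on $n$ controls which of these sub-matrices are singular. Under condition $A_r(n)$ (few primes $\equiv 3\pmod 4$) or $B_r(n)$ (few primes $\equiv \pm 3\pmod 8$) with the indicated small $r$, only a handful of divisors $d>1$ of $n$ satisfy the congruence constraints in the sums of Theorems~\ref{lg} and~\ref{lg'}.

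For the non-congruent cases $n\equiv 1,2,3\pmod 8$ (where $\epsilon(E_n)=+1$), I would enumerate the decompositions $n=d_0\cdots d_\ell$ appearing in Theorem~\ref{lg} and show that the sum collapses modulo $2$ to the single term $g(n)$ (from $\ell=0$, $d_0=n$): non-trivial decompositions either lack valid factors under the restrictive hypothesis, or their contributions pair up by swapping equal-congruence factors in $\{d_1,\dots,d_\ell\}$ and cancel modulo $2$. Since $g(n)$ is odd, $\CL(n)$ is odd, so $L(E_n,1)\neq 0$, and by Coates--Wiles/Rubin $E_n(\BQ)$ has rank zero, making $n$ non-congruent.

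For the congruent cases $n\equiv 5,7\pmod 8$ (where $\epsilon(E_n)=-1$), Theorem~\ref{lg'} asserts that $2^{-\rho(n)}\CL(n)$ is even only if both indicated sums vanish modulo $2$; it therefore suffices to exhibit one sum that is odd. A parallel combinatorial analysis shows the first sum again reduces modulo $2$ to $g(n)$, hence is odd. Thus $\CL(n)\neq 0$, so $L'(E_n,1)\neq 0$, and Gross--Zagier together with Kolyvagin yield $\rank E_n(\BQ)=1$, so $n$ is congruent. The main obstacle throughout is the combinatorial verification that the sums collapse as claimed in each of the eight sub-cases: this requires careful bookkeeping of the admissible divisors under the $A_r$/$B_r$ constraints, together with the R\'edei-symbol computations that determine which $g(d_i)$ are odd and which terms with $\ell\ge 1$ admit a fixed-point-free pairing that kills them modulo $2$.
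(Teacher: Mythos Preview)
Your overall framework matches the paper: show that the sum in Theorem~\ref{lg} (and, for $n\equiv 5,7\pmod 8$, the first sum in Theorem~\ref{lg'}) reduces modulo~$2$ to the single term $g(n)$, which is odd by hypothesis, and then conclude via Coates--Wiles/Rubin or Gross--Zagier--Kolyvagin. The gap is in the mechanism you propose for that collapse.

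The ``pairing'' idea does not work. The decompositions $n=d_0d_1\cdots d_\ell$ in Theorem~\ref{lg} are taken unordered, so swapping factors inside $\{d_1,\dots,d_\ell\}$ does not produce a distinct term to cancel against. Nor does the hypothesis that $g(n)$ is odd say anything useful about $g(d)$ for proper divisors $d\mid n$: the R\'edei matrix of $d$ is \emph{not} a principal submatrix of that of $n$ (the diagonal entries change), so the ``full-rank on $n$ controls sub-matrices'' heuristic is not valid and in any case is not what is needed.

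The paper's argument is simpler and kills each nontrivial term individually rather than in pairs. From R\'edei~\cite{Re} one extracts three facts: $g(d)$ is even whenever (i) $d>1$ is odd, $d\equiv 1\pmod 8$, and every prime factor of $d$ is $\equiv\pm 1\pmod 8$; (ii) $d=2p_1\cdots p_k$ with $k>0$ and every $p_i\equiv\pm 1\pmod 8$; or (iii) $d>1$ is odd, $d\equiv 1\pmod 8$, and every prime factor of $d$ is $\equiv 1\pmod 4$. The point of the $A_r$/$B_r$ hypotheses is that the ``exceptional'' primes of $n$ (those $\not\equiv 1\pmod 4$ for $A_r$, those $\not\equiv\pm 1\pmod 8$ for $B_r$) are too few to be spread across two blocks while keeping each block $\equiv 1\pmod 8$; hence in any decomposition with $\ell\geq 1$ at least one $d_i$ is built entirely from non-exceptional primes and falls under (i), (ii), or (iii), so $g(d_i)$ is even and the whole product $\prod_i g(d_i)$ vanishes mod~$2$. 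Only the trivial decomposition $\ell=0$ survives, contributing $g(n)$. Once you replace the pairing argument by this observation, the rest of your outline goes through.
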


\begin{proof}
By R\'edei \cite{Re},   $g(d)$ is even in any  of the following cases:
\begin{itemize}
\item $d=p_1 \cdots p_k\equiv 1\pmod 8$, $p_i\equiv \pm1\pmod 8$, $k>0$;
\item $d=2p_1 \cdots p_k$, $p_i\equiv \pm1 \pmod 8 $, $k>0$;
\item $d=p_1\cdots p_k\equiv 1\pmod 8$ , $p_i\equiv 1 \pmod 4$, $k>0$.
\end{itemize}
It follows that under any of the conditions of the corollary, the following congruence holds:
$$\sum_{\substack {n=d_0d_1\cdots d_\ell\\
d_i\equiv 1 \pmod 8 ,\ i>0}}
\prod_i g(d_i)\equiv g(n)\ \pmod 2.$$
The conclusion follows from  Theorem \ref{lg} and \ref{lg'}.
\end{proof}

  \subsubsection*{Acknowledgements}
Ye Tian  would like to acknowledge the support of the  NSFC grants 11325106 and 11031004.
Xinyi Yuan would like to acknowledge the support of the National Science Foundation
under the award DMS-1330987.
 Shou-Wu Zhang would like to acknowledge the support of the
National Science Foundation under the awards DMS-0970100 and DMS-1065839.

\section{Quadratic periods and genus periods} \label{section 2}

The goal of this section is to prove Theorem \ref{lg}.
Assume that $n\equiv 1,2,3 \pmod 8$ is positive and square-free throughout this section.

 \subsection{Quadratic periods and genus periods} \label{section2.1}

Let $K_n=\BQ(\sqrt {-n})$ be the quadratic imaginary extension.
For any decomposition $n=d_1\cdot d_2$ with $d_2$
\emph{positive and odd}, we have an unramified quadratic extension
$K_n (\sqrt {d_2^*})$ of $K_n$ where $d_2^*=(-1)^{(d_2-1)/2}d_2$.
By the class field theory, the extension gives a quadratic character
$$\chi_{d_1, d_2}:\Cl_n \lra \{\pm1\}$$
on the class group $\Cl_n$ of $K_n$.
In the degenerate case $d_2=1$, we take the convention $\chi_{d_1, d_2}=1$.
Conversely, by Gauss's genus theory, any quadratic character of $\Cl_n$ comes from such a decomposition $n=d_1\cdot d_2$.

The Rankin-Selberg L-series of the elliptic curve $E:y^2=x^3-x$ twisted by $\chi_{d_1, d_2}$ is given by
 $$L(E_{K_n}, \chi_{d_1, d_2},  s)=L (E_{d_1}, s)L(E_{d_2}, s).$$
In the following, we give a formula
for $\CL(d_1)\CL(d_2)$ using the Waldspurger formula. Notice that such formulae concern the quaternion algebra determined by the local root numbers of the L-function $L(E_{K_n}, \chi_{d_1, d_2}, s)$.

Let $B$ be the quaternion algebra over $\BQ$ ramified exactly at 2 and
$\infty$.  In fact, $B$ is the classical
Hamiltonian quaternion (over $\BQ$):
$$B=\BQ+\BQ i+\BQ j+\BQ k, \quad i^2=j^2=-1,\ ij=k=-ji.$$
Let $ O_B$ be the standard  maximal order of $B$:
$$ O_B:= O_B'+\BZ \zeta, \qquad  O_B':=\BZ+ \BZ i+\BZ j+\BZ k, \qquad \zeta =(-1+i+j+k)/2.$$

Fix an embedding $\tau:K_n\hookrightarrow B$ such that the image of $O_{K_n}$ lies in
$O_B$. If $n\equiv 1\pmod8$, we further specify the embedding by
$$
\tau(\sqrt{-n})= ai+bj+ck
$$
where $n=a^2+b^2+c^2$ with $a,b,c\in \BZ$ and $4|c$.
It is a classical result of Legendre that we can find integer solutions $a,b,c$ if $n$ is not of the form $4^e(8m-1)$. The more specific condition $n\equiv 1\pmod8$ implies the existence of a solution with $4|c$. See \cite[Theorem 5]{JP} for example.

By the Jacquet--Langlands correspondence,  the newform $f_E\in S_2(\Gamma _0(32))$ corresponding to the elliptic curve $E: y^2=x^3-x$ defines an automorphic representation $\pi=\otimes_v \pi_v$ of $B^\times(\BA)$.
Note that the central character of $\pi$ and
the infinite part $\pi_\infty$ are trivial, so $\pi=\otimes_v \pi_v$ is naturally
realized as a subspace of $C^\infty(B^\times\bs
\wh{B}^\times/\wh{\BQ}^\times)$.

Denote by $\pi_\BZ$ the $\BZ$-submodule of $\pi$ consisting of elements of $\pi$ which takes integral values on $B^\times\bs\wh{B}^\times/\wh{\BQ}^\times$.
Denote $U_n={\wh R_n^\times\cdot  K_{n, 2}^\times}$, an open subgroup of $\wh B^\times$.
Here $R_n= O_{K_n}+4 O_B$ is an order of $B$ of conductor $32$.
Consider the $U_n$-invariant submodule $\pi^{U_n}_\BZ$ of $\pi_\BZ$.
We will see that $\pi^{U_n}_\BZ$ is free of rank $1$ over $\BZ$,
as the special case of $\chi=1$ in Theorem \ref{thm2.3}.
This is an integral example of the multiplicity one theorem of Tunnell \cite{Tu} and Saito \cite{Sa} reviewed in Theorem \ref{TS1} and Corollary \ref{TS2}.

Fix a $\BZ$-generator $f_n$ of $\pi^{U_n}_\BZ$, which is determined up to multiplication by $\pm1$.
Define the {\em quadratic  period} $P(d_1, d_2)$ by
$$P(d_1, d_2):=\sum _{t\in \Cl_n}f_n(t)\chi_{d_1, d_2}(t).$$

\begin{thm}\label{thm2.1}
The period $P(d_1, d_2)\ne 0$ only if $d_2\equiv 1\pmod 8$.
In that case,
$$P(d_1, d_2)=\pm 2^{k-a}\cdot w_K\cdot \CL(d_1)\CL( d_2),$$
where $2w_K$ is the number of roots of unity in $K$, $k$ is the number of odd prime factors of $n=d_1d_2$, and $a=1$ if $n$ is odd
 and $a=0$ otherwise.
\end{thm}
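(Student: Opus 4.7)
The strategy is to apply Waldspurger's formula to the test vector $f_n$ paired against the toric character $\chi = \chi_{d_1,d_2}$, then convert the resulting central $L$-value identity into the statement about $\CL(d_1)\CL(d_2)$ via the factorization $L(E_{K_n},\chi,s) = L(E_{d_1},s)L(E_{d_2},s)$ and the definition of $\CL$. The vanishing statement comes first from a local root-number analysis. The period $P(d_1,d_2)$ is essentially the value of a $K_n^\times(\BA)$-invariant linear functional on $\pi \otimes \chi$ applied to $f_n$. By the Tunnell--Saito dichotomy (Theorem \ref{TS1}), such a functional on the representation $\pi$ of $B^\times$ is nonzero only if the local root numbers $\epsilon_v(\pi_v,\chi_v)$ match the Hasse invariants of $B$ at every place. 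Since $B$ is ramified exactly at $\{2,\infty\}$, the nontrivial constraint is at $v=2$. Using that $\pi_2$ has conductor $32$, together with the ramification data of $K_n$ and of $\chi$ at $2$, one checks that this constraint fails unless $d_2 \equiv 1 \pmod 8$, which forces $P(d_1,d_2)=0$ in the remaining residues.

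Assume now $d_2 \equiv 1 \pmod 8$. The rank-one statement in Theorem \ref{thm2.3} fixes $f_n$ up to sign as a $\BZ$-generator of $\pi_\BZ^{U_n}$, so Waldspurger's formula takes the explicit integral form
$$
P(d_1,d_2)^2 \;=\; C_\infty \cdot \prod_{v\text{ finite}} \alpha_v \cdot L(E_{d_1},1)\, L(E_{d_2},1),
$$
where $C_\infty$ is the archimedean matrix-coefficient factor and each $\alpha_v$ is a normalized local toric integral. The local computations proceed place by place: $\alpha_v = 1$ at finite primes unramified in $K_n$ and not dividing $2n$; at odd primes $p \mid n$ the ramification of $K_n$ contributes explicit powers of $2$ and, collectively, the factor $w_K$ coming from the size of the unit group of $O_{K_n}$; and at $v=2$ the explicit embedding $\tau(\sqrt{-n}) = ai+bj+ck$ with $4 \mid c$ (available exactly because $n \equiv 1 \pmod 8$ by the three-squares theorem of Legendre, with analogous choices for $n \equiv 2,3 \pmod 8$) together with the order $R_n = O_{K_n} + 4 O_B$ give $\alpha_2$ directly. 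Substituting $L(E_{d_i},1) = 2^{2k(d_i)-2-a(d_i)} \Omega_{d_i,\infty} \CL(d_i)^2$ from the definition of $\CL$, absorbing the real periods into $C_\infty$, and collecting the $2$-powers yields
$$
P(d_1,d_2)^2 \;=\; \bigl(2^{k-a}\, w_K\, \CL(d_1)\, \CL(d_2)\bigr)^2.
$$
Since $f_n$ is $\BZ$-valued, $P(d_1,d_2)$ is an integer, so extracting square roots yields the claim up to sign.

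The main obstacle is the local Waldspurger calculation at the ramified prime $2$. One must match the specific integral test vector $f_n$ of level $U_n = \wh{R}_n^\times K_{n,2}^\times$ against the local toric functional, use the precise embedding $\tau$ (whose existence with $4 \mid c$ is the technical reason for specializing $n \equiv 1 \pmod 8$), and correctly normalize the archimedean constant so that it absorbs $\Omega_{d_1,\infty}\Omega_{d_2,\infty}$. The odd-prime computations are more routine but accumulate: the contribution depends on whether each ramified prime divides $d_1$ or $d_2$. The integral refinement in Theorem \ref{thm2.3} is essential, since the usual Gross--Prasad/Waldspurger identities up to an undetermined rational are not sharp enough to give the integrality and the precise $2$-power needed for Theorem \ref{lg}.
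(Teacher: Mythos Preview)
Your overall strategy---apply Waldspurger to $f_n$ against $\chi_{d_1,d_2}$, factor the $L$-value as $L(E_{d_1},1)L(E_{d_2},1)$, and unwind the definition of $\CL$---matches the paper exactly. But the vanishing argument has a real gap, and a couple of attributions are off.

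\textbf{The vanishing step.} Tunnell--Saito controls whether the abstract functional space $(\pi\otimes\chi)_{K^\times}$ is nonzero, and the local condition at $v=2$ depends only on $\chi_2$. Since $\chi_{d_1,d_2,2}$ is determined by $d_2^*\pmod 8$, the cases $d_2\equiv 1$ and $d_2\equiv 7$ give the \emph{same} $\chi_2$ (both make $d_2^*\equiv 1\pmod 8$, hence $\chi_2=1$), so the local root-number constraint at $2$ cannot separate them. Your claim that ``this constraint fails unless $d_2\equiv 1\pmod 8$'' is therefore not right as stated. The paper instead argues in two steps: first, because $f_n$ is built to be $K_{n,2}^\times$-invariant (it lies in $\pi^{U_n}$ with $U_n=\wh R_n^\times K_{n,2}^\times$), the period vanishes unless $\chi_2$ is trivial, which gives $d_2^*\equiv 1\pmod 8$, i.e.\ $d_2\equiv \pm 1\pmod 8$; second, for $d_2\equiv 7\pmod 8$ one has $d_1\equiv 5,6,7\pmod 8$, so $\epsilon(E_{d_1})=-1$ and $L(E_{d_1},1)=0$, forcing $P(d_1,d_2)=0$ via the Waldspurger identity already proved. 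The first step is about the specific test vector, not about the abstract Hom space, and that distinction is exactly what your argument misses.

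\textbf{Misplaced ingredients.} The special embedding with $4\mid c$ is not used in the proof of Theorem~\ref{thm2.1} at all; it enters only later, in Lemma~\ref{lem2.6}, to prove the symmetry $P(d_1,d_2)=P(d_2,d_1)$ when $n\equiv 1\pmod 8$. Also, the factor $w_K$ does not arise from the odd ramified primes; it comes out of the explicit Waldspurger formula (Theorem~\ref{thm4.2}) through the term $[O_K^\times:O_F^\times]$, i.e.\ from converting the toric integral normalized by $2L(1,\eta)$ into a finite sum over $\Cl_n$. The paper's computation packages all local factors into that explicit formula and then separately computes $(f',f')_\Pet$ via the degree-$2$ modular parametrization $X_0(32)\to E$; you should expect to need both pieces rather than an ad hoc place-by-place accumulation.
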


Now we define the {\em  genus period}  $Q(n)$ by
$$Q(n):=\sum _{t\in 2\Cl_n}f_n(t).$$
Notice that $P(n)$ and $Q(n)$ are well-defined up to signs.

\begin{thm}\label{LQ}
The number $\CL (n)$ is an integer and satisfies
$$\CL(n)\equiv
\sum_{\substack{n=d_0d_1\cdots d_\ell\\ d_i\equiv 1\pmod 8, \ i>0}}
\prod_i Q(d_i)  \quad\pmod 2.$$
Here in the sums, all decompositions  $n=d_0\cdots d_\ell$ are  non-ordered with $d_i>1$ for all $i\geq 0$.
\end{thm}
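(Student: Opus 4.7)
The plan is to prove Theorem~\ref{LQ} by strong induction on the number of prime factors of $n$, establishing integrality of $\CL(n)$ and the congruence modulo $2$ simultaneously; the base case $n=1$ is immediate, with $\CL(1)=\pm 1$ and the right-hand side equal to $1$ by convention.

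The first main step is Fourier inversion on the genus group $\Cl_n/2\Cl_n$. Using the identity $\mathbf{1}_{2\Cl_n}=[\Cl_n:2\Cl_n]^{-1}\sum_\chi\chi$ one obtains
\[
[\Cl_n:2\Cl_n]\,Q(n)\;=\;\sum_{\chi\in(\Cl_n/2\Cl_n)^\vee}P_\chi.
\]
By Gauss's genus theory (\S\ref{section2.1}), the characters are parameterized by decompositions $n=d_1 d_2$ with $d_2$ positive and odd --- bijectively when $n\equiv 1,2\pmod 4$, and two-to-one when $n\equiv 3\pmod 4$ --- and Theorem~\ref{thm2.1} evaluates $P(d_1,d_2)=\pm 2^{k-a}w_K\CL(d_1)\CL(d_2)$ when $d_2\equiv 1\pmod 8$, vanishing otherwise. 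Tabulating $[\Cl_n:2\Cl_n]$, $2^{k-a}w_K$, and the pair-to-character multiplicity in each residue class $n\bmod 8\in\{1,2,3\}$ yields the master identity
\[
c(n)\,Q(n)\;=\;\sum_{\substack{n=d_1 d_2\\ d_2\equiv 1\pmod 8}}\pm\,\CL(d_1)\CL(d_2),
\]
with $c(n)=2$ for $n\equiv 1\pmod 8$ and $c(n)=1$ for $n\equiv 2,3\pmod 8$; the sum runs over unordered pairs when $n\equiv 1\pmod 8$ and over ordered pairs otherwise.

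The second step isolates the ``trivial'' contributions with $d_2=1$ (and, when $n\equiv 1\pmod 8$, also $d_2=n$), each contributing $\pm\CL(n)\CL(1)=\pm\CL(n)$. For $n\equiv 1\pmod 8$ the two such terms are combined via the swap symmetry of Theorem~\ref{thm2.1}, producing a single $\pm 2\CL(n)$ term matched by the factor $c(n)=2$. The remaining terms involve $\CL(d_i)$ for $d_i$ with strictly fewer prime factors, which are integers by the induction hypothesis; solving for $\CL(n)$ gives $\CL(n)\in\BZ$. Reducing the resulting recursion modulo $2$ --- where signs disappear and, for $n\equiv 1\pmod 8$, middle pairs $(d_1,d_2),(d_2,d_1)$ similarly combine --- yields the clean formula
\[
\CL(n)\;\equiv\;Q(n)\;+\;\sum_{\substack{n=d_1 d_2,\ d_2>1 \\ d_2\equiv 1\pmod 8}}\CL(d_1)\CL(d_2)\pmod 2,
\]
with unordered summation for $n\equiv 1\pmod 8$ and ordered summation otherwise.

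Finally, substituting the induction hypothesis $\CL(d_i)\equiv\sum_{M_i}\prod Q(\cdot)\pmod 2$ over Tian decompositions of each $d_i$ and expanding, the key combinatorial claim is that each Tian decomposition $M=\{m_1,\ldots,m_s\}$ of $n$ arises in the expansion exactly $2^{s-1}-1$ times for $s\ge 2$ (one per bipartition of $M$ into two non-empty parts such that the ``$d_2$-part'' consists entirely of factors $\equiv 1\pmod 8$) and exactly once for $s=1$ (directly via the $Q(n)$ term). Since $2^{s-1}-1$ is odd for all $s\ge 2$, each decomposition contributes $1\pmod 2$, yielding the claimed formula. The main obstacle lies in Step~1, where one must carefully track the pair-to-character multiplicity of Gauss's genus theory across the three residue classes, and --- crucially for $n\equiv 1\pmod 8$ --- verify the sign-symmetry of Theorem~\ref{thm2.1} under $(d_1,d_2)\leftrightarrow(d_2,d_1)$ required to combine paired terms throughout the argument.
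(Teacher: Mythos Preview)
Your plan is essentially the paper's own proof: Fourier inversion on $\Cl_n/2\Cl_n$ produces the relation between $Q(n)$ and the quadratic periods, Theorem~\ref{thm2.1} converts periods to products $\CL(d_1)\CL(d_2)$, and one extracts a recursion for $\CL(n)$ which is then solved modulo~$2$ by the combinatorial count you describe. Your count $2^{s-1}-1$ is correct once ``Tian decomposition'' is read as an \emph{unordered} multiset of factors (which is the paper's convention), and the induction for integrality is exactly as in the paper.

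Two remarks on precision. First, your master identity is slightly misstated: for $n\equiv 1\pmod 8$ one has $2Q(n)=\sum_{\text{ordered},\,d_2\equiv 1}\pm\CL(d_1)\CL(d_2)$, or equivalently (after the swap symmetry) $Q(n)=\sum_{\text{unordered}}\pm\CL(d_1)\CL(d_2)$; the combination ``$c(n)=2$ with unordered sum'' is inconsistent. This is harmless for the final recursion, but you should straighten it out. Second, and more importantly, the ``sign-symmetry under $(d_1,d_2)\leftrightarrow(d_2,d_1)$'' that you flag as the main obstacle is \emph{not} a consequence of Theorem~\ref{thm2.1}, which only determines $P(d_1,d_2)$ up to sign. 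The paper proves the exact equality $P(d_1,d_2)=P(d_2,d_1)$ separately as Lemma~\ref{lem2.6}; the argument uses the self-twist $\pi\simeq\pi\otimes\chi_0$ from Theorem~\ref{thm2.4}(3) together with the specific choice of embedding $\tau(\sqrt{-n})=ai+bj+ck$ with $4\mid c$ made at the outset (needed to ensure $\chi_0(g)=1$ for the element $g$ normalizing the test vector). Without this, for $n\equiv 1\pmod 8$ you cannot combine the paired terms to divide by~$2$, and integrality of $\CL(n)$ does not follow. So your identification of the obstacle is right, but be aware it requires genuine extra input beyond Theorem~\ref{thm2.1}.
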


Now Theorem \ref{lg} follows from Theorem \ref{LQ} and the following result.
\begin{prop}\label{odd}
One has
$$f_n\left( (\wh{B}^{\times})^ 2\right)\subset 1+2\BZ.$$
Therefore,
$$Q(n)\equiv g(n)\quad\pmod 2.$$
\end{prop}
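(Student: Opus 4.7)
The second assertion follows directly from the first: each $t\in 2\Cl_n$ is by construction the class of a square $\tilde s^2\in\wh K_n^\times$, so via the embedding $\tau:K_n\hookrightarrow B$ its image in $X:=B^\times\backslash\wh B^\times/\wh\BQ^\times U_n$ is represented by $\tau(\tilde s)^2\in(\wh B^\times)^2$. The first inclusion then gives $f_n(t)\equiv 1\pmod 2$, and summing over $t\in 2\Cl_n$ yields $Q(n)=\sum_{t\in 2\Cl_n}f_n(t)\equiv\#(2\Cl_n)=g(n)\pmod 2$. The crux is therefore the first inclusion $f_n((\wh B^\times)^2)\subset 1+2\BZ$.

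Writing $\bar f_n:=f_n\bmod 2$, a function on the finite set $X$, my plan is as follows. First, note that $\bar f_n\not\equiv 0$: by Theorem \ref{thm2.3} (with $\chi=1$), $\pi^{U_n}_\BZ$ is free of rank $1$ over $\BZ$ with generator $f_n$, so the GCD of the values of $f_n$ on $X$ equals $1$. This nonvanishing serves as the normalization hook.

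The main step is to show $\bar f_n(z^2)=1$ for every $z\in\wh B^\times$, via a local-global analysis using the tensor factorization $f_n=\otimes_v f_{n,v}$ (uniquely determined up to global sign by the integral Tunnell--Saito multiplicity-one), where each $f_{n,v}\in\pi_v^{U_{n,v}}$. At the archimedean place and at odd primes $p\nmid n$, the vector $f_{n,v}$ is spherical (or trivial) with value $1$ on squares. At odd primes $p\mid n$, where $U_{n,p}=O_{B,p}^\times$ is still maximal but $\pi_p$ is unramified, a direct spherical-function computation shows that $f_{n,p}$ takes odd values on $(B_p^\times)^2$. At the ramified prime $2$, where $B_2$ is the ramified quaternion algebra and $U_{n,2}=R_{n,2}^\times K_{n,2}^\times$ with $R_{n,2}=O_{K_n,2}+4O_{B,2}$, one must explicitly compute the newvector $f_{n,2}$ from the local component $\pi_2$ of the newform $f_E$ of level $32$ and verify the parity on $(B_2^\times)^2$ directly. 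The precise choice of embedding $\tau(\sqrt{-n})=ai+bj+ck$ with $4\mid c$ (when $n\equiv 1\pmod 8$) is crucial, as it fixes the image of $O_{K_n,2}$ inside $O_{B,2}$ in a way compatible with the chosen $U_{n,2}$.

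The main obstacle is the last step: giving an explicit enough description of the local newvector $f_{n,2}$ to verify its parity on squares. This requires understanding the local Jacquet--Langlands transfer of $\pi_{E,2}$ to the ramified quaternion algebra $B_2^\times$, together with the choice of an integral model making the product $\otimes_v f_{n,v}$ globally integer-valued and normalized so that the generator property of $f_n$ is manifest. Once the local parity is verified at $2$, combining with the trivial parities at other places yields $\bar f_n(z^2)=1$ for all $z\in\wh B^\times$, completing the proof.
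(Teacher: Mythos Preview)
Your derivation of the second assertion from the first is correct and matches the paper. However, what you have written for the first assertion is a plan, not a proof: you correctly identify that everything reduces to the prime $2$, but you do not carry out the computation there, and you explicitly flag this as ``the main obstacle'' without resolving it.

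There is also a conceptual wobble in your local-global setup. The automorphic form $f_n$ is not literally a product of local functions whose values multiply pointwise; rather, one should use the class-number-one property of $B$ (the paper's lemma identifying $B^\times\backslash \wh B^\times/\wh\BQ^\times U$ with $O_{B,2}/(\BZ_2+2O_{B,2})$) to see that the entire question is $2$-adic from the start. Your remarks about odd primes $p\mid n$ are thus vacuous rather than requiring a ``spherical-function computation''.

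The paper's proof is short because the hard work was already done in Theorems \ref{thm2.4} and \ref{thm2.3}. Those give $f_n=\pi(g)f_0$ with $g\in B_2^\times$ and $f_0$ written explicitly in the basis $\{f_\delta\}$, where each $f_\delta$ is $\pm 1$-valued everywhere on the double coset space. When $K_2=\BQ_2(\sqrt{-3})$ (i.e.\ $n\equiv 3\pmod 8$), $f_0$ is a sum of three such $f_\delta$, hence odd-valued everywhere; when $K_2=\BQ_2(\sqrt{-2m})$ (i.e.\ $n\equiv 2\pmod 8$), $f_0$ is a single $f_\delta$, again $\pm 1$-valued everywhere. The only subtle case is $K_2=\BQ_2(\sqrt{-1})$ (i.e.\ $n\equiv 1\pmod 8$), where $f_0=\tfrac12(f_{\frac{i+j}{2}}+f_{\frac{i-j}{2}})$. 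Here one uses $f_{\frac{i-j}{2}}=\chi_0 f_{\frac{i+j}{2}}$ from Theorem \ref{thm2.4}(3), so that $f_n(h^2)=\tfrac12 f_{\frac{i+j}{2}}(h^2g)(1+\chi_0(h^2g))$. Since $\chi_0(h^2)=1$ and $\chi_0(g)=1$ (this is exactly where the special choice of embedding with $4\mid c$ enters, via Lemma \ref{lem2.6}), one gets $f_n(h^2)=f_{\frac{i+j}{2}}(h^2g)=\pm 1$. You correctly intuited that the $4\mid c$ condition matters, but its precise role is to force $\chi_0(g)=1$, which is the missing ingredient in your sketch.
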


\subsection{Primitive test vectors} \label{section2.2}

In this subsection, we give an explicit construction of the test vector $f_n$, to prepare for the proof of the result in the last subsection.

Resume the above notations related to $K_n, B$ and $\pi$.
The local components of the automorphic representation $\pi=\otimes_v \pi_v$
of $B^\times(\BA)$ has the following properties:
\begin{itemize}
\item $\pi_\infty$ is trivial;
\item $\pi_p$ is unramified if $p\ne \infty, 2$, i.e.,  $\pi^{ O_{B, p}^\times}$ is one-dimensional;
\item $\pi_2$ has a conductor of exponential $4$ (cf. \cite{G2}), i.e., for a uniformizer
$\lambda$ (for example, $1+i$) of $B$ at $2$,
$$\pi_2^{1+\lambda^4  O_{B, 2}}\ne 0, \qquad \pi_2^{1+\lambda^3  O_{B, 2}}=0.$$
\end{itemize}
Let $U=\prod_p U_p$ be the open compact subgroup of
$\wh{ O}_B^\times$ with $U_p= O_{B, p}^\times$ if $p\ne 2$, and
$$U_2=\BZ_2^\times(1+\lambda ^4 O_{B, 2})=\BZ_2^\times(1+4 O_{B, 2}).$$
Then $\pi^U\simeq\pi_2^{U_2}$ is stable under the action of $B_2^\times$, since $U_2$ is normal in $B_2^\times$. By the irreducibility of $\pi_2$, we further have
$\pi_2^{U_2}=\pi_2$.

By definition, $\pi^U$ is a subspace of $C^\infty(B^\times\bs \wh{B}^\times/\wh{\BQ}^\times U)$, the space of maps from (the finite set) $B^\times\bs \wh{B}^\times/\wh{\BQ}^\times U$ to $\BC$. The following is a more detailed description.

\begin{thm} \label{thm2.4}
\begin{enumerate} [(1)]
\item The space $\pi^U$ is a $6$-dimensional irreducible
representation of $B_2^\times$,  with an orthogonal basis
$$f_\delta \in C^\infty(B^\times\bs \wh{B}^\times/\wh{\BQ}^\times U), \qquad
\delta\in \left\{\frac {\pm i\pm j}2, \frac {\pm j \pm k}2,  \frac
{\pm k\pm i}2\right\}\Big/\{\pm 1\}.$$
Here  for each $\delta$, the function $f_\delta$ is determined by its restriction to $1+2 O_{B, 2}$
and
$$f_\delta(1+2x)=(-1)^{\tr(\delta x)}, \qquad \forall x\in
 O_{B,2}.$$
\item The representation $\pi^U$ of $B_2^\times$ has an integral
structure $\pi_\BZ^U$ generated by
$$\qquad\qquad f_{i\pm  j}:= \frac12  (f_{\frac{i+j}{2}}\pm f_{\frac{i-j}{2}}),
\quad f_{j\pm k}:=\frac12 (f_{\frac{j+k}{2}}\pm f_{\frac{j-k}{2}}),
\quad f_{k\pm i}:=\frac 12(f_{\frac{k+i}{2}}\pm
f_{\frac{k-i}{2}}).$$
Moreover,  this $\BZ$-basis is orthonormal with
respect to the Tamagawa measure on $B^\times\bs B^\times(\BA)/\BA^\times$.
\item Let $\chi_0$ be the
character of $B^\times(\BA)$ associated to the quadratic extension
$\BQ(i)$, i.e. the composition
$$B^\times(\BA)\overset {\det} \lra \BA^\times  \simeq \BQ^\times \times
(\wh \BZ^\times\times \BR_+^\times) \lra \wh \BZ^\times\lra (\BZ/4\BZ)^\times \simeq
\{\pm 1\}.$$Then $\pi\simeq \pi\otimes\chi_0$ and
$$\quad\chi _0f_{\frac{i+j}{2}}=f_{\frac{i-j}{2}}, \qquad \chi _0f_{\frac{j+k}{2}}=f_{\frac{j-k}{2}},
\qquad \chi _0f_{\frac{k+i}{2}}=f_{\frac{k-i}{2}}.$$
\end{enumerate}
\end{thm}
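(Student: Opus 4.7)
The plan is to reduce to the local representation $\pi_2$ of $B_2^\times$, decompose it via Fourier analysis under the action of the finite abelian quotient $(1+2O_{B,2})/(1+4O_{B,2})$, and then verify the integral structure, orthonormality, and $\chi_0$-twist relations by direct computation.

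First I would reduce to a local question. Since $\pi_p$ is unramified with $U_p = O_{B,p}^\times$ for $p \neq 2, \infty$ and $\pi_\infty$ is trivial, the tensor product decomposition gives $\pi^U \cong \pi_2^{U_2}$ as $B_2^\times$-modules. As observed just before the statement, $U_2$ is normal in $B_2^\times$ and the conductor condition forces $\pi_2^{U_2} = \pi_2$ by irreducibility. The fact that $\pi_2$ is six-dimensional is standard: it follows from the classification of supercuspidal representations of $\GL_2(\BQ_2)$ of conductor $2^5$ with trivial central character under the Jacquet--Langlands correspondence, as explicitly described by Gross in \cite{G2}; alternatively one can enumerate the double coset space $B^\times \bs \wh B^\times/\wh\BQ^\times U$ using the class number one property of the Hurwitz order $O_B$.

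Next I would perform the Fourier decomposition for part (1). The subgroup $1+2O_{B,2}$ acts on $\pi^U$, with $1+4O_{B,2} \subset U_2$ acting trivially, so the action factors through the elementary abelian $2$-group $A := (1+2O_{B,2})/(1+4O_{B,2}) \cong O_{B,2}/2O_{B,2}$ of order $16$. Its characters are parameterized via the reduced trace pairing by $\chi_\delta(1+2x) = (-1)^{\tr(\delta x)}$, where $\delta$ runs over representatives of $\mathfrak{d}^{-1}/2\mathfrak{d}^{-1}$ and $\mathfrak{d}$ is the local different. Decompose $\pi^U = \bigoplus_\delta V_\delta$ by characters of $A$. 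Triviality of the central character restricts $\delta$ to those with $\tr(\delta) \in 2\BZ_2$, eliminating half the candidates; the remaining possibilities can be cut down further using the normalizer action of $O_{B,2}^\times$ (or equivalently by ruling out characters incompatible with conductor exponent $4$), leaving exactly the six representatives $\{(\pm i \pm j)/2,\ (\pm j \pm k)/2,\ (\pm k \pm i)/2\}/\{\pm 1\}$. Since $\dim \pi^U = 6$ matches the number of characters, each $V_\delta$ is one-dimensional, and $f_\delta$ is determined by the normalization $f_\delta(1) = 1$, which together with the character formula recovers the stated values on $1+2O_{B,2}$.

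For parts (2) and (3) I would argue as follows. For part (2), left multiplication by the unit $j \in O_B^\times$ preserves the double coset space and induces an involution on the character set of $A$ that swaps the pair $\{\chi_{(i+j)/2}, \chi_{(i-j)/2}\}$, and similarly for the other two pairs; this shows $f_{(i+j)/2}$ and $\pm f_{(i-j)/2}$ coincide up to sign, so $f_{(i+j)/2} \pm f_{(i-j)/2}$ is even-valued on the double coset space and $f_{i \pm j}$ is integer-valued. Orthonormality under the Tamagawa measure then follows from the orthogonality of distinct characters of $A$ combined with the Tamagawa volume of $B^\times \bs B^\times(\BA)/\BA^\times$ and the normalization at $2$. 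For part (3), the CM of $E: y^2 = x^3 - x$ by $\BZ[i]$ provides the isomorphism $\pi \cong \pi \otimes \chi_0$; locally at $2$ one computes $\chi_{0,2}(1+2x) = (-1)^{\tr(x)}$ directly from the definition of $\chi_0$ via reduced norm modulo $4$, so pointwise multiplication by $\chi_0$ sends $\chi_\delta$ to $\chi_{\delta+1}$. The inclusion $1+j \in (1-i)O_{B,2} = 2\mathfrak{d}^{-1}$ then yields $(i+j)/2 + 1 \equiv (i-j)/2$ in $\mathfrak{d}^{-1}/2\mathfrak{d}^{-1}$ and the cyclic analogs give the other two relations; signs are fixed by evaluating at the identity coset.

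The main obstacle is the exact identification of the six characters of $A$ appearing in $\pi^U$ among the eight candidates compatible with the trivial central character. This step relies essentially on the global input of the newform $f_E$ of level $32$, which pins down the isomorphism class of $\pi_2$; the cleanest way is to invoke Gross's explicit description in \cite{G2} of the local representation at $2$. Once this is settled, parts (2) and (3) are sequences of explicit computations in the Hamilton quaternion algebra $B_2$.
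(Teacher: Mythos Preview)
Your outline is broadly parallel to the paper's, but the paper organizes the argument differently in a way that closes the gap you flag at the end. Rather than first knowing $\dim\pi_2=6$ (from Gross) and then hunting for which six of the eight central-character-compatible $\delta$'s occur, the paper works in the ambient space $\CA_0^U$ of $U$-invariant functions on the double coset set that are orthogonal to all one-dimensional constituents $\chi\circ\det$. The double coset set is identified with $O_{B,2}/(\BZ_2+2O_{B,2})$ (eight elements), and $\CA_0^U$ is by definition the orthogonal complement of the two characters $1$ and $(-1)^{\tr}$, hence six-dimensional with the explicit basis $\{f_\delta\}$. The paper then proves $\CA_0^U$ is \emph{irreducible} under $B_2^\times$ by showing that $O_B^\times/\mu_2$ acts transitively by conjugation on $\Delta$; since $\pi^U\subset\CA_0^U$ is nonzero, one concludes $\pi^U=\CA_0^U$. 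This is entirely self-contained and needs no external input about $\pi_2$.

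Your proposed mechanism for cutting from eight to six does not work as stated: the normalizer (conjugation) action of $O_{B,2}^\times$ fixes the classes $\delta=0$ and $\delta=1$ and permutes the other six, so it distinguishes orbits but eliminates nothing. The honest reason $\delta=0,1$ are excluded is exactly that they span the one-dimensional constituents $\chi\circ\det$, which are orthogonal to the cuspidal $\pi$; this is what the paper's definition of $\CA_0$ encodes. Also, your sentence in part~(2) that ``$f_{(i+j)/2}$ and $\pm f_{(i-j)/2}$ coincide up to sign'' is not correct---these are linearly independent characters. What you want (and what the paper proves) is that $\pi(h)f_\delta=\pm f_{h\delta h^{-1}}$ for $h\in H$, so the $B_2^\times$-action permutes the $f_\delta$ up to sign; this is what makes the $\BZ$-span of the $f_{i\pm j}$, $f_{j\pm k}$, $f_{k\pm i}$ stable. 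Your argument for part~(3) via $\delta\mapsto\delta+1$ and $1+j\in 2\wp^{-1}$ is correct and matches the paper's direct trace computation.
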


To deduce the theorem, we first need the following precise description of $B^\times\bs \wh{B}^\times/\wh{\BQ}^\times U$.

\begin{lem} The following natural maps are bijective:
$$ O_{B, 2}/(\BZ_2+2 O_{B, 2})\stackrel{\sim}{\lra}
(1+2 O_{B, 2})/\BZ_2^\times(1+4 O_{B, 2})\stackrel{\sim}{\lra}
B^\times\bs \wh{B}^\times/\wh{\BQ}^\times U,$$
where the first map is
defined by $x\mapsto 1+2x$, and the second one is given by the natural inclusion
 $B_2^\times \subset \wh{B}^\times$.

Moreover, under the composition
$$
B^\times\bs B^\times(\BA)/\wh{\BQ}^\times
\lra
B^\times\bs \wh{B}^\times/\wh{\BQ}^\times U
\stackrel{\sim}{\lra}
 O_{B, 2}/(\BZ_2+2 O_{B, 2}),$$
the Tamagawa measure on $B^\times\bs B^\times(\BA)/\BA^\times$ transfers to the Haar measure of (the finite abelian group) $ O_{B, 2}/(\BZ_2+2 O_{B, 2})$ of total volume 2.
\end{lem}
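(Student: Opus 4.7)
The plan is to establish the two bijections sequentially---the first by direct algebra and the second by combining class number one of the Hurwitz order with the surjectivity of $O_B^\times$ onto $(O_{B,2}/2O_{B,2})^\times$---and then to deduce the measure identification from the Tamagawa number of $PB^\times$ together with the triviality of the stabilizers.

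For the first map $x \mapsto 1+2x$, the identity $(1+2x)(1+2y) = 1 + 2(x+y) + 4xy$ shows that the assignment descends to a group isomorphism $(O_{B,2}/2O_{B,2}, +) \xrightarrow{\sim} (1+2O_{B,2})/(1+4O_{B,2})$. Since $\BZ_2^\times = 1+2\BZ_2$ as subsets of $\BQ_2$, under this isomorphism the image of $\BZ_2^\times(1+4O_{B,2})$ corresponds to $(\BZ_2 + 2O_{B,2})/2O_{B,2}$; passing to the further quotient then yields the first claimed bijection.

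For the second map, I will use that $O_B$ has class number one, so $\wh{B}^\times = B^\times \wh{O}_B^\times$; combined with $U_p = O_{B,p}^\times$ at every $p \ne 2$, this produces a representative in $O_{B,2}^\times$ (embedded at the $2$-component) for each double coset. Surjectivity will then reduce to showing that the 24 Hurwitz units surject onto $(O_{B,2}/2O_{B,2})^\times$ of order $12$, which I will verify by a direct check that the only Hurwitz units reducing to $1 \bmod 2O_{B,2}$ are $\pm 1$. For injectivity, I will take a hypothetical equation $1+2y = b(1+2x)qu$ with $b \in B^\times$, $q \in \wh{\BQ}^\times$, $u \in U$, and analyze it prime by prime: the components at $p \ne 2$ force $b_p \in \BQ_p^\times O_{B,p}^\times$, which together with the fact that the reduced norm of any such $b$ must be a positive rational square gives $b \in \BQ^\times O_B^\times$ globally. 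At the prime $2$ the requirement $u_2 \in \BZ_2^\times(1+4O_{B,2}) \subset 1+2O_{B,2}$ then forces the Hurwitz-unit part of $b$ to be $\pm 1$, and a final expansion modulo $1+4O_{B,2}$ delivers $x \equiv y \pmod{\BZ_2 + 2O_{B,2}}$.

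For the measure assertion, I will invoke the identity $\vol(B^\times \bs B^\times(\BA)/\BA^\times) = 2$, which is the Tamagawa number of $PB^\times$ and is an inner-form invariant of $PGL_2$. The injectivity analysis incidentally establishes that the $B^\times/\BQ^\times$-stabilizer of every double coset is trivial, so the eight fibers of the projection to the finite group are equivariantly isomorphic and therefore carry equal Tamagawa mass $1/4$, producing the Haar measure of total volume $2$. The main technical obstacle will be the bookkeeping in the injectivity step, where one must cleanly isolate the Hurwitz unit from the rational scalar while working modulo successive powers of $2O_{B,2}$ and verify that the only surviving Hurwitz units are $\pm 1$.
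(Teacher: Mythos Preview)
Your argument is correct and rests on the same two facts the paper uses: the class number one property $\wh B^\times = B^\times \wh O_B^\times$, and the surjection $O_B^\times \twoheadrightarrow (O_{B,2}/2O_{B,2})^\times$ with kernel $\{\pm 1\}$. The paper organizes the second bijection a little differently: it first rewrites the double coset space as $H \bs B_2^\times/\BQ_2^\times U_2$ with $H = B^\times \cap (U^{(2)} B_2^\times)$, identifies $H$ as the semidirect product of $\lambda^\BZ$ with $O_B^\times$, and then compares the filtration $B_2^\times \supset O_{B,2}^\times \supset 1+\lambda O_{B,2} \supset 1+2O_{B,2}$ with the induced filtration $H \supset O_B^\times \supset (O_B')^\times \supset \mu_2$ to obtain the exact sequence $1 \to \mu_2 \to H \to B_2^\times/(1+2O_{B,2}) \to 1$, from which the bijection follows at once. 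This packaging pays off immediately afterward, since the resulting decomposition $B_2^\times = H \cdot (1+2O_{B,2})$ is exactly what is used to describe the $B_2^\times$-action on $\pi^U$ in the proof of Theorem~\ref{thm2.4}. For the measure statement, the paper notes that each representative $g$ lies in $1+2O_{B,2}$ (so $g_p=1$ for $p\ne 2$) and that $U_2$ is normal in $B_2^\times$, whence $B^\times g \wh\BQ^\times U = B^\times \wh\BQ^\times U g$ and right-invariance of the measure gives equal volumes directly; this is equivalent to your stabilizer computation but avoids revisiting the injectivity analysis.
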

\begin{proof}
We first prove the bijectivity.
The first map is clearly a group isomorphism. For the second
map, we use the class number one
property of $B$, i.e.,
$$\wh B^\times =B^\times\cdot \wh  O_B^\times=B^\times\cdot B_2^\times \cdot U^{(2)}.$$
It follows that
$$B^\times \bs \wh B^\times /\wh\BQ^\times U\simeq H\bs B_2^\times/\BQ_2^\times U_2, \quad H=B^\times \cap (U^{(2)}\cdot B_2^\times).$$
It is easy to see
that $H$ is a semi-product of $\lambda^\BZ$, where $\lambda\in  O_B$ is an element
with reduced norm $2$,  and the subgroup
$$ O_B^\times =\left\{\pm1, \quad \pm i, \quad \pm j, \quad \pm k, \quad \frac {\pm 1\pm i\pm j\pm k}2\right\}.$$
The group $ O_B^\times$ is a semi-product of $\mu_3$ generated by
$\zeta=(-1+i+j+k)/2$ and
$$( O_B' )^\times=\{\pm1, \quad \pm i, \quad \pm j, \quad \pm k\}.$$
Consider the filtration of $B_2^\times$ given by
$$B_2^\times \supset  O_{B, 2}^\times \supset 1+\lambda
 O_{B, 2}^\times \supset 1+2 O_{B, 2},$$
and its induced filtration
$$H\supset  O_B^\times \supset ( O_B')^\times\supset \mu_2.$$
It is straight forward to check that these two exact sequences have
isomorphic sub-quotients. It follows that the map $H\to B_2^\times$
induces an exact sequence
$$1\lra \mu_2\lra H\lra B_2^\times /(1+2 O_{B, 2})\lra 1.$$
In other words, the $B_2^\times$ is generated by $H$ and the  normal
subgroup $1+2 O_{B, 2}$ with intersection $H\cap (1+2 O_{B,
2})=\mu_2.$ Thus
$$H\bs B_2^\times/\BQ_2^\times U_2\stackrel{\sim}{\lla} \mu_2\bs (1+2 O_{B, 2}/1+4 O_{B, 2}
)\stackrel{\sim}{\lla}   (1+2 O_{B, 2})/\BZ_2^\times (1+4 O_{B,
2}).$$ The other two relations can be verified similarly.

Now we treat the measure. Note that the Tamagawa measure gives
$B^\times\bs B^\times(\BA)/\BA^\times$ total volume 2.
Then the induced measure on $ O_{B, 2}/(\BZ_2+2 O_{B, 2})$ also has total volume 2.
It suffices to check that the induced measure is uniform.
Equivalently, we need to show that
$\vol(B^\times\bs B^\times g \wh{\BQ}^\times U)$
is constant in $g\in \wh B^\times$.
By the first part of the lemma, we can always take a representative
$g\in 1+2 O_{B, 2}$ for the double coset $B^\times g \wh{\BQ}^\times U$. The key is that $g_p=1$ for $p\neq 2$. It follows that
$$B^\times\bs B^\times g \wh{\BQ}^\times U
=B^\times\bs B^\times \wh{\BQ}^\times U g,$$
whose volume is independent of $g$ since the measure is invariant under the right translation.
\end{proof}

In the lemma, the right
multiplication action of $B_2^\times=H\cdot (1+2 O_{B, 2})$ on
$\wh{B}^\times$ induces its action on $ O_{B, 2}/(\BZ_2+2 O_{B,
2})$ given by right conjugation of $H$ and translation of $x$, for
elements $1+2x\in 1+2 O_{B, 2}$.

Consider the space $\CA_0\subseteq C^\infty(B^\times\bs
\wh{B}^\times/\wh{\BQ}^\times)$ of forms perpendicular to forms
$\chi\circ \det$ where $\chi$ runs over all characters of
$\BQ^\times\bs \wh{\BQ}^\times$, then $\pi\subset \CA_0$. Let
$\CA_0^U$ be the subspace of $\CA_0$ of forms invariant under $U$,
then $\pi^U\subset \CA_0^U$.

The restriction map
$$\CA_0^U\lra \BC[ O_{B, 2}/(\BZ_2+2 O_{B, 2})], \qquad f\lto (\phi_f: x\mapsto f(1+2x))$$
define an isomorphism between $\CA_0^U$ and the space $\CA_1$ of
functions $\phi $ on $ O_{B, 2}/(\BZ_2+2 O_{B, 2})$ perpendicular
to the characters $1$ and $(-1)^\tr$ on $ O_{B, 2}/(\BZ_2+2 O_{B, 2})$.
Here the trace map $$\tr:  O_{B,
2}/(\BZ_2+2 O_{B, 2})\lra \BZ_2/2\BZ_2$$
is induced form the reduced trace.
The vector space $\CA_1$
is decomposed into the direct sum
$$\CA_1=\sum _{\psi\in \Psi} \BC \psi,$$
where $$\Psi=\left\{\psi\in \Hom (  O_B/(2 O_B+\BZ), \mu_2), \quad
\psi\ne 1,  (-1)^{\tr }\right\}$$
is a set of quadratic characters $\psi$ of
$ O_{B, 2}/(\BZ_2+2 O_{B, 2})$.

We have an explicit description of forms in $\CA_0^U$ corresponding to $\Psi$. Let $\wp=\lambda  O_{B, 2}$
be the maximal ideal of $ O_{B, 2}$. Then the trace map defines a
perfect pairing
$$ O_{B, 2}\otimes \wp^{-1}\lra \BZ_p, \qquad (x, y)\lra \tr(xy).$$
It induces a perfect pairing
$$( O_{B, 2}/2 O_{B, 2})\otimes (\wp^{-1}/2\wp^{-1})\lra \mu_2, \qquad (x, y)\mapsto (-1)^{\tr(xy)}.$$
It is easy to see that  $\Psi$ corresponds to the subset $\bar\Delta$
of elements $\bar \delta\in \wp^{-1}/2\wp^{-1}$ with the following
properties:
$$\tr(\delta)=0\pmod 2, \qquad \delta\ne 0, 1\pmod 2.$$
Note that the set
$$\Delta= \left\{\frac {\pm i\pm j}2, \frac {\pm j \pm k}2,  \frac
{\pm k\pm i}2\right\}$$
in Theorem \ref{thm2.4} is contained in
$\wp^{-1}$.
Thus we can identify $\bar\Delta=\Delta/\{\pm1\}$.
For each $\delta \in \Delta/\{\pm1\}$, the corresponding form
$f_\delta$ is given by
$$f_\delta (g)=(-1)^{\tr (\delta x)},$$for any $g=bh(1+2x)u  \in \wh B^\times$
with $b\in B^\times$, $h\in H$, $x\in O_{B, 2}$, and $u\in
\wh{\BQ}^\times U.$
Hence, the space $\CA_0^U$ is $6$-dimensional with the
explicit decomposition
$$\CA_0^U=\sum_{\delta\in \Delta/\{\pm1\}} \BC f_\delta$$
into characters of
$(1+2 O_{B,2})/(1+4 O_{B, 2})$.

\begin{proof}[Proof of Theorem \ref{thm2.4}]

For (1), it suffices to prove that $\CA_0^U$ is irreducible as a representation of $G:=B_2^\times/(1+4 O_{B,
2})$. Note that $G$ contains a normal and commutative finite
subgroup $C=(1+2 O_{B, 2})/(1+4 O_{B, 2})$. Thus any invariant subspace $V$ of
$\CA_0^U$ is a direct sum
$$V=\oplus _{\chi \in X}V_\chi$$
over some multiset $X$ of characters of $C$.
The multiset $X$ is stable under the conjugation of
$G$. We have seen that $V_\chi$ are all one-dimensional, and $X$ is
included into $\Psi$, the set of characters induced by elements in
$\Delta$. Thus we need only prove that $G$ acts transitively on
$\Delta$ by conjugations. In fact,  $\Delta$ is a principle
homogenous space of $ O_B^\times/\mu_2$ under conjugation.

Now we treat (2).
Note that for any $h\in H$, $x\in  O_{B, 2}$, and $\delta\in
\Delta$, we have that $h\delta h^{-1}\in \Delta$ and
$$\pi(h(1+2x))f_\delta=\psi_\delta(x)f_{h\delta h^{-1}}
=\pm f_{h\delta h^{-1}},$$
where
$\psi_\delta\in \Psi$ denotes the character $x\mapsto (-1)^{\tr
(\delta x)}$ on $ O_{B, 2}/(\BZ_2+2 O_{B, 2})$. Therefore, the action
of $B_2^\times$ on $f_{i\pm j}$ is given by
$$\pi(h)f_{i\pm j}=f_{hih^{-1}\pm hjh^{-1}},\qquad \pi(1+2x)f_{i\pm j}\in
\{\pm f_{i+j}, \pm f_{i-j}\}.$$
Similar results hold for $f_{j\pm k}$ and
$f_{k\pm i}$.
Thus $\pi_\BZ^U$ is an integral structure on $\pi^U$.
The orthonormality of the basis is a simple consequence of the previous result on the measures.

For (3), it is clear that $\chi_0$ is invariant under the left action of
$B^\times \cdot H$ and the right action of $U$ and its restriction on
$1+2 O_{B, 2}$ is given by $\chi_0(1+2x)=(-1)^{\tr x}$ for any
$x\in  O_{B, 2}$.  Thus for any $x\in  O_{B, 2}$,
$$\chi_0f_{\frac{i+j}{2}}(1+2x)=(-1)^{\tr(x+\frac{i+j}{2}x)}=(-1)^{\tr(x\frac{i-j}{2})}
(-1)^{\tr((1+j)x)}=f_{\frac{i-j}{2}}(1+2x).$$
\end{proof}

\begin{thm}\label{thm2.3}
Let $K$ be an imaginary quadratic field and $\chi$ a
quadratic character of $\wh{K}^\times/K^\times\wh{ O}_K^\times$
such that $L(E_{K},\chi,s)$ has root number $+1$ (so that $2$ cannot split
in $K$).  Let $\varpi$ be a uniformizer of $K_2$ and $\chi_2$ the
2-component of $\chi$. Fix a $\BQ$-embedding of $\tau:K\hookrightarrow B$ such
that $ O_K$ is contained in $ O_B$. Then the vector space
$$\pi^{U, \chi_2}:=\{f\in \pi^U, \ \pi(t)f=\chi_2(t)f, \ \forall t\in
K_2^\times\}$$ is one-dimensional. All the possible cases of
$(K_2, \chi_2(\varpi))$ are listed below:
$$(\BQ_2(\sqrt{-3}), 1), \quad (\BQ_2(\sqrt{-1}), \pm 1), \quad
(\BQ_2(\sqrt{-2m}), (-1)^{\frac{m-1}{2}}),\  m\equiv 1, 3, 5, 7\pmod
8.$$ Let $g\in B^\times_2$ be such that $\tau_2':=g^{-1}\tau_2 g$ is
given by
$$\begin{aligned}
&(-1+\sqrt{-3})/2\longmapsto \zeta, \quad \sqrt{-1}\longmapsto
k,\qquad\qquad\
\sqrt{-2}\longmapsto i+j,\\
&\sqrt{-10}\longmapsto i-3j,\qquad \sqrt{-6}\longmapsto i+j+2k,\quad
\sqrt{-14}\longmapsto -3i+j-2k,\end{aligned}$$
respectively in the above cases.  Then the vector
$f=\pi(g)f_0$ lies in $\pi^{U, \chi_2}$, where
$$f_0=\begin{cases}
f_{\frac{i-j}{2}}+f_{\frac{j-k}{2}}+f_{\frac{k-i}{2}},\quad
&\text{if $K_2=\BQ_2(\sqrt{-3})$},\\
f_{i\pm j}=\frac{1}{2}\left(f_{\frac{i+j}{2}}\pm
f_{\frac{i-j}{2}}\right), \quad &\text{if $(K_2,
\chi_2(\varpi))=(\BQ_2(\sqrt{-1}), \pm 1)$},\\
f_{\frac{i+j}{2}}, &\text{if $K_2=\BQ_2(\sqrt{-2m})$ with $m$ odd.}
\end{cases}$$
Moreover, $\pi_\BZ^{U, \chi_2}:=\pi_\BZ^U\cap \pi^{U, \chi_2}=\BZ f$.
\end{thm}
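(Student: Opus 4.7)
The plan is to combine the local multiplicity one theorem of Tunnell and Saito (the content of the referenced Theorem \ref{TS1} and Corollary \ref{TS2}) with a direct computation on the six-dimensional space $\pi^U$ whose structure is described in Theorem \ref{thm2.4}. Once a nonzero element of $\pi^{U,\chi_2}$ is exhibited, multiplicity one immediately forces $\dim_\BC \pi^{U,\chi_2} \le 1$, giving the one-dimensionality assertion.

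First I would determine the admissible local pairs $(K_2,\chi_2)$. The global assumption that $L(E_K,\chi,s)$ has root number $+1$, combined with the fact that $B$ is ramified exactly at $2$ and $\infty$, forces via the Tunnell--Saito dichotomy that $\Hom_{K_2^\times}(\pi_2\otimes\chi_2,\BC)\neq 0$, which in turn constrains the local root number $\epsilon(\tfrac12,\pi_{2,K_2}\otimes\chi_2)$. Since $2$ cannot split in $K$ (otherwise the nonzero Hom would sit on the split quaternion side), $K_2$ is a quadratic field, and a finite enumeration through the seven quadratic extensions of $\BQ_2$ and their quadratic characters, using that $\pi_2$ has conductor $\lambda^4$, picks out precisely the six cases in the statement.

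Next I would construct the generator. In each case one selects $g\in B_2^\times$ so that $\tau_2':=g^{-1}\tau_2 g$ sends the prescribed generator of $O_{K_2}$ to the listed element of $O_B$; that this gives a well-defined embedding with $\tau_2'(O_{K_2})\subset O_{B,2}$ is a short check of reduced trace and norm. Using the action formula
$$\pi(h(1+2x))f_\delta = (-1)^{\tr(\delta x)}\,f_{h\delta h^{-1}}, \qquad h\in H,\ x\in O_{B,2},$$
established in Theorem \ref{thm2.4}(2), one checks directly that $f_0$ is an eigenvector for $\tau_2'(K_2^\times)$ with eigencharacter $\chi_2$. For $K_2=\BQ_2(\sqrt{-3})$ the key observation is that conjugation by $\zeta$ cyclically permutes $(i-j)/2,(j-k)/2,(k-i)/2$, so their sum is $\zeta$-fixed. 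For $K_2=\BQ_2(\sqrt{-1})$, conjugation by $k$ fixes $(i+j)/2$ up to sign, and the $\pm$ choice in $f_{i\pm j}$ matches the two possible values of $\chi_2(\varpi)$. In the three remaining ramified cases, one computes the action of the prescribed uniformizer on $f_{(i+j)/2}$ and matches the resulting sign to $(-1)^{(m-1)/2}$. Setting $f:=\pi(g)f_0$ then gives the desired generator.

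For integrality, I would expand each $f_0$ in the integral basis $\{f_{i\pm j},f_{j\pm k},f_{k\pm i}\}$ of Theorem \ref{thm2.4}(2); for instance
$$f_{(i-j)/2}+f_{(j-k)/2}+f_{(k-i)/2} = (f_{i+j}-f_{i-j})+(f_{j+k}-f_{j-k})+(f_{k+i}-f_{k-i}),$$
and $f_{(i+j)/2}=f_{i+j}+f_{i-j}$, which are visibly primitive elements of $\pi_\BZ^U$. Since $\pi(g)$ preserves $\pi_\BZ^U$ and the one-dimensional $\chi_2$-eigenspace inside $\pi^U$ is automatically saturated in this lattice, one deduces $\pi_\BZ^{U,\chi_2}=\BZ f$. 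I expect the main obstacle to be the uniform case-by-case verification of the eigencharacter in the three ramified extensions $\BQ_2(\sqrt{-2m})$: extracting the sign $(-1)^{(m-1)/2}$ from the conjugation action of the embedded uniformizer on $f_{(i+j)/2}$, and keeping the four residues $m\equiv 1,3,5,7\pmod 8$ consistent with the choices of $g$, is the most delicate bookkeeping step of the argument.
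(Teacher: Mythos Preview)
Your proposal is correct and follows essentially the same route as the paper: both reduce to checking case by case, via the action formula of Theorem~\ref{thm2.4}, that the explicit vector $f_0$ is $\chi_2$-equivariant under the conjugated embedding $\tau_2'$ (the paper in fact establishes one-dimensionality by direct computation on $\pi^U$ in each case rather than by invoking Tunnell--Saito for the upper bound, but your approach is equally valid). One small correction: a one-dimensional subspace is not ``automatically saturated'' in a lattice; this is harmless, however, since your primitivity check on $f_0$ together with the fact that $\pi(g)$ is a lattice automorphism already yields $\pi_\BZ^U\cap\BC f=\BZ f$ directly, without any appeal to saturation.
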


\begin{definition} \label{def primitive}
The automorphic forms $f$ and $-f$ described in the theorem are called \emph{primitive test vectors} for $(\pi, \chi)$.
\end{definition}

The theorem can be interpreted by the multiplicity one theorem of Tunnell \cite{Tu} and Saito \cite{Sa} reviewed in Theorem \ref{TS1} and Corollary \ref{TS2}.
In fact, the space
$$\pi^{\wh O_B^{2,\times}, \chi_2}:=\{f\in \pi^{\wh O_B^{2,\times}}, \ \pi(t)f=\chi_2(t)f, \ \forall t\in
K_2^\times\}$$
is at most one-dimensional by the multiplicity one theorem. The theorem confirms that it is one-dimensional and constructs an explicit generator of the integral structure.

\begin{proof}[Proof of Theorem \ref{thm2.3}]
It suffices to show that $f_0$ is $\chi_2$-invariant under the embedding
$\tau_2':K\hookrightarrow B$.

First consider the case $K_2=\BQ_2(\sqrt{-3})$, where
$$K_2^\times/\BQ_2^\times(1+4 O_2)= O_2^\times/\BZ_2^\times(1+4 O_2)$$
is cyclic of order $6$ and generated by $\zeta$ and $1+2\zeta$. Note
that $$\zeta^{-1}i\zeta=j, \quad \zeta^{-1}j \zeta=k, \quad
\zeta^{-1}k\zeta=i.$$ Thus the subspace of $\pi^U$ of forms fixed by
$\zeta\in H$ is of 2-dimensional with basis
$$f_{\frac{i-j}{2}}+f_{\frac{j-k}{2}}+f_{\frac{k-i}{2}}, \quad
f_{\frac{i+j}{2}}+f_{\frac{j+k}{2}}+f_{\frac{k+i}{2}}.$$  Moreover,
note that $\psi_\delta(\zeta)=1$ for $\delta=\frac{i-j}{2},
\frac{j-k}{2}, \frac{k-i}{2}$ and $\psi_\delta(\zeta)=-1$ otherwise.
Thus $\pi^{U, \chi_2}$ is one-dimensional with basis
$f_{\frac{i-j}{2}}+f_{\frac{j-k}{2}}+f_{\frac{k-i}{2}}$.

In the case $K_2=\BQ(\sqrt{-1})$, let $\varpi=k-1$, we have that
$$K_2^\times/\BQ_2^\times(1+4 O_2)=\varpi^{\BZ/4\BZ}\times \langle
1+\varpi, 1+2\varpi\rangle.$$ Note that $1+2\varpi=2k-1$, and
$\psi_\delta(k)=1$ if $\delta=\frac{i\pm j}{2}$ and
$\psi_\delta(k)=-1$ otherwise. Thus the subspace of $\pi^U$ of forms
fixed by $1+2\varpi$ is of 2-dimensional with basis
$$f_{i+j}, \quad f_{i-j},$$where $1+\varpi=k$ acts trivially
since $k^{-1}ik=-i, k^{-1}jk=-j$. Finally, since $\varpi=k-1\in H$
and
$$\varpi^{-1} i\varpi=j, \quad \varpi^{-1}j \varpi=-i, \varpi^{-1} k
\varpi=k,$$ we have that $\pi^{U, \chi_2}$ is of one-dimensional
with base $f_{i\pm j}$ for $\chi_2(\varpi)=\pm 1$.

For the case $K_2=\BQ_2(\sqrt{-2n})$ with $n=1, 3, 5, 7$, let
$\varpi=\sqrt{-2n}$, we have that
$K_2^\times/\BQ_2^\times(1+4 O_2)$ is generated by the order $2$
element $\varpi$ and order 4 element $1+\varpi$.   The embedding
$\tau_2'$ maps $1+\varpi$ to $k(1+2(\zeta +i))\mod
\BZ_2^\times(1+4 O_{B, 2})$. Note that $kik^{-1}=-i, kjk^{-1}=-j$
and
$$\psi_\delta(\zeta+i)=\begin{cases}1, \quad &\text{if $\delta=\frac{i+j}{2},
\frac{i+k}{2}, \frac{j-k}{2}$},\\
-1, &\text{otherwise}.\end{cases}$$ It follows that the subspace of
$\pi^U$ fixed by $\tau_2'(1+\varpi)$ is of one-dimensional with
basis $f_{\frac{i+j}{2}}$.  We have the following decompositions of
$\tau_2'(\varpi)\in B_2^\times=H\cdot (1+2 O_{B, 2}) \mod
\BZ_2^\times(1+4 O_{B, 2})$:
$$\begin{aligned}
&i+j\in H, \ &&  i-3j\equiv (i+j)(1+2k), \\
&i+j+2k\equiv (j-i)(1+2(\zeta-k)), \ &&
-3i+j-2k=(i-j)(1+2(\zeta-k)).\end{aligned}$$ Note that
$\psi_{\frac{i+j}{2}}(k)=1$ and $\psi_{\frac{i+j}{2}}(\zeta)=-1$, we
know that $f_{\frac{i+j}{2}}$ is $\chi_2$-invariant.
\end{proof}

\subsection{Proofs of Theorems \ref{thm2.1}, \ref{LQ}
and Proposition \ref{odd} }

Resume the notations in \S \ref{section2.1}.
Especially, $f_n$ is a basis of $\pi_\BZ^{U_n}$ with
$$U_n={\wh R_n^\times\cdot  K_{n, 2}^\times}, \quad R_n= O_{K_n}+4 O_B.$$
We first connect it to the primitive test vectors in \S \ref{section2.2}.

Recall that in \S \ref{section2.2}, we have introduced
$$U= \wh O_{B}^{2,\times}\cdot U_2, \quad U_2=\BZ_2^\times(1+4 O_{B, 2}).$$
In Theorem \ref{thm2.3} and Definition \ref{def primitive}, we have introduced the primitive test vectors for $(\pi,\chi)$.
For the connection, it is easy to verify $U_n=U\cdot  K_{n, 2}^\times$.
Hence, $f_n$ is a primitive test vector for $(\pi,\chi)$ if and only if $\chi_2=1$.

\subsubsection*{Proof of Theorem \ref{thm2.1}}

Write $K=K_n$ for simplicity. The goal is to treat
$$P(d_1, d_2)=\sum_{t\in \Cl_n} f_n(t) \chi_{d_1, d_2}(t).$$
The tool is the Waldspurger formula.

By $\Cl_n=K^\times\bs \wh K^\times/\wh O_K^\times$, the summation is essentially an integration on $K^\times\bs \wh K^\times$.
Since $f_n$ is invariant under the action of $K_2^\times$, the integration is nonzero only if
$\chi_{d_1, d_2}$ is trivial on $K_{2}^\times$.
In other words, $K_2(\sqrt{d_2^*})$ splits into two copies of $K_2=\BQ_2(\sqrt{-n})$.
This is equivalent to $d_2^*\equiv 1\pmod 8$. Then $d_2\equiv \pm 1\pmod 8$.
We will exclude the case $d_2\equiv -1\pmod 8$ later.

Assume $d_2^*\equiv 1\pmod 8$.
Then $\chi_{d_1, d_2}$ is trivial on $K_{2}^\times$, and $f_n$ is a primitive test vector for
$(\pi,\chi_{d_1,d_2})$ as described in Theorem \ref{thm2.3}.
In particular,
$$(f_n, f_n)_\Pet=\begin{cases}
6, \ &\text{if $K_2=\BQ_2(\sqrt{-3})$},\\
1, &\text{if $K_2=\BQ_2(\sqrt{-1})$},\\
2, &\text{if $K_2=\BQ_2(\sqrt{-2m})$ with $m=1, 3, 5,
7$}.\end{cases}$$

Apply the explicit Waldspurger formula in Theorem \ref{thm4.2}.
We have
$$|P(d_1, d_2)|^2=\frac{w_K^2}{2^5 3^b \pi^3}\cdot \frac{(f_n, f_n)_\Pet}{(f', f')_\Pet}\cdot L(E_{d_1},1)L(E_{d_2},1),$$
where $b=1$ if $2$ is inert in $K$ and $b=0$ otherwise, and  $f'$ is the normalized new form in the automorphic representation of $\GL_2(\BA)$ associated to $E$.

We claim that
$$(f' f')_\Pet=\Omega_{d_1, \infty}\Omega_{d_2, \infty}\cdot \frac{|D|^{1/2}}{2^8 \pi^3 e},$$
where $D$ is the discriminant of $K$.
Let $\phi=\sum_{n=1}^\infty a_n q^n$ be the corresponding newform of weight $2$. Note that
$$(\phi, \phi)_{\Gamma_0(32)}=\iint_{\Gamma_0(32)\bs \CH} |\phi(z)|^2 dxdy$$
and
$$(f', f')_\Pet=\int_{\GL_2(\BQ)\bs \GL_2(\BA)/\BQ^\times } |f'(g)|^2 dg
$$
 are related by
 $$\frac{(\phi, \phi)_{\Gamma_0(32)}}{\vol(X_0(32))}=\frac{(f', f')_\Pet}{2}, \qquad \text{where $\vol(X_0(32))=16\pi$}.$$
Let $\varphi: X_0(32)\ra E$ be a modular parametrization of degree $2$, and $\omega$  the N\'{e}ron differential on $E$, and $\Omega=\int_{E(\BR)} \omega$.
Note that
$$\varphi^*\omega=4\pi i \phi(z) dz, \quad
2^{-1}\Omega^2=\iint_{E(\BC)} |\omega \wedge \ov{\omega}|,$$
and thus
$$\Omega^2=32\pi^2 (\phi, \phi)_{\Gamma_0(32)}.$$
By definition,
$$\Omega_{d_1, \infty}\Omega_{d_2, \infty}=\Omega^2/\sqrt{d_1d_2}=2^{e-1}\Omega^2/\sqrt{|D|},$$
where $e=1$ if $2\nmid D$ and $e=2$ otherwise. Put all these together, we have the formula for $(f' f')_\Pet$.

Hence, we have
$$|P(d_1, d_2)|^2=2^{4+c} w_K^2 \frac{L(E_{d_1},1)}{\Omega_{d_1, \infty}}\cdot \frac{L(E_{d_2},1)}{\Omega_{d_2, \infty}},$$
where $c=0$ if $8\nmid D$ and $c=1$ otherwise.
It gives the formula of the theorem.

It remains to prove that $d_2\equiv -1\pmod 8$ implies $P(d_1, d_2)=0$.
This is a direct consequence from the formula we just proved, since $L(E_{d_1},1)=0$ by considering the root number in this case.

\subsubsection*{Proof of Theorem \ref{LQ}}

Let $h_2(n)=\dim _{\BF_2}\Cl_n/2\Cl_n$.
By Gauss's genus theory, $h_2(n)+1$ is exactly equal to the number of prime factors of the discriminant of $K_n$, and any character of $\Cl_n/2\Cl_n$ is of the form $\chi_{d_1,d_2}$ for some decomposition $d=d_1d_2$ with $d_2$ positive and odd.
Moreover, a repetition $\chi_{d_1',d_2'}=\chi_{d_1,d_2}$ occurs only if
$(d_1',d_2')=(d_2,d_1)$ and $n\equiv3\pmod8$.

Hence, we have the character formula
$${\sum_{n=d_1d_2}} '\chi_{d_1, d_2}(t)=2^{h_2(n)}\delta _{2\Cl (K_n)}(t), \qquad t\in \Cl_n,$$
where the sum is over ordered (resp. non-ordered) decompositions $d=d_1d_2$ if $n\equiv 1\pmod 8$ (resp. $n\equiv 3\pmod 8$), and requires $d_2$ to be odd if $n\equiv 2\pmod 8$.
As a result, we have
\begin{equation} \label{PQ}
{\sum_{n=d_1d_2}} ' P(d_1, d_2)=2^{h_2(n)} Q(n).
\end{equation}
The summation follows the same rule as above.

The following lemma shows the symmetry in the case $n\equiv 1\pmod 8$.

\begin{lem}\label{lem2.6}
Assume $n\equiv 1\pmod 8$. Then
for any decomposition $d=d_1d_2$ with $d_1,d_2>0$,
$$P(d_1, d_2)= P(d_2, d_1).$$
\end{lem}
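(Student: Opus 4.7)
My plan splits the lemma into a vanishing reduction, a symmetry up to sign coming from Theorem~\ref{thm2.1}, and a separate sign analysis. By Theorem~\ref{thm2.1}, $P(d_1,d_2)=0$ unless $d_2\equiv 1\pmod 8$, and symmetrically $P(d_2,d_1)=0$ unless $d_1\equiv 1\pmod 8$. Under $n=d_1d_2\equiv 1\pmod 8$ with $d_1,d_2$ positive odd divisors, these two conditions are equivalent, so either both periods vanish (and the identity is trivial) or both $d_1,d_2\equiv 1\pmod 8$. In the latter case Theorem~\ref{thm2.1} gives
\[
|P(d_1,d_2)| \;=\; |P(d_2,d_1)| \;=\; 2^{k-a}\, w_K \,|\CL(d_1)\CL(d_2)|,
\]
so $P(d_1,d_2)=\pm P(d_2,d_1)$, and the content of the lemma reduces to pinning down the sign.

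For the sign, I would rewrite
\[
P(d_1,d_2)-P(d_2,d_1) \;=\; 2\sum_{\substack{t\in\Cl_n\\ \chi_{-1}(t)=-1}} f_n(t)\,\chi_{d_1,d_2}(t),
\]
where $\chi_{-1} := \chi_{d_1,d_2}\cdot\chi_{d_2,d_1}$. In the nontrivial case $d_1,d_2\equiv 1\pmod 8$ one has $d_1^\ast d_2^\ast = d_1 d_2 = n$; since $n\equiv -(\sqrt{-n})^2\pmod{K_n^{\times 2}}$, this identifies $\chi_{-1}$ with the character of $\Cl_n$ attached to the extension $K_n(\sqrt{-1})/K_n$. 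The extension is unramified at every place: at $2$, $\BQ_2(\sqrt{-n})=\BQ_2(i)$ because $n\equiv 1\pmod 8$, so the extension is locally trivial there; at odd primes unramifiedness is standard.

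The main obstacle is showing that the anti-invariant sum above vanishes. My strategy is to exhibit a fixed-point-free involution $\iota$ on the coset $\{t\in\Cl_n : \chi_{-1}(t)=-1\}$ with $f_n\circ\iota=f_n$ and $\chi_{d_1,d_2}\circ\iota=-\chi_{d_1,d_2}$, so that the terms pair to zero. The natural candidate is multiplication by an element $s_0\in\ker\chi_{-1}\cap\Cl_n[2]$ with $\chi_{d_1,d_2}(s_0)=-1$ (with $s_0$ depending on $(d_1,d_2)$); the existence of such $s_0$ reduces to a class-group computation using R\'edei-style genus theory. The $f_n$-invariance $f_n(s_0 t)=f_n(t)$ is the delicate point, and I expect it to follow from the explicit $B_2^\times$-structure of $\pi^U$ given in Theorem~\ref{thm2.4}, combined with the specific embedding $\tau(\sqrt{-n})=ai+bj+ck$ with $4\mid c$ fixed in \S\ref{section2.2}, which endows $f_n$ with extra symmetries beyond its $U_n$-invariance.
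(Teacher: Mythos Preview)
Your setup is sound: the identity $\chi_{d_1,d_2}\cdot\chi_{d_2,d_1}=\chi_0|_{\wh K_n^\times}$ (the character cut out by $K_n(i)/K_n$) is exactly the relation the paper uses, and the reduction
\[
P(d_1,d_2)-P(d_2,d_1)=2\sum_{\chi_0(t)=-1} f_n(t)\,\chi_{d_1,d_2}(t)
\]
is correct. The problem lies entirely in your proposed mechanism for killing this sum.

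The involution argument has a genuine gap. First, in the boundary case $(d_1,d_2)=(n,1)$ (still live after your reduction, since $1\equiv 1\pmod 8$) you have $\chi_{d_1,d_2}=1$, so no $s_0$ with $\chi_{d_1,d_2}(s_0)=-1$ can exist. Second, and more seriously, the invariance $f_n(s_0 t)=f_n(t)$ amounts to asking that $\pi(\tilde s_0)f_n$ and $f_n$ agree on the image of $\wh K_n^\times$ for some idele $\tilde s_0$ representing a nontrivial class. Nothing in Theorem~\ref{thm2.4} supplies such a translation symmetry for a generic $s_0\in\Cl_n[2]$; the $U_n$-invariance only gives $f_n(t\cdot u)=f_n(t)$ for $u\in U_n\cap\wh K_n^\times$, which lands in the trivial class. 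So ``I expect it to follow'' is hiding the entire difficulty.

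The paper does not pair terms at all: it shows each summand vanishes individually. Using Theorem~\ref{thm2.3} for $K_2=\BQ_2(\sqrt{-1})$ one has $f_n=\pi(g)f_0$ with $f_0=f_{\frac{i+j}{2}}\cdot\frac{1+\chi_0}{2}$, and the specific embedding $\tau(\sqrt{-n})=ai+bj+ck$ with $4\mid c$ forces $\chi_0(g)=1$. Hence for $t\in\wh K_n^\times$,
\[
f_n(t)=f_{\frac{i+j}{2}}(tg)\cdot\frac{1+\chi_0(t)}{2},
\]
which is zero whenever $\chi_0(t)=-1$. Your anti-invariant sum is then identically zero term by term, and the identity $\frac{1+\chi_0}{2}\cdot\chi_0=\frac{1+\chi_0}{2}$ gives $P(d_1,d_2)=P(d_2,d_1)$ directly, with no case analysis and no appeal to Theorem~\ref{thm2.1}. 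The role of the $4\mid c$ condition is precisely to pin down $\chi_0(g)=1$; this is where your intuition about ``extra symmetries'' from the embedding is actually realized, but as a support condition on $f_n|_{\wh K_n^\times}$ rather than a translation invariance.
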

\begin{proof} Let $\chi_0$ be the character on $\wh{B}^\times$
corresponding to the extension $\BQ(i)$ over $\BQ$,  defined in
Theorem \ref{thm2.4}. The two quadratic characters are related by
$$\chi_{d_1, d_2}=\chi_{d_2, d_1} \cdot \chi_0.$$
In fact, for any $t\in \wh{K}^\times$, we have
$$\chi_{d_1, d_2}(t)
\chi_{d_2, d_1}(t)=\frac{\sigma_t(\sqrt{d_1})}{\sqrt{d_1}}\frac{\sigma_t(\sqrt{d_2})}
{\sqrt{d_2}}=\frac{\sigma_t(i)}{i}=\chi_0(t).$$

In the notation of Theorem \ref{thm2.3}, the primitive test vector is given by
$f_n=\pi(g)f_0$ (up to $\{\pm1\}$) with $g\in B_2^\times$ and
$$f_0=\frac 12 (f_{\frac {i+j}2}+f_{\frac {i-j}2})=f_{\frac {i+j}2}\cdot (\frac {1+\chi_0}2).$$
We claim that $\chi_0(g)=1$ by our special choice of $\tau:K_n\hookrightarrow B$ at the beginning.

Assuming $\chi_0(g)=1$, then
$$\begin{aligned}
P(d_2, d_1)&=\sum_t f_{\frac{i+j}{2}}(tg)\frac {1+\chi_0(t)}2\chi_{d_2, d_1}(t)\\
&=\sum_t f_{\frac{i+j}{2}}(tg)\frac {1+\chi_0(t)}2 \chi_0(t)\chi_{d_1, d_2}(t)\\
&= \sum_t f_{\frac{i+j}{2}}(tg)\frac {1+\chi_0(t)}2\chi_{d_1, d_2}(t) \\
&=P(d_1, d_2).
\end{aligned}$$

It remains to check $\chi_0(g)=1$.
Recall that $g\in B^\times_2$ is an element such that $\tau_2'=g^{-1}\tau_2 g: K_{n,2} \hookrightarrow B_2$ gives $\tau_2'(\sqrt{-1})=k$.
Recall that the embedding $\tau:K_n\hookrightarrow B$ is defined by
$$
\tau(\sqrt{-n})= ai+bj+ck
$$
where $n=a^2+b^2+c^2$ with $a,b,c\in \BZ$ and $4|c$.
Thus the equation for $g$ is just
$$g^{-1}\cdot \frac{1}{\sqrt n}(ai+bj+ck)\cdot  g=k.$$
Here $\sqrt n$ denotes a square root of $n$ in $K_2$.
Explicit computation gives a solution
$$
g_0= ai+bj+(c+\sqrt n)k.
$$
For this solution, we have
$$
\det(g_0)=a^2+b^2+(c+\sqrt n)^2=2(n+c\sqrt n).
$$
Note that $n+c\sqrt n\equiv 1 \pmod 4$ by the condition $4|c$, and thus $\chi_0(g_0)=1$.
It is easy to see that any other solution is of the form $g=g_0 (u+v k)$ for $u,v\in \BQ_2$. Then we have $\chi_0(u+v k)=1$ and thus $\chi_0(g)=1$.
\end{proof}

\begin{lem}
One has
$$\sum _{n=d_1d_2}\epsilon (d_1, d_2)\CL (d_1)\CL(d_2)=Q(n),$$
where $\epsilon (d_1, d_2)=\pm 1$, and the sum is over non-ordered  decompositions $n=d_1d_2$ such that $d_1,d_2>0$ and $d_2\equiv 1\pmod 8$.
\end{lem}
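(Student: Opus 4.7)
The plan is to substitute Theorem~\ref{thm2.1} into the character-theoretic identity \eqref{PQ}. Since $P(d_1,d_2)$ vanishes unless $d_2\equiv 1\pmod 8$, the left-hand side of \eqref{PQ} collapses onto a sum of signed products of $\CL$-values. Up to the factor $w_K$ (which is $1$ except in the small anomalous cases $n=1,3$, where $K_n\in\{\BQ(i),\BQ(\sqrt{-3})\}$ and the lemma can be verified by hand), one obtains
$${\sum_{n=d_1d_2}} '\,\epsilon(d_1,d_2)\,2^{k-a}\,\CL(d_1)\CL(d_2)\;=\;2^{h_2(n)}\,Q(n),$$
where the summation follows the same convention (ordered/non-ordered, $d_2$ odd) as in \eqref{PQ}.

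Next I would compute $h_2(n)$ by Gauss genus theory (counting the prime factors of the discriminant of $K_n$), obtaining $h_2(n)=k$ for $n\equiv 1,2\pmod 8$ and $h_2(n)=k-1$ for $n\equiv 3\pmod 8$, where $k$ is the number of odd prime factors of $n$; recall also that $a=1$ for $n$ odd and $a=0$ for $n$ even. Cancellation of the powers of $2$ then yields the lemma directly when $n\equiv 2,3\pmod 8$: the summation in \eqref{PQ} is already compatible with the non-ordered convention of the lemma, since for $n\equiv 2\pmod 8$ the parity requirement on $d_2$ uniquely distinguishes the ordering within each unordered pair, while for $n\equiv 3\pmod 8$ the summation in \eqref{PQ} is non-ordered from the start.

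The remaining case $n\equiv 1\pmod 8$ is where Lemma~\ref{lem2.6} plays the decisive role. The summation in \eqref{PQ} is now ordered, and the exponents $h_2(n)=k$ and $k-a=k-1$ differ by one. The key observation is that $n\equiv 1\pmod 8$ together with $d_2\equiv 1\pmod 8$ forces $d_1=n/d_2\equiv 1\pmod 8$ as well, so both orderings $(d_1,d_2)$ and $(d_2,d_1)$ qualify for the restricted sum; by Lemma~\ref{lem2.6} they contribute equal terms, and the ordered sum is exactly twice the non-ordered sum, precisely absorbing the extra factor of $2$. The principal technical point is sign bookkeeping: one must fix signs of $\CL(d)$ and of the $\pm$ in Theorem~\ref{thm2.1} compatibly so that the signs $\epsilon(d_1,d_2)$ obtained from the two orderings agree under the symmetry of Lemma~\ref{lem2.6}; once this is arranged, the ordered-to-unordered collapse is clean and the stated identity follows.
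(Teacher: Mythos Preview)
Your proposal is correct and follows essentially the same approach as the paper: combine the character identity \eqref{PQ} with Theorem~\ref{thm2.1}, use Lemma~\ref{lem2.6} to collapse the ordered sum to a non-ordered one when $n\equiv 1\pmod 8$, and cancel the powers of $2$ via the genus-theoretic computation of $h_2(n)$. The paper's proof is terser but identical in substance; your explicit case analysis of $h_2(n)$ versus $k-a$ and your remark on the anomalous cases $n=1,3$ are exactly the details the paper leaves implicit.
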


\begin{proof}
Writing equation (\ref{PQ}) in terms of non-ordered decompositions, we have
\begin{equation*}
 \sum_{n=d_1d_2} P(d_1, d_2)=2^{h_2(n)-\delta} Q(n),
\end{equation*}
where $\delta =1$ if $n\equiv 1\pmod 8$ and $\delta=0$ otherwise.
Here in the case $n\equiv 1\pmod 8$, we have used the symmetry
$P(d_1, d_2)=P(d_2, d_1)$.
Apply Theorem \ref{thm2.1}.
\end{proof}

Finally, we are ready to derive Theorem \ref{LQ}.

\begin{proof}[Proof of Theorem \ref{LQ}]
Since $\CL (1)=1$, the above lemma gives a recursive formula
$$\pm \CL (n)=Q(n)-
\sum_{\substack{n=d_1d_2 \\ d_2\equiv 1\pmod 8, \ d_2>1}}
\epsilon (d_1, d_2)\CL (d_1)\CL(d_2).$$
Here the sum is over non-ordered decompositions.
This formula determines $\CL(n)$ uniquely.
In particular, $\CL(n)$ is an integer.

Now we prove the congruence formula
$$\CL(n)\equiv
\sum_{\substack{n=d_0d_1\cdots d_\ell\\ d_i\equiv 1\pmod 8, \ i>0\\
d_i>1,\ i\geq0}}
\prod_i Q(d_i)  \quad\pmod 2.$$
It suffices to prove that the congruence formula (applied to every $P(d_1)$ and $P(d_2)$ below) satisfies the recursive formula
$$Q(n) \equiv
\sum_{\substack{n=d_1d_2\\d_2\equiv 1\pmod 8, \ d_2>0}}
\CL (d_1)\CL(d_2)
\quad\pmod 2.$$
Namely, we need to check that
\begin{multline*}
Q(n)\equiv
\sum_{\substack{n=d_1d_2\\d_2\equiv 1\pmod 8, \ d_2>0}}
\left(\sum_{\substack {d_1=d_0'd_1'\cdots d_{\ell'}'\\
d_j'\equiv 1\pmod 8, \ j>0\\
d_j'>1,\ j\geq0}}
\prod_{j\geq 0} g(d_j') \right)
\left(\sum_{\substack{d_2=d_0''d_1''\cdots d_{\ell''}''\\
d_k''\equiv 1 \pmod 8, \ k>0\\
d_k''>1,\ k\geq0}}
\prod_{k\geq 0} g(d_k'')\right)
\quad\pmod 2.
\end{multline*}
The right-hand side is a $\BZ$-linear combination of
$$
\prod_{j=0}^{\ell'} g(d_j')
\prod_{k=0}^{\ell''} g(d_k'').
$$
We consider the multiplicity of this term in the sum.
Each appearance of such a term gives a partition
$$
\{d_1',\cdots, d_{\ell'}', d_0'',\cdots, d_{\ell''}''\}
=\{d_1',\cdots, d_{\ell'}'\}\coprod \{d_0'',\cdots, d_{\ell''}''\}.
$$
If the set on the left-hand side is non-empty, the number of such partitions is even. Then the contribution of this set in the sum is zero modulo 2.
Thus, we are only left with the empty set, which corresponds to the unique term $g(n)$ on the right. This proves the formula.
\end{proof}

\subsubsection*{Proof of Proposition \ref{odd}}

The proof easily follows from the explicit result in Theorem \ref{thm2.3}.
In fact, take $K=K_n$ and $\chi=1$ in the theorem. We see that the primitive test vector
$f_n=\pi(g)f_0$ for some $g\in B_2^\times$, where
$$f_0=\begin{cases}
f_{\frac{i-j}{2}}+f_{\frac{j-k}{2}}+f_{\frac{k-i}{2}},\quad
&\text{if $K_2=\BQ_2(\sqrt{-3})$},\\
f_{i+ j}=\frac{1}{2}\left(f_{\frac{i+j}{2}}+
f_{\frac{i-j}{2}}\right), \quad &\text{if $K_2=\BQ_2(\sqrt{-1})$},\\
f_{\frac{i+j}{2}}, &\text{if $K_2=\BQ_2(\sqrt{-2m})$ with $m$ odd.}
\end{cases}$$
Note that the case $(K_2, \chi_2(\varpi))=(\BQ_2(\sqrt{-1}), -1)$ does not occur here.
It is immediate that $f_0$ and $f$ take odd values everywhere in the first and the third cases.

Assume that we are in the case $K_2=\BQ_2(\sqrt{-1})$.
By Theorem \ref{thm2.4}, $\ds\chi _0f_{\frac{i+j}{2}}=f_{\frac{i-j}{2}}$.
For any $h\in \wh{B}^{\times}$, we have
$$
f_n(h^2)=f_0(h^2g)
=\frac{1}{2} f_{\frac{i+j}{2}} (h^2g)(1+\chi_0(h^2g))
=f_{\frac{i+j}{2}} (h^2g)
=\pm 1,
$$
which is odd. Here we have used the fact $\chi_0(g)=1$, which has been treated in the proof of Lemma \ref{lem2.6}.

\section{Quadratic points and genus points}

This section treats $\CL(n)$ for $n\equiv 5,6,7 \pmod 8$.
The goal is to prove Theorem \ref{lg'}.
We assume  $n\equiv 5,6,7 \pmod 8$ throughout this section.
The method is to construct rational points using the tower $X=\lim _UX_U$
of modular curves $X_U$.

\subsection{Quadratic points and genus points} \label{section3.1}

In the following, we will mainly work on the elliptic curve $A: 2y^2=x^3+x$ (instead of $E:y^2=x^3-x$), which is isomorphic to $(X_0(32), \infty)$.
Fix an identification $i_0:(X_0(32), \infty)\to A$.
We will introduce a morphism $f_n:X_V\to A$ from certain modular curve $X_V$ to $A$, and use this morphism to produce Heegner points on $A$.

\subsubsection*{Test vector}
Recall that the open compact subgroup $U_0(32)$ of $\GL_2(\wh\BQ)$ is given by
$$
U_0(32)= \left\{ \matrixx{a}{b}{c}{d}\in \GL_2(\wh\BZ): 32|c \right\}.
$$
Define another open compact subgroup
$$
U= \left\{ \matrixx{a}{b}{c}{d}\in U_0(32): 4|(a-d) \right\}.
$$
Then $U$ is a normal subgroup of $U_0(32)$ of index two.

Denote by $f_0: X_U\to A$ the natural projection map $X_U\to X_0(32)$.
It is finite and \'etale of degree 2.
Note that the geometrically connected components of $X_U$ are parametrized by $\Spec\, \BQ(i)$. Then it is easy to figure out that $X_U\cong {X_0(32)}_{\BQ(i)}$ over $\BQ$, and under this identification $f_0$ is the natural map by the base change. Then
$$\Aut_\BQ(X_U)=\Aut_{\BQ(i)}({X_0(32)}_{\BQ(i)})\rtimes \{1, \epsilon\},$$
where $\epsilon$ is the Hecke operator given by $\matrixx{1}{}{}{-1}$, which is also the automorphism coming from the non-trivial automorphism of $\Spec\, \BQ(i)$.

For $n\equiv 7\pmod 8$, let $f_n:X_0(32)\to A$ be the identity map $i_0:X_0(32)\to A$.
For $n\equiv 5,6\pmod 8$, define $f_n: X_U\ra A$ by
$$f_n:=\begin{cases} f_0-f_0\circ [i], \qquad &\text{if $n\equiv 5\pmod 8$},\\
f_0\circ [i], &\text{if $n\equiv 6\pmod 8$}.
\end{cases}$$

Denote $K_n=\BQ(\sqrt {-n})$ as before.
Embed $K_n$ into $M_2(\BQ)$ by
$$\sqrt{-n} \longmapsto \matrixx{-1}{1/4}{-4(n+1)}{1}, \qquad \matrixx{}{1/4}{-4n}{}, \qquad \matrixx{\delta}{2}{-(n+\delta^2)/2}{-\delta},$$
according to $n\equiv 5, 6, 7\pmod 8$ respectively. Here $\delta$ is an integer such that $\delta^2\equiv -n\pmod {128}$ in the case $n\equiv 7\pmod 8$.

The embeddings look arbitrary, but they are chosen on purpose. For $n\equiv 5, 6\pmod 8$, the embeddings make $K_{n,2}^\times$ normalize $U_2$ in $\GL_2(\BQ_2)$ at the place $2$, which is the basis of our treatment. For $n\equiv 7\pmod 8$, the embedding gives $\wh{O}_K^\times\subset U_0(32)$, which makes the easiest calculation.

Similarly, the choices of $f_n$ seem artificial and technical here. However, they are obtained by some prescribed representation-theoretical properties below.
Following \cite[\S 1.2]{YZZ},  consider the representation
  $$\pi=\Hom ^0_\infty(X, A)=\varinjlim_V \Hom^0_\infty (X_V, A)$$
of $\GL_2(\wh \BQ)$.
Here for any open compact subgroup $V$ of $\GL_2(\wh \BQ)$,
$$ \Hom^0_\infty (X_V, A)=\Hom_\infty (X_V, A)\otimes_\BZ\BQ,$$
where
$$\Hom_\infty(X_V, A)=\{f\in \Hom (X_V, A): \ f(\infty)\in A(\overline \BQ)_{\tor}\}.$$
Here $\infty$ denotes the cusp of $X_V$.

\begin{prop} \label{choice}
\begin{enumerate} [(1)]
\item
If $n\equiv 5,6\pmod 8$, the space $\pi^{\GL_2(\wh\BZ^{(2)})\cdot K_{n,2}^\times}$ is one-dimensional and contains $f_n$.
\item
If $n\equiv 7\pmod 8$,  the space
$\pi^{U_0(32)}$ is one-dimensional and contains $f_n$.
\end{enumerate}
\end{prop}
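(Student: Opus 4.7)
The plan is to split by the two cases and reduce everything to a local computation at the prime $2$, using the Tunnell--Saito multiplicity one theorem for the dimension bound and an explicit matrix calculation to verify that the proposed $f_n$ sits in the unique invariant line.

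First, for $n\equiv 7\pmod 8$, I would verify directly that $\wh O_{K_n}^\times \subset U_0(32)$. Since the only nontrivial generator of $O_{K_n}/\BZ$ at $2$ is $\tfrac{1+\sqrt{-n}}{2}$, and its image under the given embedding is $\tfrac{1}{2}\begin{pmatrix} 1+\delta & 2\\ -(n+\delta^2)/2 & 1-\delta \end{pmatrix}$, the condition $32\mid (n+\delta^2)/4$ is exactly $\delta^2\equiv -n\pmod{128}$. Hence $\pi^{U_0(32)}=\pi_2^{U_0(32)_2}$ is already one-dimensional by newform theory (the conductor of $A$ at $2$ being $2^5$), and $f_n = i_0$ is visibly a nonzero element of it.

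Next, for $n\equiv 5,6\pmod 8$, the strategy is different: the chosen embedding is arranged so that $K_{n,2}^\times$ \emph{normalizes} but does not lie in $U_2$. Since $[U_0(32)_2 : U_2]=2$, the space $\pi^U \simeq \Hom^0_\infty(X_U,A)$ is two-dimensional, spanned by $f_0$ and $f_0\circ[i]$. The quotient $K_{n,2}^\times\cdot U_2/U_2$ thus acts on this plane, and its character decomposition has each isotypic component one-dimensional. The Tunnell--Saito dichotomy applied to $(\pi_2,\chi_2=1)$ shows that the trivial-character line exists (it is forced by the local root number sign associated to $n\equiv 5,6\pmod 8$), giving the upper bound of one for $\pi^{\GL_2(\wh\BZ^{(2)})\cdot K_{n,2}^\times}$. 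To exhibit the explicit invariant vector, I would compute the action of a generator of $K_{n,2}^\times/(K_{n,2}^\times\cap U_2)$ on the basis $\{f_0,\ f_0\circ[i]\}$: using $\sqrt{-n}\mapsto \begin{pmatrix} -1 & 1/4\\ -4(n+1) & 1\end{pmatrix}$ (resp.\ $\begin{pmatrix} 0 & 1/4\\ -4n & 0\end{pmatrix}$) one shows the generator swaps the two basis vectors with a sign that pins down $f_0-f_0\circ[i]$ (resp.\ $f_0\circ[i]$) as the trivial eigenvector.

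The main obstacle is precisely this last explicit matrix calculation: one must relate the Hecke-type automorphism $[i]$ of the cover $X_U\to X_0(32)$ (which comes from the $\BQ(i)/\BQ$-structure) to multiplication by the chosen image of $\sqrt{-n}$ in $\GL_2(\BQ_2)$, and track the resulting $\pm 1$ signs carefully enough to distinguish the $n\equiv 5$ and $n\equiv 6$ cases. Once this compatibility is established and the right sign is computed, the one-dimensionality, together with the containment of $f_n$, follows mechanically from the character decomposition of the finite-group action on $\pi^U$.
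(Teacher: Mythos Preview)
Your plan matches the paper's argument closely: for $n\equiv 7\pmod 8$ the paper verifies $\wh O_K^\times\subset U_0(32)$ from the condition $\delta^2\equiv -n\pmod{128}$ and invokes newform theory, and for $n\equiv 5,6\pmod 8$ it computes the Hecke action of generators of $K_{n,2}^\times/\BQ_2^\times(1+4O_{K,2})$ on $X_U$ explicitly (Propositions~\ref{heckeaction5}, \ref{heckeaction6}), shows $f_n$ is fixed up to a torsion translation (Corollaries~\ref{integral5}, \ref{integral6}), and then cites Tunnell--Saito (Theorem~\ref{TS1} and Corollary~\ref{TS2}) for one-dimensionality. One minor correction to your expected outcome of the calculation: for $n\equiv 6\pmod 8$ the action of $\varpi$ on $\pi^U$ (modulo torsion) is \emph{diagonal} in the basis $\{f_0,\,f_0\circ[i]\}$, sending $f_0\mapsto -f_0$ and $f_0\circ[i]\mapsto f_0\circ[i]$, rather than a swap---this is why the invariant vector is $f_0\circ[i]$ itself and not a combination $f_0\pm f_0\circ[i]$.
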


The proposition explains that $f_n$ is an explicit vector in a one-dimensional space in the framework of the multiplicity one theorem of Tunnell \cite{Tu} and Saito \cite{Sa}. See Theorem \ref{TS1} and Corollary \ref{TS2}.
One can also define an integral structure $\pi_\BZ$ of $\pi$ as the subgroup of elements of $\pi$ coming from $\Hom_\infty (X_V, A)$ for some $V$.
Then one can consider the primitivity of $f_n$ under this integral structure as in \S \ref{section 2}. However, this is too involved in the current setting, so we will only consider the behavior of $f_n$ in the rational structure $\pi$.

\subsubsection*{CM point2}
Note that we have chosen an explicit embedding of $K_n$ in $M_2(\BQ)$, which induces an action of $K_n^\times$ on the upper half plane $\CH$.
Let
$$P_n=[h, 1]\in X_U(\BC)$$
be the CM point, where $h\in \CH^{K_n^\times}$ is the unique fixed point of $K_n^\times$ in
$\CH$.
Let
$$z_n=f_n(P_n) \in A(K_n^\ab).$$

Note that $z_n$ is not necessarily defined over the Hilbert class field $H_n$ of $K_n$.
Denote by $H_n'=H_n(z_n)$ the extension of $H_n$ generated by the residue field of $z_n$.
The following result is a precise description of the field of definition of $z_n$. In the following, denote by
$$
\sigma: K_n^\times \bs \wh K_n^\times \lra \Gal(K_n^\ab/K_n)
$$
the geometric Artin map, normalized by sending the uniformizers to the geometric Frobenii.
So it is the reciprocal of the usual Artin map.

\begin{prop}  \label{CM point1}
\begin{enumerate}[(1)]
 \item Assume that $n\equiv 5\pmod 8$. Then $\Gal(H_n'/H_n)\simeq \BZ/2\BZ$ is generated by $\sigma_{\varpi}^2$. Here $\varpi=(\sqrt{-n}-1)_2\in K_{n,2}^\times$.  The field $H_n'(\sqrt{2})$ is the ring class field of conductor $4$ over $K_n$.
 The Galois group $\Gal(H_n'(\sqrt{2})/H_n)\simeq (\BZ/2\BZ)^2$ is generated by $\sigma_{1+2\varpi}$ and $\sigma_{\varpi}^2$, and $H'_n$ is the subfield of $H_n'(\sqrt{2})$ fixed by $\sigma_{1+2\varpi}$.

 \item Assume that $n\equiv 6\pmod 8$. Then $\Gal(H_n'/H_n)\simeq \BZ/4\BZ$ is generated by $\sigma_{1+\varpi}$. Here $\varpi=(\sqrt{-n})_2\in K_{n,2}^\times$.  The subfield of $H_n'$ fixed by $\sigma_{1+\varpi}^2$ is $H_n(i)$. The field $H_n'$ is exactly the ring class field of conductor $4$ over $K_n$.

\item Assume that $n\equiv 7\pmod 8$. Then $H_n'=H_n$.

\item For any $n\equiv 5,6,7\pmod 8$, $2z_n$ is defined over $H_n$.
\end{enumerate}
\end{prop}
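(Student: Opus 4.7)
The plan is to apply the Shimura reciprocity law and the explicit description of $f_n$ to compute the fixed field of $z_n$ inside $K_n^{\ab}$. By reciprocity, for $t \in \wh K_n^\times$ one has $\sigma_t^{-1}(P_n) = [h,t]$ in $X_V$; since $f_n$ is a $\BQ$-morphism, $\sigma_t(z_n) = f_n(\sigma_t(P_n))$, so I am reduced to determining those $t$ for which $f_n([h,t^{-1}]) = f_n([h,1])$ in $A(\ov K_n)$. The key tool at two levels is the identity $[h,t_1]=[h,t_2]$ in $X_V \Leftrightarrow t_1^{-1}t_2 \in K_n^\times \cdot V$, used once for $V=U_0(32)$ and once for $V=U$.

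For part (3), with $n\equiv 7\pmod 8$, the embedding $\sqrt{-n}\mapsto \matrixx{\delta}{2}{-(n+\delta^2)/2}{-\delta}$ is chosen precisely so that $\wh\CO_{K_n}^\times \subset U_0(32)$: it is a direct local check at $2$ that the congruence $\delta^2 \equiv -n\pmod{128}$ forces the lower-left entry $-(n+\delta^2)/2$ to vanish modulo $32$ on any unit of $\CO_{K_n,2}$. This gives $K_n(P_n)=H_n$, and $H_n'=H_n$ follows because $f_n$ is the identity.

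For parts (1) and (2), I would proceed in three steps. First, compute $K_n(P_n)$ for $P_n \in X_U$ by describing the finite quotient $K_{n,2}^\times/\BQ_2^\times(K_{n,2}^\times \cap U_2)$, which is well-defined because $K_{n,2}^\times$ normalizes $U_2$ by assumption. For $n\equiv 5$, take $\varpi=\sqrt{-n}-1$ (of $2$-adic valuation $1$ since $N\varpi=n+1\equiv 6\pmod 8$) and read off a presentation of this quotient generated by $\varpi$ and $1+2\varpi$; for $n\equiv 6$, take $\varpi=\sqrt{-n}$ and do the analogous computation, which yields a cyclic group of order $4$ generated by $1+\varpi$. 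Second, pass from $K_n(P_n)$ to $K_n(z_n)$ by exploiting the coarser relation $f_0([h,s])=f_0([h,s']) \Leftrightarrow s^{-1}s'\in K_n^\times\cdot U_0(32)$, which is weaker than equality in $X_U$ by the index-two factor $U_0(32)/U$. Third, account for the Hecke twist $[i]$: for $n\equiv 5$, the formula $f_n=f_0-f_0\circ [i]$ means $\sigma_t$ fixes $z_n$ exactly when it either fixes both $f_0(P_n)$ and $f_0([i]P_n)$ or swaps them by the deck transformation, and unwinding identifies the resulting subgroup with $\langle \sigma_{1+2\varpi},\sigma_\varpi^2\rangle$ inside the Galois group of the ring class field of conductor $4$, with $\sigma_{1+2\varpi}$ corresponding to $\sqrt{2}$; for $n\equiv 6$, $f_n=f_0\circ[i]$ is a pure composition and the generator $\sigma_{1+\varpi}$ of order $4$ survives intact, with its square identified with the element cutting out $H_n(i)$.

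Part (4) then follows by checking that each generator of $\Gal(H_n'/H_n)$ acts on $z_n$ through $[-1] \in \Aut(A)$, so that $2z_n$ is fixed. For $n\equiv 5$, the action of $\sigma_\varpi^2$ swaps the two terms of $f_0(P_n)-f_0([i]P_n)$ up to the deck transformation, producing the sign. For $n\equiv 6$, the action of $\sigma_{1+\varpi}$ factors through $[i]$ composed with a deck transformation, and these together induce $[-1]$ on $A$. The case $n\equiv 7$ is trivial.

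The main obstacle is the bookkeeping in the third step of parts (1) and (2): one must pin down that the specific local elements $1+2\varpi$ (for $n\equiv 5$) and $1+\varpi$ (for $n\equiv 6$) act via the deck transformation of $f_0:X_U\to X_0(32)$ composed with $[i]$, so that the extra $f_0$-identification kills precisely the right subgroup when combined with the minus sign (resp.\ the $[i]$-twist) in the definition of $f_n$. This amounts to tracking how the normalizer action of $K_{n,2}^\times$ on the quotient $U_0(32)_2/U_2$ intertwines with the Hecke action $[i]$, and it is exactly for this intertwining that the particular embeddings of $K_n$ into $M_2(\BQ)$ fixed above, rather than more symmetric choices, are indispensable.
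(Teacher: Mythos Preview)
Your overall strategy for parts (1)--(3) --- Shimura reciprocity plus a computation of the stabilizer of $P_n$ inside $\wh K_n^\times$ at the place $2$ --- is the same as the paper's, and part (3) is handled correctly. The paper carries out the ``main obstacle'' you identify not by abstract bookkeeping but by brute force: for each of $\varpi$, $1+2\varpi$, $1+\varpi$, and an auxiliary element $j$, it writes down an explicit Iwasawa-type decomposition in $\GL_2(\BQ_2)$, reads off the induced automorphism of $X_U$ in the form $Q\mapsto \alpha Q^{\epsilon^?}+\tau(c)$ with $\alpha\in\mu_4$ and $c\in\frac12\BZ[i]/(1+i)\BZ[i]$, and then deduces the exact Galois action on $z_n$ (Theorem~\ref{CM point2}). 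From these explicit formulae Proposition~\ref{CM point1} falls out immediately. Note also a small slip: for $n\equiv 6\pmod 8$ the quotient $K_{n,2}^\times/\BQ_2^\times(1+4O_{K,2})$ is $\BZ/4\BZ\times\BZ/2\BZ$, not cyclic of order $4$; the extra $\BZ/2\BZ$ is generated by $\varpi$ itself.

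There is, however, a genuine error in your part (4). You assert that the generators of $\Gal(H_n'/H_n)$ act on $z_n$ through $[-1]\in\Aut(A)$ and conclude that $2z_n$ is fixed. But $z_n\mapsto -z_n$ gives $2z_n\mapsto -2z_n$, which is \emph{not} fixed unless $z_n$ is torsion. What the explicit computation actually shows is that the action is by \emph{translation by a torsion point killed by $[2]$}: for $n\equiv 5$ one finds $z_n^{\sigma_{\varpi}^2}=z_n+\tau(1)$ with $\tau(1)\in A[2]$, and for $n\equiv 6$ one finds $z_n^{\sigma_{1+\varpi}}=z_n+\tau(\frac{1-i}{2})$ with $\tau(\frac{1-i}{2})\in A[2]$ (since $1-i\in(1+i)\BZ[i]$). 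In particular, for $n\equiv 5$ the action of $\sigma_\varpi^2$ does \emph{not} swap $f_0(P_n)$ and $f_0([i]P_n)$ --- one computes $P_n^{\varpi^2}=P_n+\tau(\frac{1+i}{2})$, a pure translation. The correct mechanism for (4) is thus ``translation by $2$-torsion,'' not ``sign.''
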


Denote $K_n'=K_n, K_n(i), K_n$ according to $n\equiv 5, 6, 7\pmod 8$ respectively. Set
$\Cl_n=\Gal(H_n/K_n)$ and $\Cl'_n=\Gal(H_n'/K_n')$.
Let $\sigma$ be the unique order-two element of $\Gal(H_n'/H_n)$ in the case $n\equiv 5,6\pmod 8$, and set $\sigma=1$ in the case $n\equiv 7\pmod 8$.
Then the natural map $\Cl_n'\to \Cl_n$ induces two isomorphisms
$$
\Cl'_n/\langle\sigma\rangle\cong\Cl_n, \quad
(2\Cl'_n)/\langle\sigma\rangle\cong2\Cl_n.
$$
The least obvious case is the second isomorphism for $n\equiv 6\pmod 8$. For that,  it suffices to check that $\sigma=\sigma_{1+\varpi}^2$ lies in $2\Cl'_n$.
Note that $\sigma_{1+\varpi}\notin \Cl'_n$, but we use the relations
$\sigma=(\sigma_{1+\varpi}\sigma_{\varpi})^2$ and $\sigma_{1+\varpi}\sigma_{\varpi}\in \Cl'_n$ instead. In fact, an easy calculation shows $\sigma_{\varpi}^2=1$ (on $H_n'$) and $\sigma_{\varpi}(i)=-i$, which give the new relations.

\subsubsection*{Quadratic point}

Fix a set $\Phi\subset \Cl'_n$ of representatives of $\Cl'_n/\langle\sigma\rangle\cong \Cl_n$. Let $\chi:\Cl_n\to \{\pm1\}$ be a character.
Define the quadratic point $P_\chi$ associated to $\chi$ by
$$P_\chi:=\sum_{t\in \Phi} f_n(P_n)^{t} \chi(t)\in A(H_n').$$
Here $\chi$ is also viewed as a function on $\Phi$ via the bijection $\Phi\to \Cl_n$.

To give a formula for $P_\chi$, we need to describe another algebraic point on the elliptic curve.
Recall that $\CL(n)$ and  $\rho(n)$ are defined in the introduction of this paper.
We will see that $\CL(n)$ is a rational number.
Define
 $$\CP(n):=2^{-1-\rho(n)}\CL(n) \alpha_n \in A(K_n)^-\otimes_\BZ\BQ,$$
where
$$A(K_n)^-:=\{\alpha\in  A(K_n): \bar\alpha=-\alpha\}\ \subset\ A(K_n),$$
and
$\alpha_n\in A(K_n)^-$ is any point which generates the free part $A(K_n)^-/A(K_n)^-_\tor$ if $\CL(n)\neq 0$. Note that $\CP(n)=0$ if $\CL(n)=0$.

\begin{thm}[Gross-Zagier formula]\label{GZ}
Let $\chi:\Cl_n\to \{\pm1\}$ be a character.
The point $P_\chi$ is non-torsion only if $\chi$ is of the form
$$\chi_{d_0, d_1},\quad  n=d_0d_1,\ 0<d_0\equiv 5, 6, 7\pmod 8,\ 0<d_1\equiv 1, 2, 3\pmod 8, $$ where $\chi_{d_0, d_1}$ is the unique Hecke character over $K_n$ associated to the extension $K_n(\sqrt{d_1})$ for $n\equiv 5,6\pmod 8$ or $K_n(\sqrt{d_1^*})$ for $n\equiv 7\pmod 8$.
Here $d_1^*=(-1)^{(d_1-1)/2}d_1$ as before.

In that case, in the vector space
$A(H_n'(i))\otimes_{\BZ} \BQ=A(H_n'(i))\otimes_{\BZ[i]} \BQ[i]$,
$$P_{\chi}=\epsilon (d_0, d_1) 2^{h_2(n)}\CL(d_1) \CP(d_0),$$
where $\epsilon(d_0, d_1)=\pm i$ if $(d_0, d_1)\equiv (5, 3)\pmod 8$ and $\epsilon(d_0, d_1)=\pm 1$ otherwise.
\end{thm}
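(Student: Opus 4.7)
The plan parallels the proof of Theorem \ref{thm2.1}, but on the derivative side of the functional equation, replacing the Waldspurger formula by the generalized Gross--Zagier formula of Yuan--Zhang--Zhang \cite{YZZ}. By construction, $P_\chi$ is the $\chi$-isotypic projection of the CM divisor $f_n(P_n)$ under the action of $\Cl_n$ on $A(H_n')$, and its N\'eron--Tate height, paired against its complex conjugate, is (up to an explicit product of local orbital factors) a multiple of $L'(E_{K_n},\chi,1)$. Using the Rankin--Selberg factorization
\[
L(E_{K_n},\chi_{d_0,d_1},s)=L(E_{d_0},s)\,L(E_{d_1},s),
\]
this derivative is nonzero exactly when one of the two factors vanishes at $s=1$ and the other does not.

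First I would establish the vanishing dichotomy. By the Birch--Stephens root number computation recalled in the introduction, exactly one factor can contribute a first derivative while the other contributes a non-zero central value precisely when $d_0\equiv 5,6,7$ and $d_1\equiv 1,2,3\pmod 8$. At the same time, the local toric period at $2$ must be non-zero on the test vector $f_n$; this is the direct analogue of the condition $d_2\equiv 1\pmod 8$ in Theorem \ref{thm2.1}, and is forced by Proposition \ref{choice}, which pins down $f_n$ up to scalar as the unique vector with the appropriate $K_{n,2}^\times$- or $U_0(32)$-invariance. Any $\chi$ not of the listed form $\chi_{d_0,d_1}$ violates one of these two conditions, so the corresponding $P_\chi$ is torsion.

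Next I would evaluate the height identity and match $P_\chi$ to a multiple of $\CP(d_0)$. After descending from $H_n'$ to $K_n$ and transporting through the $2$-isogeny relating $A$ to $E_{d_0}$, the $\chi_{d_0,d_1}$-isotypic line in $A(H_n')\otimes\BQ$ is one-dimensional and spanned by $\alpha_{d_0}\in A(K_n)^-$, so $P_\chi$ is necessarily a scalar multiple of $\CP(d_0)$. Inserting the normalization identities for $L(E_{d_1},1)$ and $L'(E_{d_0},1)$ from the introduction into the Gross--Zagier height formula, and comparing with $\langle \CP(d_0),\CP(d_0)\rangle = 2^{-2-2\rho(d_0)}\CL(d_0)^2 R_{d_0}$, pins down the absolute value of the scalar to be $2^{h_2(n)}\lvert\CL(d_1)\rvert$. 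The factor $2^{h_2(n)}$ enters through the orbit counting for $\Cl_n'\to \Cl_n$, exactly as in the passage from $P(d_0,d_1)$ to $Q(n)$ on the Waldspurger side.

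The main obstacle will be pinning down the phase $\epsilon(d_0,d_1)\in\{\pm 1,\pm i\}$, and in particular showing that $\pm i$ occurs precisely when $(d_0,d_1)\equiv(5,3)\pmod 8$. The $\pm 1$ ambiguity is the usual undetermined sign of $\CL(\cdot)$; the appearance of $i$ is genuine, reflecting that when $n\equiv 6\pmod 8$ and more delicately when $(d_0,d_1)\equiv(5,3)\pmod 8$, the point $P_\chi$ is defined over $H_n'\supset H_n(i)$ rather than over $H_n$, so the identification must take place in $A(H_n'(i))\otimes\BZ[i]$. Concretely, I would track the action of the non-trivial element $\sigma\in\Gal(H_n'/H_n)$ described in Proposition \ref{CM point1} (given by $\sigma_{1+2\varpi}$ or $\sigma_{1+\varpi}^2$) on both sides of the comparison, and exploit the quadratic-character relation $\chi_{d_0,d_1}=\chi_{d_1,d_0}\cdot\chi_0$ paralleling Lemma \ref{lem2.6} to reduce the phase to a local $\chi_0$-evaluation at the prime $2$, computed through the explicit embedding of $K_n$ into $M_2(\BQ)$ prescribed in this section. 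This local/Galois cocycle bookkeeping is the technical heart of the argument.
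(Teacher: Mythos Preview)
Your overall architecture is right and matches the paper: establish the dichotomy on $\chi$ via root numbers plus the local condition at $2$ (this is Lemma \ref{char'}), observe that both $P_\chi$ and $\CP(d_0)$ lie in a one-dimensional line, and then compare N\'eron--Tate heights through the explicit Gross--Zagier formula (Theorem \ref{EGZ}) to pin down the scalar up to a unit.

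The gap is in your handling of the phase $\epsilon(d_0,d_1)$. A factual correction first: when $(d_0,d_1)\equiv(5,3)\pmod 8$ one has $n=d_0d_1\equiv 7\pmod 8$, and Proposition \ref{CM point1}(3) gives $H_n'=H_n$, which does \emph{not} contain $i$; so the appearance of $\pm i$ is not explained by $P_\chi$ living over $H_n(i)$. (Also, for $n\equiv 5$ the generator of $\Gal(H_n'/H_n)$ is $\sigma_{\varpi}^2$, not $\sigma_{1+2\varpi}$, which fixes $H_n'$.) More seriously, the Lemma \ref{lem2.6} mechanism does not transfer: there $d_1,d_2$ play symmetric roles, whereas here $d_0$ and $d_1$ are distinguished by root number, and the relevant character identity degenerates. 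The paper's route is instead Lemma \ref{multiples1}: compute the action of complex conjugation $c$ and of the nontrivial $\tau\in\Gal(K_{d_0,d_1}/K_n)$ on $2P_\chi$ using the explicit formulas of Theorem \ref{CM point2}. One always gets $(2P_\chi)^\tau=-2P_\chi$; for $n\equiv 7$ one gets $(2P_\chi)^c=-2\chi_{d_0,d_1}(\sigma_\varpi)P_\chi$ up to torsion, and $\chi_{d_0,d_1}(\sigma_\varpi)=-1$ precisely when $(d_0,d_1)\equiv(5,3)\pmod 8$. In that exceptional case $4P_\chi$ lands in $A(\BQ(\sqrt{d_0}))^-$ (real quadratic twist) rather than $A(\BQ(\sqrt{-d_0}))^-$, and the CM isomorphism $[i]:A(\BQ(\sqrt{-d_0}))^-\stackrel{\sim}{\to}A(\BQ(\sqrt{d_0}))^-$ is what produces $\epsilon(d_0,d_1)=\pm i$.
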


\subsubsection*{Genus point}

Set $\Phi_0=\Phi\cap (2\Cl_n')$ as a subset of $\Cl_n'$.
Then $\Phi_0\subset 2\Cl_n'$ is a set of representatives of $(2\Cl'_n)/\langle\sigma\rangle\cong 2\Cl_n$ in $2\Cl_n'$. Define
 $$Z(n):=\sum_{t\in \Phi_0} f_n(P_n)^{t}\in A(H_n').$$

To compare $Z(d_0)$ for different divisors $d_0$ of $n$, we introduce the composite field
$$\BH_n':=L_n(i)\cdot\prod_{\substack{d_0|n, \ d_0>0 \\ d_0\equiv 5, 6\pmod 8}} H_{d_0}'\subset \overline\BQ.$$
Here $L_n(i)=\BQ(i, \sqrt{d}:d|n)$. The field seems to be very large, but we will see that
$A(\BH_n')_\tor\subset A[4]$ in Lemma \ref{torsion2}, which is a key property in our treatment.
Note that $A(\BH_n')$ is a $\BZ[i]$-module.
Define $P(n)\in A(\BH_n')$ inductively by
$$P(n):=Z(n)-\sum_{\substack{n=d_0d_1\\ d_0\equiv 5, 6, 7\pmod 8\\
d_1\equiv  1,2 ,3 \pmod 8,\ d_1>1}} \epsilon(d_0, d_1)\CL(d_1)P(d_0),$$
where $\epsilon(d_0, d_1)\in \mu_4$ is as in Theorem \ref{GZ}.
Note that $\CL(d_1)\in \BZ$ by Theorem \ref{lg}.
By definition, it is easy to verify the following congruence formula.

\begin{prop} \label{congruence}
In $A(\BH_n')$,
\begin{multline*}
P(n) \equiv
\sum_{\substack{n=d_0d_1\cdots d_\ell\\
d_0\equiv 5,6, 7\pmod 8\\
d_1\equiv 1, 2, 3\pmod 8\\
d_i\equiv 1 \pmod 8, \ i>1}}\epsilon(d_0, d_1) \left(\prod_{i\geq 1} g(d_i)\right) Z(d_0)
\\
+i\sum_{\substack{n=d_0d_1\cdots d_\ell\\
(d_0,d_1,d_2)\equiv (5,3,2)\pmod 8\\
d_i\equiv 1 \pmod 8, \ i>2}} \left(\prod_{i\geq 1} g(d_i)\right) Z(d_0)
\qquad\mod\ 2A(\BH_n').
\end{multline*}
\end{prop}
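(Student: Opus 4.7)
The plan is to proceed by strong induction on the number of prime factors of $n$, starting from the recursive definition
$$P(n) \;=\; Z(n) \;-\; \sum_{\substack{n = d_0 d_1,\ d_1 > 1 \\ d_0 \equiv 5,6,7,\ d_1 \equiv 1,2,3 \pmod 8}} \epsilon(d_0, d_1)\, \CL(d_1)\, P(d_0).$$
In the base case when no such decomposition exists (e.g.\ $n$ prime), $P(n) = Z(n)$, which matches the $\ell = 0$ term of the first sum on the right-hand side of the proposition (the second sum being empty). For the inductive step, each $d_0$ in the above sum satisfies $d_0 \equiv 5,6,7 \pmod 8$ and $d_0 < n$, and since $\BH_{d_0}' \subset \BH_n'$, the inductive hypothesis lifts to a congruence modulo $2A(\BH_n')$ describing $P(d_0)$ in terms of $Z(\cdot)$'s and products of $g(\cdot)$'s.

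Next I would substitute both the inductive formula for $P(d_0)$ and the mod-$2$ congruence for $\CL(d_1)$ from Theorem~\ref{lg} into the recursion, using $-x \equiv x \pmod{2A(\BH_n')}$. This expresses $P(n)$ modulo $2A(\BH_n')$ as a $\BZ[i]$-linear combination of terms $Z(m)\prod g(\cdot)$, indexed by a top-level decomposition $n = d_0 d_1$, a factorization $d_1 = f_0 f_1 \cdots f_m$ with $f_j \equiv 1 \pmod 8$ for $j > 0$, and the analogous nested data coming from the inductive formula for $P(d_0)$. I would then regroup these contributions by the underlying non-ordered decomposition $n = m \cdot a_1 \cdots a_L$ of $n$ into prime blocks, and check that the total weight assigned to each such decomposition matches exactly (modulo $2$) the coefficient predicted by the right-hand side of the proposition.

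The main obstacle is combinatorial bookkeeping: for a fixed non-ordered decomposition of $n$ into $m$ plus prime blocks, many recursion paths contribute the same product of $g$'s times $Z(m)$, and one must verify that the total multiplicity has the correct parity. The crucial point is the careful tracking of the units $\epsilon(d_0, d_1) \in \{\pm 1, \pm i\}$, since $\epsilon = \pm i$ precisely when $(d_0, d_1) \equiv (5, 3) \pmod 8$: I would show that configurations with a single $(5,3)$-adjacency in the nested recursion produce the $i$-prefactor in $\mathrm{Sum}_2(n)$, while configurations with two or more such adjacencies cancel in pairs modulo $2$. A direct check for $n$ with two or three prime factors (using $\CL(p) \equiv g(p)$ for primes $p \equiv 1,2,3 \pmod 8$ and using that the two orderings $(d_0,d_1) = (a,b)$ vs.\ $(b,a)$ in $\CL(ab)$ each contribute when both factors are $\equiv 1$) illustrates how the $\BZ[i]$-cancellations work and makes the general reorganization transparent.
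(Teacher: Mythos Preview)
Your approach is essentially the same as the paper's: both proceed by induction via the defining recursion for $P(n)$, substitute the mod-$2$ formula of Theorem~\ref{lg} for $\CL(d_1)$ and the inductive hypothesis for $P(d_0)$, and then count multiplicities of the resulting decompositions. The paper phrases the induction as a verification that the claimed formula satisfies the recursion, and is more explicit on two points you leave vague: it separates the cases $n\equiv 5,7$ and $n\equiv 6\pmod 8$ (the second sum with the $i$-prefactor only arises for $n\equiv 6$), and it pins down the key $\epsilon$-cancellation via the identity $\epsilon(d_0''d_1'',d_1)\,\epsilon(d_0'',d_1'') = \pm\,\epsilon(d_0''d_1,d_1'')\,\epsilon(d_0'',d_1)$, which is what makes the pairing of ordered triples $n=d_0''d_1''d_1\leftrightarrow d_0''d_1d_1''$ go through. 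Your ``adjacency'' language is a reasonable heuristic for this, but the actual argument reduces to that single sign check together with the observation that any block $\equiv 1\pmod 8$ can be freely moved between the $\CL$-side and the $P$-side, giving even multiplicity.
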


The main result of this section is as follows, which is an enhanced version of Theorem \ref{lg'}.

\begin{thm}\label{m3}
The vector $\CP(n)\in A(K_n)^-\otimes_\BZ\BQ$ is represented by the point $P(n)\in A(\BH_n')$ in the sense that they are equal in $A(\BH_n')\otimes_\BZ\BQ$.
Moreover,
 \begin{enumerate}[(1)]
 \item The image of $2P(n)$ under any $2$-isogeny from $A$ to $E$ belongs to $E(K_n)^-$, i.e. $\CL(n)$ is integral.
 \item Assume that $P(n)\in A(K_n)^-+A[4]$, i.e. $2^{-\rho(n)}\CL(n)$ is even.
If $n\equiv 5, 7\pmod 8$, then
 $$\sum_{\substack{{n=d_0\cdots d_\ell}\\ {d_i\equiv 1\pmod 8,\ i>0}}} \prod_i
g(d_i)\equiv \sum_{\substack{{n=d_0\cdots d_\ell,}\\
{d_0\equiv 5, 6, 7\pmod 8}\\ {d_1\equiv  1,2 ,3 \pmod 8} \\ {d_i\equiv 1\pmod 8,\ i>1}}} \prod_i g(d_i)  \equiv 0 \quad
\pmod 2.$$
If $n\equiv 6\pmod 8$, then
 $$ \sum_{\substack{{n=d_0\cdots d_\ell,}\\
{d_0\equiv 5, 6, 7\pmod 8}\\ {d_1\equiv  1,2 ,3 \pmod 8} \\ {d_i\equiv 1\pmod 8,\ i>1}}} \prod_i g(d_i)  \equiv 0\quad
\pmod 2.$$
\end{enumerate}
\end{thm}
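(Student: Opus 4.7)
The plan is to prove the three assertions in sequence. First I would establish the identity $P(n)=\CP(n)$ in $A(\BH_n')\otimes_\BZ\BQ$ by induction on the divisors of $n$, using a character-sum identity combined with the Gross--Zagier formula (Theorem~\ref{GZ}). Then integrality (1) follows by clearing denominators in this identity, using the torsion bound $A(\BH_n')_\tor\subset A[4]$ (Lemma~\ref{torsion2}) and the definition of $\rho(n)$. The parity assertion (2) is the deepest, using Proposition~\ref{congruence} and a Galois-isotypic decomposition of $A(\BH_n')/2A(\BH_n')$ to isolate each residue-class contribution.

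For the identity, I would establish the genus-point analogue of equation~(\ref{PQ}): by Gauss's genus theory applied to $\Cl_n$, summing the quadratic points $P_\chi$ over quadratic characters $\chi$ of $\Cl_n$ yields $2^{h_2(n)} Z(n)$, up to the ordered/non-ordered convention dictated by the residue of $n$ mod~$8$. By Theorem~\ref{GZ}, only contributions from $\chi_{d_0,d_1}$ with $d_0\equiv 5,6,7$ and $d_1\equiv 1,2,3\pmod 8$ survive, each equal to $\epsilon(d_0,d_1)\,2^{h_2(n)}\,\CL(d_1)\,\CP(d_0)$. Cancelling $2^{h_2(n)}$, separating the $d_1=1$ term (which is $\CP(n)$), and invoking the induction hypothesis $P(d_0)=\CP(d_0)$ for proper divisors $d_0$ with $d_0\equiv 5,6,7\pmod 8$, the defining recursion of $P(n)$ matches this expansion exactly, giving $P(n)=\CP(n)$.

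For integrality, the previous step gives that $2^{1+\rho(n)}P(n)-\CL(n)\alpha_n$ vanishes in $A(\BH_n')\otimes_\BZ\BQ$, which by $A(\BH_n')_\tor\subset A[4]$ yields the equation $2^{1+\rho(n)}P(n)=\CL(n)\alpha_n+\tau$ in $A(\BH_n')$ for some $\tau\in A[4]$. Applying any $2$-isogeny $\varphi:A\to E$ and using that $\varphi(\alpha_n)$ generates the non-$2$-torsion part of $E(K_n)^-$ with index exactly $2^{\rho(n)}$ modulo $E[2]$ (which is built into the definition of $\rho(n)$), we conclude both that $\CL(n)\in\BZ$ and that $\varphi(2P(n))=2^{-\rho(n)}\CL(n)\varphi(\alpha_n)+\varphi(\tau')\in E(K_n)^-+E[2]\subset E(K_n)^-$, giving~(1).

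For the parity assertion, assume $P(n)\in A(K_n)^-+A[4]$. The class of $P(n)$ in $A(\BH_n')/2A(\BH_n')$ then lies in the image $\bar M$ of $A(K_n)^-+A[4]$. On the other hand, Proposition~\ref{congruence} expresses this same class as a $\BZ[i]$-combination of the classes $\bar Z(d_0)$, with coefficients exactly the sums appearing in Theorem~\ref{lg'}. The crux of the argument is to show that these contributions separate into enough independent pieces modulo $\bar M$ that each coefficient sum is forced to vanish modulo~$2$. I would argue this via Galois-theoretic projection: for $n\equiv 5,7\pmod 8$ the condition $i\notin K_n'=K_n$ allows separation of the $\{\pm 1\}$- and $\{\pm i\}$-eigencomponents of $\epsilon(d_0,d_1)\in\mu_4$ by Galois projectors that annihilate $\bar M$, producing the two independent congruences; for $n\equiv 6\pmod 8$ the inclusion $i\in K_n'$ merges these eigencomponents and leaves only the single congruence. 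The main obstacle will be establishing this structural independence of the $\bar Z(d_0)$'s modulo $\bar M$, which requires careful analysis of the Galois action on $A(\BH_n')/2A(\BH_n')$ and of how $A(K_n)^-+A[4]$ interacts with each $H_{d_0}'$-substructure; the explicit factor $i$ in front of the second summand of Proposition~\ref{congruence}, together with the identification $(2\Cl_n')/\langle\sigma\rangle\cong 2\Cl_n$, is what ultimately separates the two residue-class regimes.
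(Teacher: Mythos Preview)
Your argument for the representative identity $P(n)=\CP(n)$ in $A(\BH_n')\otimes_\BZ\BQ$ is correct and is exactly what the paper does: sum $P_\chi$ over quadratic $\chi$, apply Theorem~\ref{GZ}, and match against the recursion defining $P(n)$.

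Your argument for part~(1), however, is circular. You write that the rational identity ``yields the equation $2^{1+\rho(n)}P(n)=\CL(n)\alpha_n+\tau$ in $A(\BH_n')$ for some $\tau\in A[4]$'', but this already presupposes $\CL(n)\in\BZ$: if $\CL(n)\notin\BZ$ then $\CL(n)\alpha_n$ is not a point of $A(\BH_n')$ at all, and the torsion bound says nothing. More structurally, the identity $P(n)=\CP(n)$ in $\otimes\,\BQ$ only tells you that \emph{some} integer multiple of $P(n)$ lies in $A(K_n)^-$ up to $A[4]$; it gives no control on \emph{which} multiple, because $\alpha_n$ (primitive in $A(K_n)^-$) could be divisible in the larger group $A(\BH_n')$. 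The paper's proof of~(1) is a genuine Galois descent: first one shows $4P(n,1)\in A(K_n)^-$ by direct Galois computation on the character sum (Lemma~\ref{multiples1}, using Theorem~\ref{CM point2}), then $4P(n,1)=\pm 2^{2+h_2(n)}P(n)$ (Lemma~\ref{multiples2}), and then the key step is a Kummer/inflation--restriction argument over the tower $K_n''\subset L_n'$ to push $2^{2+h_2(n)}R(n)\in E(K_n)^-$ down to $2R(n)\in E(K_n'')$, followed by a case-by-case removal of the extra factors $\sqrt{i},\sqrt{2}$ using Lemma~\ref{torsion3}. None of this is visible in your outline.

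Your plan for part~(2) is too vague to succeed as stated, and the mechanism you propose (linear independence of the $\bar Z(d_0)$ in $A(\BH_n')/2A(\BH_n')$ modulo the image of $A(K_n)^-+A[4]$, separated by $\mu_4$-eigencomponents) is not what actually drives the proof. The paper does \emph{not} establish any independence among the $Z(d_0)$'s. Instead it singles out a specific Galois element $\beta\in\Gal(\BQ^{\ab}/\BQ)$ (namely $r_\infty(-1)r_2(-2)$ or $r_\infty(-1)r_2(6)$, chosen according to $n\bmod 16$) with the property that $\beta$ acts as $-1$ on $K_n$, so that $(\beta'+1)P(n)\in(\beta'+1)A[4]$ under the hypothesis. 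The crucial computation (Lemma~\ref{betaaction}) is that $(\beta'+1)Z(d_0)$ equals $g(d_0)$ times a \emph{specific} torsion point in $A[(1+i)^3]$ depending only on $d_0\bmod 8$, namely $\tau(\frac{1-i}{2})$, $\tau(\frac{i}{2})$, or $\tau(\frac12)$; this comes directly from Theorem~\ref{CM point2}. Applying $\beta'+1$ to Proposition~\ref{congruence} and reading off coefficients against these three torsion points (which are independent modulo $\BZ\,\tau(1)$, resp.\ modulo $A[2]$) is what forces the congruences. Your eigencomponent/projector picture does not produce these explicit torsion values, and the ``main obstacle'' you flag is precisely the content you are missing.
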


\subsection{Test vectors}

Recall that in Proposition \ref{CM point1} we have described the field $H_n'=H_n(z_n)$.
The major goal of this section is to prove some results about Galois actions on $z_n$. We will also prove Proposition \ref{choice} and Proposition \ref{CM point1}.

To describe the results about Galois actions on $z_n$, we recall some basic facts about $X_0(32)$ and $A$, which are basic facts or results proved in \cite{Tian}.
\begin{enumerate}[(1)]
\item
There is an analytic isomorphism
$$\tau: \BC/(1+i)\BZ[i]\lra A(\BC).$$
The map $\tau$ is unique up to multiplication by $\mu_4=\{\pm1, \pm i\}$.
We can adjust $\tau$ such that $\BR/2\BZ$ maps onto $A(\BR)$ and
$$A[2^\infty]\ =\ \BQ_2(i)/(1+i)\BZ_2[i]\ \subset\ \BQ(i)/(1+i)\BZ[i] \ =\ A(\BC)_{\tor}. $$
\item
Under the uniformization $\tau$, the Galois group $G_\BQ=G_{\BQ(i)}\rtimes \{1, c\}$ acts on $A[2^\infty]=\BQ_2(i)/(1+i)\BZ_2[i]$ as follows.
The induced action of $c$ on $\BQ_2(i)/(1+i)\BZ[i]$ is still given by the conjugation $i\mapsto -i$, and the induced action of $G_{\BQ(i)}$ on $\BQ_2(i)/(1+i)\BZ[i]$ is given by multiplying by the composition
$$G_{\BQ(i)}\ra \Gal(\BQ(i)^{\ab}/\BQ(i))\stackrel{\sigma_{\BQ(i)}^{-1}}{\lra} \BQ(i)^\times \bs \wh{\BQ(i)}^\times \cong (1+(1+i)^3\wh{\BZ[i]})^\times\ra (1+(1+i)^3\BZ_2[i])^\times,$$
where $\sigma_{\BQ(i)}$ denotes the Artin map and the last map is the natural projection.
\item
The identification $i_0:X_0(32)\ra A$ (mapping $\infty$ to $0$) identifies the set
$\CS=\Gamma_0(32)\bs \BP^1(\BQ)$ of cusps with $A[(1+i)^3]=A(\BQ(i))$.
Replacing $\tau$ by $-\tau$ if necessary, we can (and we will) assume that the induced bijection
$$\tau: \frac12 \BZ[i]/(1+i)\BZ[i] \lra A[(1+i)^3]= \Gamma_0(32)\bs \BP^1(\BQ)$$
gives
$$\tau(0)=[\infty], \ \quad \tau(1/2)=[0], \ \quad \tau(-1/2)=[1/2],$$
$$\tau(1)=[1/16], \ \quad \tau(\pm i/2)=[\pm 1/4], \ \quad \tau((1\pm i)/2)=[\pm 1/8].$$
\end{enumerate}

Now we are ready to state the main result of this subsection.

\begin{thm}\label{CM point2}
Resume the notations in Proposition \ref{CM point1}. The following are true:
\begin{enumerate}[(1)]
 \item Assume that $n\equiv 5\pmod 8$. Then
$$ z_n^{\sigma_\varpi}=z_n+\tau(\frac{1+i}2), \qquad
z_n^{\sigma_{1+2\varpi}}=z_n, \qquad \bar z_n=-z_n+\tau(1).$$
Thus $z_n^{\sigma_{\varpi^2/2}}=z_n^{\sigma_{\varpi^2}}=z_n+\tau(1)$.

 \item Assume that $n\equiv 6\pmod 8$. Then
$$z_n^{\sigma_{\varpi}}=z_n+\tau(-i/2),\qquad z_n^{\sigma_{1+\varpi}}=z_n+\tau(\frac{1-i}2), \qquad \bar z_n=-z_n.$$
Thus $z_n^{\sigma_{1+\varpi}^2}=z_n+\tau(1)$.

\item Assume that $n\equiv 7\pmod 8$.
Let $v_2$ and $v_2'$ be the two places of $K_n$ above $2$ such that $v_2(\sqrt{-n}-\delta)\geq 6$. Let $\varpi\in K_{n,2}$ be an element with $v_2(\varpi)=1$ and $v_2'(\varpi)=0$.
Then
$$\bar z_n+z_n^{\sigma_{\varpi^5}}=\tau(1/2).$$
 \end{enumerate}
\end{thm}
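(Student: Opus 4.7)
The plan is to convert each Galois action into a right multiplication on the idele component of $P_n = [h,1]$ via Shimura's reciprocity law, and then put the resulting point back into standard form on $X_U$ or $X_0(32)$. Under the fixed embedding $K_n \hookrightarrow M_2(\BQ)$ of \S \ref{section3.1} and the geometric Artin normalization, reciprocity gives
$$
P_n^{\sigma(t)} = [h, t] \qquad \text{in } X_U(\BC), \quad t \in \wh K_n^\times.
$$
To evaluate $z_n^{\sigma(t)} = f_n([h, t])$, we put $[h, t]$ into standard form $[\gamma^{-1} h, u_0]$ for some $\gamma \in \GL_2(\BQ)^+$ and some $u_0 \in \GL_2(\wh \BQ)$ representing the geometric component of $X_U$ where the new point sits; the fractional-linear transformation $h \mapsto \gamma^{-1}h$ will shift the modular parametrization by a cusp, identified via item (3) of the basic facts with a specific torsion element of $A(\BQ(i))$ under the uniformization $\tau$. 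The component data $u_0$ interacts with the Hecke operator $[i]$ appearing in $f_n$ for $n \equiv 5, 6 \pmod 8$, and yields exactly the shifts predicted by the theorem.

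For Case (1), $n \equiv 5 \pmod 8$: with the chosen embedding $\sqrt{-n} \mapsto \matrixx{-1}{1/4}{-4(n+1)}{1}$, the local element $\varpi = \sqrt{-n} - 1 \in K_{n,2}^\times$ normalizes $U_2$ (Proposition \ref{choice}) but is non-trivial modulo $U_2 \cdot \BQ_2^\times$, so multiplication by $\varpi$ swaps the two geometric components of $X_U$. An explicit decomposition $\varpi = \gamma \cdot u \cdot q$ with $\gamma \in \GL_2(\BQ)^+$, $u \in U$, $q \in \BQ_2^\times$ will identify the cusp translation: $\gamma^{-1} h$ differs from $h$ by a cusp whose $\tau$-image is a $4$-torsion point, and combined with the sign contributed by the $-f_0 \circ [i]$ term on the swapped component, the net shift is $\tau((1+i)/2)$. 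The identity $z_n^{\sigma_{1+2\varpi}} = z_n$ follows immediately from the verification that $1 + 2\varpi$ lies in $U_2 \cdot \BQ_2^\times$. Case (2) is handled analogously, with $\sqrt{-n} \mapsto \matrixx{}{1/4}{-4n}{}$ and the two test elements $\varpi = \sqrt{-n}$, $1 + \varpi$, whose classes in $K_{n,2}^\times / (K_{n,2}^\times \cap U_2 \BQ_2^\times)$ have orders $2$ and $4$ respectively. Case (3), $n \equiv 7 \pmod 8$, is simpler on the modular side because the embedding ensures $\wh O_{K, 2}^\times \subset U_0(32)_2$, so $f_n = i_0$ and no component issue arises; the computation of $z_n^{\sigma_{\varpi^5}}$ amounts to locating $[h, \varpi^5]$ on $X_0(32)$, and the choice of exponent $5$ matches the level $32 = 2^5$.

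The formulas for $\bar z_n$ rest on the fact that, under $i_0$, the Atkin-Lehner involution $w_{32} = \matrixx{0}{-1}{32}{0}$ on $X_0(32)$ corresponds to $[-1]$ on $A$ composed with translation by an explicit cusp. Complex conjugation acts on $X_U(\BC)$ by $[h,g] \mapsto [-\bar h, c(g)]$ for an element $c$ implementing the conjugation of the component structure, and since $-\bar h$ is the fixed point of the Galois-conjugate embedding of $K_n$, there exists $\gamma_c \in K_n^\times$ sending $h$ to $-\bar h$; tracking this through the definition of $f_n$ produces the stated formulas $\bar z_n = -z_n + \tau(1)$ and $\bar z_n = -z_n$ in Cases (1) and (2), the discrepancy between them reflecting whether the $[i]$-swap intervenes. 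For Case (3), the combined identity $\bar z_n + z_n^{\sigma_{\varpi^5}} = \tau(1/2)$ follows from the reciprocity $\sigma_{\varpi \bar\varpi} = \sigma_2$ together with the $w_{32}$ relation, which picks out the cusp $\tau(1/2) = [0]$.

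The main obstacle will be the matrix arithmetic: for each choice of $t$, finding the decomposition $t = \gamma u q$ cleanly, verifying that $\gamma$ sends $h$ into a $\Gamma_0(32)$-translate of $h$ (rather than to an unrelated point of $\CH$), and matching the resulting cusp to its $\tau$-image via the explicit dictionary in item (3) of the basic facts. This bookkeeping is the only non-conceptual part of the argument, and it depends crucially on the specific normalizations fixed in \S \ref{section3.1} --- both of the embedding $K_n \hookrightarrow M_2(\BQ)$ and of the uniformization $\tau: \BC/(1+i)\BZ[i] \to A(\BC)$.
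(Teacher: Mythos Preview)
Your high-level plan—Shimura reciprocity to turn Galois actions into right Hecke translations, then explicit matrix computations—is exactly the paper's strategy. But two steps of your execution are wrong and would block the argument.

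First, the claim that $z_n^{\sigma_{1+2\varpi}}=z_n$ because ``$1+2\varpi$ lies in $U_2\cdot\BQ_2^\times$'' is false: a direct check (or Proposition~\ref{heckeaction5}) shows that $1+2\varpi$ acts on $X_U$ nontrivially, by $Q\mapsto Q+\tau(1)$. The invariance of $z_n$ is not for free; it comes from the particular shape $f_n=f_0\circ[1-i]$, since $[1-i]\tau(1)=\tau(1-i)=0$ in $A$. In other words, the test vector was \emph{chosen} so as to kill this shift (this is the content of Proposition~\ref{choice}), and your proof must use that choice, not bypass it. Second, your handling of complex conjugation via ``$\gamma_c\in K_n^\times$ sending $h$ to $-\bar h$'' cannot work: every element of $K_n^\times\subset\GL_2(\BQ)$ fixes $h$ by definition. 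The paper instead finds, for each case, an explicit $j\in\GL_2(\BQ)$ with $\det j<0$ that normalizes both $K^\times$ (inducing $x\mapsto\bar x$) and $U$; then $\bar P_n=P_n^j$ and one computes the Hecke action of $j$. For $n\equiv 7\pmod 8$ the crux is the single verification $j\varpi^5\in W\cdot U_0(32)_2$ with $W$ the Atkin--Lehner element, which gives the combined identity at once; attempting to compute $\bar z_n$ and $z_n^{\sigma_{\varpi^5}}$ separately, as you propose, is harder and unnecessary.

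There is also a structural shortcut you are missing. Since $\varpi$, $1+2\varpi$, $j$ (and their analogues for $n\equiv 6$) all \emph{normalize} $U$, they act as elements of
\[
\Aut_\BQ(X_U)\cong\bigl(A(\BQ(i))\rtimes\mu_4\bigr)\rtimes\{1,\epsilon\},
\]
hence each acts as $Q\mapsto \alpha\,Q^{\epsilon^e}+R$ for some $\alpha\in\mu_4$, $e\in\{0,1\}$, $R\in A(\BQ(i))$. Such an automorphism is pinned down by its effect on two cusps (the paper uses $[\infty]$ and $[0]$). This is what Propositions~\ref{heckeaction5} and~\ref{heckeaction6} do, and it means all the matrix decompositions are for cusps, never for the CM point $h$—so the difficulty you flag (``verifying that $\gamma$ sends $h$ into a $\Gamma_0(32)$-translate of $h$'') simply does not arise.
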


Here $\bar z_n$ denotes the complex conjugate of $z_n$.
The results will be treated case by case in the following.
For simplicity, we write $K$ for $K_n$ (so that $K_2$ means the local field $K_{n,2}$ of $K_n$ at $2$).

\

\subsection*{Case $n\equiv 5\pmod 8$}

In this case, $f_n:X_U\to A$ is given by $f_n=f_0-f_0\circ [i]$, and the embedding of $K$ into $M_2(\BQ)$ is given by
$$\sqrt{-n} \longmapsto \matrixx{-1}{1/4}{-4(n+1)}{1}.$$
The embedding gives
$(\wh{\BZ}+4\wh{O}_{K})^\times \subset U$.

\begin{lem} \label{coset5}
Assume  $n\equiv 5\pmod 8$.
\begin{enumerate}[(1)]
\item The quotient
$K_{2}^\times/\BQ_2^\times (1+4 O_{K,2})$
is isomorphic to $\BZ/4\BZ\times \BZ/2\BZ$, and generated by the order-four element
$\varpi=(\sqrt{-n}-1)_2$ and the order-two element $1+2\varpi$.
\item
The multiplicative group $K_2^\times$ normalizes $U_2$.
\end{enumerate}
\end{lem}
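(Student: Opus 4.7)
The plan splits according to the two parts of the lemma, both leveraging the ramified structure of $K_{n,2}/\BQ_2$ and the explicit matrix embedding.

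For part (1), first note that $n \equiv 5 \pmod 8$ implies $-n \equiv 3 \pmod 8$, so $K_{n,2} = \BQ_2(\sqrt{-n})$ is a ramified quadratic extension of $\BQ_2$. The element $\varpi = \sqrt{-n} - 1$ satisfies $\varpi^2 + 2\varpi + (n+1) = 0$, so $N_{K_2/\BQ_2}(\varpi) = n+1$ has $v_2 = 1$; hence $\varpi$ is a uniformizer. I would then compute the cardinality of the quotient via the exact sequence
$$1 \to O_{K,2}^\times/\BZ_2^\times(1+4O_{K,2}) \to K_2^\times/\BQ_2^\times(1+4O_{K,2}) \to \BZ/2\BZ \to 1,$$
where the right map is $v_K$ modulo $2$. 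The left term has order $|(O_{K,2}/4)^\times|/|(\BZ/4)^\times| = 8/2 = 4$, giving total order $8$.

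To identify generators, expand $(1+2\varpi)^2 = 1 + 4(\varpi + \varpi^2) \in 1 + 4 O_{K,2}$, so $1+2\varpi$ has order dividing $2$. The relation $\varpi^2 = -(n+1) - 2\varpi$ gives $\varpi^2/(-(n+1)) \equiv 1 + c\varpi \pmod{4O_{K,2}}$ with $c \in \BZ_2^\times$; since the image of $\BZ_2$ in $O_{K,2}/4O_{K,2} \cong \BZ_2/4 \oplus (\BZ_2/4)\varpi$ has trivial $\varpi$-component, no such element can lie in $\BZ_2^\times(1+4O_{K,2})$. This simultaneously forces $1+2\varpi$ to have order exactly $2$ and shows $\varpi^2 \notin \BQ_2^\times(1+4O_{K,2})$. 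Direct expansion yields $\varpi^4 = (n+1)(n-3) + 4(n-1)\varpi$, and dividing by $(n+1)(n-3)$ puts this in $\BQ_2^\times(1+4O_{K,2})$, so $\varpi$ has order exactly $4$. Independence of $\varpi$ and $1+2\varpi$ follows by comparing valuation parities and $\varpi$-residues, yielding the structure $\BZ/4 \times \BZ/2$.

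For part (2), since $\BQ_2^\times$ is central and since by part (1) the group $K_{n,2}^\times$ is generated modulo $\BQ_2^\times(1+4O_{K,2})$ by $\varpi$ and $1+2\varpi$, it suffices to show that $1+4O_{K,2}$, $\varpi$, and $1+2\varpi$ each normalize $U_2$. Writing $O_{K,2} = \BZ_2 + \BZ_2 \sqrt{-n}$ and applying the given embedding, an element $1 + 4(a + b\sqrt{-n})$ maps to
$$\matrixx{1+4a-4b}{b}{-16b(n+1)}{1+4a+4b},$$
whose lower-left entry is divisible by $32$ (using $2 | (n+1)$) and whose diagonal difference is $-8b$, divisible by $4$. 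Thus $1+4O_{K,2}$ maps into $U_2$ and normalizes it trivially. For $\varpi$ and $1+2\varpi$, I would write out their matrix images, conjugate a generic $\matrixx{a}{b}{c}{d} \in U_2$ directly, and verify that the defining conditions $32 | c'$ and $4 | (a'-d')$ are preserved.

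The main obstacle is the explicit conjugation check for $1+2\varpi$, whose matrix image $\matrixx{-3}{1/2}{-8(n+1)}{1}$ has the non-integral entry $1/2$; one must confirm that the fractional denominators produced by conjugation cancel out, which relies crucially on $v_2(n+1) = 1$. A cleaner alternative would be to realize $U_2$ as the stabilizer of a level structure on $X_U$ whose $K_{n,2}^\times$-stability is manifest from the CM-theoretic side, bypassing raw matrix computation; however, the direct approach is elementary and consistent with the computational style of this section.
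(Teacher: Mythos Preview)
Your proposal is correct and follows essentially the same approach as the paper: for part (1), both you and the paper use the valuation filtration to identify an index-two subgroup $O_{K,2}^\times/\BZ_2^\times(1+4O_{K,2})\cong(\BZ/2)^2$ and then pin down the full structure by checking the orders of $\varpi$ and $1+2\varpi$; for part (2), both reduce via part (1) to showing $1+4O_{K,2}\subset U_2$ and then verifying by direct matrix conjugation that $\varpi$ and $1+2\varpi$ normalize $U_2$. Your write-up is in fact more explicit than the paper's, which simply asserts that the generator check is ``easy'' and that the normalization ``can be done by explicit calculations''; your computation of $\varpi^4=(n+1)(n-3)+4(n-1)\varpi$ and the $\varpi$-residue argument fill in exactly what the paper elides.
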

\begin{proof}
We first check (1). Note that $K_2$ is ramified over $\BQ_2$.
Then $\BQ_2^\times O_{K,2}^\times$ has index two in $K_{2}^\times$.
Then $K_{2}^\times/\BQ_2^\times (1+4 O_{K,2})$ has an index-two subgroup
$$\BQ_2^\times O_{K,2}^\times /\BQ_2^\times (1+4 O_{K,2})
=O_{K,2}^\times /\BZ_2^\times (1+4 O_{K,2})
=(O_{K,2}/4 O_{K,2})^\times /\{\pm 1\}
\simeq \BZ/2\BZ\times \BZ/2\BZ.
$$
It follows that $K_{2}^\times/\BQ_2^\times (1+4 O_{K,2})$ is isomorphic to $\BZ/4\BZ\times \BZ/2\BZ$.
Now it is easy to check that $\varpi$ and $1+2\varpi$ generate the group.

For (2), since $1+4 O_{K,2} \subset U$, we see that $\BQ_2^\times (1+4 O_{K,2})$ normalizes $U_2$. By (1),  it suffices to check that $\varpi$ and $1+2\varpi$ normalize $U_2$, which can be done by explicit calculations.
\end{proof}

By the lemma, $K_2^\times$ normalizes $U_2$, and thus it acts on $X_U$ by the right multiplication. The subgroup $\BQ_2^\times(1+4 O_{K_n,2})$ acts trivially and induces a homomorphism
$$K_2^\times/\BQ_2^\times
(1+4 O_{K_n,2})\lra \Aut_\BQ(X_U).$$
We will describe this homomorphism explicitly.

The following result contains a lot of identities in
$$\Aut _\BQ(X_U)=\Aut_{\BQ(i)}(X_0(32)_{\BQ(i)})\rtimes \{1,
\epsilon\}\simeq \left( A(\BQ(i))\rtimes \mu _4\right) \rtimes \{1,
\epsilon\}.$$
Here $\epsilon=\matrixx{1}{}{}{-1}\in \GL_2(\BQ)$ also normalizes $U$, and its Hecke action on $X_U$ gives the non-trivial element of $\Gal(X_U/X_0(32))$. In particular, $\epsilon$ acts on $\Aut_{\BQ(i)}(X_0(32)_{\BQ(i)})$ by sending $i$ to $-i$.
Recall that we have also identified
$$
A(\BQ(i))= \frac12 \BZ[i]/(1+i)\BZ[i]
$$
with the set $\CS$ of cusps of $X_0(32)$.

\begin{prop}\label{heckeaction5}
Assume  $n\equiv 5\pmod 8$.
\begin{enumerate}[(1)]
\item For any
$Q\in X_U(\BC)$,
$$Q^{\varpi}
=[i] Q^\epsilon+\tau(\frac12), \quad\
Q^{1+2\varpi}=Q+\tau(1).$$
\item  The order-two element $j=\matrixx{1}{0}{8}{-1} \in
\GL_2(\BQ)$ normalizes $K^\times$ such that $jxj=\ov{x}$ for all
$x\in K^\times$ and normalizes $U$ with the  induced  action on
$X_U$ given by
$$Q^{j}=[-i] Q^\epsilon+\tau(\frac {1+i}2), \quad \forall\ Q\in X_U(\BC).$$
\end{enumerate}
\end{prop}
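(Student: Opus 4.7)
The proof of Part (1) reduces to computing the images of $\varpi$ and $1+2\varpi$ under the induced map
$$K_{n,2}^\times/\BQ_2^\times(1+4O_{K_n,2}) \lra \Aut_\BQ(X_U),$$
since by Lemma \ref{coset5}(1) these two elements generate the quotient of order $8$, and by Lemma \ref{coset5}(2) together with the trivial action of the centre $\BQ_2^\times$ and of $1+4O_{K_n,2}\subset U_2$, the map is well defined. My plan is to pin down each image inside the semi-direct product $\Aut_\BQ(X_U)\cong (A(\BQ(i)) \rtimes \mu_4) \rtimes \{1,\epsilon\}$ by determining its three factors separately.

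First I would fix the $\{1,\epsilon\}$-component using the determinant. The norm $N(\varpi)=n+1\equiv 6\pmod 8$ has odd $2$-adic valuation, so right multiplication by $\varpi$ swaps the two geometric components of $X_U$ over $\BQ(i)$, placing it in the nontrivial $\epsilon$-coset; by contrast $N(1+2\varpi)=4n+1\equiv 5\pmod 8$ is a $2$-adic unit, so $1+2\varpi$ lies in the identity coset. Next, for the $A(\BQ(i))$-translation part, I would track the action on the cusp $[\infty]\in X_U$: replacing the matrix image of $\varpi$ (resp.\ $1+2\varpi$) by the product of a suitable element of $U_2$ with a coset representative in $\GL_2(\BQ_2)$ whose effect on $[\infty]$ can be read off from the bijection $\tau:\tfrac12\BZ[i]/(1+i)\BZ[i]\cong\CS$. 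Finally, the $\mu_4$-twist $[i]$ in $Q^{\varpi}$ is determined by the effect of $\varpi$ on the cotangent space at the CM point $P_n$: under the analytic uniformization $\tau:\BC/(1+i)\BZ[i]\to A(\BC)$, multiplication by $\sqrt{-n}-1\in K_{n,2}^\times$ on the tangent line descends to an element of $\mu_4$, which must match $[i]$ once the class of the representative in $\Aut_{\BQ(i)}(X_0(32)_{\BQ(i)})/A(\BQ(i))\simeq\mu_4$ is computed.

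For Part (2), I would first verify the algebraic claims by direct matrix calculation. A short computation gives $j^2=I$, and writing $M_{\sqrt{-n}}=\matrixx{-1}{1/4}{-4(n+1)}{1}$, the identity $j\,M_{\sqrt{-n}}\,j = -M_{\sqrt{-n}}$ follows by an explicit multiplication, so $jxj=\bar x$ for all $x\in K_n$. To check that $j$ normalizes $U$ it suffices to verify $juj^{-1}\in U$ on a set of generators for $U_2$, which reduces to two congruence conditions on the resulting matrix entries. The formula for the Hecke action of $j$ then follows by the same three-step decomposition as in Part (1): the value $\det(j)=-1$ together with the character $\chi_0$ of Theorem \ref{thm2.4}(3) forces the $\epsilon$-coset; the translation $\tau(\tfrac{1+i}2)$ is obtained by following $[\infty]$; and the $[-i]$ twist is forced by compatibility with complex conjugation (since $j$ implements $x\mapsto\bar x$ on $K_n$), once the corresponding twist for $\varpi$ in Part (1) is known.

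\textbf{Main obstacle.} The most delicate step is isolating the $\mu_4$-twist ($[i]$ in Part (1), $[-i]$ in Part (2)). In contrast to the $\epsilon$-coset, which is read off from determinant valuations, and the cusp translation, which is read off from the image of $[\infty]$, this twist is invisible on cusps and requires CM-theoretic input. Concretely, one must compare the local action of the relevant element of $K_{n,2}^\times$ on the universal object at $P_n$ with the specific analytic uniformization $\tau:\BC/(1+i)\BZ[i]\to A(\BC)$, and the normalizations fixed at the start of Section 3 and in \cite{Tian} must be respected throughout. I expect this step to rely on a lemma identifying the right translation of $\sqrt{-n}\in K_{n,2}^\times$ on $X_U$ at a CM point with a specific element of $\mu_4$ acting on the tangent line, which is the subtle point the present formulas encode.
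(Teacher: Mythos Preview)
Your outline for the $\epsilon$-coset and the translation part matches the paper, but your ``main obstacle'' rests on a misconception: the $\mu_4$-twist is \emph{not} invisible on cusps, and no CM-theoretic input is required.

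The paper pins down the twist by evaluating at a \emph{second} cusp. Having established that $Q^{\varpi\epsilon} = \alpha Q + R$ with $\alpha \in \mu_4$ and $R$ a cusp, one sets $Q=[\infty]=\tau(0)$ to obtain $R$, and then $Q=[0]=\tau(1/2)$ to obtain $\alpha\,\tau(1/2)+R$; since $\tau(1/2),\tau(i/2),\tau(-1/2),\tau(-i/2)$ are four distinct elements of $A[(1+i)^3]$, this determines $\alpha$. Both evaluations reduce to explicit factorizations in $\GL_2(\BQ_2)$ producing a representative in $\GL_2(\BZ_2)$, from which the cusp is read off via $\CS \cong N(\BZ_2)\backslash \GL_2(\BZ_2)/U_0(32)_2$. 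The same two-cusp recipe handles $1+2\varpi$ and $j\epsilon$; no CM point, tangent space, or compatibility-with-conjugation argument enters. One then passes from $\varpi\epsilon$, $j\epsilon$ to $\varpi$, $j$ by applying $\epsilon$ on the right and using that $\epsilon$ acts on $\Aut_{\BQ(i)}(X_0(32)_{\BQ(i)})$ by $i\mapsto -i$.

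Two smaller corrections. The criterion for the nontrivial $\epsilon$-coset is that the $2$-adic unit part of $\det$ is $\equiv 3\pmod 4$, not that the $2$-adic valuation is odd; your argument for $\varpi$ reaches the right conclusion here, but the stated criterion would fail e.g.\ for an element with $\det=2$ at the place $2$. And your tangent-line idea is not well-posed as written: right translation by $\varpi$ does not fix $P_n$, so there is no induced action on a single tangent space to compare with $\mu_4$, and the $2$-adic quantity $\sqrt{-n}-1$ has no direct interpretation as a complex scalar on the tangent line.
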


\begin{proof}
The right translation by an element $g\in \GL_2(\BA_f)$ switches the two geometric
components of $X_U$ if and only if the image of $g$ under the
composition
$$\GL_2(\wh{\BQ})\stackrel{\det}{\lra}
\wh{\BQ}^\times=\BQ^\times_+\cdot \wh{\BZ}^\times \lra
(\BZ_2/4\BZ_2)^\times \cong \{\pm 1\}$$ is $-1$. For example, all
$\epsilon=\matrixx{1}{0}{0}{-1},\  j=\matrixx{1}{0}{8}{-1}$,\ and
$\varpi=\matrixx{-2}{1/4}{-4(n+1)}{0}$ are such elements,  but
$1+2\varpi$ is not.

Hence, the actions of $\varpi \epsilon$,
$1+2\varpi$ and $j\epsilon$ take the form
$$Q^{\varpi \epsilon}=\alpha Q+R, \quad Q^{1+2\varpi}=\beta Q +S,
\quad Q^{j\epsilon}=\gamma Q+T$$ where $\alpha, \beta, \gamma\in
\mu_4$ and $R, S, T\in\CS$ are cusps of $X_0(32)$. Here the right sides belong to
$$\Aut_{\BQ(i)}(X_0(32)_{\BQ(i)})=\Aut(X_U/\BQ(i))\subset
\Aut_\BQ(X_U).$$

To compute $R, S, T$, we take $Q=[\infty]$. In terms of the complex uniformization
$$X_0(32)(\BC)=\GL_2(\BQ)_+ \bs (\CH\cup \BP^1(\BQ)) \times
\GL_2(\wh{\BQ})/U_0(32),$$
we have
$$
R=[\infty, \varpi \epsilon], \quad
S=[\infty, 1+2\varpi], \quad
T=[\infty, j \epsilon].
$$
We need to convert them to expressions of the form $[\theta]=[\theta,1]$ with $\theta\in  \BP^1(\BQ)$.
By the complex uniformization,
$$\begin{aligned}
\CS&=\GL_2(\BQ)_+\bs \BP^1(\BQ)\times
\GL_2(\wh{\BQ})/U_0(32)=P(\BQ)_+\bs \GL_2(\wh{\BQ})/U_0(32)\\
&=P(\BQ)_+\bs P(\wh{\BQ})\cdot \GL_2(\wh{\BZ})/U_0(32)=N(\BZ_2)\bs
\GL_2(\BZ_2)/U_0(32)_2. \end{aligned}$$
For the truth of the last identity, we refer to \cite[Lemma 4.12(2)]{YZZ}.
We have decompositions in
$\GL_2(\BQ_2)$ as follows:
$$\begin{aligned}
\varpi\epsilon&=\matrixx{1/4}{0}{0}{8}\matrixx{-8}{-1}{1}{0}\matrixx
{-(n+1)/2}{0}{4(n+3)}{1},\\
1+2\varpi&=\matrixx{1}{1/2}{0}{1}\matrixx{1}{0}{-16}{1}\matrixx{4n+1}{0}{8(7n+1)}{1}, \\
j\epsilon&=\matrixx{1}{0}{8}{1}.
\end{aligned}$$
It follows that
$$
R=\left[\infty, \matrixx{-8}{-1}{1}{0}_2\right]=\left[\matrixx{-8}{-1}{1}{0}^{-1}\infty, 1\right]=[0]=\tau(1/2).
$$
Similarly, $S=[1/16]=\tau(1)$ and $T=[-1/8]=\tau((1-i)/2)$.

To find $\alpha, \beta, \gamma$, we only need
check the action on the cusp $[0]=\left[\infty,\matrixx{0}{1}{-1}{0}\right]$. We have  decompositions:
$$\begin{aligned}
\matrixx{0}{1}{-1}{0}\varpi\epsilon
&=\matrixx{8}{0}{0}{1/4}\matrixx{1}{0}{8}{1}\matrixx{(n+1)/2}{0}{-4(n+3)}{-1},\\
\matrixx{0}{1}{-1}{0}(1+2\varpi)&=\matrixx{-2}{+1}{0}{-1/2}\matrixx{7}{-3}{-2}{1}
\matrixx{4n-17}{3}{8(n-5)}{7},\\
\matrixx{0}{1}{-1}{0}j\epsilon&=\matrixx{8}{1}{-1}{0}.
\end{aligned}$$
It follows that
$$[0]^{\varpi\epsilon}=[-1/8]=\tau((1-i)/2), \quad
[0]^{1+2\varpi}=[1/2]=\tau(-1/2), \quad
[0]^{j\epsilon}=[0]=\tau(1/2).$$
We then have the equations
$$\tau((1-i)/2)=\alpha\tau(1/2)+\tau(1/2), \quad \tau(-1/2)=\beta
\tau(1/2)+\tau(1), \quad \tau(1/2)=\gamma\tau(1/2)+\tau((1-i)/2),$$
which give $\alpha=-i$, $\beta=1$ and $\gamma=i$.

Therefore, we have
$$Q^{\varpi\epsilon}=[-i]Q+\tau(1/2), \quad\
Q^{1+2\varpi}=Q+\tau(1), \quad
Q^{j\epsilon}=[i]Q+\tau(\frac {1-i}2).$$
For the first and the the third equations, we take a further $\epsilon$-action on both sides. Then
$$
Q^{\varpi}=([-i]Q+\tau(1/2))^\epsilon
=[i](Q^\epsilon)+\tau(1/2)
$$
and
$$
Q^{j}=([i]Q+\tau(\frac {1-i}2))^\epsilon=[-i](Q^\epsilon)+\tau(\frac {1+i}2).$$
It finishes the proof.
\end{proof}

The map $K_2^\times \to \Aut(X_U)$ induces an action of $K_2^\times$ on $\Hom(X_U,A)$.
Still use $\pi$ denote this action. Now it is easy to have the action on $f_n=f_0\circ [1-i].$

\begin{cor} \label{integral5}
In $\Hom(X_U,A)$,
$$\pi(\varpi)f_n=f_n+\tau(\frac{1+i}2), \quad\ \pi(1+2\varpi)f_n=f_n.$$
\end{cor}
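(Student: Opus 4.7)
The plan is to verify both identities by direct computation from Proposition~\ref{heckeaction5}(1), using the rewriting $f_n=f_0\circ[1-i]$ in $\Hom(X_U,A)$. Here $[1-i]$ is the endomorphism of $X_U\cong A_{\BQ(i)}$ arising from the CM action $\BZ[i]=\End(A_{\BQ(i)})$, and the identity $f_0-f_0\circ[i]=f_0\circ(1-[i])=f_0\circ[1-i]$ is immediate.

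For the first formula, Proposition~\ref{heckeaction5}(1) gives $Q^\varpi=[i]Q^\epsilon+\tau(1/2)$. Since $[1-i]$ is a group endomorphism commuting with $[i]$ and $(1-i)i=1+i$,
\[
f_n(Q^\varpi)=f_0\bigl([1-i]\bigl([i]Q^\epsilon+\tau(1/2)\bigr)\bigr)=f_0\bigl([1+i]Q^\epsilon\bigr)+f_0\bigl(\tau((1-i)/2)\bigr).
\]
The semidirect-product relation $\epsilon[i]\epsilon^{-1}=[-i]$ in $\Aut_\BQ(X_U)$ yields $[1+i]\circ\epsilon=\epsilon\circ[1-i]$, and $f_0\circ\epsilon=f_0$ because $\epsilon$ swaps the two geometric components of $X_U$ while $f_0$ identifies both with $X_0(32)$. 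Hence $f_0\circ[1+i]\circ\epsilon=f_0\circ\epsilon\circ[1-i]=f_0\circ[1-i]=f_n$, giving $f_0([1+i]Q^\epsilon)=f_n(Q)$. Tracking the Galois-conjugate CM action on the component opposite to $Q$ (where $[1-i]$ acts as $1+i$) reinterprets the $\BQ(i)$-point $\tau((1-i)/2)$ as $\tau((1+i)/2)$ in the final output, yielding $\pi(\varpi)f_n=f_n+\tau((1+i)/2)$.

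For the second formula, Proposition~\ref{heckeaction5}(1) gives $Q^{1+2\varpi}=Q+\tau(1)$ within the same component, so additivity of $f_n$ on each component yields $f_n(Q^{1+2\varpi})=f_n(Q)+(1-i)\tau(1)=f_n(Q)+\tau(1-i)$. Since $1-i=-i(1+i)\in(1+i)\BZ[i]$, we have $\tau(1-i)=0$ in $A(\BC)=\BC/(1+i)\BZ[i]$, so $\pi(1+2\varpi)f_n=f_n$.

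The main bookkeeping obstacle is the Galois twist of the CM action on the two geometric components of $X_U$: on the twisted component $[1-i]$ acts as $1+i$, which swaps $\tau((1\pm i)/2)$ in intermediate steps and must be tracked carefully to produce the correct final constant $\tau((1+i)/2)$.
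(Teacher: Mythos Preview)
Your approach matches the paper's: rewrite $f_n = f_0 \circ [1-i]$ and push Proposition~\ref{heckeaction5}(1) through. The second identity is correct as you have it.

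In the first identity, though, your handling of the constant term is not a clean step. Once you reach $f_0\bigl([1+i]Q^\epsilon + \tau((1-i)/2)\bigr)$, the split into $f_0\bigl([1+i]Q^\epsilon\bigr) + f_0\bigl(\tau((1-i)/2)\bigr)$ does not literally make sense: $\tau((1-i)/2)$ is a translation automorphism of $X_U$ (defined over $\BQ(i)$), not a point of $X_U$ to which $f_0$ can be applied on its own. Your sentence about ``the Galois-conjugate CM action on the component opposite to $Q$'' gestures at the right phenomenon---on the $\epsilon$-twisted component the $\BQ(i)$-coefficients get conjugated---but as written it is a heuristic, not an argument, and the aside ``where $[1-i]$ acts as $1+i$'' is irrelevant since $[1-i]$ has already been applied.

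The paper avoids this by packaging the constant into the same $\epsilon$-twist before ever applying $f_0$. Since $\epsilon$ conjugates both $[1+i]\mapsto[1-i]$ and $\tau\bigl(\tfrac{1-i}{2}\bigr)\mapsto\tau\bigl(\tfrac{1+i}{2}\bigr)$, one has
\[
[1+i]Q^\epsilon + \tau\bigl(\tfrac{1-i}{2}\bigr) \;=\; \Bigl([1-i]Q + \tau\bigl(\tfrac{1+i}{2}\bigr)\Bigr)^{\epsilon},
\]
and then $f_0\circ\epsilon = f_0$ gives $f_0\bigl([1-i]Q\bigr) + \tau\bigl(\tfrac{1+i}{2}\bigr) = f_n(Q) + \tau\bigl(\tfrac{1+i}{2}\bigr)$ directly, with no component bookkeeping required. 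This single rewriting resolves precisely the obstacle you flagged at the end.
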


\begin{proof}
For any $Q\in X_U(\BC)$,
$$
(\pi(\varpi)f_n)(Q)= f_0([1-i] Q^\varpi).
$$
By the proposition,
$$
[1-i] Q^\varpi=[1-i]([i]Q^\epsilon+\tau(1/2)) =[1+i]Q^\epsilon+\tau(\frac{1-i}2)
=( [1-i]Q+\tau(\frac{1+i}2) )^\epsilon.$$
Note that $f_0$ is invariant under $\epsilon.$
Thus
$$
(\pi(\varpi)f_n)(Q)= f_0([1-i]Q+\tau(\frac{1+i}2))
= f_0([1-i]Q)+\tau(\frac{1+i}2)
= f_n(Q)+\tau(\frac{1+i}2).
$$
The second equality is proved similarly.
\end{proof}

The corollary is an integral version of Lemma \ref{choice} for $n\equiv5\pmod8$.
Now $f_n$ lies in the space $\pi^{\GL_2(\wh\BZ^{(2)})\cdot K_{2}^\times}\simeq \pi_2^{K_{2}^\times}$, which is one-dimensional by Theorem \ref{TS1} and Theorem \ref{TS2}.

Now we are ready to prove Theorem \ref{CM point2} for $n\equiv5\pmod8$, i.e.,
$$ z_n^{\sigma_\varpi}=z_n+\tau(\frac{1+i}2), \qquad
z_n^{\sigma_{1+2\varpi}}=z_n, \qquad \bar z_n=-z_n+\tau(1).$$
For the first two equalities, the key is that the Galois action of $K_2^\times$ on $P$ (via the Artin map $\sigma$) is the same as the Hecke action of $K_2^\times$, by the special form of
$P_n=[h,1]$.
Then by Corollary \ref{integral5},
$$
z_n^{\sigma_\varpi}=f_n(P_n^{\sigma_\varpi})
=f_n(P_n^{\varpi})
=(\pi(\varpi)f_n)(P_n)
=f_n(P_n)+\tau(\frac{1+i}2)
=z_n+\tau(\frac{1+i}2).
$$
The second equality is similarly obtained.

For the third equality, the Hecke action of the element $j$ in Proposition \ref{heckeaction5} (2) gives the complex conjugation of $P_n=[h,1]$ by the condition $jxj^{-1}=\ov{x}$ for all
$x\in K^\times$.
In fact,
$$
\bar P_n=[\bar h,1], \quad P_n^j=[h,j]=[j(h),1].
$$
It suffices to check $\bar h=j(h)$. Note that $\{h, \bar h\}$ is the set of fixed points of $K^\times$ in $\CH^\pm$. By $jK^\times j=K^\times$, we see that $\{h, \bar h\}=\{j(h), j(\bar h)\}$ as sets. Since $\det(j)=-1<0$, we have $j(h)\in \CH^-$ and thus $j(h)=\bar h$.

Hence,
$$
\bar z_n=f_n(\bar P_n)
=f_n(P_n^{j})
=f_n([-i] P_n^\epsilon+\tau(\frac {1+i}2))
=f_0([-1-i] P_n^\epsilon+\tau(1)).
$$
By $[-1-i] P_n^\epsilon+\tau(1)=([-1+i] P_n^\epsilon+\tau(1))^\epsilon$, we have
$$
\bar z_n
=f_0([-1+i] P_n+\tau(1))
=-f_0(P_n)+\tau(1)
=-z_n+\tau(1).
$$
This proves the theorem in the current case.

Finally, we prove Proposition \ref{CM point1} for $n\equiv 5\pmod 8$.
By the reciprocity law, the point $P_n$ is defined over the abelian extension of $K$ with Galois group $\wh K^\times/ K^\times (\wh K^\times\cap U)$.
It is easy to see $(\wh{\BZ}+4\wh{O}_{K})^\times \subset U$.
Then $P_n$ is defined over the ring class field $H_{n,4}$ of $K$
with Galois group $\wh K^\times/ K^\times (\wh{\BZ}+4\wh{O}_{K})^\times$.
We have
\begin{multline*}
\Gal(H_{n,4}/H_n)
\cong K^\times \wh{O}_{K}^\times/
K^\times (\wh{\BZ}+4\wh{O}_{K})^\times
= \wh{O}_{K}^\times/ (\wh{\BZ}+4\wh{O}_{K})^\times \\
= O_{K,2}^\times/ (\BZ_2+4 O_{K,2})^\times
= O_{K,2}^\times/ \BZ_2^\times(1+4 O_{K,2})
= (O_{K,2}/4 O_{K,2})^\times/ \{\pm1\}.
\end{multline*}
As in the proof of Lemma \ref{coset5}, the right-hand side is isomorphic to $\BZ/2\BZ\times \BZ/2\BZ$, and generated by $\frac12\varpi^2$ and $1+2\varpi$.
Consider $z_n=f_n(P_n)$ and $H_n'=H_n(z_n) \subset H_{n,4}$.
By Theorem \ref{CM point2},
$$z_n^{\sigma_{\varpi^2/2}}=z_n+\tau(1), \qquad
z_n^{\sigma_{1+2\varpi}}=z_n.$$
It follows that $H_n'$ is the index-two subfield of $H_{n,4}$ fixed by $\sigma_{1+2\varpi}$.
Note that $\sqrt2\in H_{n,4}$ but $\sqrt2\notin H_{n}'$ by
$\sigma_{1+2\varpi}(\sqrt2)=-\sqrt2$.
The equations also indicate that $2z_n$ is invariant under both $\sigma_{\varpi^2/2}$ and $\sigma_{1+2\varpi}$, and thus it is defined over $H_n$.
The proposition is proved in this case.

\

\subsection*{Case $n\equiv 6\pmod 8$}

Now we consider the case $n\equiv 6\pmod 8$.
The exposition is very similar to the previous case $n\equiv 5\pmod 8$, and the calculations are slightly simpler.
We still follow the process of the previous case, but only sketch some of the proofs.

In this case, $f_n:X_U\to A$ is given by $f_n=f_0\circ [i]$, and the embedding of $K$ into $M_2(\BQ)$ is given by
$$\sqrt{-n}\lto {\matrixx{}{1/4}{-4n}{}}.$$
The embedding still gives
$(\wh{\BZ}+4\wh{O}_{K})^\times \subset U$.

\begin{lem} \label{coset6}
Assume  $n\equiv 6\pmod 8$.
\begin{enumerate}[(1)]
\item The quotient
$K_{2}^\times/\BQ_2^\times (1+4 O_{K,2})$
is isomorphic to $\BZ/4\BZ\times \BZ/2\BZ$, and generated by the order-two element
$\varpi=(\sqrt{-n})_2$ and the order-four element $1+\varpi$.
\item
The multiplicative group $K_2^\times$ normalizes $U_2$.
\end{enumerate}
\end{lem}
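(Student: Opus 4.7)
The plan is to treat the two parts of Lemma \ref{coset6} much as in the $n\equiv 5\pmod 8$ case (Lemma \ref{coset5}), exploiting that $\varpi=\sqrt{-n}$ is a uniformizer of the ramified quadratic extension $K_{n,2}/\BQ_2$ (with residue field $\BF_2$), and that $\varpi^4 = n^2$ differs from $4$ by a unit in $\BZ_2^\times$, so $4 O_{K,2} = \varpi^4 O_{K,2}$.

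For part (1), I would first compute the order of the quotient. Using the filtration
$$O_{K,2}^\times \supset 1+\varpi O_{K,2} \supset 1+\varpi^2 O_{K,2}\supset 1+\varpi^3 O_{K,2}\supset 1+\varpi^4 O_{K,2} = 1+4 O_{K,2},$$
whose successive quotients each have order $2$, one gets $|O_{K,2}^\times/(1+4 O_{K,2})| = 8$. Factoring out $\BZ_2^\times$, whose image has order $2$ (generated by $-1$, since $-1-1=-2$ has $\varpi$-valuation $2<4$), gives $|O_{K,2}^\times/\BZ_2^\times(1+4 O_{K,2})| = 4$. Adjoining the uniformizer (noting $\varpi^2=-n\in\BQ_2^\times$) doubles this to $8$. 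To identify the group structure, I would verify: $\varpi$ has order $2$ in the quotient (since $\varpi^2\in\BQ_2^\times$ and $\varpi\notin O_{K,2}^\times$); the element $1+\varpi$ has order $4$, by computing $(1+\varpi)^2 = (1-n) + 2\varpi \equiv 1 + \frac{2\varpi}{1-n}\pmod{\BQ_2^\times}$ which lies in $(1+\varpi^3 O_{K,2})\setminus(1+\varpi^4 O_{K,2})$, hence is nontrivial, while $(1+\varpi)^4$ has its non-scalar part of $\varpi$-valuation $\geq 5$, so lies in $\BQ_2^\times(1+4 O_{K,2})$; and the two subgroups $\langle\varpi\rangle$ and $\langle 1+\varpi\rangle$ intersect trivially in the quotient because powers of $1+\varpi$ are units while $\varpi$ is not. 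A count of orders then forces the whole quotient to be $\langle\varpi\rangle\times\langle 1+\varpi\rangle\cong\BZ/2\BZ\times\BZ/4\BZ$.

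For part (2), the key reduction is that $\BQ_2^\times$ embeds as scalars (which normalize every subgroup), and that $1+4 O_{K,2}$ actually lies inside $U_2$: a direct check from the embedding $\sqrt{-n}\mapsto\matrixx{0}{1/4}{-4n}{0}$ shows $1+4(a+b\sqrt{-n})\mapsto\matrixx{1+4a}{b}{-16nb}{1+4a}$, whose entries are integral, whose lower-left entry is divisible by $32$ (using $n=2m$ with $m$ odd), and whose diagonal entries are equal. So by part (1), normalization of $U_2$ reduces to checking the two generators $\varpi$ and $1+\varpi$.

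For $\varpi$, setting $M=\matrixx{0}{1/4}{-4n}{0}$, one computes
$$M\matrixx{a}{b}{c}{d}M^{-1} = \matrixx{d}{-c/(16n)}{-16nb}{a},$$
and reading off the $U_2$-conditions ($32\mid c$ and $4\mid(a-d)$ become $32\mid 16nb$, using $n$ even, and $4\mid(d-a)$) shows the result lies in $U_2$. For $1+\varpi$, setting $N=I+M$, a similar (but longer) conjugation calculation modulo the unit $\det N = 1+n$ verifies all required integrality and congruence conditions, using crucially that $n=2m$ with $m$ odd. The expected main obstacle is this last matrix computation for $1+\varpi$: it is straightforward but the book-keeping — tracking divisibility by $4$, $16$, $32$ simultaneously across the four entries after multiplying by the inverse of $1+n$ — is where a slip is most likely, so I would organize it entry by entry and reduce mod $4$ only at the end.
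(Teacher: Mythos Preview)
Your proposal is correct and follows essentially the same route the paper takes (the paper simply says the proof is similar to that of Lemma~\ref{coset5}, and your argument is exactly such an adaptation: compute the order of the quotient, pin down the structure by exhibiting generators of the right orders, then reduce part~(2) to checking the two generators by explicit matrix conjugation using $1+4O_{K,2}\subset U_2$). One small slip to fix: in your filtration the first quotient $O_{K,2}^\times/(1+\varpi O_{K,2})\cong\BF_2^\times$ is trivial, not of order~$2$, which is why the total is $8$ rather than $16$.
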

\begin{proof}
The proof is similar to that of Lemma \ref{coset5}.
\end{proof}

Now we describe the homomorphism
$$K_2^\times/\BQ_2^\times
(1+4 O_{K_n,2})\lra \Aut_\BQ(X_U).$$

\begin{prop}\label{heckeaction6}
Assume  $n\equiv 6\pmod 8$.
\begin{enumerate}[(1)]
\item For any
$Q\in X_U(\BC)$,
$$Q^{\varpi}=-Q^\epsilon+\tau(\frac12), \quad Q^{1+\varpi}=-Q^\epsilon+\tau(\frac{1-i}2).$$
\item  The order-two element $\epsilon=\matrixx{1}{}{}{-1}\in
\GL_2(\BQ)$ normalizes $K^\times$ such that $\epsilon x \epsilon=\ov{x}$ for all
$x\in K^\times$.
\end{enumerate}
\end{prop}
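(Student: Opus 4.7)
Part (2) is a direct matrix computation. Since the embedding sends $\sqrt{-n} \mapsto \matrixx{0}{1/4}{-4n}{0}$, conjugation by $\epsilon = \matrixx{1}{}{}{-1}$ negates the off-diagonal entries, giving $\epsilon\,\sqrt{-n}\,\epsilon = -\sqrt{-n} = \overline{\sqrt{-n}}$. As $\epsilon^2 = 1$ and $K$ is $\BQ$-generated by $\sqrt{-n}$, we obtain $\epsilon x \epsilon = \bar x$ for every $x \in K^\times$, so $\epsilon$ normalizes $K^\times$ and realizes its nontrivial $\BQ$-automorphism.

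For part (1), I follow the template of the proof of Proposition \ref{heckeaction5}. The right translation by $g \in \GL_2(\BA_f)$ on $X_U \cong X_0(32)_{\BQ(i)}$ switches the two geometric components iff the image of $\det(g)$ under $\GL_2(\wh\BQ) \to \wh\BQ^\times \to \wh\BZ^\times \to (\BZ_2/4\BZ_2)^\times \cong \{\pm1\}$ is $-1$. Using $n \equiv 6 \pmod 8$, one checks that $\det(\varpi) = n$ and $\det(1+\varpi) = 1+n$ both map to $-1$, so $\varpi$ and $1+\varpi$ switch the two components, while $\varpi\epsilon$ and $(1+\varpi)\epsilon$ preserve them. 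Consequently the induced actions on $X_U$ take the form
$$Q^{\varpi\epsilon} = \alpha Q + R, \qquad Q^{(1+\varpi)\epsilon} = \alpha' Q + R',$$
for some $\alpha, \alpha' \in \mu_4 = \Aut(A)$ and cusps $R, R' \in \CS \cong A[(1+i)^3]$. The claim I aim for is $\alpha = \alpha' = -1$, $R = \tau(1/2)$, and $R' = \tau((1+i)/2) = [1/8]$; then applying $\epsilon$ to both identities and using $\tau(1/2)^\epsilon = \tau(1/2)$, $\tau((1+i)/2)^\epsilon = \tau((1-i)/2)$ recovers the two stated formulas.

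To locate $R, R'$ and $\alpha, \alpha'$, I use strong approximation to write $\varpi\epsilon$ and $(1+\varpi)\epsilon$ as products $p_+\gamma u$ with $p_+ \in P(\BQ)_+$, $\gamma \in \GL_2(\BQ)_+$, and $u \in U_0(32)_2$, and read off $[\infty]^{\varpi\epsilon} = [\gamma^{-1}\infty]$. An explicit trial gives
$$\varpi\epsilon = \matrixx{1/4}{0}{0}{8}\matrixx{0}{1}{-1}{0}\matrixx{n/2}{0}{0}{-1},$$
which yields $R = [0] = \tau(1/2)$ (note that $\det u = -n/2 \equiv 1 \pmod 4$ using $n \equiv 6 \pmod 8$), and an analogous decomposition for $(1+\varpi)\epsilon$ is expected to produce $R' = [1/8]$. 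I then repeat the procedure at the cusp $[0] = [\infty,\matrixx{0}{-1}{1}{0}]$, decomposing $\matrixx{0}{-1}{1}{0}\varpi\epsilon$ and $\matrixx{0}{-1}{1}{0}(1+\varpi)\epsilon$ in the same way, to determine $\alpha, \alpha'$ from the identities $\alpha\tau(1/2) + R = [0]^{\varpi\epsilon}$ and $\alpha'\tau(1/2) + R' = [0]^{(1+\varpi)\epsilon}$. The main obstacle lies in finding the decomposition for $(1+\varpi)\epsilon$: its upper-right entry $-1/4$ forces a factor of $1/4$ into $p_+'$, while the lower-left entry $-4n$ has $2$-adic valuation only $3$, so the missing factor of $4$ must be absorbed into $\gamma'$ to satisfy $32 \mid c(u')$ while simultaneously keeping $\det(u') \in \BZ_2^\times$ and $\gamma' \in \GL_2(\BQ)_+$; balancing these constraints requires careful arithmetic bookkeeping specific to the class $n \equiv 6 \pmod 8$.
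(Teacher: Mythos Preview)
Your approach is exactly the paper's: reduce to the component-preserving operators $\varpi\epsilon$ and $(1+\varpi)\epsilon$, read off the translation part by evaluating at $[\infty]$, and pin down the $\mu_4$-factor by evaluating at $[0]$. Part~(2) and the treatment of $\varpi\epsilon$ are fine and match the paper verbatim.

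The gap is that you stop at the ``obstacle'' for $(1+\varpi)\epsilon$ without resolving it, and the difficulty you describe stems from insisting on a diagonal $p_+'$ as in the $\varpi\epsilon$ case. The paper instead takes a \emph{unipotent} upper-triangular factor:
\[
(1+\varpi)\epsilon
=\matrixx{1}{1/4}{0}{1}\matrixx{-1}{0}{8}{-1}\matrixx{-(1+n)}{0}{-8(1+n/2)}{1}.
\]
Here the first factor lies in $N(\BQ)\subset P(\BQ)_+$, the middle factor is in $\GL_2(\BZ)$, and the last factor lies in $U_0(32)_2$ since $8(1+n/2)\equiv 0\pmod{32}$ and $-(1+n)\in\BZ_2^\times$ when $n\equiv 6\pmod 8$. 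This sends $[\infty]$ to $[1/8]=\tau((1+i)/2)$, so $R'=\tau((1+i)/2)$ as you anticipated. For the $\mu_4$-factor, the paper decomposes
\[
\matrixx{0}{1}{-1}{0}(1+\varpi)\epsilon
=\matrixx{4}{-1}{0}{1/4}\matrixx{1}{0}{-4}{1}\matrixx{-(1+n)}{0}{-8(1+n/2)}{1},
\]
giving $[0]^{(1+\varpi)\epsilon}=[1/4]=\tau(i/2)$; solving $\tau(i/2)=\alpha'\tau(1/2)+\tau((1+i)/2)$ forces $\alpha'=-1$. Applying $\epsilon$ then yields the stated formula. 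So once you allow $p_+'\in N(\BQ)$ rather than the diagonal torus, the bookkeeping dissolves.
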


\begin{proof}
Follow the strategy of Proposition \ref{heckeaction5}.
The Hecke operators
$\varpi\epsilon $ and $(1+\varpi)\epsilon$ do not switch the two geometric
components of $X_U$.
We have the decompositions
$$\begin{aligned}
\varpi\epsilon&=\matrixx{1/4}{0}{0}{8}\matrixx{0}{1}{-1}{0}\matrixx{n/2}{0}{0}{-1},\\
(1+\varpi)\epsilon&=\matrixx{1}{1/4}{0}{1}\matrixx{-1}{0}{8}{-1}\matrixx{-(1+n)}
{0}{-8(1+n/2)}{1}.\end{aligned}$$
It follows that $\varpi\epsilon $ and $(1+\varpi)\epsilon$ maps $[\infty]$ to $[0]$ and
$[1/8]$, respectively. Thus they acts on $X_U$ take form
$$Q^{\varpi\epsilon}=\alpha Q+\tau(1/2), \quad Q^{(1+\varpi)\epsilon}=\beta Q
+\tau((1+i)/2)$$for some $\alpha, \beta\in \mu_4$.
Use the decompositions
$$\begin{aligned}
\matrixx{0}{1}{-1}{0}\varpi\epsilon &
=\matrixx{-8}{0}{0}{-1/4}\matrixx{n/2}{0}{0}{-1},\\
\matrixx{0}{1}{-1}{0}(1+\varpi)\epsilon&
=\matrixx{4}{-1}{0}{1/4}\matrixx{1}{0}{-4}{1}\matrixx{-(1+n)}{0}{-8(1+n/2)}{1}.
\end{aligned}$$
Setting $Q=[0]=\tau(1/2)$, we have the equations
$$\tau(0)=\alpha\tau(1/2)+\tau(1/2), \quad
\tau(i/2)=\beta\tau(1/2)+\tau((1+i)/2).$$ It follows that $\alpha=-1$ and
$\beta=-1$. Hence, we have
$$Q^{\varpi\epsilon}=-Q+\tau(\frac12), \quad Q^{(1+\varpi)\epsilon}=-Q+\tau(\frac{1+i}2).$$
Further actions by $\epsilon$ gives the results.
\end{proof}

We have the following integral version of Lemma \ref{choice} for $n\equiv5\pmod8$.

\begin{cor} \label{integral6}
Assume  $n\equiv 6\pmod 8$.
In $\Hom(X_U,A)$,
$$\pi(\varpi)f_n=f_n+\tau(-\frac{i}2), \quad\ \pi(1+\varpi)f_n=f_n+\tau (\frac {1-i}2).$$
\end{cor}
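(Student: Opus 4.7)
The plan is to mimic the proof of Corollary \ref{integral5} verbatim, substituting the case--$6$ data. The two ingredients are: the explicit Hecke action of $\varpi$ and $1+\varpi$ on $X_U$ given by Proposition \ref{heckeaction6}; and the fact that $f_0: X_U \to A$ factors through $X_U \to X_0(32)$, so $f_0$ is invariant under $\epsilon$ (which generates $\Gal(X_U/X_0(32))$).

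First I compute $\pi(\varpi) f_n$. For $Q \in X_U(\BC)$, by definition
$$
(\pi(\varpi) f_n)(Q) = f_n(Q^{\varpi}) = f_0\bigl([i]\, Q^{\varpi}\bigr).
$$
Plugging in $Q^{\varpi} = -Q^{\epsilon} + \tau(1/2)$ from Proposition \ref{heckeaction6}(1) and pushing $[i]$ through the group law on $A(\BQ(i))$ (noting that $[i]$ acts on $\BC/(1+i)\BZ[i]$ by multiplication by $i$ and $[i](-P)=[-i]P$), one obtains
$$
[i]\,Q^{\varpi} = [-i]\,Q^{\epsilon} + \tau(i/2).
$$
Now I use the semi-direct-product structure of $\Aut_\BQ(X_U) = (A(\BQ(i))\rtimes \mu_4)\rtimes \{1,\epsilon\}$: conjugation by $\epsilon$ sends $i\mapsto -i$, so $\epsilon\cdot[i]=[-i]\cdot\epsilon$ and $\tau(z)^\epsilon = \tau(\bar z)$. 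Hence
$$
[-i]\,Q^{\epsilon} + \tau(i/2)\ =\ \bigl([i]\,Q + \tau(-i/2)\bigr)^{\epsilon}.
$$
Applying $f_0$ and using its $\epsilon$-invariance then yields $(\pi(\varpi) f_n)(Q) = f_0([i]Q+\tau(-i/2)) = f_n(Q) + \tau(-i/2)$, as desired.

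The second identity is obtained by exactly the same maneuver starting from $Q^{1+\varpi} = -Q^{\epsilon} + \tau((1-i)/2)$: multiplying by $[i]$ turns the translation into $\tau((1+i)/2)$, and the identities $[-i]Q^\epsilon = ([i]Q)^\epsilon$ together with $\tau((1+i)/2) = \tau((1-i)/2)^\epsilon$ rewrite the result as $([i]Q+\tau((1-i)/2))^\epsilon$; applying $f_0$ gives $f_n+\tau((1-i)/2)$.

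I do not anticipate any genuine obstacle here; the two assertions are formal consequences of Proposition \ref{heckeaction6} and the $\epsilon$-invariance of $f_0$. The only mild point requiring care is bookkeeping in the semi-direct product $(A(\BQ(i))\rtimes\mu_4)\rtimes\{1,\epsilon\}$ when pulling the automorphism $[i]$ of $X_U$ across the automorphism $\epsilon$, but since this amounts merely to complex conjugation of the coefficients in $\BQ(i)$, the computation is mechanical and parallels the case $n\equiv 5\pmod 8$ without any new input.
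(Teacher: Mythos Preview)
Your proof is correct and follows exactly the approach the paper indicates: the paper's proof simply reads ``The proof is similar to that of Corollary \ref{integral5},'' and you have carried out precisely that computation, using Proposition \ref{heckeaction6} in place of Proposition \ref{heckeaction5} and the $\epsilon$-invariance of $f_0$ together with the conjugation rule $\epsilon\cdot[i]=[-i]\cdot\epsilon$.
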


\begin{proof}
The proof is similar to that of Corollary \ref{integral5}.
\end{proof}

Now we can prove Theorem \ref{CM point2} and Proposition \ref{CM point1} for $n\equiv6\pmod8$ similarly.
For example, the proof of $\bar z_n=-z_n$ is given by:
$$
\bar z_n=f_n(\bar P_n)
=f_n(P_n^{\epsilon})
=f_0([i] P_n^\epsilon)
=f_0(([-i] P_n)^\epsilon)
=f_0([-i] P_n)
=-z_n.
$$

\

\subsection*{Case $n\equiv 7\pmod 8$}

Now we consider the case $n\equiv 7\pmod 8$. Then $2$ is split over $K$. It is the simplest case since $f_n:X_0(32)\to A$ is just the identity map $i_0:X_0(32)\to A$. The theory does not involve the more complicated curve $X_U$. For example, Proposition \ref{choice} is true in this case since $\dim \pi^{U_0(32)}=1$ by the newform theory.

The embedding of $K$ into $M_2(\BQ)$ is given by
$$\sqrt{-n}\longmapsto
{\matrixx{\delta}{2}{-(n+\delta^2)/2}{-\delta}},$$
where $\delta\in \BZ$ satisfies $\delta^2\equiv -n\pmod {128}$.
It is easy to check that the embedding gives $\wh{O}_K^\times\subset U_0(32)$.
Then Proposition \ref{CM point1} is automatic in this case.

The following is devoted to prove Theorem \ref{CM point2} in this case.
Recall from the theorem that $v_2$ and $v_2'$ are the two places of $K_n$ above $2$ such that $v_2(\sqrt{-n}-\delta)\geq 6$, and that $\varpi\in K_{n,2}$ is an element with $v_2(\varpi)=1$ and $v_2'(\varpi)=0$.

\begin{prop}\label{prop3.9}
Assume  $n\equiv 7\pmod 8$.
Let
$$W=\matrixx{0}{1}{-32}{0}, \qquad
j=\matrixx{1}{}{-\delta}{-1}$$
be elements of $\GL_2(\BQ)$.
Then
\begin{enumerate}[(1)]
\item
The element $W$ normalizes $U_0(32)$, and
$$Q^W=-Q+\tau (1/2), \quad\ \forall\ Q\in X_0(32)(\BC).$$
\item
One has $j^2=1$, $jxj=\bar x$ for any $x\in K$, and $j\varpi^5\in W\cdot U_0(32)_2$.
Therefore,
$$Q^{j\varpi^5}=-Q+\tau (1/2), \quad\ \forall\ Q\in X_0(32)(\BC).$$
\end{enumerate}
\end{prop}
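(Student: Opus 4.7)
The plan is to handle (1) by the standard Atkin--Lehner calculation for level $32$, and to reduce (2) to a purely local identity at the prime $2$, where the hypothesis $\delta^2\equiv -n\pmod{128}$ provides the needed cancellations.

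For (1), I would first verify that $W$ conjugates $U_0(32)$ onto itself by direct matrix multiplication: for $\gamma=\matrixx{a}{b}{32c}{d}\in U_0(32)$, one checks $W\gamma W^{-1}=\matrixx{d}{-c}{-32b}{a}\in U_0(32)$. The induced action of $W$ on $X_0(32)\cong A$ is a $\BQ$-rational automorphism of the elliptic curve, hence of the form $Q\mapsto \alpha Q+T$ with $\alpha\in\mu_4$ and $T\in A(\BQ(i))$. Evaluating at the cusp $[\infty]=\tau(0)$ gives $T=[0]=\tau(1/2)$. The involution relation $W^2=1$ forces $\alpha^2=1$ and $(\alpha+1)\tau(1/2)=0$; since $2\tau(1/2)=\tau(1)=[1/16]\ne 0$ in $A$ (because $1\notin (1+i)\BZ[i]$), we must have $\alpha=-1$, giving $Q^W=-Q+\tau(1/2)$.

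For (2), the identities $j^2=I$ and $jxj=\bar x$ for $x\in K$ are direct matrix computations; the second reduces to checking $jMj=-M$, where $M$ is the matrix of $\sqrt{-n}$, and uses only the relation $-2\mu=n+\delta^2$. The essential assertion is $\gamma:=W^{-1}j\varpi^5\in U_0(32)_2$. Since any two valid choices of $\varpi$ differ by a unit in $O_{K,2}^\times\subset\wh O_K^\times\subset U_0(32)$ whose fifth power still lies in $U_0(32)_2$, I may verify this for one convenient $\varpi$. Identifying $K_{n,2}\simeq\BQ_2\times\BQ_2$ with the first factor corresponding to $v_2$, I take $\varpi=(2,1)$. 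With $\eta_1\in\BZ_2^\times$ the square root of $-n$ at $v_2$ satisfying $\eta_1\equiv\delta\pmod{2^6}$, the matrix $S=\matrixx{2}{2}{\eta_1-\delta}{-\eta_1-\delta}$ diagonalizes $M$ as $S^{-1}MS=\operatorname{diag}(\eta_1,-\eta_1)$, and therefore
$$\varpi^5=S\operatorname{diag}(32,1)S^{-1}=\frac{1}{4\eta_1}\matrixx{66\eta_1+62\delta}{124}{31N}{66\eta_1-62\delta},$$
where $N=-n-\delta^2$ has $v_2(N)\ge 7$.

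The hard part is the explicit product $W^{-1}j\varpi^5$, where $W^{-1}j=\matrixx{\delta/32}{1/32}{1}{0}$. Multiplying and simplifying with $\eta_1^2=-n$ yields
$$W^{-1}j\varpi^5=\frac{1}{128\eta_1}\matrixx{66\delta\eta_1+31(\delta^2-n)}{62\delta+66\eta_1}{32(66\eta_1+62\delta)}{3968}.$$
The only delicate check is the $(1,1)$-entry: using $\eta_1\equiv\delta\pmod{2^6}$ and $\delta^2\equiv-n\pmod{2^7}$, its numerator reduces to $-66n-62n=-128n\pmod{2^7}$, so has $2$-valuation at least $7$ and the entry is integral. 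The $(1,2)$-numerator $62\delta+66\eta_1\equiv 128\delta\pmod{2^7}$ is handled identically. The bottom row inherits a factor of $32$ from $W^{-1}j$, so the $(2,1)$-entry is automatically in $32\BZ_2$, while the $(2,2)$-entry simplifies to $31/\eta_1\in\BZ_2^\times$. Finally $\det\gamma=\det(W^{-1}j)\det(\varpi^5)=(-1/32)\cdot 32=-1\in\BZ_2^\times$, so $\gamma\in U_0(32)_2$ as required. The final assertion $Q^{j\varpi^5}=-Q+\tau(1/2)$ then follows by combining this containment with part (1), since $U_0(32)$ acts trivially on $X_0(32)$.
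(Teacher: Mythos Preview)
Your proof is correct, and both parts reach the same conclusions as the paper, but by somewhat different routes.

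For part (1), the paper argues via the root number: since $A$ has root number $+1$, the Atkin--Lehner operator satisfies $\pi(W)f_n=-f_n$ in $\pi=\Hom_\infty^0(X_0(32),A)$, so $\pi(W)f_n+f_n$ is a torsion constant, identified as $\tau(1/2)$ by evaluating at $[\infty]$. Your argument is more elementary and self-contained: you use only that $W^2$ is scalar (so the induced map on $X_0(32)$ is an involution) together with $2\tau(1/2)=\tau(1)\neq 0$ to force $\alpha=-1$. This avoids any appeal to root numbers. In fact, since the action of $W$ is defined over $\BQ$, one already knows $\alpha\in\{\pm1\}$ without passing through $\mu_4$, so your involution step could be shortened.

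For part (2), the paper obtains the matrix of $\varpi^5$ by writing $(32,1)=\frac{31}{2\alpha}(\alpha,-\alpha)+\frac{33}{2}(1,1)$ directly in $K_2\cong\BQ_2^2$, whereas you diagonalise via $S$. Both yield exactly the same matrix $\frac{1}{4\eta_1}\matrixx{66\eta_1+62\delta}{124}{31N}{66\eta_1-62\delta}$, and the final membership check $W^{-1}j\varpi^5\in U_0(32)_2$ is then the same. One small imprecision: your sentence ``the bottom row inherits a factor of $32$ from $W^{-1}j$'' is not literally correct, since the bottom row of $W^{-1}j=\matrixx{\delta/32}{1/32}{1}{0}$ is $(1,0)$. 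The $(2,1)$-entry of the product is $\frac{66\eta_1+62\delta}{4\eta_1}$, and its lying in $32\BZ_2$ requires exactly the estimate $v_2(66\eta_1+62\delta)\ge 7$ that you already established for the $(1,2)$-entry; so the verification is immediate but not ``automatic'' in the sense suggested.
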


\begin{proof}
For (1), consider the Atkin--Lehner operator $\pi(W)$.
Note that $\pi(W)f_n=-f_n$ in the $\BQ$-space $\pi$ since $A$ has root number 1.
It follows that, in the $\BZ$-module $\Hom(X_0(32),A)$, the sum
 $\pi(W)f_n+f_n$ is a torsion point of $A$.
To figure out the torsion point, evaluate at $[\infty]$. We have
$$(\pi(W)f_n+f_n)([\infty])=f_n([\infty]^W)+f_n([\infty])=\tau (1/2).$$
This proves (1).

For (2), consider the $\BQ_2$-algebra $K_2\cong K_{v_0}\times K_{v_0'}=\BQ_2\times \BQ_2$.
Let $\alpha\in \BZ_2^\times$ be such that
$$(\alpha, -\alpha)=\sqrt{-n}\lto
\matrixx{\delta}{2}{-(n+\delta^2)/2}{-\delta}.$$ Then
$64|(\alpha-\delta)$ and $2\| (\alpha+\delta)$. We may take
$\varpi=(2, 1)$. Then
$$\varpi^5
=(32, 1)= \frac{31}{2\alpha} (\alpha,-\alpha)+ \frac{33}{2} (1,1)
$$
corresponds to the matrix
$$
\frac{31}{2\alpha} \matrixx{\delta}{2}{-(n+\delta^2)/2}{-\delta}
+\frac{33}{2} \matrixx{1}{}{}{1}
=\frac{1}{2\alpha} \matrixx{31\delta+33\alpha}{62}{-31(n+\delta^2)/2}{-31\delta+33\alpha}.
$$
It is now straight forward to check
$W^{-1} j\varpi^5\in U_0(32)$.
\end{proof}

Now it is easy to obtain Theorem \ref{CM point2} for $n\equiv 7\pmod 8$ which asserts
$$\bar z_n+z_n^{\sigma_{\varpi^5}}=\tau(1/2).$$
In fact, the proposition gives
$$P_n^{j\varpi^5}=-P_n+\tau (1/2).$$
As before, $j$ computes the complex conjugate of $P_n$. Then the above becomes
$$(\bar P_n)^{\varpi^5}=-P_n+\tau (1/2).$$
The Hecke action is defined over $\BQ$ and thus commutes with the complex conjugation.
This finishes the proof.

\subsection{Proofs of Theorems \ref{GZ}, Proposition \ref{congruence} and Theorem \ref{m3}}

In this section, we prove our main theorems in the case $n\equiv 5, 6, 7\pmod 8$.

\subsubsection*{Proof of Theorem \ref{GZ}}

We first prove the following result, which gives the first statement of the theorem.
\begin{lem}\label{char'}
Let $\chi:\Cl_n\to \{\pm1\}$ be a character satisfying the following conditions:
 \begin{enumerate} [(1)]
 \item The root number of $L(A_{K_n},\chi,s)$ is $-1$;
 \item If $2$ is not split in $K_n$, then the $2$-component $\chi_2:K_{n,2}^\times \to \{\pm1\}$ of $\chi$ is trivial.
\end{enumerate}
Then $\chi$ is exactly of the form
$$\chi_{d_0, d_1},\qquad  n=d_0d_1,\ 0<d_0\equiv 5, 6, 7\pmod 8,\ 0<d_1\equiv 1, 2, 3\pmod 8, $$ where $\chi_{d_0, d_1}$ is the unique Hecke character over $K_n$ associated to the extension $K_n(\sqrt{d_1})$ for $n\equiv 5,6\pmod 8$ and $K_n(\sqrt{d_1^*})$ for $n\equiv 7\pmod 8$.
\end{lem}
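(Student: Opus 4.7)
The plan is to combine Gauss's genus theory, the Rankin--Selberg root number factorization, and a local $2$-adic analysis.

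First, by Gauss's genus theory, every quadratic character $\chi$ of $\Cl_n$ arises from a decomposition $n = d_0 d_1$ into positive factors and can be written as $\chi_{d_0,d_1}$, associated to the unramified quadratic extension $K_n(\sqrt{d_1^*})$ when $n \equiv 7 \pmod 8$, or $K_n(\sqrt{d_1})$ when $n \equiv 5, 6 \pmod 8$. Taking this parameterization as the starting point, the task reduces to pinning down the residues of $d_0$ and $d_1$ modulo $8$.

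I would then apply the Rankin--Selberg factorization
\[
L(A_{K_n}, \chi_{d_0,d_1}, s) = L(E_{d_0}, s)\, L(E_{d_1}, s),
\]
using that $A$ and $E$ are $2$-isogenous (hence have the same $L$-function), together with the corresponding factorization of the global root number $\epsilon(A_{K_n}, \chi_{d_0,d_1}) = \epsilon(E_{d_0}) \epsilon(E_{d_1})$. The Birch--Stephens formula ($\epsilon(E_d) = -1$ iff $d \equiv 5, 6, 7 \pmod 8$) then turns condition (1) into the statement that exactly one of $d_0, d_1$ is $\equiv 5, 6, 7 \pmod 8$ and the other $\equiv 1, 2, 3 \pmod 8$. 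For $n \equiv 7 \pmod 8$ this already suffices: $2$ splits in $K_n$ so condition (2) is vacuous, and $d_0^* d_1^* = -n$ is a square in $K_n$, so $\chi_{d_0, d_1} = \chi_{d_1, d_0}$, allowing us to freely relabel to obtain the asserted form.

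For $n \equiv 5, 6 \pmod 8$, the swap $(d_0, d_1) \leftrightarrow (d_1, d_0)$ genuinely changes the character (since $n$ is not a square in $K_n$ for $n > 1$), so condition (2) is needed to force the correct labeling. By local class field theory, $\chi_{d_0, d_1}|_{K_{n,2}^\times} = 1$ amounts to $d_1 \in (K_{n,2}^\times)^2$, where $K_{n,2} = \BQ_2(\sqrt{-n})$. A direct computation (writing $\beta = a + b\sqrt{-n}$ and squaring) shows that an element of $\BQ_2^\times$ becomes a square in $K_{n,2}$ iff its class in $\BQ_2^\times/(\BQ_2^\times)^2$ equals either $\{1\}$ or $\{-n\}$; enumerating the eight square classes (with $n \pmod{16}$ distinguishing sub-cases for $n \equiv 6 \pmod 8$) shows that $d_1 \equiv 5, 6, 7 \pmod 8$ is always excluded. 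Hence condition (2) forces $d_1 \equiv 1, 2, 3 \pmod 8$, completing the proof. The main technical obstacle is this last $2$-adic square-class enumeration, especially for $n \equiv 6 \pmod 8$ where the local field $K_{n,2}$ depends on $n$ modulo $16$, but it remains a finite check in $\BQ_2^\times/(\BQ_2^\times)^2$.
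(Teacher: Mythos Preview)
Your approach is correct and essentially the same as the paper's: both start from Gauss's genus theory to write $\chi$ in terms of a factorization $n=d_0d_1$, use the Rankin--Selberg root-number factorization together with the Birch--Stephens formula to force exactly one of $d_0,d_1$ into $\{5,6,7\}\pmod 8$, and then invoke the local condition at $2$ to fix the labeling when $n\equiv 5,6\pmod 8$. The only cosmetic difference is bookkeeping: the paper tracks the ambiguity as a sign (writing the extension as $K_n(\sqrt{\pm d_1})$ with $d_1\equiv 1,2,3$ already fixed, and letting condition~(2) select the sign), whereas you track it as the ordering of $(d_0,d_1)$ and let condition~(2) exclude $d_1\equiv 5,6,7\pmod 8$ via the square-class computation in $\BQ_2^\times/(\BQ_2^\times)^2$. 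Your $2$-adic framing is in fact slightly more explicit than the paper's terse case check, and both unwind to the same verification.
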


\begin{proof}
The character $\chi$ corresponds to an extension over $K_n$ of degree dividing $2$ and inside the genus field $L_n$ of $K_n$, which must be of form $K_n(\sqrt{d})=K_n(\sqrt{-n/d})$ for some integer $d|n$ with $\sqrt{d}\in L_n$.

First, the L-function $L(A_{K_n},\chi,s)=L(A_d,s)L(A_{n/d},s)$ has root number $-1$ if and only if exactly one element of $\{|d|, \ n/|d|\}$ is congruent to $1, 2, 3$ modulo $8$ and the other one is congruent to $5, 6, 7$ modulo $8$.  Thus we may assume that the extension corresponding to $\chi$ is of the form $K_n(\sqrt{\pm d_1})\subset L_n$, $0<d_1\equiv 1, 2, 3\pmod 8$, such that $d_0:=n/d_1\equiv 5, 6, 7\pmod 8$.

If $n\equiv 7\pmod 8$,  then $2$ is split in $K$ and the second condition is empty.  Note that $\sqrt{d_1^*}\in L_n$ but $\sqrt{-d_1^*}\notin L_n$. Thus $\chi$ is exactly of desired form.

If $n\equiv 5\pmod 8$,  then $2$ is ramified in $K_n$. Both $\sqrt{d_1}$ and
$\sqrt{-d_1}$ are in $L_n$. We have $(d_0,d_1)\equiv (5,1), (7,3) \pmod8$.
The restriction that $\chi_2$ is trivial implies that the extension corresponding to $\chi$ is $K_n(\sqrt{d_1})$ in the first case or $K_n(\sqrt{-d_0})$ in the second case. Then $K_n(\sqrt{d_1})$ is a uniform way to write down the field.

If $n\equiv 6\pmod 8$,  then $2$ is ramified in $K_n$.
We have $(d_0,d_1)\equiv (6,1), (7,2) \pmod8$.
Then the extension corresponding to $\chi$ is $K_n(\sqrt{d_1})$ in the first case or $K_n(\sqrt{-d_0})$ in the second case, and $K_n(\sqrt{d_1})$ is still a uniform way.
\end{proof}

Now we prove Theorem \ref{GZ}. It is an example of Theorem \ref{EGZ},
an explicit version of the Gross--Zagier formula proved by Yuan--Zhang--Zhang \cite{YZZ}. Recall that
$$
P_\chi= \sum_{t\in \Phi} f_n(P_n)^t \chi(t).
$$
The summation on $\Phi$ is not canonical, so the expression is not the exact case to apply the formula.
However, by Proposition \ref{CM point1}, $2z_n=2f_n(P_n)$ is defined over $H_n$ and thus
$$
2P_\chi= \sum_{t\in \Phi} (2f_n(P_n))^t \chi(t)
= \sum_{t\in \Cl_n} (2f_n(P_n))^t \chi(t).
$$
This is the situation to apply the Gross--Zagier formula (to the test vector $2f_n$).

First, we see that the point $P_\chi$ is non-torsion only if $\chi$ satisfies the two conditions of Lemma \ref{char'}. The first condition holds by considering the Tunnell--Saito theorem (cf. Theorem \ref{TS1}). See the remarks after \cite[Theorem 1.2]{YZZ} for example.
For the second condition, assume that $2$ is not split in $K=K_n$. By $\Cl_n=K^\times\bs \wh K^\times/\wh O_K^\times$, the summation for $2P_\chi$ is essentially an integration on $K^\times\bs \wh K^\times$.
By Proposition \ref{choice}, $f_n$ is invariant under the action of $K_2^\times$ up to torsions, so the integration is non-torsion only if
$\chi$ is trivial on $K_{2}^\times$.

Hence, Lemma \ref{char'} implies the first statement of the theorem.
Next, assume $\chi=\chi_{d_0, d_1}$ as in the theorem.
Denote $P(d_0,d_1)=P_{\chi_{d_0,d_1}}$.
We first have the following basic result.

\begin{lem} \label{multiples1}
If $(d_0, d_1)\equiv (5, 3)\pmod 8$, then $4P(d_0, d_1)\in A(\BQ(\sqrt{d_0}))^-=[i] A(\BQ(\sqrt{-d_0}))^-$.
Otherwise, $4P(d_0, d_1)\in A(\BQ(\sqrt{-d_0}))^-.$
\end{lem}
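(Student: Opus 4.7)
The plan is to compute the action of complex conjugation $c$ and of the non-trivial element $\gamma \in \Gal(L/K_n)$ on $4P(d_0, d_1)$, where $L$ denotes $K_n(\sqrt{d_1})$ if $n \equiv 5, 6 \pmod 8$ and $K_n(\sqrt{d_1^*})$ if $n \equiv 7 \pmod 8$, and then identify the fixed subfield of $\Gal(L/\BQ)$ to conclude. By Proposition~\ref{CM point1}, $2z_n$ (resp.\ $z_n$) lies in $A(H_n)$ for $n \equiv 5, 6$ (resp.\ $n \equiv 7$), and $z_n^\sigma - z_n \in A[2]$ for the generator $\sigma$ of $\Gal(H_n'/H_n)$; a short check then shows that $2P(d_0, d_1) = \sum_{s \in \Cl_n} (2z_n)^s \chi_{d_0, d_1}(s)$ is independent of the choice of $\Phi$. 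Since $\chi_{d_0, d_1}$ factors through $\Gal(L/K_n) \cong \BZ/2$, we obtain $2P \in A(L)$ with $\gamma(2P) = -2P$.

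Next, using the standard CM identity $c \sigma_s c^{-1} = \sigma_{s^{-1}}$ and $\chi(s^{-1}) = \chi(s)$, I compute $c(P)$. For $n \equiv 5, 6$, Theorem~\ref{CM point2} gives $c(2z_n) = -2z_n$ exactly, whence $c(2P) = -2P$. For $n \equiv 7$, the relation $c(z_n) = -z_n^{\sigma_{\varpi^5}} + \tau(1/2)$ together with $\tau(1/2) \in A[4]$ only yields $c(4z_n) = -4 z_n^{\sigma_{\varpi^5}}$; the reindexing $t = \sigma_{\varpi^5} s^{-1}$ then produces $c(4P) = -\chi(\sigma_{\varpi^5}) \cdot 4P$. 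The sign $\chi(\sigma_{\varpi^5}) = \chi_{v_2}(\varpi)$ is determined by whether $d_1^*$ is a square in $K_{n, v_2} = \BQ_2$: for $(d_0, d_1) \equiv (7, 1) \pmod 8$, $d_1^* = d_1 \equiv 1 \pmod 8$ is a square, so $\chi_{v_2} = 1$ and $\chi(\sigma_{\varpi^5}) = 1$; for $(d_0, d_1) \equiv (5, 3) \pmod 8$, $d_1^* = -d_1 \equiv 5 \pmod 8$ generates the unramified quadratic extension of $\BQ_2$, so $\chi_{v_2}$ is non-trivial unramified and $\chi(\sigma_{\varpi^5}) = -1$.

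Combining the eigenvalues of $c$ and $\gamma$ now pins down the field of definition of $4P$ inside $L$. In the generic case ($n \equiv 5, 6$, or $(d_0, d_1) \equiv (7, 1) \pmod 8$), both $c$ and $\gamma$ act on $4P$ by $-1$; inside $L = \BQ(\sqrt{-d_0}, \sqrt{d_1})$, a direct inspection shows that $c\gamma$ fixes $\sqrt{-d_0}$, so the fixed subfield of $\langle c\gamma \rangle$ is $\BQ(\sqrt{-d_0})$ and hence $4P \in A(\BQ(\sqrt{-d_0}))^-$. In the exceptional case $(d_0, d_1) \equiv (5, 3) \pmod 8$, $c$ acts by $+1$ and $\gamma$ by $-1$; the $c$-fixed subfield of $L = \BQ(\sqrt{d_0}, \sqrt{-d_1})$ is $\BQ(\sqrt{d_0})$, on which $\gamma$ restricts to the non-trivial involution, giving $4P \in A(\BQ(\sqrt{d_0}))^-$. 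The identification $A(\BQ(\sqrt{d_0}))^- = [i] A(\BQ(\sqrt{-d_0}))^-$ inside $A(\BQ(i, \sqrt{d_0}))$ then follows from the CM-theoretic fact that $[i]$ is defined over $\BQ(i)$ and satisfies $c \circ [i] = -[i] \circ c$, so that the $[i]$-image of the $\sqrt{-d_0}$-minus eigenspace lies in the $\sqrt{d_0}$-minus eigenspace.

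The main technical obstacle is the careful bookkeeping of torsion in the formulas of Theorem~\ref{CM point2}: recognizing that $\tau(1/2) \in A[4] \setminus A[2]$ forces the passage from $2P$ to $4P$ precisely in the case $n \equiv 7 \pmod 8$, and correctly evaluating the local character $\chi_{v_2}$ at the split place $v_2$ in order to distinguish between the two sub-cases $(7, 1)$ and $(5, 3)$ modulo $8$, which in turn dictates whether the point lives over the imaginary quadratic field $\BQ(\sqrt{-d_0})$ or over the real quadratic field $\BQ(\sqrt{d_0})$.
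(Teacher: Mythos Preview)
Your proof is correct and follows essentially the same approach as the paper: you reduce to the well-defined sum $2P=\sum_{s\in\Cl_n}(2z_n)^s\chi(s)$, compute the $\gamma$-action via the kernel of $\chi$ and the $c$-action via Theorem~\ref{CM point2} (picking up the extra sign $\chi(\sigma_{\varpi^5})=\chi(\sigma_\varpi)$ only when $n\equiv 7\pmod 8$), and then descend by identifying the appropriate fixed subfield of the biquadratic $L$. The only cosmetic differences are that the paper keeps the $\tau(1)$ term at the level of $2P$ before doubling, and it treats the degenerate case $d_1=1$ (where $L=K_n$ and there is no $\gamma$) separately; your argument handles this implicitly since then $c$ alone already forces $4P\in A(\BQ(\sqrt{-d_0}))^-$.
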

\begin{proof}
Recall that
$$
2P(d_0, d_1)= \sum_{t\in \Cl_n} (2z_n)^t \chi_{d_0, d_1}(t).
$$
Then $2P(d_0, d_1)$ is invariant under the action of
$\ker(\chi_{d_0,d_1})=\Gal(H_n/K_{d_0,d_1})$.
Then $2P(d_0, d_1)$ is defined over $K_{d_0,d_1}$.
Here $K_{d_0,d_1}=K_n(\sqrt{-d_1})$ if $(d_0, d_1)\equiv (5, 3)\pmod 8$, and $K_{d_0,d_1}=K_n(\sqrt{d_1})$ otherwise.

First, assume that $d_1\neq 1$, so that $[K_{d_0,d_1}:\BQ]=4$.
Consider the action of $\Gal(K_{d_0,d_1}/\BQ)$ on $2P(d_0, d_1)$.
The group $\Gal(K_{d_0,d_1}/\BQ)$ has two explicit elements: the complex conjugation $c$ and the unique nontrivial element
$\tau$ of $\Gal(K_{d_0,d_1}/K_n)$.
By definition, $\chi$ takes $-1$ on any lifting of $\tau$ in $\Cl_n$. It follows that
$$
(2P(d_0, d_1))^\tau= -2P(d_0, d_1).
$$
On the other hand, the complex conjugate
$$
(2P(d_0, d_1))^c= \sum_{t\in \Cl_n} (2\bar z_n)^{1/t}
= \sum_{t\in \Cl_n} (2\bar z_n)^{t} .
$$
By Theorem \ref{CM point2}, if $n\equiv5, 6\pmod 8$, then $2\bar z_n=-2z_n$. It follows that $(2P(d_0, d_1))^c=-2P(d_0, d_1)$.
If $n\equiv7\pmod 8$, we only have
$2\bar z_n=-2z_n^{\sigma_{\varpi^5}}+\tau(1)$, which gives
$$(2P(d_0, d_1))^c=-2\chi_{d_0, d_1}(\sigma_\varpi) P(d_0, d_1)+ |\Cl_n|\tau(1).$$
Here
$$\chi_{d_0, d_1}(\sigma_\varpi)=-1
\ \Longleftrightarrow \
\sigma_\varpi(\sqrt{d_1^*})=-\sqrt{d_1^*}
\ \Longleftrightarrow \
(d_0, d_1)\equiv (5, 3)\pmod 8.$$

In summary, if $(d_0, d_1)\not\equiv (5, 3)\pmod 8$ (and $d_1\neq1$), then
$$
(4P(d_0, d_1))^\tau= -4P(d_0, d_1), \quad
(4P(d_0, d_1))^c= -4P(d_0, d_1).
$$
It follows that $4P(d_0, d_1)$ is invariant under $c\tau$, and thus defined over
$$
K_{d_0,d_1}^{c\tau}=\BQ(\sqrt{-d_0}, \sqrt{d_1})^{c\tau}=\BQ(\sqrt{-d_0}).
$$
The action of $\tau$ further gives $4P(d_0, d_1)\in A(\BQ(\sqrt{-d_0}))^-$.
If $(d_0, d_1)\equiv (5, 3)\pmod 8$, then
$$
(4P(d_0, d_1))^\tau= -4P(d_0, d_1), \quad
(4P(d_0, d_1))^c= 4P(d_0, d_1).
$$
It follows that $4P(d_0, d_1)$ is invariant under $c$, and thus defined over
$$
K_{d_0,d_1}^{c}=\BQ(\sqrt{d_0}, \sqrt{-d_1})^{c}=\BQ(\sqrt{d_0}).
$$
The action of $\tau$ further gives $4P(d_0, d_1)\in A(\BQ(\sqrt{d_0}))^-$.

In the last case $d_1=1$, we have $K_{d_0,d_1}=K_n$. Then $(4P(d_0, d_1))^c= 4P(d_0, d_1)$ implies $4P(d_0, d_1)\in A(\BQ(\sqrt{-d_0}))^-$.
\end{proof}

Go back to the proof of the theorem. Now we are ready to prove the formula
$$P_{\chi}=\epsilon (d_0, d_1) 2^{h_2(n)}\CL(d_1) \CP(d_0) \ \in\ A(H_n'(i))\otimes_\BZ\BQ.$$
The formula is equivalent to
$$
2 \bar\epsilon (d_0, d_1) P_{\chi}= 2^{h_2(n)+1}\CL(d_1) \CP(d_0).
$$
By Lemma \ref{multiples1}, this is an identity in $A(K_{d_0})^-\otimes_\BZ\BQ$.

We first claim that the equality is true up to a multiple in $\BQ^\times$.
In fact, if $L'(A_{K_n},\chi,1)=L'(A_{d_0},1)L(A_{d_1},1)$ is zero, then the right-hand side is zero by definition, and $P_\chi$ is zero since the canonical height $\wh h(P_\chi)=0$ by the Gross--Zagier formula (in either \cite[Theorem 1.2]{YZZ} or the explicit version Theorem \ref{EGZ}).
If $L'(A_{K_n},\chi,1)\neq 0$, then by the theorems of Gross--Zagier and Kolyvagin, $E(K_{d_0})^-\otimes_\BZ\BQ$ is one-dimensional, and the thus two sides of the equality are proportional.

To finish the proof, it suffices to check that the two sides of the equality have the same canonical heights.
One can do the whole computation on $A$, but we will carry it out on $E$ to be compatible with our original framework.

Let $\varphi: A\ra E$ be the isogeny of degree $2$.
The desired formula becomes
$$ R_\chi= \epsilon(d_0, d_1) 2^{h_2(n)} \CL(d_1)\CR(d_0) \ \in\ E(H_n'(i))\otimes_\BZ\BQ.$$
Here $R_\chi=\varphi(P_\chi)$ and $\CR(d_0)=\varphi(\CP(d_0))$.
The vector $\CR(d_0)\in E(K_{d_0})^-\otimes_\BZ\BQ$ has an independent description. If $\CL(d_0)=0$, then $\CR(d_0)=0$. If
$\CL(d_0)\neq 0$, then the theorems of Gross--Zagier and Kolyvagin imply that $E(K_{d_0})^-$ is of rank one.
In this case, $\CR(d_0)=2^{-1}\CL(d_0)\beta_{d_0}\in E(K_{d_0})^-_\BQ$,
where $\beta_{d_0}\in E(K_{d_0})^-$ is any $\BZ$-basis of the free part of $E(K_{d_0})^-$.

The height identity we need to check is
$$\wh{h}(R_\chi)=4^{h_2(n)-1} \CL(d_1)^2 \CL(d_0)^2 \wh{h}(\beta_{d_0}).$$
Assuming $L'(E_{K_n},\chi,1)\neq 0$. By the definitions of $\CL(d_1)$ and $\CL(d_0)$ in the introduction, the identity becomes
$$\wh{h}(R_\chi)=
L'(E_{K_n},\chi,1)/(2^{2k(n)-2h_2(n)-2-a(n)} \Omega _{d_0, \infty} \Omega _{d_1, \infty}).
$$

Apply Theorem \ref{EGZ}, the explicit Gross-Zagier formula in the appendix, for $(E_{K_n},\chi_{d_0, d_1})$ and the morphism $\varphi \circ f_n$.
The proof is finished by computations similar to that in the proof of Theorem \ref{thm2.1}.

In the proof, we also see that $\CL(n)\in \BQ$. For example, the height formula
$$\wh{h}(R_\chi)=4^{h_2(n)-1} \CL(d_1)^2 \CL(d_0)^2 \wh{h}(\beta_{d_0})$$
actually implies that $\CL(d_1) \CL(d_0)\in \BQ$. Setting $d_1=1$, we see that $\CL(n)\in \BQ$.

\subsubsection*{Proof of Proposition \ref{congruence}}

Here we prove Proposition \ref{congruence} which asserts that
\begin{multline*}
P(n) \equiv
\sum_{\substack{n=d_0d_1\cdots d_\ell\\
d_0\equiv 5,6, 7\pmod 8\\
d_1\equiv 1, 2, 3\pmod 8\\
d_i\equiv 1 \pmod 8, \ i>1}}\epsilon(d_0, d_1) \left(\prod_{i\geq 1} g(d_i)\right) Z(d_0)
\\
+i\sum_{\substack{n=d_0d_1\cdots d_\ell\\
(d_0,d_1,d_2)\equiv (5,3,2)\pmod 8\\
d_i\equiv 1 \pmod 8, \ i>2}} \left(\prod_{i\geq 1} g(d_i)\right) Z(d_0)
\qquad\mod\ 2A(\BH_n').
\end{multline*}

It suffices to prove that the above formula
(applied to every $P(d_0)$ below) and the formula in Theorem \ref{lg} (applied to every $\CL(d_1)$ below) satisfies
$$Z(n)\equiv \sum_{\substack{n=d_0d_1\\ d_0\equiv 5, 6, 7\pmod 8\\
d_1\equiv  1,2 ,3 \pmod 8}} \epsilon(d_0, d_1)\CL(d_1)P(d_0)\qquad
\mod\ 2A(\BH_n').$$

We first treat the case $n\equiv 5,7\pmod8$. Then the formula simplifies as
\begin{multline*}
P(n) \equiv
\sum_{\substack{n=d_0d_1\cdots d_\ell\\
d_0\equiv 5, 7\pmod 8\\
d_1\equiv 1, 3\pmod 8\\
d_i\equiv 1 \pmod 8, \ i>1}}\epsilon(d_0, d_1) \left(\prod_{i\geq 1} g(d_i)\right) Z(d_0)
\qquad\mod\ 2A(\BH_n').
\end{multline*}
We need to check that
\begin{multline*}
Z(n)\equiv \sum_{\substack{n=d_0d_1\\ d_0\equiv 5, 7\pmod 8\\
d_1\equiv  1 ,3 \pmod 8}} \epsilon(d_0, d_1)
\left(\sum_{\substack {d_1=d_0'd_1'\cdots d_{\ell'}'\\
d_j'\equiv 1\pmod 8, \ j>0}}
\prod_{j\geq 0} g(d_j') \right)
\\
\left(\sum_{\substack{d_0=d_0''d_1''\cdots d_{\ell''}''\\
d_0''\equiv 5,7\pmod 8\\
d_1''\equiv 1, 3\pmod 8\\
d_k''\equiv 1 \pmod 8, \ k>1}}\epsilon(d_0'', d_1'')
\prod_{k\geq 1} g(d_k'')Z(d_0'')\right)
\qquad
\mod\ 2A(\BH_n').
\end{multline*}
The right-hand side is a $\BZ$-linear combination of
$$
 \epsilon(d_0, d_1) \epsilon(d_0'', d_1'')
\prod_{j=0}^{\ell'} g(d_j')
\prod_{k=1}^{\ell''} g(d_k'') Z(d_0'').
$$
Consider the multiplicity of this term in the sum.
Each appearance of such a terms gives a partition
$$
\{d_1',\cdots, d_{\ell'}', d_2'',\cdots, d_{\ell''}''\}
=\{d_1',\cdots, d_{\ell'}'\}\cup \{d_2'',\cdots, d_{\ell''}''\}.
$$
If this set is non-empty, the number of such partitions is even, and thus the contribution is zero in the congruence equation. Moreover, if $d_0'\equiv 1\pmod 8$ or $d_1''\equiv 1\pmod 8$, then we can put also put it into the partition deduce that the contribution of such terms is still zero.

Note that the contribution by $d_0=d_0''=n, d_1=1$ is the single term $Z(n)$.
Therefore, it is reduced to check
$$0\equiv \sum_{\substack{n=d_0d_1\\ d_0\equiv 5, 7\pmod 8\\
d_1\equiv  3 \pmod 8}} \epsilon(d_0, d_1) g(d_1)
\sum_{\substack{d_0=d_0''d_1''\\
d_0''\equiv 5, 7\pmod 8\\
d_1''\equiv 3\pmod 8}}\epsilon(d_0'', d_1'') g(d_1'') Z(d_0'')
\qquad
\mod\ 2A(\BH_n').$$
Rewrite it as
$$0\equiv {\sum_{n=d_0''d_1''d_1}}'
\epsilon(d_0''d_1'', d_1)
\epsilon(d_0'', d_1'') g(d_1) g(d_1'') Z(d_0'')
\qquad
\mod\ 2A(\BH_n').$$
Here the sum is over ordered decompositions $n=d_0''d_1''d_1$ which satisfy the original congruence conditions (with $d_0=d_0''d_1''$).
The ordered decomposition
$n=d_0''d_1''d_1$ corresponds to the ordered decomposition $n=d_0''d_1d_1''$ uniquely.
One checks in this case
$$
\epsilon(d_0''d_1'', d_1)  \epsilon(d_0'', d_1'')
=\pm \epsilon(d_0''d_1, d_1'')  \epsilon(d_0'', d_1).
$$
Then the sum is divisible by $2$.

Now we treat the case $n\equiv 6\pmod8$.
We need to check
\begin{multline*}
Z(n)\equiv \sum_{\substack{n=d_0d_1\\ d_0\equiv 6, 7\pmod 8\\
d_1\equiv  1, 2 \pmod 8}}
\left(\sum_{\substack {d_1=d_0'd_1'\cdots d_{\ell'}'\\
d_j'\equiv 1\pmod 8, \ j>0}}
\prod_{j\geq 0} g(d_j') \right)\cdot
\\
\left(\sum_{\substack{d_0=d_0''d_1''\cdots d_{\ell''}''\\
d_0''\equiv 5,6,7\pmod 8\\
d_1''\equiv 1, 2,3\pmod 8\\
d_k''\equiv 1 \pmod 8,  \ k>1}}
 \epsilon(d_0'', d_1'')
\prod_{k\geq 1} g(d_k'')Z(d_0'')
\,+\,
i\sum_{\substack{d_0=d_0''d_1''\cdots d_{\ell''}''\\
(d_0'',d_1'',d_2'')\equiv (5,3,2)\pmod 8\\
d_k''\equiv 1 \pmod 8, \ k>2}}
\prod_{k\geq 1} g(d_k'')Z(d_0'')
\right) \\
\qquad
\mod\ 2A(\BH_n').
\end{multline*}
Split the outer sum $d=d_0d_1$ into the case $(d_0,d_1)\equiv (6,1) \pmod8$
and the case $(d_0,d_1)\equiv (7,2) \pmod8$. We obtain three triple sums, since the conditions $(d_0,d_1)\equiv (7,2) \pmod8$ and $(d_0'',d_1'',d_2'')\equiv (5,3,2)\pmod 8$ do not hold simultaneously.
Similar to the case $n\equiv 5,7\pmod8$, the contribution of the terms with some $d_j'\equiv 1\pmod 8$ or some $d_k''\equiv 1\pmod 8$ is divisible by
$2$. In particular, for the case $d_1\equiv 1\pmod 8$, we are only left with $d_1=1$.
Then it is reduced to check
\begin{multline*}
0\equiv \sum_{\substack{n=d_0d_1\\ d_0\equiv 7\pmod 8\\
d_1\equiv  2 \pmod 8}}
\left( g(d_1) Z(d_0)
+\sum_{\substack{d_0=d_0''d_1''\\
d_0''\equiv 5\pmod 8\\
d_1''\equiv 3\pmod 8}}i\, g(d_1) g(d_1'') Z(d_0'')
\right)\\
+
\sum_{\substack{n=d_0''d_1''\\
d_0''\equiv 7\pmod 8\\
d_1''\equiv 2\pmod 8}}   g(d_1'') Z(d_0'')
\, +\,
i \sum_{\substack{n=d_0''d_1''d_2''\\
(d_0'',d_1'',d_2'')\equiv (5,3,2)\pmod 8}}
g(d_1'')g(d_2'') Z(d_0'')
\qquad
\mod\ 2A(\BH_n').
\end{multline*}
This is true by obvious cancellations, which finishes the proof of the proposition.

\subsubsection*{Torsion points}

To prepare the proof of Theorem \ref{m3}, we present some results on torsion points of $A$. They will be the key to lower multiples of algebraic points.

Denote $F=\BQ(i)$.
Recall that we have fixed an identification
$A(\BC)\cong\BC/(1+i)O_F$,
which gives $A(\BC)_\tor=A(F^\ab)_\tor\cong F/(1+i)O_F$.
Under the identification, the complex conjugation on $A(F^\ab)_\tor$ is given by the conjugation $i\mapsto -i$ on $F$,
The induced action of the Galois group $\Gal(F^\ab/F)$ on
$F/(1+i)O_F$ is given by multiplying by the composition
$$\Gal(F^\ab/F)\stackrel{\sigma_{F}^{-1}}{\lra}
F^\times \bs \wh{F}^\times \cong (1+(1+i)^3\wh O_F)^\times.$$

\begin{lem} \label{torsion1}
Over $F$, the elliptic curve $A_F$ is isomorphic to $E_F$. Moreover,
$$\BQ(A[4])=\BQ(\sqrt 2, i),\quad A(\BQ(i))=A[(1+i)^3].$$
\end{lem}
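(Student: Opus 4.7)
The plan is to treat the three assertions in turn.

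For $A_F\cong E_F$: put $A:2y^2=x^3+x$ into short Weierstrass form $Y^2=X^3+4X$ via $(x,y)=(X/2,Y/4)$. Both $Y^2=X^3+4X$ and $E:y^2=x^3-x$ have $j=1728$, so they are quartic twists of each other; the twist parameter is $-4=(1+i)^4$, a fourth power in $F$. Explicitly, the substitution $X=(1+i)^2 X'$, $Y=(1+i)^3 Y'$ transforms $Y^2=X^3+4X$ into $Y'^2=X'^3-X'$, giving the desired $F$-isomorphism.

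For $\BQ(A[4])=\BQ(\sqrt 2,i)$: by the previous paragraph, $F(A[4])=F(E[4])$, so it suffices to identify $F(E[4])$. The duplication formula on $E$ reads $x(2P)=(x^2+1)^2/(4x(x^2-1))$, so the preimages of the three nontrivial $2$-torsion points are given by $(x^2+1)^2=0$ and $(x^2\mp 2x-1)^2=0$, yielding $x=\pm i$ with $y\in F$, and $x=\pm1\pm\sqrt 2$ with $y$-coordinates such as $\pm\sqrt 2(1+\sqrt 2)$ and $\pm i\sqrt 2(1-\sqrt 2)$, all lying in $\BQ(\sqrt 2,i)$. Hence $F(E[4])=\BQ(\sqrt 2,i)$. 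Now $\BQ(A[4])$ contains $F$ by the Weil pairing and sits inside $F(A[4])=\BQ(\sqrt 2,i)$, so the assertion follows once one knows $A[4]\not\subset A(F)$, which is part of the third assertion.

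For $A(F)=A[(1+i)^3]$: the $\BQ$-automorphism $x\mapsto-x$ of $A$ identifies its quadratic twist $A^{(-1)}$ with $A$ itself, whence $\rank_\BZ A(F)=2\rank_\BZ A(\BQ)$, and the right side is $0$ for this conductor-$32$ curve (by a routine $2$-descent, or standard tables). For the torsion, the first paragraph gives $A(F)_\tor=E(F)_\tor$. The curve $E_F$ has CM by $\BZ[i]$ with Hecke character of conductor $\fc=(1+i)^3$ (since $32=d_F\cdot N_{F/\BQ}(\fc)$), so $F(E[\fm])$ is the ray class field of $F$ of conductor $\mathrm{lcm}(\fm,\fc)$. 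Direct counts give $\Cl_{(1+i)^3}=1$ but $\Cl_{(1+i)^4}\simeq\BZ/2\BZ$, so $E[(1+i)^3]\subset E(F)$ while $E[4]\not\subset E(F)$; combined with the fact that a CM elliptic curve has no nontrivial odd-$\ell$ division points over $F$, this forces $E(F)_\tor=E[(1+i)^3]$.

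The main obstacle is the CM-theoretic torsion computation in the last paragraph, though this reduces to two ray class group orders in $\BQ(i)$. The rank vanishing for $A(\BQ)$ is classical, and everything else is an explicit substitution.
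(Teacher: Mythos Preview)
Your argument is correct and arrives at the same conclusions by a route that is more explicitly computational than the paper's.

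For the isomorphism $A_F\cong E_F$, the paper observes that the two $2$-isogenies $\varphi_F:A_F\to E_F$ and $[1+i]:A_F\to A_F$ share the kernel $\{0,\tau(1)\}$, and concludes by comparing quotients. Your explicit change of variables is more direct and has the virtue of exhibiting the isomorphism concretely. For $\BQ(A[4])$, the paper works entirely through the CM description of the Galois action on $A[2^\infty]$ (the map $s_4$ to $(1+(1+i)^3O_{F_2})/(1+4O_{F_2})$) and identifies $F(A[4])$ with the ring class field of $F$ of conductor $4$, which is $\BQ(\zeta_8)$. You instead transport to $E$ and solve the duplication equation by hand; this is equally valid and avoids invoking the CM formalism. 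For $A(F)$, both arguments handle the rank via the self-twist $A^{(-1)}\cong A$; for the torsion the paper uses a ramification argument at odd $p$ (previewing the method of the next lemma) together with the $s_4$-stabilizer computation at $2$, while you package everything through ray class groups of $F$.

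One point to tighten: the sentence ``a CM elliptic curve has no nontrivial odd-$\ell$ division points over $F$'' is not a general fact (e.g.\ $y^2=x^3+1$ has a rational $3$-torsion point), so it should not be invoked as such. What you actually need, and what your own setup already provides, is that for each prime $\wp\mid\ell$ of $\BZ[i]$ the field $F(E[\wp])$ is the ray class field of conductor $\wp\cdot(1+i)^3$, whose class group has order $|(\BZ[i]/\wp)^\times|\cdot 4/4=N\wp-1>1$; since the Galois action on the rank-one $\BZ[i]/\wp$-module $E[\wp]$ is by scalars, nontriviality of the action forces $E(F)[\wp]=0$. With that sentence replaced, the argument is complete.
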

\begin{proof}
The results can be checked by explicit computations, but we include a theoretical proof.
For the first statement, consider the two 2-isogenies
$$
\varphi_F: A_F\lra E_F, \quad [1+i]: A_F\lra A_F.
$$
One checks that these two morphisms have the same kernel $\{0,\tau(1)\}$. It
follows that there is an isomorphism $A_F\to E_F$ carrying $[1+i]$ to
$\psi_F$.

Now we treat $\BQ(A[4])$. It is easy to have $F=\BQ(A[2])\subset \BQ(A[4])$, and thus $\BQ(A[4])=F(A[4])$.
The Galois action of $\Gal (F^\ab/F)$ on $A[4]$ is given by
$$s_4:(1+(1+i)^3\wh O_F)^\times
\lra (1+(1+i)^3O_{F_2})/(1+4O_{F_2}).$$
The field $F(A[4])$ is given by the subfield of $F^\ab$ fixed by
$\ker(s_4)=(1+4\wh O_F)^\times$, which is the ring class field of $F$ of conductor $4$. The norm map
$$(1+(1+i)^3O_{F_2})/(1+4O_{F_2}) \simeq (1+4\BZ_2)/(1+8\BZ_2)$$
implies that $F(A[4])$ is equal to the ring class field $\BQ(\zeta_8)$ of $\BQ$.

For $A(\BQ(i))$, we first see that it is torsion since $A_1(\BQ)\simeq A_{-1}(\BQ)$ are torsion. We also have $A(\BQ(i))[p]=0$ for any odd prime $p$.
In fact, we can show that any non-trivial element of $A[p]$ has a residue field ramified above $p$ and cannot be defined over $\BQ(i)$.
This argument will be used in Lemma \ref{torsion2} in a more complicated situation, so we omit it here.

Finally, we show that $A(\BQ(i))[2^\infty]=A[(1+i)^3]$.
Note that the stabilizer of any element $x_4$ of $A[4]\setminus A[(1+i)^3]$ is still $\ker(s_4)$. Then the residue field $F(x_4)$ is still $\BQ(\zeta_8)$, and thus $x_4\notin A(\BQ(i))$. It follows that  $A[4](\BQ(i))=A[(1+i)^3]$.
\end{proof}

\begin{lem} \label{torsion3}
Let $\kappa\in \Gal(\BQ(\zeta_8)/\BQ)$ be the element sending $\zeta _8$ to $\zeta _8^5$. Then
$$(\kappa+1)A[4]=A[4][\kappa+1]=A[2], \qquad (\kappa+1)E[4]=E[4][\kappa+1]=E[2].$$
Here $(\kappa+1)A[4]$ and $A[4][\kappa+1]$ are respectively the image and the kernel of the map
$$\kappa+1: A[4]\lra A[4], \quad x\longmapsto x^\kappa+x.$$
\end{lem}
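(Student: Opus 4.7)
The plan is to exploit the explicit complex-uniformization $A(\BC) \cong \BC/(1+i)O_F$ (with $F = \BQ(i)$) to identify $A[4]$ with $O_F/4O_F$ as an $O_F$-module and to read off the action of $\kappa$ directly. Once that is done, the image and kernel of $\kappa+1$ are immediate, and the statement for $E[4]$ is transferred using the $F$-isomorphism $A_F \cong E_F$ of Lemma \ref{torsion1}.

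First I would verify that $\kappa$ is the nontrivial element of $\Gal(\BQ(\zeta_8)/F)$: from $\kappa(\zeta_8)=-\zeta_8$ we get $\kappa(i) = \kappa(\zeta_8^2) = \zeta_8^{10} = i$ and $\kappa(\sqrt 2) = \kappa(\zeta_8+\zeta_8^{-1}) = -\sqrt 2$. Since Lemma \ref{torsion1} gives $\BQ(A[4]) = \BQ(\zeta_8)$ and $A[2] \subset A(F)$, the element $\kappa$ acts nontrivially only on $A[4]/A[2]$. By the description of the Galois action recalled from \cite{Tian}, the representation of $\Gal(F^{\ab}/F)$ on $A[4]$ factors through
\[
(1+(1+i)^3 O_{F_2})/(1+(1+i)^4 O_{F_2}) \;\cong\; O_{F_2}/(1+i)O_{F_2} \;\cong\; \BF_2,
\]
so $\kappa$ acts as multiplication by any lift $u \in O_F$ satisfying $u \equiv 1 \pmod{(1+i)^3}$ and $u \not\equiv 1 \pmod 4$; a convenient choice is $u = 1+(1+i)^3 = -1+2i$. (One checks $u^2 = -3-4i \equiv 1 \pmod{4O_F}$, consistent with $\kappa^2=1$.)

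The computation of image and kernel of $\kappa+1$ on $A[4] = O_F/4O_F$ is then a one-line calculation: $u+1 = 2i$, and since $i \in O_F^\times$, multiplication by $2i$ on $O_F/4O_F$ has image $2O_F/4O_F$ and kernel $2O_F/4O_F$. Under the chosen identification, $2O_F/4O_F$ is exactly $A[2]$, which establishes $(\kappa+1)A[4] = A[4][\kappa+1] = A[2]$. For the corresponding statement on $E[4]$, the $F$-isomorphism $A_F \cong E_F$ of Lemma \ref{torsion1} is $\kappa$-equivariant because $\kappa$ fixes $F$, and it identifies $A[2]$ with $E[2]$ since 2-torsion is intrinsic; transporting the $A$-side result gives $(\kappa+1)E[4] = E[4][\kappa+1] = E[2]$.

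The only delicate point is pinning down the correct coset $u \pmod{4O_{F_2}}$ representing $\kappa$, which requires a careful unwinding of the geometric Artin map convention used in the paper. But since the target group has order exactly $2$, the nontrivial coset is uniquely determined and any explicit lift (such as $-1+2i$ or $3-2i$, which differ by $4(1-i)$ and thus agree mod $4O_F$) gives the same endomorphism of $A[4]$; all other steps are elementary linear algebra in $\BZ[i]/4\BZ[i]$.
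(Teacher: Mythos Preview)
Your proof is correct and follows essentially the same approach as the paper: identify $A[4]$ with $O_F/4O_F$ via the complex uniformization, determine that $\kappa$ acts as multiplication by $-1+2i$ (the paper writes this as $-1\pm 2i$, coming from $\sigma_{F,2}(\pm 1\pm 2i)$), observe that $\kappa+1$ is then multiplication by $2i$ with image and kernel both $2O_F/4O_F=A[2]$, and transfer to $E$ via the $F$-isomorphism $A_F\cong E_F$. Your additional justification that the target group $(1+(1+i)^3 O_{F_2})/(1+4 O_{F_2})$ has order $2$, so the nontrivial coset is uniquely determined, is a helpful elaboration of what the paper asserts directly.
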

\begin{proof}
The results for $A$ and $E$ are equivalent since they are isomorphic over $F=\BQ(i)$.
Note that $\kappa$ acts on $\BQ(\zeta_8)\subset F^\ab$ as $\sigma_{F,2}(\pm 1\pm 2i)=\sigma_{F,2}(\pm 2\pm i)$.
In terms of the CM theory, $\kappa$ acts on $A[4]\cong O_F/4O_F$ by multiplication by $-1\pm 2i\in 1+(1+i)^3O_{F_2}$. Then $\kappa+1$ acts by multiplication by $\pm 2i$.
The results are true.
\end{proof}

\begin{lem} \label{torsion2}
The torsion subgroup $A(\BH_n')_\tor=A[(1+i)^3]$ if $n$ is odd, and $A(\BH_n')_\tor=A[4]$ if $n$ is even.
\end{lem}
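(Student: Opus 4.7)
\emph{Proof plan.} The inclusions follow directly from Lemma \ref{torsion1}: $A[(1+i)^3] = A(\BQ(i)) \subset A(\BH_n')$ holds since $\BQ(i) \subset \BH_n'$, and when $n \equiv 6 \pmod 8$, the divisor $2 \mid n$ gives $\sqrt 2 \in L_n \subset \BH_n'$, so $\BQ(\zeta_8) = \BQ(i, \sqrt 2) \subset \BH_n'$ and Lemma \ref{torsion1} yields $A[4] \subset A(\BH_n')$.

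For the reverse inclusions my approach is a ramification analysis combined with the CM structure of $A$ by $\BZ[i]$ over $F = \BQ(i)$. First I would bound the ramification of $\BH_n'/\BQ$: the multiquadratic extension $L_n(i)/\BQ$ is ramified only at primes dividing $2n$ with ramification index $2$, and by Proposition \ref{CM point1} each $H_{d_0}'$ lies inside the ring class field of $K_{d_0}$ of conductor $4$, hence is ramified only at primes dividing $2 d_0$, with tame ramification at each odd prime $p \mid d_0$. Consequently $\BH_n'$ is unramified at every odd $p \nmid n$ and has bounded ramification at $p \mid n$.

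To exclude odd torsion in $A(\BH_n')$, for any odd prime $p$ the CM gives a $\BZ[i]$-linear action of $\Gal(\overline F/F)$ on $A[p]$, through a character with image generating a large subgroup of $(\BZ[i]/p)^\times$ modulo scalar units. A nontrivial $P \in A[p] \cap A(\BH_n')$ would force a corresponding subfield of the ray class field of $F$ above $p$ to sit in $\BH_n' \cdot F$; the Weil-pairing inclusion $\BQ(\zeta_p) \subset F(A[p])$ for $p \ge 5$ and a direct computation for $p = 3$ (where $F(A[3])$ already has degree $8$ over $F$, larger than what $\BH_n'$ can accommodate) rule out all cases.

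To bound the $2$-primary part: via CM, $F(A[(1+i)^k])$ for $k \ge 5$ is totally ramified at $(1+i)$ with local degree at $2$ strictly exceeding that of $\BH_n'$, which forces $A(\BH_n')[2^\infty] \subset A[4]$. Finally, when $n$ is odd, no divisor $d_0$ of $n$ is $\equiv 6 \pmod 8$, and Proposition \ref{CM point1}(1) explicitly states that for $d_0 \equiv 5 \pmod 8$ it is $H_{d_0}'(\sqrt 2)$, not $H_{d_0}'$ itself, that equals the conductor-$4$ ring class field; combined with $\sqrt 2 \notin L_n(i)$ for odd $n$, this yields $\sqrt 2 \notin \BH_n'$, and Lemma \ref{torsion1} then gives $A(\BH_n')[2^\infty] \subset A[(1+i)^3]$. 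The principal obstacle is the $2$-adic bookkeeping in this penultimate step: combining the ramification contributions of $L_n(i)$ and all the $H_{d_0}'$ simultaneously to show $F(A[8]) \not\subset \BH_n'$.
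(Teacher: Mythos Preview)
Your plan for the inclusions, the odd-torsion vanishing, and the refinement to $A[(1+i)^3]$ for odd~$n$ broadly matches the paper's argument. The genuine gap is in your proposed bound $A(\BH_n')[2^\infty]\subset A[4]$ via $2$-adic ramification, specifically when $n\equiv 6\pmod 8$. You assert that $F(A[(1+i)^5])$ has local degree at~$2$ strictly exceeding that of $\BH_n'$, but this is false: $F(A[(1+i)^5])$ has ramification index~$8$ over~$\BQ$ at~$2$, while for any $d_0\equiv 6\pmod 8$ the field $H_{d_0}'$ is, by Proposition~\ref{CM point1}(2), the full ring class field of conductor~$4$ over $K_{d_0}$, and one computes $e\bigl((H_{d_0}')_v/\BQ_2\bigr)=e(K_{d_0,2}/\BQ_2)\cdot\bigl|O_{K_{d_0,2}}^\times/\BZ_2^\times(1+4O_{K_{d_0,2}})\bigr|=2\cdot 4=8$ as well. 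Since $\BH_n'$ contains such an $H_{d_0}'$, its $2$-local ramification and degree are at least this large, so no comparison of this kind can succeed. The paper bypasses $2$-adic analysis here entirely: it selects an auxiliary prime $p\equiv 3\pmod 8$ inert in each $K_{d_0}$ (for $d_0\equiv 5,6\pmod 8$) with $pO_{K_{d_0}}$ splitting completely in $H_{d_0}'$, so that the residue field of $\BH_n'$ at any place above~$p$ is exactly $\BF_{p^2}$; then $A$ is supersingular at~$p$, $|A(\BF_{p^2})|=(p+1)^2$, and reduction yields $|A(\BH_n')[2^\infty]|\le 16$.

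Your final step also glosses over a real point: knowing $\sqrt 2\notin H_{d_0}'$ for each $d_0\equiv 5\pmod 8$ and $\sqrt 2\notin L_n(i)$ does not by itself exclude $\sqrt 2$ from the compositum $\BH_n'$. The paper settles this locally at~$2$ by showing that every completion $(H_{d_0}')_{v_{d_0}}$ is unramified over one common local field $N_{5,4}$ independent of $d_0$, whereas $N_{5,4}(\sqrt 2)/N_{5,4}$ is ramified; hence $\sqrt 2\notin(\BH_n')_v$.
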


\begin{proof}
We prove the results by three steps.
\medskip

\noindent \emph{Step 1}. The group $A(\BH_n')[p]=0$ for any odd prime $p$.
Let $\wp$ be a prime ideal of $F$ above $p$.
The action of the Galois group on $A[\wp]$ gives a homomorphism
$$
\Gal(F^\ab/F)\lra \Aut_{O_F}(A[\wp])=(O_F/\wp)^\times.
$$
This map is surjective since it is given by
$$s_\wp:(1+(1+i)^3\wh O_F)^\times
\lra O_{F_\wp}^\times \lra (O_F/\wp)^\times.$$
As a consequence, we have the following two properties:
\begin{enumerate}[(1)]
\item For any nonzero $x\in A[\wp]$, the residue field $F(x)=F(A[\wp])$ has degree $N(\wp)-1\geq 4$ over $F$.
\item The prime $\wp$ is totally ramified in $F(x)$.
\end{enumerate}
On the other hand, we claim that the ramification index of $\wp$ in $\BH_n'$ is at most $2$.
In fact, $\BH_n'$ is the composite of $L_n(i)$ and $H_{d_0}'$ for different $d_0$, where the extensions $L_n(i)/K_n$ and $H_{d_0}'/K_{d_0}$ do not involve ramification above
$p$. If follows that we only need to consider the ramification index of $p$ in
the composite of $K_n$ and $K_{d_0}$ for different $d_0$, which is at most $2$.

Combining the claim and the properties (1) and (2),
we see that $F(x)$ cannot be contained in $\BH_n'$. In other words,  $A(\BH_n')[\wp]=0$. Then $A(\BH_n')[p]=0$.
Hence, $A(\BH_n')_\tor=A(\BH_n')[2^\infty]$.
\medskip

\noindent \emph{Step 2}.
For any $n\equiv 5,6,7\pmod8$, $A(\BH_n')[2^\infty]\subset A[4]$.
Note that $A(\BH_n')[2^\infty]$ is a finite $O_F$-module, so it must be of the form $A[(1+i)^e]$ for some positive integer $e$. Thus it suffices to prove $|A(\BH_n')[2^\infty]|\leq 16$.

The idea is to use the reduction map to obtain the bound.
Take a prime number $p\nmid (2n)$, and let $v$ be a place of $\BH_n'$ above $p$. Denote by $k(v)$ the residue field of $v$. The reduction map gives an injection
$$
A(\BH_n')[2^\infty] \longrightarrow A(k(v))[2^\infty].
$$
We will choose $p$ carefully to get an easy bound on the right-hand side.
In fact, we choose $p$ satisfying the following properties:
\begin{enumerate}[(1)]
\item $p\equiv 3\pmod8$.
\item $p$ is inert in $K_{d_0}$ for any positive factor $d_0$ of $n$ with $d_0\equiv 5,6\pmod8$.
\end{enumerate}

Assuming the existence of such $p$, we first see how it implies the desired bound. The proof consists of two steps.
The first step is to show that $k(v)=\BF_{p^2}$.
Denote by
$w$ the restriction of $v$ to $L_n(i)$, and
$v_{d_0}$ the restriction of $v$ to $H_{d_0}'$. It is easy to see that the residue field $k(w)=\BF_{p^2}$. To prove $k(v)=\BF_{p^2}$, it suffices prove that $k(v_{d_0})\subset \BF_{p^2}$ for any $d_0\equiv 5,6\pmod8$.
Note that $p$ is inert in $K_{d_0}$. Then it suffices to check that $pO_{K_{d_0}}$ is totally split in $H_{d_0}'$.
By Lemma \ref{CM point1},  $H_{d_0}'$ is contained in the ring class field $H_{d_0,4}$ of conductor $4$.
We claim that $pO_{K_{d_0}}$ is totally split in $H_{d_0,4}$.
In fact, by the class field theory, it is equivalent to the easy fact that the image of $p$ under the composition
$$
K_{d_0, p}^\times \lra \wh K_{d_0}^\times \lra
K_{d_0}^\times\bs \wh K_{d_0}^\times/(\wh \BZ+4 \wh O_{K_{d_0}})^\times=\Gal(H_{d_0,4}/K_{d_0})
$$
is trivial.

The second step is to show that  $|A(\BF_{p^2})[2^\infty]|\leq 16$.
This is done by explicit computation. In fact, by the choice $p\equiv 3\pmod8$, we see that $A$ has supersingular reduction at $p$.
Then the eigenvalues of the absolute Frobenius $\varphi_p$ on the Tate modules of $A$ are $\pm\sqrt{-p}$, so the eigenvalues of $\varphi_p^2$
are $-p, -p$. It follows that
$$
|A(\BF_{p^2})|=p^2+1-(-p-p)=(p+1)^2.
$$
By the choice $p\equiv 3\pmod8$, we have $|A(\BF_{p^2})[2^\infty]|= 16$.
This finishes the second step.

Finally, we check the existence of the prime $p$ satisfying the two conditions.
The second condition is equivalent to $(-d_0/p)=-1$, which becomes
$(d_0/p)=1$ by the first condition. Then we choose $p$ satisfying:
\begin{enumerate}[(a)]
\item $p\equiv 3\pmod8$.
\item $(\ell/p)=1$ for any prime factor $\ell$ of $n$ with $\ell\equiv 1\pmod4$.
\item $(\ell/p)=-1$ for any prime factor $\ell$ of $n$ with $\ell\equiv -1\pmod4$.
\end{enumerate}
It is easy to check that it gives $(d_0/p)=1$ for any $d_0\equiv 5,6\pmod8$.
Now the existence of $p$ satisfying (a), (b) and (c) is just a combination of the quadratic reciprocity law, the Chinese remainder theorem, and Dirichlet's density theorem.

\medskip

\noindent \emph{Step 3}.
If $n$ is odd, then $A(\BH_n')[2^\infty]= A[(1+i)^3]$.
We will prove $\sqrt2\notin \BH_n'$, which implies
$A(\BH_n')_\tor=A[(1+i)^3]$ by Lemma \ref{torsion1}.

To prove $\sqrt2\notin \BH_n'$, note that $\BH_n'$ is the composite of $L_n(i)$ and $H_{d_0}'$ for some $d_0\equiv 5\pmod8$. Let $v$ be a place of $\BH_n'$ above 2, and $v_{d_0}$ the restriction to $H_{d_0}'$. It suffices to show  $\sqrt2\notin (\BH_n')_v$.
Consider the ramification of $v$ above $2$.
Note that $(\BH_n')_v$ is the composite of $\BQ_2(i)$ and $(H_{d_0}')_{v_{d_0}}$ for all related $d_0$. By Proposition \ref{CM point1},
$(H_{d_0}')_{v_{d_0}}$ is unramified over
$N_{d_0,4}=(M_{d_0,4})^{\sigma_{1+2\varpi_{d_0}}}$, where $\varpi_{d_0}=(\sqrt{-d_0}-1)_2$ and $M_{d_0,4}$ is the ring class field  of $\BQ_2(\sqrt{-d_0})$ of conductor $4$.

We claim that $N_{d_0,4}$ is independent of
$d_0$.
In fact, fix an isomorphism $\BQ_2(\sqrt{-d_0})\cong\BQ_2(\sqrt{-5})$, which induces an isomorphism $M_{d_0,4}\cong M_{5,4}$. Note that $1+2\varpi_{d_0}$ and $1+2\varpi_{5}$ have the same image in $K_{5,2}^\times/(\BZ_2+4O_{K_{5,2}})^\times$, so their actions on $M_{5,4}$ are the same. It follows that $N_{d_0,4}=N_{5,4}$.

Note that $i\in N_{5,4}$ and $\sqrt2\notin N_{5,4}$.
Therefore, $(\BH_n')_v$ is unramified over $N_{5,4}$. To prove
$\sqrt2\notin (\BH_n')_v$, it suffices to prove that $N_{5,4}(\sqrt2)=M_{5,4}$ is ramified over $N_{5,4}$. This is clear since $\Gal(M_{5,4}/N_{5,4})$ is generated by $\sigma_{1+2\varpi_{5}}$ with $1+2\varpi_{5}\in O_{K_{5,2}}^\times$.

\end{proof}

\subsubsection*{Proof of Theorem \ref{m3}: representative}

By definition, $\Phi_0\subset \Phi$. Recall that
$$P_\chi=\sum_{t\in \Phi} f_n(P_n)^{t} \chi(t).$$
Summing over all characters $\chi: \Cl_n\cong \Cl_n'/\langle\sigma\rangle\ra \{\pm 1\}$.
We have
$$\sum_{\chi: \Cl_n\ra \{\pm 1\}} P_\chi
=\sum_{t\in \Phi} f_n(P_n)^{t} \sum_{\chi: \Cl_n\ra \{\pm 1\}} \chi(t).$$
As in the case $n\equiv 1,2,3\pmod8$, apply the character formula
$$ \sum_{\chi: \Cl_n\ra \{\pm 1\}} \chi(t)
=2^{h_2(n)}\delta _{2\Cl_n}(t), \qquad t\in \Cl_n.$$
Here $h_2(n)=\dim _{\BF_2}\Cl_n/2\Cl_n$.
Then we obtain
$$\sum_{\chi: \Cl_n\ra \{\pm 1\}} P_\chi
=2^{h_2(n)} \sum_{t\in \Phi_0} f_n(P_n)^{t}
=2^{h_2(n)} Z(n).$$
This is an equality in $A(H_n')$.

By Theorem \ref{GZ}, the equality gives
$$\sum_{\substack{n=d_0d_1\\ d_0\equiv 5, 6, 7\pmod 8\\
d_1\equiv  1,2 ,3 \pmod 8,\ d_1>0}}
\epsilon (d_0, d_1) 2^{h_2(n)}\CL(d_1) \CP(d_0)
=2^{h_2(n)} Z(n)\
\in\ A(H_n'(i))\otimes_{\BZ} \BQ.$$
We end up with
$$\sum_{\substack{n=d_0d_1\\ d_0\equiv 5, 6, 7\pmod 8\\
d_1\equiv  1,2 ,3 \pmod 8,\ d_1>0}}
\epsilon (d_0, d_1)  \CL(d_1) \CP(d_0)
= Z(n)\  \in\ A(H_n'(i))\otimes_{\BZ} \BQ.$$
Then we have
$$\CP(n)=Z(n)-\sum_{\substack{n=d_0d_1\\ d_0\equiv 5, 6, 7\pmod 8\\
d_1\equiv  1,2 ,3 \pmod 8,\ d_1>1}} \epsilon(d_0, d_1)\CL(d_1)\CP(d_0).$$
It follows that $\CP(n)$ and $P(n)$ satisfy the same iteration formula (in different groups).
Therefore, $P(n)$ represents $\CP(n)$.
This proves the first statement of the theorem.

\subsubsection*{Proof of Theorem \ref{m3}: part (1)}

Here we prove part (1) of the theorem.
Let $R(n)$
(resp. $R(d_0, d_1)$)
be the image of $P(n)$
(resp. $P(d_0, d_1)=P_{\chi_{d_0, d_1}}$)
under the $2$-isogeny from $A$ to $E$. Then $R(n)\in E(\BH_n')$ and
$R(d_0, d_1)\in E(H_n')$.
We need to prove that $2R(n)\in E(K_n)^-$.

Note that in Lemma \ref{multiples1} we have already checked $4P(n,1)\in A(K_n)^-$ and thus $4R(n,1)\in E(K_n)^-.$ To relate to $2R(n)$, we have the following simple connection.

\begin{lem} \label{multiples2}
$$4 P(n,1)=\pm 2^{2+h_2(n)}  P(n), \qquad
4 R(n,1)=\pm 2^{2+h_2(n)}  R(n).$$
\end{lem}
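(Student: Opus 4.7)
The identity should follow by specializing Theorem \ref{GZ} to the degenerate decomposition $(d_0,d_1)=(n,1)$, combining it with the already-established fact that $P(n)$ represents $\CP(n)$, and then using the torsion bound of Lemma \ref{torsion2} to promote an identity in $A(\BH_n')\otimes\BQ$ to an honest identity after multiplying by $4$.

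Concretely, when $d_1=1$ the quadratic character $\chi_{n,1}$ is trivial (consistent with the convention in \S \ref{section2.1}), so by definition $P(n,1)=\sum_{t\in\Phi}f_n(P_n)^t$ is a well-defined element of $A(H_n')$. Applying Theorem \ref{GZ} with $(d_0,d_1)=(n,1)$ — noting that $(n,1)\not\equiv(5,3)\pmod8$ forces $\epsilon(n,1)=\pm1$, and $\CL(1)=1$ — yields
$$P(n,1)=\pm 2^{h_2(n)}\CP(n)\qquad\text{in }A(H_n'(i))\otimes_\BZ\BQ.$$
The representative part of Theorem \ref{m3}, proved just above the lemma, gives $P(n)=\CP(n)$ in $A(\BH_n')\otimes_\BZ\BQ$. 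Since $i\in L_n(i)\subset\BH_n'$ and $H_n'\subset\BH_n'$ (by construction for $n\equiv 5,6\pmod 8$, and because $H_n'=H_n\subset L_n(i)$ when $n\equiv 7\pmod 8$), we have $H_n'(i)\subset\BH_n'$, and can combine the two equalities to obtain
$$P(n,1)=\pm 2^{h_2(n)}P(n)\qquad\text{in }A(\BH_n')\otimes_\BZ\BQ.$$

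This identity in $A(\BH_n')\otimes_\BZ\BQ$ is exactly the statement that the difference $P(n,1)\mp 2^{h_2(n)}P(n)$ is a torsion element of $A(\BH_n')$. By Lemma \ref{torsion2} this torsion sits inside $A[4]$, so multiplying through by $4$ kills it and produces
$$4P(n,1)=\pm 2^{2+h_2(n)}P(n)\quad\text{in }A(\BH_n'),$$
which is the first claim. The second identity follows by applying the $2$-isogeny $\varphi:A\to E$, which is a group homomorphism sending $P(n)\mapsto R(n)$ and $P(n,1)\mapsto R(n,1)$ by definition.

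There is no real obstacle to overcome: the three inputs (Gross--Zagier, the representative statement, and the torsion bound $A(\BH_n')_\tor\subset A[4]$) are already in place, and the only genuine verification is the containment $H_n'(i)\subset\BH_n'$, which is immediate from the definition of $\BH_n'$. The factor of $4$ in the lemma is precisely what is needed to absorb possible $4$-torsion contamination, and the $\pm$ sign inherits from the sign $\epsilon(n,1)$ in Theorem \ref{GZ}.
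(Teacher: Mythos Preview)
Your approach is exactly the paper's: specialize Theorem \ref{GZ} at $(d_0,d_1)=(n,1)$, combine with the representative statement, and clear torsion via Lemma \ref{torsion2}. The paper's proof is just a terser version of what you wrote.

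One small correction: your claim that $H_n'=H_n\subset L_n(i)$ when $n\equiv 7\pmod 8$ is false in general (take $n=23$: the class number of $\BQ(\sqrt{-23})$ is $3$, so $H_{23}$ is a cubic extension of $K_{23}$, while $L_{23}(i)=K_{23}(i)$ is only quadratic). What you actually need is not $H_n'\subset\BH_n'$ but merely $P(n,1)\in A(\BH_n')$. This does hold: for $n\equiv 7\pmod 8$ one has $\sigma=1$ and $\Phi=\Cl_n$, so $P(n,1)=\sum_{t\in\Cl_n}z_n^t$ is the full trace of $z_n$ from $H_n$ down to $K_n$, hence lies in $A(K_n)\subset A(L_n(i))\subset A(\BH_n')$. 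With that fix the rest of your argument goes through unchanged.
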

\begin{proof}
By Theorem \ref{GZ},
$$P(n,1)=\pm 2^{h_2(n)}  P(n)\
\in\ A(\BH_n')\otimes_{\BZ} \BQ.$$
Then
$$P(n,1)\mp 2^{h_2(n)}  P(n)\
\in\ A(\BH_n')_\tor=  A(\BH_n')[4].$$
Here the last identity follows from Lemma \ref{torsion2}.
\end{proof}

Before proving part (1) of the theorem,
we introduce some notations on fields.
Recall that $H_n$ is the Hilbert class field of $K_n=\BQ(\sqrt{-n})$ and $H_n'=H_n(f_n(P_n))$. Recall that $K_n'=K_n, K_n(i), K_n$ for $n\equiv 5, 6,7\pmod 8$ respectively.
Let $L_n\subset H_n$ be the genus field of $K_n$; that is, $L_n$ is subfield of $H_n$ fixed by the subgroup $2\Cl_n$ of $\Cl_n=\Gal(H_n/K_n)$. Define
$L_n'=L_n, L_n(i), L_n$ for $n\equiv 5, 6,7\pmod 8$ respectively.
Then $L_n'=L_nK_n'$.
Set $K_n''=K_n(E[4](L_n'))$, i.e. $K_n''=K_n(i), K_n(\sqrt{2}, i), K_n$ for $n\equiv 5, 6,7\pmod 8$ respectively.

First, we prove $2R(n)\in E(K_n'')$.
Consider the image of $4R(n, 1)=\pm2^{h_2(n)+2}R(n)$ under the (injective)  Kummer map $$\delta: E(K_n'')/2^{h_2(n)+2}E(K_n'') \lra H^1(K_n'', E[2^{h_2(n)+2}]),$$ and the inflation-restriction exact sequence
$$1\lra \Hom (\Gal(L_n'/K_n''), E[4](K_n''))\lra H^1(K_n'', E[2^{h_2(n)+2}])
\lra H^1(L_n', E[2^{h_2(n)+2}]).$$
(Note that $E[2^\infty](L_n')=E[4](K_n'')$.)
The image of $\delta(2^{h_2(n)+2}R(n))$ in $H^1(L_n', E[2^{h_2(n)+2}])$ is 0, since it is 0 in $E(L_n')/2^{h_2(n)+2}E(L_n')$.
Then $\delta(2^{h_2(n)+2}R(n))$ lies in $\Hom(\Gal(L_n'/K_n''), E[4](K_n''))$, which has exponent 2 since $\Gal(L_n'/K_n'')$ has exponent 2.
It follows that $\delta(2^{h_2(n)+3}R(n))=0$. Thus
$$2^{h_2(n)+3}R(n)\in 2^{h_2(n)+2}E(K_n''), \qquad
2R(n)\in E(K_n'')+E[2^\infty](L_n')=E(K_n'').$$

Second, we prove $2R(n)\in E(K_n)^-$ for $n\equiv 7\pmod 8$.
This is the simplest case, but it illustrates the key idea.
In this case, we already have $2R(n)\in E(K_n)$, and we need to prove
$2\overline{R(n)}=-2R(n)$.
By Lemma \ref{multiples1} and Lemma \ref{multiples2},
$$2^{h_2(n)+2}(R(n)+\overline{R(n)})=\pm (4R(n, 1)+4\overline{R(n, 1)})=0.$$
Then $R(n)+\overline{R(n)}\in E(K_n)[2^\infty]=E[2]$ is killed by 2.
The result follows.

Third, we prove $2R(n)\in E(K_n)^-$ for $n\equiv 5\pmod 8$.
It suffices to prove $2R(n)\in E(K_n)$, since the process from $E(K_n)$ to $E(K_n)^-$ is the same as the case  $n\equiv 7\pmod 8$.
We already know $2R(n)\in E(K_n(i))$.
Denote by $\xi\in \Gal(K_n(i)/K_n)$ the unique non-trivial element, and take a lifting of $\xi$ to $\Gal(\BH_n'/K_n)$, which we still denote by $\xi$.
By Lemma \ref{multiples1} and Lemma \ref{multiples2},
$$2^{h_2(n)+2}(P(n)^\xi-P(n))=\pm (4P(n, 1)^\xi-4 P(n, 1))=0.$$
Then $P(n)^\xi-P(n)\in A(K_n(i))[2^\infty]=A[(1+i)^3]$. Note that $A[(1+i)^3]$ is
exactly killed by $2\varphi:A\to E$. We have $2R(n)^\xi-2R(n)=0$, and thus $2R(n)\in E(K_n)$.

For the case $n\equiv 6\pmod 8$, we need the following simple result.

\begin{lem}
For any $n\equiv 5,6,7\pmod 8$,  $R(n)\in E(L_n(i))$.
\end{lem}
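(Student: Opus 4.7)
The plan is to induct on $n$ among positive square-free integers $\equiv 5,6,7\pmod 8$, ordered by divisibility. Applying $\varphi$ to the recursive definition of $P(n)$, the induction step reduces to two tasks: (i) each summand $\varphi(\epsilon(d_0,d_1) P(d_0))$ lies in $E(L_n(i))$, and (ii) $\varphi(Z(n)) \in E(L_n(i))$; the base case (no nontrivial summands) is subsumed by (ii).

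For (i), I would observe that $\varphi \circ [i]$ is a $2$-isogeny $A \to E$ with the same kernel $A[1+i] = \{0, \tau(1)\}$ as $\varphi$ (using $\tau(i) = \tau(1)$ in $A$). Hence $\varphi \circ [i] = \alpha \circ \varphi$ for some $\alpha \in \Aut_{\BQ(i)}(E) = \mu_4$, and each summand equals a $\BQ(i)$-defined automorphism of $E$ applied to $R(d_0)$. Combining the induction hypothesis $R(d_0) \in E(L_{d_0}(i))$ with the genus-theoretic inclusion $L_{d_0}(i) \subset L_n(i)$ (verified from $\sqrt{-d_0}, \sqrt{d_1} \in L_n(i)$ via the standard genus decomposition of $L_n$) then closes (i).

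For (ii), since $L_n' \subset L_n(i)$ in all three cases, it suffices to prove $\varphi(Z(n)) \in E(L_n')$. The isomorphism $(2\Cl_n')/\langle\sigma\rangle \cong 2\Cl_n$ together with $\sigma \in 2\Cl_n'$ identifies $\Gal(H_n'/L_n')$ with $2\Cl_n'$. For $\tau \in 2\Cl_n'$, right-multiplication by $\tau$ sends $\Phi_0$ to another set of representatives of $2\Cl_n'/\langle\sigma\rangle$, so I may write $t\tau = \sigma^{\epsilon(t)} t'$ with $t' \in \Phi_0$ and $\epsilon(t) \in \{0,1\}$. Theorem \ref{CM point2} supplies $z_n^\sigma - z_n = \tau(1)$ for $n \equiv 5, 6 \pmod 8$ (with $\sigma = 1$ trivial when $n\equiv 7$), and $\tau(1)$ is the $\BQ$-rational cusp $[1/16]$ of $X_0(32)$, hence Galois-invariant. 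Consequently $(z_n^s)^\sigma = z_n^s + \tau(1)$ for any $s$, and summing over $\Phi_0$ yields
\[
Z(n)^\tau - Z(n)\ =\ m_\tau\, \tau(1)\ \in\ \ker \varphi
\]
for some $m_\tau \in \BZ$. Therefore $\varphi(Z(n))$ is $\Gal(H_n'/L_n')$-invariant, giving $\varphi(Z(n)) \in E(L_n') \subset E(L_n(i))$.

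The main obstacle will be the bookkeeping: correctly identifying $\Gal(H_n'/L_n')$ with $2\Cl_n'$, orchestrating the representative shift $\Phi_0 \to \Phi_0\tau$ modulo $\langle\sigma\rangle$, and tracking the CM scalar $[i]$ through $\varphi$ as a $\BQ(i)$-defined automorphism of $E$. Once the explicit identity $z_n^\sigma - z_n = \tau(1) \in A(\BQ)$ from Theorem \ref{CM point2} is in hand, computing $Z(n)^\tau - Z(n)$ modulo $\ker\varphi$ is routine.
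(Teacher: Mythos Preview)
Your proof is correct and follows essentially the same strategy as the paper: reduce via the recursion and induction to showing $\varphi(Z(n)) \in E(L_n')$, then use $z_n^\sigma - z_n = \tau(1) \in \ker\varphi$ from Theorem~\ref{CM point2}. The paper's execution of (ii) is marginally slicker---it first observes that $\varphi(z_n)$ itself is $\sigma$-invariant and hence lies in $E(H_n)$ (resp.\ $E(H_n(i))$), so that $\varphi(Z(n))$ is simply the trace over $2\Cl_n$ of a point of $E(H_n)$---while your explicit handling of the case $\epsilon(d_0,d_1)=\pm i$ via $\varphi\circ[i]=\alpha\circ\varphi$ with $\alpha\in\Aut_{\BQ(i)}(E)$ fills in a detail the paper leaves implicit.
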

\begin{proof}
By the recursion formula, it suffices to prove $\varphi(Z(n))\in E(L_n')$ for any $n\equiv 5,6,7\pmod 8$.
By Theorem \ref{CM point2}, $\varphi(z_n)$ is invariant under the action of
$\sigma$. Here $\sigma$ is described right after Proposition \ref{CM point1}.
If $n\equiv 5,7\pmod 8$, then $\varphi(z_n)$ is defined over $H_n$, and thus $\varphi(Z(n))$ is defined over $L_n$.
If $n\equiv 6\pmod 8$, then $\varphi(z_n)$ is defined over $H_n(i)$, and thus $\varphi(Z(n))$ is defined over $L_n(i)$.
\end{proof}

Finally, we prove $2R(n)\in E(K_n)^-$ for $n\equiv 6\pmod 8$.
We already know $2R(n)\in E(K_n(\sqrt{2}, i))$.
It suffices to prove $2R(n)\in E(K_n(i))$, since the process from $E(K_n(i))$ to $E(K_n)^-$ is the same as that for the case $n\equiv 5\pmod 8$.

Let $\kappa \in \Gal(K_n(\sqrt{2}, i)/K_n(i))$ be the unique non-trivial element, and take any lifting of $\kappa$ in $\Gal(L_n(i)/K_n(i))$, still denoted by $\kappa$.
We need to show that $(2R(n))^{\kappa}=2R(n)$.
Note that $\kappa^2=1$ since $\Gal(L_n(i)/K_n(i))$ has exponent 2.
By Lemma \ref{multiples1} and Lemma \ref{multiples2},
$$2^{h_2(n)+2}(R(n)^\kappa-R(n))=\pm (4R(n, 1)^\kappa-4R(n, 1))=0,$$
so $R(n)^\kappa-R(n)$ lies in
$E[4][\kappa+1]=\{x\in E[4]:x^\kappa+x=0  \}$.
By Lemma \ref{torsion3},
$E[4][\kappa+1]=E[2]$. It follows that $2(R(n)^\kappa-R(n))=0$.
The proof of part (1) is complete.

\subsubsection*{Proof of Theorem \ref{m3}: part (2)}

We start with some Galois-theoretic preparation.
Denote by
$$r:   \BQ^\times \bs \BA^\times \lra \Gal (\BQ^\ab/\BQ)$$
the Artin map over $\BQ$.
Then $c=r_\infty (-1)$ is the complex conjugation.
Define $\beta_1,\beta_2\in \Gal(\BQ^\ab/\BQ)$ by
$$\beta_1=r _\infty (-1)r _2(-2), \qquad
\beta_2=r _\infty (-1)r _2 (6).
$$
Let $\beta_1', \beta_2'\in \Gal(\overline\BQ/\BQ)$ be any liftings of $\beta_1,\beta_2$.

In the following, we take the convention that $(\gamma+1)R$ means $\gamma(R)+1$ for any $\gamma\in \Gal(\overline\BQ/\BQ)$.
The key of the proof is the following lemma.
\begin{lem} \label{betaaction}
\begin{enumerate}[(1)]
\item For any $n\equiv 5\pmod 8$,
$$ Z(n)^{\beta_1'+1}=Z(n)^{\beta_2'+1} \ \in \  g(n)\ \tau(\frac{1-i}2)+\BZ\, \tau(1).$$
\item For any $n\equiv -2\pmod{16}$,
$$
Z(n)^{\beta_1'+1} \ \in \  g(n)\ \tau(\frac{i}2)+\BZ\, \tau(1).$$
\item For any $n\equiv 6\pmod{16}$,
$$
Z(n)^{\beta_2'+1} \ \in \  g(n)\ \tau(\frac{i}2)+\BZ\, \tau(1).$$
\item For any $n\equiv 7\pmod 8$,
$$Z(n)^{\beta_1'+1}=Z(n)^{\beta_2'+1}= g(n)\ \tau(\frac12).$$
\end{enumerate}
\end{lem}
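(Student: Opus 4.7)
The plan is to handle the four parts by case analysis on $n \bmod 8$ (with additional splitting on $n \bmod 16$ for (2) and (3)), using Theorem \ref{CM point2} for the explicit Galois actions on $z_n$. The argument naturally organizes as: (i) a preliminary well-definedness step that reduces the problem to the abelian hull of $H_n'$ modulo $\BZ\tau(1)$, (ii) case-by-case computation of $\beta_i'(z_n)$ by decomposing $\beta_i = c \cdot r_2(\alpha)$ with $\alpha \in \{-2, 6\}$, and (iii) summation over $\Phi_0$.

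For (i), I would first observe that although $H_n'$ need not lie in $\BQ^\ab$ when $\Cl_n$ has elements of order $>2$, the sum $Z(n)$ lies in $A(L_n')$ modulo $\BZ\tau(1)$, where $L_n' \subseteq H_n'$ is the maximal subfield abelian over $\BQ$. Indeed, for any $s \in 2\Cl_n'$, shifting $\Phi_0$ by $s$ yields another system of representatives of $(2\Cl_n')/\langle\sigma\rangle$, and the resulting sum differs from $Z(n)$ only by $\sigma$-translations contributing multiples of $\tau(1)$. Consequently $\beta_i'(Z(n)) + Z(n)$ is well-defined modulo $\BZ\tau(1)$ independently of the chosen lift of $\beta_i$. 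For step (ii), in case (1) $n \equiv 5 \pmod 8$, a direct Hilbert-symbol computation shows that $r_2(-2)|_{L_n'}$ agrees with $\sigma_\varpi|_{L_n'}$ (where $\varpi = \sqrt{-n}-1$): both act as the class of $\wp_{v_2}$ modulo $2\Cl_n$. Combined with $\sigma_\varpi(z_n) = z_n + \tau(\tfrac{1+i}{2})$ and $\bar z_n = -z_n + \tau(1)$, any such lift gives
\[
\beta_1'(z_n) = c(\sigma_\varpi(z_n)) = -z_n + \tau(1) + \tau(\tfrac{1-i}{2}) \equiv -z_n + \tau(\tfrac{1-i}{2}) \pmod{\BZ\tau(1)}.
\]
The same holds for $\beta_2'$ via $r_2(6) = r_2(-2) r_2(-3)$ and the triviality of $r_2(-3)|_{L_n'}$ for $n \equiv 5 \pmod 8$. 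Cases (2) and (3), using Theorem \ref{CM point2}(2) with $\varpi = \sqrt{-n}$, give $\beta_i'(z_n) \equiv -z_n + \tau(\tfrac{i}{2}) \pmod{\BZ\tau(1)}$; the distinct $\bmod\,16$ congruences determine precisely which of $\beta_1, \beta_2$ satisfies $r_2(-3)|_{K_n} = 1$ (the other produces torsion-only correction). For case (4) $n \equiv 7 \pmod 8$, one uses $\bar z_n + z_n^{\sigma_{\varpi^5}} = \tau(\tfrac{1}{2})$ together with the cancellation $r_2(-2)|_{H_n} \cdot \sigma_{\varpi^5}|_{H_n} \in 2\Cl_n$ (which preserves $Z(n)$ exactly, since $H_n' = H_n$ and $\sigma = 1$), to obtain the claimed identity without modular ambiguity.

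For step (i
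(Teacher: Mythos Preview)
Your overall strategy matches the paper's proof: reduce $\beta_j'$ to $\sigma_\varpi\circ c$ via the fact that they agree on $L_n(i)$ (the fixed field of $2\Cl_n'$ in $H_n'$), then invoke Theorem~\ref{CM point2}. Your step~(i) is exactly the right preparation, and the paper does the same thing, phrased as: the element $\alpha:=\sigma_\varpi\circ c\circ(\beta_j')^{-1}$ restricts to the identity on $L_n(i)$, hence lies in $2\Cl_n'$, and therefore $Z(n)^\alpha-Z(n)\in\BZ\,\tau(1)$ by the representative-shuffling argument you describe.

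The gap is in step~(ii). You write
\[
\beta_1'(z_n)=c(\sigma_\varpi(z_n)),
\]
but this equality of \emph{points} is not justified: $z_n$ lives in $A(H_n')$, and you have only established that $\beta_1'$ and $c\circ\sigma_\varpi$ agree on the smaller field $L_n(i)$, not on $H_n'$. Different lifts $\beta_1'$ act differently on $z_n$ itself. What your step~(i) actually buys you is the equality
\[
Z(n)^{\beta_1'}\equiv Z(n)^{\sigma_\varpi\circ c}\pmod{\BZ\,\tau(1)},
\]
at the level of the \emph{sum}, not term by term. You must then compute $Z(n)^{\sigma_\varpi\circ c}+Z(n)$ using the specific automorphism $\sigma_\varpi\circ c$ acting on $H_n'$: for each $t\in\Phi_0$ one has $(z_n^t)^{\sigma_\varpi\circ c}=(-z_n+\tau(\tfrac{1-i}{2}))^{\bar t}$ (for $n\equiv 5$), and then one must check that $\{\bar t:t\in\Phi_0\}$ is again a set of representatives of $2\Cl_n'/\langle\sigma\rangle$ so that $\sum_t z_n^{\bar t}\equiv Z(n)\pmod{\BZ\,\tau(1)}$. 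Only after this cancellation does the count $g(n)\,\tau(\tfrac{1-i}{2})$ emerge. The same issue recurs in your case~(4): the expression ``$r_2(-2)|_{H_n}$'' is not meaningful, since $H_n$ is in general not abelian over $\BQ$; the paper instead uses that $Z(n)\in A(L_n)$ exactly (no $\BZ\,\tau(1)$ ambiguity, since $\sigma=1$), so one may replace $\beta_j'$ by $\sigma_{\varpi^5}\circ c$ via their agreement on $L_n$, and then compute in $H_n$.
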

\begin{proof}
Recall that after Proposition \ref{CM point1} we have introduced $\sigma\in 2\Cl_n'$ which gives
$$
\Cl'_n/\langle\sigma\rangle\cong\Cl_n, \quad
(2\Cl'_n)/\langle\sigma\rangle\cong2\Cl_n.
$$
Note that the genus field $L_n$ is the subfield of $H_n$ fixed by $2\Cl_n$.
It follows that the subfield of $H_n'$ fixed by $2\Cl_n'$ is $L_n, L_n(i), L_n$ according to $n\equiv 5,6,7\pmod 8$.

The field $L_n(i)=\BQ(i, \sqrt{d}:d|n)$ is a subfield of $\BQ^\ab$. It is easy to check that the action of the involved $\beta_j'$ on $L_n(i)$ is the same as that of
$\sigma_{\varpi}\circ c$ in all the four cases of the lemma.
For example, if $n\equiv 5\pmod 8$, then
$\sigma_{\varpi}$ acts on $L_{n}(i)$ as
$r_2(N_{K_{n}/\BQ}(\varpi))=r_2(n+1)=r_2(-2)$.
As a consequence, we claim that
$$
Z(n)^{\beta_j'}-Z(n)^{\sigma_{\varpi}\circ c} \in \BZ\, \tau(1)
$$
in all four cases.

In fact, denote
$\alpha=\sigma_{\varpi}\circ c \circ \beta_j'^{-1}$, viewed as an element of
$\Cl_n'=\Gal(H_n'/K_n')$.
It suffices to show
$$Z(n)^{\alpha}-Z(n) \in \BZ\, \tau(1).$$
Since $\alpha$ acts trivially on $L_n(i)$, we see that $\alpha\in 2\Cl_n'$.
Recall the definition
$$Z(n)=\sum_{t\in \Phi_0} z_n^{t}, \qquad
Z(n)^\alpha=\sum_{t\in \alpha\Phi_0} z_n^{t}.$$
Here $\Phi_0$ is a set of representatives of $2\Cl_n=(2\Cl'_n)/\langle\sigma\rangle$ in $2\Cl_n'$. Then $\alpha\Phi_0$ is also a set of representatives of $2\Cl_n$ in $2\Cl_n'$.
Write $\Phi_0=\{t_i:i=1,\cdots, g(n)\}$.
Then $\alpha\Phi_0=\{\sigma_i t_i: i=1,\cdots, g(n)\}$,
where each $\sigma_i\in  \langle\sigma\rangle$.
By Theorem \ref{CM point2}, we see that $z_n^\sigma=z_n$ or $z_n^\sigma=z_n+\tau(1)$. It follows that
$$
Z(n)^\alpha-Z(n)=\sum_{t_i\in \Phi_0} (z_n^{\sigma_i}-z_n)^{{t_i}}
\in \BZ\, \tau(1).$$

Therefore, the result for $Z(n)^{\beta_j'+1}$ becomes that for
$Z(n)^{\sigma_{\varpi}\circ c+1}$, which can be checked easily by Theorem \ref{CM point2} for $n\equiv 5,6\pmod 8$.
In the case $n\equiv 7\pmod 8$, $H_n'=H_n$ and thus $Z(n)$ is already defined over $L_n$. Then
$$
Z(n)^{\beta_j'}=Z(n)^{\sigma_{\varpi}\circ c}
=Z(n)^{\sigma_{\varpi^5}\circ c}.
$$
Here the last identity holds since the Galois group $\Gal(L_n(i)/\BQ)$ has exponent 2. Then the result for $Z(n)^{\beta_j'+1}$ still follows from Theorem \ref{CM point2}.
\end{proof}

Now we prove part (2) of Theorem \ref{m3}.
Assume that $P(n)=P+t$ for some $P\in A(K_n)^-$ and $t\in A[4]$.
Define $\beta\in \Gal(\BQ^\ab/\BQ)$ by
$$\beta=\begin{cases}
r _\infty (-1)r _2(-2)&\text{if $n\equiv 5,7\pmod 8$ or $n\equiv -2 \pmod{16}$},
\\
r _\infty (-1)r _2 (6)&\text{if $n\equiv 6\pmod{16}$}.
\end{cases}
$$
Let $\beta'\in \Gal(\overline\BQ/\BQ)$ be any liftings of $\beta$.
Explicit calculation shows that $\beta$ acts on $K_n$ by $\sqrt{-n}\mapsto-\sqrt{-n}$. It follows that $P(n)^{\beta}+P(n)=t^\beta+t$.

We first treat the case $n\equiv 5,7\pmod 8$.
Then  $t\in A(\BH_n')[4]=A(\BQ(i))$ by Lemma \ref{torsion2}. Note that $\beta$ acts on $\BQ(i)$ trivially. Then
$P(n)^\beta+P(n)=2t \in \BZ\tau(1).$
Apply  $\beta'+1$ to both sides of Proposition \ref{congruence}.
By Lemma \ref{betaaction},
we have
$$\left(i^{\frac{n-1}{2}}
\sum_{\substack{n=d_0d_1\cdots d_\ell\\
d_0\equiv 5\pmod 8\\
d_1\equiv 1, 3\pmod 8\\
d_i\equiv 1 \pmod 8,\ i>1}} \prod_i g(d_i) \right)\tau(\frac{1-i}{2}) +\left(\sum_{\substack{n=d_0d_1\cdots d_\ell\\
d_0\equiv 7\pmod 8\\
d_1\equiv 1, 3\pmod 8\\
d_i\equiv 1 \pmod 8,\ i>1}}\prod_i g(d_i) \right) \tau(\frac12)\in 2A(\BH_n')^{\beta'+1}+\BZ \tau(1).$$
It follows that the contribution from $2A(\BH_n')^{\beta'+1}$ is torsion, which is contained in
$$2A(\BH_n')_{\tor}= 2 A(\BQ(i)) =\BZ\, \tau(1).$$
Then the left-hand side lies in $\BZ\, \tau(1)$.
Thus the coefficients in both of the brackets must be even.

\

Now we treat the case $n\equiv 6\pmod 8$.
In this case we can only have the weaker result
$$P(n)^\beta+P(n)\in (\beta+1)A[4]=A[2]$$
by Lemma \ref{torsion3}.
Apply $\beta'+1$ to Proposition \ref{congruence} again.
We get
\begin{multline*}
\left(\sum_{\substack{n=d_0d_1\cdots d_\ell\\
d_0\equiv 6\pmod 8\\
d_i\equiv 1 \pmod 8,\ i>0}} \prod_i g(d_i)\right)\tau(\frac i2)+ \left(\sum_{\substack{n=d_0d_1\cdots d_\ell\\
d_0\equiv 7\pmod 8\\
d_1\equiv 2\pmod 8\\
d_i\equiv 1 \pmod 8,\ i>1}} \prod_i g(d_i)\right) \tau(\frac12)
\\
+ \left(i\sum_{\substack{n=d_0d_1\cdots d_\ell\\
(d_0,d_1,d_2)\equiv (5,3,2)\pmod 8\\
d_i\equiv 1 \pmod 8, \ i>2}}\prod_{i\geq 1} g(d_i)\right)
\tau(\frac{1-i}{2})
\ \in\ 2A(\BH_n')^{\beta'+1}+A[2].
\end{multline*}
The contribution of $2A(\BH_n')^{\beta'+1}$ is a torsion point, and thus lies in $2A[4]=A[2]$.
Then the left-hand side lies in $A[2]$.
It follows that the first two coefficients have the same parity, which is the same as the assertion of the theorem in this case.
This finishes the proof of Theorem \ref{m3}.

\appendix

\section{Explicit Formulae}

In this appendix, we prove an explicit Waldspurger formula and an explicit Gross--Zagier formula in the case that the character $\chi$ on the quadratic extension is unramified. The results are derived from the original Waldspurger formula (cf. \cite[Theorem 1.4]{YZZ}) and the Yuan--Zhang--Zhang version of the Gross--Zagier formula proved in \cite[Theorem 1.2]{YZZ}.

All global L-functions in this section are complete L-functions with archimedean components normalized to have center $s=1/2$. To avoid confusion, we use $L(s,1_F)$ to denote the complete Dedekind zeta functions of a number field $F$, which is the product of the usual Dedekind zeta function $\zeta_F(s)$ with the gamma factors.

\subsection{Theorem of multiplicity one}

As in \cite[Chapter 1]{YZZ}, the Waldspurger formula and the Gross--Zagier formula can be interpreted as identities of certain one-dimensional spaces of functionals. In this section, we briefly recall the local results about this space of functionals.

Let $F$ be a local field and $B$ a quaternion algebra over $F$. Then $B$ is
isomorphic to either $M_2(F)$ or the unique division quaternion algebra over
$F$. The Hasse invariant $\epsilon(B)=1$ if $B\simeq M_2(F)$, and
$\epsilon(B)=-1$ if $B$ is the division algebra.

Let $K$ be either $F\oplus F$ or a quadratic field extension over $F$, with a
fixed embedding $K\hookrightarrow B$ of algebras over $F$. Let $\eta:F^\times\to \BC^\times$ be the (quadratic or trivial) character associated to the extension $K/F$.

Let $\pi$ be an irreducible admissible representation of $B^\times$ with central
character $\omega_\pi$, and let $\chi:K^\times\to \BC^\times$ be a character of
$K^{\times}$ such that
$$\omega _\pi\cdot \chi|_{F^\times }=1.$$
Define the co-invariant space
$$(\pi\otimes\chi)_{K^\times}:=\{\ell\in \Hom_{\BC}(\pi, \BC): \ell(\pi(t)v)= \chi^{-1}(t) \ell(v), \ \forall\ v\in \pi,\ t\in K^\times
\}.$$

The following result asserts that the dimension of this space is determined by
the local root number of the Rankin--Selberg L-function $L(\frac 12,\pi, \chi)$.

\begin{thm}[Tunnell \cite{Tu}, Saito \cite{Sa}]\label{TS1}
The dimension $\dim\ (\pi\otimes\chi)_{K^\times} \leq 1$, and the equality holds if and only if
 $$\epsilon (B)=\chi(-1)\eta(-1)\epsilon (\frac 12,\pi, \chi).$$
\end{thm}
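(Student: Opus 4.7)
The plan is to establish the theorem in two separate stages, since the multiplicity-one bound $\dim(\pi\otimes\chi)_{K^\times}\leq 1$ and the $\epsilon$-factor criterion rely on quite different techniques. I would handle them independently, then combine them.

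For the upper bound, my first move would be a Gelfand-pair style argument. If $B=M_2(F)$ and $K\subset B$ is a quadratic field, one exhibits an anti-involution $\sigma$ of $B$ that acts trivially on $K^\times$; this descends to the Hecke algebra of biinvariant distributions on $(B^\times, K^\times)$ and makes it commutative, which forces any $K^\times$-isotypic component of an irreducible admissible $\pi$ to be at most one-dimensional. The case $K=F\oplus F$ is the classical Gelfand pair $(GL_2, \text{split torus})$. When $B$ is a division algebra, $K^\times\backslash B^\times$ is compact, so the functional $\ell$ is determined by finitely many values and the multiplicity-one property follows from a direct decomposition of $\pi|_{K^\times}$ into characters.

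For the criterion of when equality holds, I would follow the original case analysis of Tunnell and Saito. The central observation is a dichotomy: using Jacquet--Langlands to pair $\pi$ with its transfer $\pi^{\mathrm{JL}}$ on the other quaternion algebra $B'$, exactly one of the spaces $(\pi\otimes\chi)_{K^\times}$ and $(\pi^{\mathrm{JL}}\otimes\chi)_{K^\times}$ is nonzero under the central character hypothesis $\omega_\pi\chi|_{F^\times}=1$. Combined with the upper bound, this reduces the problem to deciding \emph{which} of the two signs $\chi(-1)\eta(-1)\epsilon(\tfrac12,\pi,\chi)\in\{\pm 1\}$ governs the dichotomy. I would stratify by the type of $\pi$ (principal series, special, supercuspidal) and by the splitting behavior of $K/F$. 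In the principal series case the functional $\ell$ can be written as an explicit zeta integral, and its non-vanishing is read off directly from the factorization of $L(s,\pi,\chi)$ and a computation of the local $\epsilon$-factor. The special (Steinberg) case follows from the principal series case by a residue argument.

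The main obstacle is the supercuspidal case, which is genuinely hard and where the Tunnell--Saito arguments are most delicate. My approach would be via the Weil representation: realize $\pi$ as $\pi(\mu)$ induced from a character $\mu$ of $K'^\times$ for some auxiliary quadratic extension $K'/F$, and then express both the existence of $\ell$ and the $\epsilon$-factor in terms of Gauss sums attached to the tri-character data $(K,\chi,\mu)$. The matching of these two expressions is the technical core of Saito's proof and amounts to a clever identity among $\epsilon$-factors of pairs. The archimedean cases $F=\BR$ (with $B=M_2(\BR)$ or $\BH$) reduce to explicit formulas for discrete series and present no substantive difficulty once the non-archimedean situation is in hand. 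The global shape of the proof is therefore: upper bound by Gelfand pair; dichotomy by Jacquet--Langlands; identification of the sign by explicit local computation in each Bruhat--Tits stratum.
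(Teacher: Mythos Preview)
The paper does not prove this theorem: it is stated with attribution to Tunnell \cite{Tu} and Saito \cite{Sa} and used as a black box throughout (for instance in Corollary \ref{TS2} and in the appendix). So there is no proof in the paper to compare your proposal against.

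Your outline is a reasonable sketch of the original Tunnell--Saito arguments: the Gelfand-pair reasoning for the multiplicity bound, the Jacquet--Langlands dichotomy, and the case analysis (principal series via zeta integrals, special via residues, supercuspidal via the Weil representation and $\epsilon$-factor identities). That is indeed roughly how the cited references proceed. If you were asked to supply a proof here, this would be an appropriate plan, with the caveat that the supercuspidal case is where most of the real work lies and your description of it is necessarily schematic. But for the purposes of this paper no proof is expected; the correct response is simply to cite \cite{Tu} and \cite{Sa}.
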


We also consider the invariant subspace
$$(\pi\otimes \chi)^{K^\times}=\{v \in \pi: \pi(t)v=\chi^{-1}(t)v, \ \forall\ t\in K^\times \}.$$

\begin{cor} \label{TS2}
One has
$$\dim\ (\pi\otimes \chi)^{K^\times} \leq \dim\ (\pi\otimes \chi)_{K^\times}.$$
If $K$ is a field, then the equality holds.
\end{cor}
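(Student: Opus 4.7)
The plan is to combine duality via the contragredient $\wt\pi$, the multiplicity one bound of Theorem \ref{TS1}, and (for the field case) an averaging argument over the compact quotient $K^\times/F^\times$.

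First, for the inequality, I would fix a $B^\times$-invariant nondegenerate pairing $\pair{\cdot,\cdot}:\pi\times\wt\pi\to\BC$. For $v\in(\pi\otimes\chi)^{K^\times}$, the linear functional $\ell_v:\wt\pi\to\BC$ defined by $\wt v\mapsto\pair{v,\wt v}$ satisfies
$$\ell_v(\wt\pi(t)\wt v)=\pair{v,\wt\pi(t)\wt v}=\pair{\pi(t^{-1})v,\wt v}=\chi(t)\ell_v(\wt v),$$
using $\pi(t^{-1})v=\chi(t)v$. Hence $\ell_v\in(\wt\pi\otimes\chi^{-1})_{K^\times}$, and nondegeneracy of the pairing makes $v\mapsto\ell_v$ injective, giving
$$\dim(\pi\otimes\chi)^{K^\times}\le\dim(\wt\pi\otimes\chi^{-1})_{K^\times}.$$

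Next, I would show $\dim(\wt\pi\otimes\chi^{-1})_{K^\times}=\dim(\pi\otimes\chi)_{K^\times}$ by producing an explicit isomorphism. For any irreducible admissible $\pi$ of $B^\times$ one has $\wt\pi\cong\pi\otimes(\omega_\pi^{-1}\circ\nu)$, where $\nu:B^\times\to F^\times$ is the reduced norm. The hypothesis $\omega_\pi\chi|_{F^\times}=1$ gives $\chi|_{F^\times}=\omega_\pi^{-1}$, which after evaluating on $N_{K/F}(t)=t\bar t$ forces the character identity
$$(\omega_\pi^{-1}\circ N_{K/F})\cdot\chi^{-1}=\chi^c\quad\text{on}\quad K^\times,$$
where $\chi^c(t):=\chi(\bar t)$. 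Combining these yields $\wt\pi\otimes\chi^{-1}\cong\pi\otimes\chi^c$ as $K^\times$-representations. Finally, any $w\in B^\times$ that normalizes $K$ and induces the Galois involution on it (such $w$ always exists, since $K^\times$ has index $2$ in its $B^\times$-normalizer) gives a bijection $(\pi\otimes\chi)_{K^\times}\to(\pi\otimes\chi^c)_{K^\times}$ via $\ell\mapsto\ell\circ\pi(w)$. Concatenating the three isomorphisms proves the general inequality.

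Third, for equality when $K/F$ is a field, I would use that $K^\times/F^\times$ is compact. The hypothesis $\omega_\pi\chi|_{F^\times}=1$ ensures that $t\mapsto\chi(t)\pi(t)$ descends to a smooth action of $K^\times/F^\times$ on $\pi$, and the averaging operator
$$P:=\int_{K^\times/F^\times}\chi(t)\pi(t)\,dt$$
(with normalized Haar measure) is a $K^\times$-equivariant idempotent on $\pi$ with image exactly $(\pi\otimes\chi)^{K^\times}$. For any $\ell\in(\pi\otimes\chi)_{K^\times}$ a direct computation gives $\ell\circ P=\ell$, so $\ell\neq 0$ forces $(\pi\otimes\chi)^{K^\times}\neq 0$. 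Combined with the already-proven inequality, equality follows.

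The main obstacle is the middle step, namely the matching of the two co-invariant dimensions; it rests on the twin identities $\wt\pi\cong\pi\otimes(\omega_\pi^{-1}\circ\nu)$ and $(\omega_\pi^{-1}\circ N_{K/F})\chi^{-1}=\chi^c$, together with the choice of a Galois-conjugating element $w\in B^\times$ normalizing $K$. An alternative, parallel route is to observe directly from Theorem \ref{TS1} that the non-vanishing criteria for $(\wt\pi\otimes\chi^{-1})_{K^\times}$ and $(\pi\otimes\chi)_{K^\times}$ coincide, since they involve the same local $\epsilon$-factor $\epsilon(\tfrac12,\pi,\chi)$ (using $L(s,\pi,\chi)=L(s,\pi,\chi^c)$ arising from $\Ind_K^F\chi\cong\Ind_K^F\chi^c$).
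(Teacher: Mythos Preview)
Your proof is correct and follows essentially the same three-step skeleton as the paper: (i) inject $(\pi\otimes\chi)^{K^\times}$ into $(\wt\pi\otimes\chi^{-1})_{K^\times}$ via the duality pairing, (ii) match the latter dimension with $\dim(\pi\otimes\chi)_{K^\times}$, and (iii) in the field case, average over the compact group $K^\times/F^\times$ to produce a nonzero invariant from a nonzero co-invariant.

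The only substantive difference is in step (ii). The paper simply invokes Theorem~\ref{TS1} together with the $\epsilon$-factor symmetry $\epsilon(\tfrac12,\pi^\vee,\chi^{-1})=\epsilon(\tfrac12,\pi,\chi)$---exactly what you list as your ``alternative, parallel route''. Your primary route instead builds an explicit chain of isomorphisms using $\wt\pi\cong\pi\otimes(\omega_\pi^{-1}\circ\nu)$, the character identity $(\omega_\pi^{-1}\circ N_{K/F})\chi^{-1}=\chi^c$, and conjugation by a normalizing element $w$. Both are valid; the paper's route is shorter because it recycles the multiplicity-one theorem, while your primary route is more elementary in that it does not re-invoke Tunnell--Saito and makes the isomorphism explicit rather than just equating dimensions. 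In step (iii), your formulation via the projection $P$ and the identity $\ell\circ P=\ell$ is a clean repackaging of the paper's argument, which picks a single $v$ with $\ell(v)\neq 0$ and checks $\ell(Pv)\neq 0$ directly.
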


\begin{proof}
Denote by $\pi^\vee$ the contragredient of $\pi$.
The natural inclusion $\pi\hookrightarrow \Hom_{\BC}(\pi^\vee, \BC)$ induces an injection
$$(\pi\otimes \chi)^{K^\times} \lra (\pi^\vee\otimes\chi^{-1})_{K^\times}.$$
It follows that
$$\dim\ (\pi\otimes \chi)^{K^\times} \leq \dim\ (\pi^\vee\otimes\chi^{-1})_{K^\times}= \dim\ (\pi\otimes \chi)_{K^\times}.$$
Here the last equality follows from the theorem since
$\epsilon (\frac 12,\pi^\vee, \chi^{-1})=\epsilon (\frac 12,\pi, \chi)$.
This proves the first assertion.

For the second assertion, assuming $\dim\ (\pi\otimes \chi)_{K^\times}=1$, we need to construct a nonzero element of $(\pi\otimes \chi)^{K^\times}$. Take $\ell\in (\pi\otimes \chi)_{K^\times}$ and $v\in \pi$ such that $\ell(v)\neq 0$.
Since $K$ is a field, the quotient $K^\times/F^\times$ is compact. Fix a Haar measure on
$K^\times/F^\times$.
Then
$$w=\int_{K^\times/F^\times} \chi(t) \pi(t)vdt$$
is an element of $\pi$.
Furthermore,
$$
\ell(w)= \int_{K^\times/F^\times} \chi(t) \ell(\pi(t) v) dt
= \int_{K^\times/F^\times}  \ell(v) dt
= \vol(K^\times/F^\times)\ \ell(v)
\neq 0.
$$
It follows that $w\neq 0$, which finishes the proof.
\end{proof}

\subsection{Explicit Waldspurger formula}
Let $F$ be a number field and $\BA$ its ring of adeles. Let $B$ be a
quaternion algebra over $F$ and $G$  the algebraic group $B^\times$
over $F$. Denote by  $Z\cong F^\times$ the
center of $G$. Let $\pi$ be a unitary cuspidal automorphic representation
of $G(\BA)$ and $\omega_\pi$ its central character. Let $K$ be a  quadratic field extension over $F$,  $T$ the algebraic group $K^\times$ over $F$,
$\eta$ its associated quadratic Hecke character on $\BA^\times$. Let $\chi:
K^\times \bs \BA_K^\times \ra \BC^\times$ be a Hecke character of finite order.  Assume that
\begin{itemize}
\item $\omega_\pi \cdot  \chi|_{\BA^\times}=1$.
\item For each places $v$ of $F$,
$\epsilon(1/2, \pi_v, \chi_v) \cdot \eta_v(-1)\chi_v(-1)=\mathrm{inv} (B_v)$.
\end{itemize}
It follows that the global root number $\epsilon(1/2, \pi, \chi)$ of  $L(s, \pi,\chi)$ is $+1$ and there is an $F$-embedding $K\subset B$, which we fix once for all and via which
$T$ is viewed as a sub-torus of $G$. By Theorem \ref{TS1}, the space
$$(\pi\otimes\chi)_{T}:=\{\ell\in \Hom_{\BC}(\pi, \BC): \ell(\pi(t)f)= \chi^{-1}(t) \ell(f), \ \forall\ f\in \pi,\ t\in T(\BA)
\}$$
is one-dimensional.

Let $P_\chi:\pi\to\BC$ be the period functional defined by
$$P_\chi(f)=\int_{ T(F)Z(\BA)\bs T(\BA)} f(t) \chi(t) dt,
\quad \forall f\in \pi.$$
Here the Haar measures is normalized by $\vol(Z(\BA)T(F)\bs T(\BA), dt)=2L(1,
\eta)$.
Note that $P_\chi$ is a natural element of $(\pi\otimes\chi)_{T}$.
The Waldspurger formula tells when it is non-zero.

\begin{thm}[Waldspurger formula, \cite{YZZ}, Theorem 1.4] \label{thm4.1}
For any non-zero pure tensor $f=\otimes_v f_v\in \pi$,
$$\frac{|P_\chi(f)|^2}{\quad (f, f)_\Pet\ }=
\frac{1}{2}\frac{ L(1/2, \pi, \chi)}{ L(1, \pi, \ad)L(2,
1_F)^{-1}}\cdot \beta (f).$$
The notations are explained as follows:
\begin{enumerate}[(1)]
\item $\beta(f)=\prod_v \beta_v(f_v)$ is a product over all places $v$ of $F$
and for each $v$,
$$\beta_v (f_v):=\frac{L(1, \eta_v)L(1, \pi_v,
\ad)}{L(2, 1_v)L(1/2, \pi_v, \chi_v)} \int_{Z(F_v)\bs
T(F_v)} \frac{(\pi_v(t_v) f_{v}, f_{v})_v}{(f_{v}, f_{
v})_v} \chi_v(t_v) dt_v,$$
where  $(\ ,\ )_v$  is any non-trivial
$B_v^\times$-invariant Hermitian pairing on $\pi_v$. The Haar measures are normalized by
$\otimes_v dt_v=dt$ and $\vol(Z(\BA)T(F)\bs T(\BA), dt)=2L(1,
\eta)$.
\item $(f, f)_\Pet$ is the Peterson norm of $f\in \pi$ defined by
$$(f, f)_\Pet=\int_{G(F) Z(\BA)\bs
G(\BA)} |f(g)|^2 dg,$$
where the Haar measure $dg$
is the Tamagawa measure such that the volume of $G(F) Z(\BA)\bs
G(\BA)$ is $2$.
\end{enumerate}
\end{thm}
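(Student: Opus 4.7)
The plan is to follow Waldspurger's classical strategy via the Shimizu theta lift, as refined in \cite{YZZ}. By the local Tunnell--Saito dichotomy recalled in Theorem \ref{TS1}, under the hypothesis $\epsilon(1/2,\pi_v,\chi_v)\eta_v(-1)\chi_v(-1)=\inv(B_v)$ at every place $v$, each local co-invariant space $(\pi_v\otimes\chi_v)_{T(F_v)}$ is one-dimensional. Consequently both sides of the claimed identity are Hermitian forms on $\pi$ that factor as pure tensors in $v$, and the global identity reduces to identifying a single scalar of proportionality together with the correct local factors.

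First, I would realize $\pi\boxtimes\pi^\vee$ as a Shimizu theta lift from the metaplectic cover $\wt{\SL}_2(\BA)$, via the dual reductive pair attached to the ternary quadratic space $(B^0,\mathrm{Nrd})$ of trace-zero elements of $B$ equipped with the reduced norm. This gives an integral expression for the matrix coefficient $\langle\pi(g)f,f\rangle$ as a theta integral against an explicit test form on $\wt{\SL}_2(\BA)$. Composing with the toric period then rewrites $|P_\chi(f)|^2$ as the $(T,\chi)$-twisted Fourier coefficient of this theta integral, which after unfolding becomes a Rankin--Selberg-type integral representing $L(1/2,\pi,\chi)$.

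Second, I would apply the Rallis inner product formula to the theta integral to obtain
$$
\frac{|P_\chi(f)|^2}{(f,f)_{\Pet}} \;=\; c\cdot L(1/2,\pi,\chi)\cdot \prod_v \alpha_v(f_v),
$$
where $\alpha_v(f_v)$ is the regularized integral of the matrix coefficient of $\pi_v$ against $\chi_v$ over $Z(F_v)\bs T(F_v)$ and $c$ is a global constant produced by the spectral decomposition (giving the factors $L(1,\pi,\ad)^{-1}$ and $L(2,1_F)$). Absorbing the local $L$-ratios into $\beta_v(f_v)$ as in the statement ensures $\beta_v\equiv 1$ at almost all places, so the displayed product is a genuine Euler product and globalizes correctly.

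The main obstacle is the careful local normalization at ramified and archimedean places. One must verify that the $L$-factor combination in $\beta_v(f_v)$ exactly cancels the zeros and poles of the matrix-coefficient integral when $\pi_v$ or $\chi_v$ is ramified, and that the Tamagawa normalization $\vol(Z(\BA)T(F)\bs T(\BA))=2L(1,\eta)$ distributes correctly over the local Haar measures $dt_v$. This delicate bookkeeping, together with the spectral computation extracting the ratio $L(1,\pi,\ad)/L(2,1_F)$ and the archimedean computation matching the test vector normalization, constitutes the technical core of the argument in \cite{YZZ}, and is largely independent of the particular pair $(K,\chi)$ being treated.
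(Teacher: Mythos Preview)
The paper does not prove this theorem at all: it is stated as a citation of \cite[Theorem~1.4]{YZZ} and used as a black box in the derivation of the explicit Waldspurger formula (Theorem~\ref{thm4.2}). So there is no ``paper's own proof'' to compare against.

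Your sketch is a reasonable high-level outline of the strategy that \cite{YZZ} (and ultimately Waldspurger \cite{Wa}) uses to establish the formula---theta correspondence via the Shimizu lift, unfolding to a Rankin--Selberg integral, and the Rallis/Siegel--Weil mechanism to extract the adjoint $L$-value. But for the purposes of this paper no such argument is needed or expected: the correct ``proof'' here is simply to invoke the cited result. If you intend to reproduce the proof from \cite{YZZ}, be aware that your outline glosses over the most delicate point, namely the precise determination of the global constant $c=1/2$ under the stated Tamagawa normalizations; this is where the bulk of the work in \cite{YZZ} lies, and your sketch does not indicate how you would pin it down.
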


The goal of this subsection is to give an explicit form of Waldspurger's formula under the
following assumptions:
\begin{enumerate}[(a)]
\item $F$ is totally real and  $K$ is quadratic and totally imaginary
over $F$;
\item $\chi_v$ is unramified for each place $v\nmid \infty$;
\item for any $v|\infty$,  the Jacquet-Langlands correspondence $\pi_v^\JL$ is a
discrete series of weight $k_v$ on $\GL_2(\BR)$ with $k_v\geq 2$
even integer.
\end{enumerate}
It follows that the central character $\omega_\pi$ of $\pi$ is unramified everywhere.

Let $O_F$ be the ring of integers in $F$ and $ O_v$ be the ring of integers in $F_v$ for any finite place $v$ of $F$. For any $a\in \BA^\times$, let $|a|$ denote its adelic absolute valuation such that $dax=|a|dx$ for any Haar measure $dx$ on $\BA^\times$. We view $F_v^\times$ and the finite part $\BA_f^\times$ of $F$ as subrings of $\BA^\times$.

Let $N, D, d\in \BA_f^\times$ be such that for any finite place $v$
of $F$, $N_v$ generates the conductor of $\pi_v^\JL$ of $\pi$,  $D_v$ generates the relative
discriminant of $K_v/F_v$, and $d_v$ generates the different of $F_v$.
For each $v\nmid \infty$, let $R_v$ be an order of $B_v:=B\otimes_F F_v$ with
discriminant $N_v O_v$ such that $R_v\cap
K_v= O_{K_v}$. Such an order exists and unique up
to conjugacy of $K_v^\times$. Recall that a Gross-Prasad test vector $f\in \pi$ for the pair $\pi$
and $\chi$ is a pure tensor $f=\otimes_v f_v$ defined as follows (see \cite{GP}).
\begin{enumerate}[(1)]
\item If $v$ is finite with $\ord_v(N_v)\leq 1$ or $K_v/F_v$ is unramified, then
$\pi_v^{R_v^\times}$ is of dimension one and $f_v\in
\pi_v^{R_v^\times}$ is a non-zero vector.
\item If $v$ is finite with $\ord_v(N_v)\geq 2$ and $K_v/F_v$ is ramified,
then the space
$$(\pi_v\otimes \chi_v)^{K_v^\times}
=\{f_v\in \pi_v:\pi_v(t)f_v=\chi_v^{-1}(t)
f_v, \ \forall \ t\in K_v^\times\}$$
 is one-dimensional by Theorem \ref{TS2}.
The vector $f_v$ is any non-zero element in this space.
\item If $v$ is real, let $f_v$ is still any non-zero element of the one-dimensional space $(\pi_v\otimes \chi_v)^{K_v^\times}$.
\end{enumerate}
Thus a Gross-Prasad test vector for $(\pi, \chi)$ is unique up to scalar.

Let $\pi^\JL$ be the Jacquet-Langlands correspondence of $\pi$ on $\GL_2(\BA)$. The Hilbert newform $f'\in \pi^\JL$ is the unique form of level $U_1(N)$ such that  $\SO_2(\BR)\subset \GL_2(F_v)$ acts by the character $\matrixx{\cos\theta}{\sin \theta}{-\sin\theta}{\cos\theta}\mapsto e^{2\pi i k_v \theta}$ for each $v|\infty$, and such that $$L(s, \pi^\JL)=2^{[F:\BQ]} \cdot |d|^{s-\frac{1}{2}} \cdot \int_{F^\times \bs \BA^\times} f'\matrixx{a}{}{}{1} |a|^{s-\frac{1}{2}} d^\times a,$$
where the measure $d^\times a$ is chosen such that
$$\Res_{s=1} \int_{|a|\leq 1, a\in F^\times \bs \BA^\times} |a|^{s-1} d^\times a=\Res_{s=1} L(s, 1_F).$$

\begin{thm}[Explicit Waldspurger Formula] \label{thm4.2}
Assume the
conditions (a), (b) and (c). Let $f'$ be the newform of
$\pi^\JL$ and  $f$  a Gross-Prasad test vector. Then
$$\begin{aligned}
&\frac{1}{(f, f)_\Pet}\cdot
\Big|\sum_{t\in \wh{K}^\times/K^\times \wh{F}^\times
\wh{ O}_K^\times}
f(t)\chi(t)\Big|^2 \\
=&\ \kappa^2 \cdot [ O_K^\times: O_F^\times]^2\cdot 2^{[F:\BQ]}\cdot \frac{L^{((N,
D)\infty)}(1/2, \pi, \chi)}{(f',
f')_\Pet|Dd^2|^{1/2}}\cdot\prod_{v|\infty}(4\pi)^{-(k_v+1)}\Gamma(k_v)\\
&\cdot \prod_{v|N  \mathrm{\ inert}} (1-q_v^{-1})(1+q_v^{-1})^{-1}\cdot
\prod_{v||N \mathrm{\ ramified}} 2(1+q_v^{-1})^{-1}\cdot \prod_{v^2|N \mathrm{\ ramified}} 2(1-q_v^{-1}),
\end{aligned}$$
where $\kappa=1$ or $2$ is the order of the kernel of the natural morphism from the ideal class group of $F$ to that of $K$, and $(N, D)$ is the set of places $v$ such that both $\ord_v(N)$ and $\ord_v(D)$ are positive.
\end{thm}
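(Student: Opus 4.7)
The plan is to derive Theorem \ref{thm4.2} from the Waldspurger formula (Theorem \ref{thm4.1}) in three steps: (i) converting the period integral $P_\chi(f)$ into the indicated finite sum by a volume computation on $T(\BA)$; (ii) evaluating the local matrix coefficient integrals $\beta_v(f_v)$ at the Gross--Prasad test vectors place by place; and (iii) eliminating $L(1,\pi,\ad)$ in favor of $(f',f')_\Pet$ via the standard Rankin--Selberg identity for the Petersson norm of a newform.

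For step (i), each finite component $f_v$ is invariant under $O_{K,v}^\times$: at $v\nmid N$ or at $v$ with $K_v/F_v$ unramified this holds because $O_{K,v}^\times\subset R_v^\times$ and $\chi_v$ is unramified, while at ramified $v\mid N$ it follows from $\pi_v(t)f_v=\chi_v^{-1}(t)f_v$ combined with $\chi_v|_{O_{K,v}^\times}=1$. At an archimedean $v$, $f_v$ is a $K_v^\times$-eigenvector, so $|f_v|$ is right $K_v^\times$-invariant. Hence the integrand of $P_\chi(f)$ is right invariant under $K_\infty^\times\wh O_K^\times$, so $P_\chi(f)$ equals $\sum_t f(t)\chi(t)$ times the volume of the image of $K_\infty^\times\wh O_K^\times$ in $Z(\BA)T(F)\backslash T(\BA)$. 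This volume can be computed from $\vol(Z(\BA)T(F)\backslash T(\BA))=2L(1,\eta)$, the analytic class number formula for $L(1,\eta)$, and the index $\kappa[O_K^\times:O_F^\times]$ coming from the kernel of $\Cl(F)\to\Cl(K)$ together with the relative unit index; the totally imaginary assumption forces $K_\infty^\times/F_\infty^\times\cong (S^1)^{[F:\BQ]}$, which contributes the factor $2^{[F:\BQ]}$ once the archimedean measures are matched.

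For step (ii), $\beta_v(f_v)=1$ at every $v\nmid N\infty$ with $K_v/F_v$ unramified by the spherical case of the Macdonald formula. At a finite $v\mid N$ with $K_v/F_v$ unramified, $\pi_v$ is an unramified twist of the Steinberg representation and the matrix coefficient integral of $f_v$ against $\chi_v$ on $T(F_v)/Z(F_v)$ is a geometric series evaluating to $(1\mp q_v^{-1})(1+q_v^{-1})^{-1}$ according as $v$ is inert or split, matching the stated factor in the inert case (the split case is excluded by the root number hypothesis, see Theorem \ref{TS1}). At a finite $v$ with $\ord_v(N)\geq 2$ and $K_v/F_v$ ramified, $f_v$ spans the one-dimensional space $(\pi_v\otimes\chi_v)^{K_v^\times}$ of Corollary \ref{TS2}, so $(\pi_v(t)f_v,f_v)=\chi_v^{-1}(t)(f_v,f_v)$ and the matrix coefficient integral collapses to $\vol(Z(F_v)\backslash T(F_v))$ times the local $L$-ratio prefactor, producing $2(1-q_v^{-1})$ when $v^2\mid N$ and $2(1+q_v^{-1})^{-1}$ when $v\| N$. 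At each archimedean $v$, $f_v$ is the lowest weight vector of the weight-$k_v$ discrete series and the matrix coefficient against the unitary character of $K_v^\times/F_v^\times\cong S^1$ is a classical beta integral yielding $(4\pi)^{-(k_v+1)}\Gamma(k_v)$.

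Step (iii) invokes the standard identity relating the Petersson norm of a Hilbert newform to its adjoint $L$-value, of the general shape
\[
(f',f')_\Pet \;=\; 2^{a}\,|d|^{b}\,\frac{L(1,\pi,\ad)}{L(2,1_F)}\,\prod_{v\mid\infty}\frac{\Gamma(k_v)}{(4\pi)^{k_v+1}}\,\prod_{v\mid N}\epsilon_v,
\]
proved via a Rankin--Selberg integral, with explicit local corrections $\epsilon_v$ at the places dividing $N$ and appropriate exponents $a,b$. Substituting this identity and the outputs of step (ii) into Theorem \ref{thm4.1}, the local $L$-factors at places in $(N,D)\cup\{\infty\}$ cancel against the beta factors and the archimedean $\Gamma$-ratios, leaving the restricted $L$-function $L^{((N,D)\infty)}(1/2,\pi,\chi)$, the archimedean product $\prod_{v\mid\infty}(4\pi)^{-(k_v+1)}\Gamma(k_v)$, and the ramified products listed in the statement. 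The main obstacle will be the meticulous bookkeeping of measure normalizations---Tamagawa versus local Haar, and the relative unit index $[O_K^\times:O_F^\times]$---needed to produce the discriminant factor $|Dd^2|^{1/2}$ with the correct exponents (the $|d|^2$ arising from the Tamagawa--Haar discrepancy on $B^\times(\BA)$, the $|D|$ from the ramified local $\beta_v$ at places dividing $D$); a secondary delicate point is the archimedean beta integral, whose value depends on the precise normalization of the lowest weight vector and of the embedding $K_v\hookrightarrow B_v$.
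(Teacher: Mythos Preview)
Your three-step plan matches the paper's proof: the paper converts $P_\chi(f)$ to the finite sum via a class-number lemma (your step (i)), computes the local factors $\beta_v(f_v)$ in Proposition~\ref{prop4.7} (your step (ii)), and replaces $L(1,\pi,\ad)$ by $(f',f')_\Pet$ via the adjoint $L$-value identity of Theorem~\ref{adjoint} (your step (iii)).

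However, step (ii) contains two genuine errors. First, the claim that split places $v\mid N$ are excluded by the root number hypothesis is false: when $K_v\cong F_v\times F_v$, the Tunnell--Saito condition merely forces $B_v\cong M_2(F_v)$ and imposes no obstruction to $v\mid N$. Such places do occur, and the paper handles them by a Whittaker-model computation (the case $K=F^2$ in the proof of Proposition~\ref{prop4.7}), obtaining $\beta_v(f_v)=L(1,1_v)/L(2,1_v)$ or $L(1,1_v)L(1,\pi_v,\ad)/L(2,1_v)$; these factors then cancel exactly against the local factors $\prod_{v\mid N}(1+q_v^{-1})$ and $\prod_{v\| N}(1-q_v^{-2})$ in the adjoint formula of step (iii), which is why no split product survives in the statement. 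Second, your assertion that $\pi_v$ is a Steinberg twist at $v\mid N$ with $K_v/F_v$ unramified holds only when $\ord_v(N)=1$; for $\ord_v(N)\ge 2$ inert you have given no argument. The correct observation, used in the paper, is that for \emph{any} $v\mid N$ with $K_v$ nonsplit the Gross--Prasad vector $f_v$ is already a $\chi_v^{-1}$-eigenvector for $K_v^\times$: in the inert case this is because $K_v^\times=F_v^\times\cdot O_{K,v}^\times$, with $O_{K,v}^\times\subset R_v^\times$ fixing $f_v$ and $F_v^\times$ acting by the central character. The toric integral then collapses to $\vol(F_v^\times\backslash K_v^\times)$ with no matrix-coefficient computation needed, and no Steinberg hypothesis.
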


Here as before, $(f',f')_\Pet$ and $(f, f)_\Pet$ denote the Peterson norms with respect to the Tamagawa measures on $Z\bs \GL_2$ and $Z\bs G$ respectively.
To deduce the explicit formula, we first calculate the local factors.

\begin{prop} \label{prop4.7}
Let $(\pi, \chi, f)$ as above. Let $e_v$ be the ramification index of
$E_v$ over $F_v$.  Then we have
$$|Dd|_v^{-1/2} \ \beta(f_v)=\begin{cases}
\ds e_v(1-q_v^{-e_v})\frac{L(1, \pi_v, \ad)}{L(1/2, \pi_v,
\chi_v)}, \qquad &\text{if $v|N$ non-split},\\
\ds \frac{L(1,1_v)}{L(2,1_v)}, \qquad &\text{if $v|| N$ split},\\
\ds \frac{L(1,1_v)L(1,\pi_v,\ad)}{L(2,1_v)}, \qquad
&\text{if $v^2|N$ split}, \\
1, &\text{otherwise}.
\end{cases}$$
\end{prop}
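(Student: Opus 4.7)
The plan is to prove the proposition by a purely local case-by-case computation. The core task at each finite place $v$ is to evaluate the matrix coefficient integral
\[
J_v := \int_{Z(F_v)\bs T(F_v)} \frac{(\pi_v(t)f_v, f_v)_v}{(f_v, f_v)_v}\, \chi_v(t)\, dt_v
\]
explicitly, and then multiply by the normalizing factor $\frac{L(1,\eta_v)L(1,\pi_v,\ad)}{L(2,1_v)L(1/2,\pi_v,\chi_v)}$ to obtain $\beta_v(f_v)$. By Theorem \ref{TS1} and Corollary \ref{TS2}, the test vector $f_v$ is unique up to scalar, so the ratio in the integrand is well-defined independently of that choice.

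First, for the \emph{otherwise} case ($v\nmid N$ finite), the representation $\pi_v$ is unramified, $\chi_v$ is unramified by hypothesis (b), and $f_v$ is a spherical vector in an unramified principal series. I would invoke the classical unramified matrix coefficient formula (Macdonald's formula on $\GL_2(F_v)$, combined with the evaluation of $T(F_v)$-periods of spherical vectors via a Casselman--Shalika style computation) to obtain
\[
J_v \;=\; |Dd|_v^{1/2}\cdot \frac{L(1/2,\pi_v,\chi_v)\, L(2,1_v)}{L(1,\eta_v)\, L(1,\pi_v,\ad)},
\]
so after multiplying by the normalization one gets $|Dd|_v^{-1/2}\beta_v(f_v) = 1$. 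For the split cases ($v\mid N$ with $v$ split in $K$), $T(F_v)$ embeds as the diagonal torus of $B_v = \GL_2(F_v)$, the character $\chi_v$ restricts to an unramified character of $F_v^\times$, and $J_v$ reduces to a Hecke-type zeta integral on $F_v^\times$. When $v\|N$ (so $\pi_v$ is an unramified twist of Steinberg) I would use the explicit Kirillov/Whittaker model description of the Gross--Prasad test vector to evaluate the integral and obtain $L(1,1_v)/L(2,1_v)$. When $v^2\mid N$ the analogous computation with the level-$N_v$ new vector produces the additional factor $L(1,\pi_v,\ad)$.

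For the non-split case ($v\mid N$, $K_v/F_v$ inert or ramified), I split further on $\ord_v(N_v)$. If $\ord_v(N_v) = 1$, the test vector $f_v$ is the unique $R_v^\times$-fixed vector in the quaternionic realization of $\pi_v$ (an unramified twist of Steinberg on $B_v^\times$); the matrix coefficient is then supported on a single double coset $Z(F_v)\bs T(F_v)/(T(F_v)\cap R_v^\times)$ and the integral can be computed directly, giving $e_v(1-q_v^{-e_v})$ times the stated L-ratio. If $\ord_v(N_v) \geq 2$ (which forces $K_v/F_v$ ramified), the test vector satisfies $\pi_v(t)f_v = \chi_v^{-1}(t) f_v$ by the very definition of case (2), hence the integrand is identically $1$ and $J_v = \vol(Z(F_v)\bs T(F_v), dt_v)$, which one reads off from the local measure normalization (itself determined by the global constraint $\vol(Z(\BA)T(F)\bs T(\BA)) = 2L(1,\eta)$ together with the local volumes already pinned down at unramified places). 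The main obstacle I anticipate is keeping the Haar measure normalizations consistent across all cases, and carrying through the explicit Kirillov-model zeta integrals in the split $v^2\mid N$ case together with the quaternionic matrix coefficient computation in the inert Steinberg case; both are standard but tedious, and for most of these steps one can cite or adapt existing computations from \cite{GP} and \cite{Wa}.
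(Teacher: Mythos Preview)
Your approach is essentially the same as the paper's: Macdonald's formula in the unramified case, a zeta integral via the Whittaker/Kirillov model in the split case, and a volume computation in the non-split $v\mid N$ case. One simplification you miss, however, is that in \emph{every} non-split $v\mid N$ case the Gross--Prasad test vector is already a $\chi_v^{-1}$-eigenvector for $K_v^\times$, so the integrand is identically $1$ and $J_v=\vol(F_v^\times\bs K_v^\times)$; there is no need to analyze matrix coefficients on double cosets when $\ord_v(N_v)=1$. For the inert case this follows from $K_v^\times=F_v^\times O_{K_v}^\times$ together with $O_{K_v}^\times\subset R_v^\times$ and $\chi_v$ unramified; for the ramified case with $\ord_v(N_v)=1$, a uniformizer of $K_v$ normalizes $R_v$, so the one-dimensional space $\pi_v^{R_v^\times}$ is automatically a $K_v^\times$-eigenspace. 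This also patches your parenthetical ``(which forces $K_v/F_v$ ramified)'': the inert case with $\ord_v(N_v)\geq 2$ does occur (for instance $v=2$, $K_2=\BQ_2(\sqrt{-3})$ in the congruent number application), and is handled by exactly the same observation rather than by the definition of case~(2).
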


\begin{remark}
For each place $v\nmid \infty$, note that the central
character $\omega_v$ of $\pi_v$ is unramified and then
$\omega_v=\mu_v^2$ for some unramified character $\mu$. So we
may assume that $\pi_v$ is of trivial central character. Let
$\iota_i, i=1, 2$ be two embeddings of $K_v$ in $B(F_v)$ then they
are conjugate by an element $\gamma\in G(F_v)$. Let $R_1$ be the
order above and $f_1$ a test vectors, then $\gamma R_1\gamma^{-1}$
is an order under $\iota_2$ and $f_2=\pi_v(\gamma)(f_1)$ is a test
vector. We have that $\beta_v(f_1)=\beta_v(f_2)$.
\end{remark}

\begin{proof}
  We are in the local situation, omitting the subscript $v$,  let $K$ denote the quadratic extension of local field $F$. Denote $n = \ord_v(N)$. Reduce to compute the toric integral
  \[\beta^0 = \int_{F^\times \bs K^\times} \frac{\langle \pi(t)f,f \rangle}{\langle f,f \rangle} \chi(t)dt.\]
  If $n >0$ and $K$ is nonsplit, then $f$ is $\chi^{-1}$-eigen and
  $\beta^0 = \vol(F^\times \bs K^\times)$.  For the other cases, the order $R$ in the definition of $V(\pi,\chi)$ is an Eichler order of
discriminant $n$. We fix the following embedding of $K$ so that $R = R_0(n) :=  \begin{pmatrix}
     O &  O \\
    \mathfrak p^n &  O
  \end{pmatrix}
$ and we can take
the test vector as the new vector $W_0$. If $K = F^2$ is
split, embed $K$ into $M_2(F)$ by  $(a,b) \lto \matrixx{a}{}{}{b}$.
 If $K$ is  a field,  take $\tau \in  O_K$ such that $ O_K =  O[\tau]$ and such that if $K/F$ is ramified then
$\tau$ is a uniformizer. Let $\tr \tau, \RN \tau\in F$ denote the trace and norm of $\tau$, respectively.   Embed $K$ into $B$ by
$$a+b\tau \lto
  \begin{pmatrix}
    a+b\tr \tau & b \RN\tau \\
    -b & a
  \end{pmatrix}.$$

Assume $K = F^2$. We write $K^\times = F^\times K^1$ with the image of $K^1$ in $\GL_2(F)$ equal to
$\begin{pmatrix}
  * & \\
    & 1
\end{pmatrix}$. Denote by $\chi_1$ the restriction of $\chi$ to $K^1$, then
\[
  \begin{aligned}
  \beta^0 &= (W_0, W_0)^{-1}
  \iint_{(F^\times)^2} W_0
  \left[
    \begin{pmatrix}
      ab & \\
         & 1
    \end{pmatrix}
  \right]
  \ov{W_0
    \left[
      \begin{pmatrix}
	b & \\
	  & 1
      \end{pmatrix}
    \right]}\chi_1(a) d^\times b d^\times a \\
   &= (W_0,W_0)^{-1} |Z(1/2,W_0,\chi_1)|^2.
  \end{aligned}
\]
Now $Z(1/2,W_0,\chi_1) = \chi_1(d)^{-1}L(1/2,\pi\otimes\chi_1)$ and
\[\beta^0 =  (W_0,W_0)^{-1} L(1/2,\pi,\chi).\]
Now consider the case that $K$ is a field and $\pi$ is unramified. Let
    $$ \Psi(g) := \frac{(\pi(g)W_0,W_0)}{(W_0,W_0)}, \qquad  g\in \GL_2(F).$$
     Then
  $$\beta^0
    =\frac{\vol(K^\times/F^\times)}{\# K^\times/F^\times O_K^\times}
  \sum_{t \in K^\times/F^\times O_K^\times} \Psi(t)\chi(t).$$
  If $K/F$ is unramified, then
    \[\beta^0 = \vol(K^\times/F^\times) = |d|^{1/2}\]
    while if $K/F$ is ramified,
    \[\beta^0 = |Dd|^{1/2} (1 + \Psi(\tau)\chi(\tau)).\]
    Using MacDonald formula for the matrix coefficient $\Psi(\tau)$,
    we obtain
    \[\beta(f) = |Dd|^{1/2}.\]
\end{proof}

\begin{proof}[Proof of Theorem \ref{thm4.2}]
The proof is just a calculation of the right-hand side of the formula in Theorem \ref{thm4.1}.
Apply the expression of $\beta(f_v)$ in Proposition \ref{prop4.7}. We also need the a special value formula for $L(1,\pi,\ad)$, which will be proved in Theorem \ref{adjoint} in the next subsection.
Then the formula gives
$$\begin{aligned}
&\frac{\Big||Dd|^{-1/2}P_\chi(f)\Big|^2}{(f,
f)_\Pet} \\
=&\frac{L^{(\infty)}(1/2, \pi, \chi)}{(f',
f')_\Pet|Dd^2|^{1/2}}\left(\frac{2}{\pi}\right)^{[F:\BQ]}(4\pi)^{-\sum_v
k_v}
\prod_v \Gamma(k_v)\\
&\qquad\qquad\qquad\qquad \cdot \frac{\prod_{v\nmid
\infty} \beta_v(f_v)|Dd|_v^{-1/2}}{L_N(1, \pi, \ad)
\prod_{v|N}(1+q_v^{-1})\prod_{v\| N} (1-q_v^{-2})}\\
=&\frac{L^{(\infty)}(1/2, \pi, \chi)}{(f',
f')_\Pet|Dd^2|^{1/2}}\prod_{v|\infty}\left(2^{1-2k_v}\pi^{-(k_v+1)}\Gamma(k_v)\right)\\
&\qquad \cdot
\prod_{v||N}\frac{\beta_v(f_v)|Dd|_v^{-1/2}}{(1+q_v^{-1})}\cdot
\prod_{v^2|N}\frac{\beta_v(f_v)|Dd|_v^{-1/2}}{L_v(1, \pi_v,
\ad)(1+q_v^{-1})}\cdot \prod_{v\nmid N\infty}
\beta_v(f_v)|Dd|_v^{-1/2}\\
=&\frac{L^{(\infty)}(1/2, \pi, \chi)}{(f',
f')_\Pet|Dd^2|^{1/2}}\prod_{v|\infty}\left(2^{1-2k_v}\pi^{-(k_v+1)}\Gamma(k_v)\right)
\cdot \prod_{v| c} \beta_v(f_v)|Dd|_v^{-1/2}\\
&\qquad \cdot \prod_{v||N\mathrm{\ nonsplit}}\frac{\beta_v(f_v)|Dd|_v^{-1/2}}{(1+q_v^{-1})}\cdot
\prod_{v^2|N\mathrm{\ nonsplit}}\frac{\beta_v(f_v)|Dd|_v^{-1/2}}{L_v(1,
\pi_v, \ad)(1+q_v^{-1})},\\
=&\frac{L^{(\infty)}(1/2, \pi, \chi)}{(f',
f')_\Pet|Dd^2|^{1/2}}\prod_{v|\infty}\left(2^{1-2k_v}\pi^{-(k_v+1)}\Gamma(k_v)\right)\\
&\cdot \prod_{v|N\mathrm{\ inert}} (1-q_v^{-1})(1+q_v^{-1})^{-1}\cdot
\prod_{v||N\mathrm{\ ramified}} (1+q_v^{-1})^{-1}\cdot
\prod_{v^2|N \mathrm{\ ramified}} (1-q_v^{-1})\\
&\cdot \prod_{v|N \mathrm{\ ramified}}2 L(1/2, \pi_v, \chi_v)^{-1}.
\end{aligned}$$
It follows that
$$\begin{aligned}
&\frac{\Big||Dd|^{-1/2}P_\chi(f)\Big|^2}{(f,
f)_\Pet}=\frac{L^{((N, D)\infty)}(1/2, \pi, \chi)}{(f',
f')_\Pet|Dd^2|^{1/2}}\left(\prod_{v|\infty}2^{1-2k_v}\pi^{-(k_v+1)}\Gamma(k_v)\right)\\
&\qquad \cdot \prod_{v|N\mathrm{\ inert}} (1-q_v^{-1})(1+q_v^{-1})^{-1}\cdot
\prod_{v||N\mathrm{\ ramified}} 2(1+q_v^{-1})^{-1}\cdot \prod_{v^2|N\mathrm{\ ramified}} 2(1-q_v^{-1}).
\end{aligned}$$
To finish the proof, we need the following simple result.

\begin{lem} Let $F$ be a totally real field, $K$ a totally
imaginary quadratic extension over $F$, and $\eta$ the associated
quadratic character of $\BA^\times$. Then
$$\frac{2L(1,
\eta)|Dd|^{-1/2}}{\# (\wh{K}^\times/K^\times \wh{F}^\times
\wh{ O}_K^\times)}=
\kappa^{-1}\cdot [ O_K^\times: O_F^\times]^{-1}\cdot 2^{[F:\BQ]}.$$
Here $\kappa=1$ or $2$ is the cardinality of the kernel of natural morphism from the ideal class group of $F$ to that of $K$.
\end{lem}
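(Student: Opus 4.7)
The plan is to deduce the identity from the analytic class number formulas for $F$ and $K$. First, I would identify the denominator: since $\wh K^\times/K^\times\wh O_K^\times\cong \Cl_K$, the quotient $\wh K^\times/K^\times\wh F^\times\wh O_K^\times$ is $\Cl_K$ modulo the image of $\wh F^\times$, and this image factors through the natural map $\Cl_F\to\Cl_K$; by definition of $\kappa$ it has order $h_F/\kappa$, so the denominator equals $h_K\kappa/h_F$. Substituting this into the left-hand side, the two factors of $\kappa$ cancel and the identity reduces to
$$L(1,\eta)\;=\;\frac{h_K\cdot 2^{n-1}|Dd|^{1/2}}{h_F\,[O_K^\times:O_F^\times]},\qquad n:=[F:\BQ].$$

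Next I would apply Dirichlet's analytic class number formula. Since $F$ is totally real of degree $n$ (with $w_F=2$) and $K$ is CM with $n$ complex places and no real ones, the residues at $s=1$ are $\Res_{s=1}\zeta_F(s)=2^{n-1}h_FR_F/|d_F|^{1/2}$ and $\Res_{s=1}\zeta_K(s)=(2\pi)^nh_KR_K/(w_K|d_K|^{1/2})$. Forming the ratio via $\zeta_K=\zeta_F\cdot L_{\mathrm{cl}}(\cdot,\eta)$ (the classical, non-completed $L$-function) and using $|Dd|^{1/2}=(|d_F|/|d_K|)^{1/2}$ gives
$$L_{\mathrm{cl}}(1,\eta)\;=\;\frac{(2\pi)^nh_KR_K}{2^{n-1}w_Kh_FR_F}\,|Dd|^{1/2}.$$

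To eliminate the regulators, I would introduce the Hasse unit index $Q:=[O_K^\times:\mu_KO_F^\times]\in\{1,2\}$. Comparing the log-embeddings of $O_F^\times\subset O_K^\times$ (each complex absolute value on $K$ restricted to $F$ is the square of the corresponding real one on $F$) yields $QR_K=2^{n-1}R_F$; and since $\mu_F=\{\pm1\}$, one has $[O_K^\times:O_F^\times]=Qw_K/2$. Substituting both relations, the combination involving $Q$ and $w_K$ collapses to produce
$$L_{\mathrm{cl}}(1,\eta)\;=\;\frac{(2\pi)^nh_K\,|Dd|^{1/2}}{2\,h_F\,[O_K^\times:O_F^\times]}.$$

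Finally, to match the reduced assertion one needs the normalization identity $L(1,\eta)=L_{\mathrm{cl}}(1,\eta)/\pi^n$. This is exactly the convention for $L(1,\eta)$ implicit in the Tamagawa volume formula $\vol(Z(\BA)T(F)\bs T(\BA))=2L(1,\eta)$ of Theorem~\ref{thm4.1}: each of the $n$ archimedean gamma factors $\Gamma_\BR(s+1)=\pi^{-(s+1)/2}\Gamma((s+1)/2)$ at the real places of $F$ contributes $\pi^{-1}$ at $s=1$, accounting precisely for the missing $\pi^{-n}$. The main subtlety, and the only non-routine step, is this last normalization match; once it is pinned down, the rest is standard bookkeeping with class numbers, regulators, and units.
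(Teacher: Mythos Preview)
Your proof is correct and follows essentially the same route as the paper: both compute the denominator as $h_K\kappa/h_F$ via the natural map $\Cl_F\to\Cl_K$, and both then reduce to the class number formula. The only cosmetic difference is that the paper passes to $s=0$ via the functional equation $L(1,\eta)|Dd|^{-1/2}=L(0,\eta)$ (where the archimedean gamma factors are $1$) and cites the relative class number formula there, whereas you stay at $s=1$ and unpack the regulator identity $QR_K=2^{n-1}R_F$ and the factor $\pi^{-n}$ by hand---two presentations of the same computation.
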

\begin{proof}
It follows from the  exact sequence
$$1\ra (\wh{F}^\times\cap K^\times \wh{ O}_K^\times)/F^\times
\wh{ O}_F^\times \ra \wh{F}^\times/F^\times \wh{ O}_F^\times\ra  \wh{K}^\times/K^\times
\wh{ O}_K^\times \ra \wh{K}^\times/K^\times
\wh{F}^\times \wh{ O}_K^\times \ra 1,$$
that
$$\# \wh{K}^\times/K^\times \wh{F}^\times \wh{ O}_K^\times=\frac{h_K}{h_F}\cdot \kappa,$$ where we use the fact that
$\wh{K}^\times/K^\times\wh{ O}_K^\times$ is isomorphic to the ideal
class group of $K$ and similarly for $F$. By the ideal class number formula:
$$L(1, \eta)|Dd|^{-1/2}=L(0, \eta)=\frac{h_K}{h_F} \cdot
[ O_K^\times: O_F^\times]^{-1}\cdot 2^{[F:\BQ]-1}.$$ Thus we have
that
$$\frac{L(1,
\eta)|Dd|^{-1/2}}{\# \wh{K}^\times/K^\times \wh{F}^\times
\wh{ O}_K^\times}=  \kappa^{-1}\cdot [ O_K^\times
: O_F^\times]^{-1}\cdot 2^{[F:\BQ]-1}.$$
\end{proof}

Go back to proof of Theorem \ref{thm4.2}. Note that
$$P_\chi(f)=\frac{2L(1, \eta)}{\#\wh{K}^\times/K^\times
\wh{F}^\times\wh{ O}_K^\times}\sum_{t\in \wh{K}^\times/K^\times
\wh{F}^\times \wh{ O}_K^\times} f(t)\chi(t).$$ We have that
$$|Dd|^{-1/2}P_\chi(f)=2^{[F:\BQ]}\kappa^{-1} [ O_K^\times: O_F^\times]^{-1}\sum_{t\in
\wh{K}^\times/K^\times \wh{F}^\times \wh{ O}_K^\times}
f(t)\chi(t).$$ Thus we obtain the desired explicit formula in
Theorem \ref{thm4.2}.
\end{proof}

\subsection{Explicit Gross-Zagier formula}
We first recall the main theorem of \cite{YZZ}.
Let $F$ be a totally real number field and $\BA$ its ring of adeles. Let $X$ be the
Shimura curve over $F$ associated to an incoherent quaternion algebra
$\BB$ over $\BA$ with ramification set $\Sigma$ (containing all
infinite places). Let $\xi$ be the Hodge bundle on $X$, and $J$ be its Jacobian.
Let $A$ be a simple abelian variety defined over $F$ parameterized
by $X$. Then
$$\pi_A:=\Hom_\xi^0(X, A)=\Hom^0(J, A),$$
is a representation $\BB^\times$ over $\BQ$ whose infinite
components are all trivial. It is known that
$M:=\End_{\BB^\times}(\pi_A)=\End^0(A)$ is a number field of degree
$\dim A$ over $\BQ$. Let $(\ ,\ ): \pi_A\times \pi_{A^\vee}\ra M$ be
the perfect $\BB^\times$-pairing given by $(f_1,
f_2)=\vol(X_U)^{-1}(f_{1, U}\circ f_{2, U}^\vee)$, where the
composition using the canonical isomorphism $J_U^\vee\cong J_U$ and
the volume using the measure $dx dy/(2\pi y^2)$ on $\CH$. Let
$$\pair{\ ,\ }_M: \ A(\bar{F})_\BQ \otimes_M A^\vee (\bar{F})_\BQ
\lra M\otimes_\BQ \BR$$ be the $M$-bilinear height pairing whose
trace to $\BR$ is the usual height pairing.

Let $K$ be a totally imaginary quadratic extension of $F$ with a fixed
embedding $K_\BA\hookrightarrow \BB$ over $\BA$. Let $\chi:
\wh{K}^\times/K^\times \ra L^\times$ be a Hecke
character of a finite order valued in a number field $L\supset M$, and also viewed as a Galois character via the class field theory. Assume that
\begin{itemize}
\item $\omega_{\pi_A} \cdot \chi|_{\BA^\times}=1.$
\item $\epsilon(1/2, \pi_v, \chi_v)\cdot \chi_v\eta_v(-1) =\inv(\BB_v)$ for each place $v$ of $F$.
\end{itemize}
Then the global root number is $-1$.
By Theorem \ref{TS1}, these conditions implies that
$(\pi_A\otimes\chi)_{K_\BA^\times}$ is one-dimensional.

Denote
$$A(\chi)=(A(K^{\ab})_\BQ\otimes_ML)^{\Gal(K^{\ab}/K)}.$$
Let $h_0\in \CH$ be the unique fixed point of $K^\times$. It defines a point $P=([h_0,1]_U)_U\in X$. Define the period map $P_\chi:\pi\to A(\chi)$ by
$$P_\chi(f)=\int_{t\in \wh{K}^\times/K^\times \wh{F}^\times}
f(P)^{\sigma_t}\otimes_M \chi(t) dt,$$
where we use the Haar
measure such that the volume of $\wh{K}^\times/K^\times
\wh{F}^\times$ is $2L(1, \eta)$. The Gross-Zagier formula of
Yuan-Zhang-Zhang is as follows.

\begin{thm}[Yuan-Zhang-Zhang \cite{YZZ}]
For any pure tensors $f_1\in \pi_A$ and $f_2\in \pi_{A^\vee}$ with $(f_1, f_2)\neq 0$,
$$\frac{\pair{P_\chi(f_1), P_{\chi{-1}}(f_2)}_L}{(f_1, f_2)}=\frac{L'(1/2, \pi_A,
\chi)}{L(1, \pi_A, \ad) L(2, 1_F)^{-1}}\cdot \beta(f_1\otimes f_2)$$
as an identity in $L\otimes_\BQ \BC$. Here $\pair{\ ,\ }_L:
A(\chi)\times A(\chi^{-1})\ra L\otimes_\BQ \BR$ is the L-linear
N\'{e}ron--Tate height pairing  induced by the $M$-linear N\'{e}ron--Tate
height pairing $\pair{\cdot ,\cdot }_M$ above.
\end{thm}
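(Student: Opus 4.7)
The final theorem is quoted verbatim from \cite{YZZ}, so the plan is not to reprove it from scratch but to outline the strategy of Yuan--Zhang--Zhang. The overall idea is to compare two expressions for the N\'eron--Tate height $\pair{P_\chi(f_1),\ P_{\chi^{-1}}(f_2)}_L$: one arising from the arithmetic geometry of CM divisors on the Shimura curve $X$, and the other arising from the first derivative at the center of a Rankin--Selberg convolution attached to $\pi_A$ and $\chi$. Matching these two expressions place by place, with the Haar measures fixed in the statement, produces the identity with the normalization factor $\beta(f_1\otimes f_2)$.

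The key steps, in order, are as follows. First, construct an automorphic kernel on $\GL_2(\BA)\times\GL_2(\BA)$, or its quaternionic incoherent analogue, whose Petersson pairing with $f_1\otimes f_2$ represents $L(s,\pi_A,\chi)$ via Rankin--Selberg unfolding; the Schwartz input is tuned so that the local integrals reproduce the local factors packaged into $\beta$. Second, differentiate at $s=1/2$ and expand geometrically; the derivative yields a sum of local height contributions coming from both archimedean and nonarchimedean places. Third, identify each nonarchimedean contribution with an arithmetic intersection multiplicity on an integral model of $X\times X$, using the modular interpretation of CM cycles together with $p$-adic uniformization of the Shimura curve at places of ramification of $\BB$. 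Fourth, sum these local intersection numbers to recover $\pair{P_\chi(f_1),\ P_{\chi^{-1}}(f_2)}_L$ after applying holomorphic projection to kill non-cuspidal contributions; the surviving terms match the analytic side place by place, and the compatibility of the Tamagawa measures propagates the global identity.

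The main obstacle is purely local. At nonarchimedean places where $\BB$ or $K/F$ ramifies, the interaction of CM points with the supersingular locus requires a delicate analysis of quasi-canonical liftings in order to match local intersection multiplicities with derivatives of local Whittaker integrals; these are the computations whose traces show up as the local factors $\beta_v(f_v)$ tabulated in Proposition \ref{prop4.7}. A second, closely related, obstacle is the holomorphic projection step: the $s$-derivative of the geometric kernel is not holomorphic, and isolating its cuspidal part without spurious Eisenstein contamination is exactly what forces the factor $L(1,\pi_A,\ad)$ to appear in the denominator. Once these local identities and projection estimates are in place in the incoherent quaternionic generality, the global formula drops out by summing the local matches and invoking cuspidality of $\pi_A$. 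For our applications in the body of the paper we will invoke the theorem as a black box, combined with the local computations of Theorem \ref{thm2.4} and Theorem \ref{thm2.3} to unwind $\beta(f_1\otimes f_2)$ into the explicit shape used in Theorem \ref{GZ}.
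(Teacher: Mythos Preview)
Your reading is correct: the paper does not prove this theorem at all. It is stated as a black-box citation of the main result of \cite{YZZ}, introduced with ``We first recall the main theorem of \cite{YZZ}'' and given no proof; the paper only uses it to derive the explicit version in Theorem~\ref{EGZ}. Your sketch of the Yuan--Zhang--Zhang strategy is reasonable background but already exceeds what the paper itself supplies.
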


Note that we can define Gross-Prasad test vectors as in the last subsection.
Then the explicit version of the formula is as follows.

\begin{thm}[Explicit Formula of Gross-Zagier] \label{EGZ}
Assume that $\chi$ is an unramified  character of finite order.
Let $f\in \pi_A$ be a Gross-Prasad test vector. Then
$$\begin{aligned}
&\frac{1}{(f, f)}\cdot \wh{h}\left(\sum_{t\in \wh{K}^\times/\wh{F}^\times
K^\times \wh{ O}^\times_K} f(P)^t \chi(t)\right)\\
=&\ \kappa^2\cdot [ O_K^\times:  O_F^\times]^2\cdot 2^{[F:\BQ]+1}\cdot
\frac{{L'}^{((N, D)\infty)}(1/2, \pi, \chi)}{(f',
f')_\Pet|Dd^2|^{1/2}
(4\pi)^{3[F:\BQ]}}\\
&\cdot \prod_{v|N\mathrm{\ inert}} (1-q_v^{-1})(1+q_v^{-1})^{-1}\cdot
\prod_{v||N\mathrm{\ ramified}} 2(1+q_v^{-1})^{-1}\cdot
\prod_{v^2|N \mathrm{\ ramified}} 2(1-q_v^{-1}).
\end{aligned}$$
Here $\kappa=1$ or $2$ is the order of the morphism from the ideal class group of $F$ to that of $K$, and $(N, D)$ denotes the set of finite places $v$ of $F$ such that both $\ord_v(N)$ and $\ord_v(D)$ are positive.
\end{thm}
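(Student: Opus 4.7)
The plan is to derive Theorem \ref{EGZ} from the Gross--Zagier formula of Yuan--Zhang--Zhang in exactly the same way that Theorem \ref{thm4.2} was derived from Theorem \ref{thm4.1}. In fact, the right-hand sides of the two formulae are identical except for the factors $L^{(\infty)}(1/2,\pi,\chi)$ vs.\ ${L'}^{(\infty)}(1/2,\pi,\chi)$ and a few elementary numerical constants, because the local quantities $\beta_v(f_v)$ and the comparison of the Petersson norm $(f',f')_\Pet$ on $\GL_2$ with the pairing $(f,f)$ on $\BB^\times$ depend only on the local representations $\pi_v,\chi_v$ and the test vector $f_v$, not on whether the relevant epsilon-factors multiply to $+1$ or $-1$.

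First, I would take $f_1=f_2=f$ to be the Gross--Prasad test vector and apply the Yuan--Zhang--Zhang formula to the pair $(f,f)$, getting
\[
\frac{\wh h (P_\chi(f))}{(f,f)}
=\frac{L'(1/2,\pi_A,\chi)}{L(1,\pi_A,\ad)L(2,1_F)^{-1}}\cdot \beta(f\otimes f).
\]
The local factors $\beta_v(f_v)$ are the same ones computed in Proposition \ref{prop4.7}; I would substitute them in verbatim, and then invoke the special-value formula for $L(1,\pi,\ad)$ (Theorem \ref{adjoint} cited in the Waldspurger proof) to trade $L(1,\pi,\ad)/(f',f')_\Pet$ for an expression involving $|Dd^2|^{1/2}$ and the archimedean gamma factors $(4\pi)^{-(k_v+1)}\Gamma(k_v)$. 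Since all archimedean weights are $k_v=2$ here (as we are dealing with the Jacobian of a Shimura curve and the abelian variety $A$ corresponds to a weight-two form), the product $\prod_{v\mid\infty}(4\pi)^{-(k_v+1)}\Gamma(k_v)=(4\pi)^{-3[F:\BQ]}$, which explains the factor $(4\pi)^{3[F:\BQ]}$ appearing in the denominator. The computation at the finite non-archimedean places producing the factors at inert and ramified primes is \emph{identical} to that in the proof of Theorem \ref{thm4.2}, so I would simply cite that computation.

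Second, I would translate $P_\chi(f)\in A(\chi)$ into the finite sum $\sum_{t\in\wh K^\times/K^\times\wh F^\times\wh O_K^\times} f(P)^t\chi(t)$. Because $\chi$ is everywhere unramified, $f$ is $\wh O_K^\times$-invariant up to scalars at each finite place, and the integral over $\wh K^\times/K^\times\wh F^\times$ (with the normalization $\vol=2L(1,\eta)$) collapses to
\[
P_\chi(f)=\frac{2L(1,\eta)}{\#(\wh K^\times/K^\times\wh F^\times\wh O_K^\times)}
\sum_{t\in\wh K^\times/K^\times\wh F^\times\wh O_K^\times} f(P)^t\chi(t).
\]
Applying the class-number identity in the lemma used inside the proof of Theorem \ref{thm4.2}, namely
\[
\frac{2L(1,\eta)|Dd|^{-1/2}}{\#(\wh K^\times/K^\times\wh F^\times\wh O_K^\times)}
=\kappa^{-1}[O_K^\times:O_F^\times]^{-1}2^{[F:\BQ]},
\]
converts the prefactor into the explicit constant $\kappa^{-2}[O_K^\times:O_F^\times]^{-2}2^{2[F:\BQ]}$ appearing after squaring, thus producing the $\kappa^2\cdot[O_K^\times:O_F^\times]^2\cdot 2^{[F:\BQ]+1}$ in the final formula (the extra factor of $2$ coming from $|Dd^2|^{1/2}$ versus $|Dd|^{1/2}$ compared to the Waldspurger case, and the quadratic appearance of the denominator of $P_\chi$).

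Assembling the three ingredients — the identity of local factors $\beta_v(f_v)$, the special-value formula for $L(1,\pi,\ad)$, and the class-number/period lemma — yields the stated formula. The one step requiring genuine care rather than routine bookkeeping is the verification that the height pairing on the left, once $f_1=f_2=f$ is substituted and the integral is unfolded to the finite sum, is indeed $M\otimes_\BQ\BR$-bilinear in the sense compatible with the $L$-scaling on both factors; this amounts to checking that the $L$-structure on $A(\chi)$ transports correctly through the sum decomposition. Given the formalism of Yuan--Zhang--Zhang this is just a verification, but it is the only non-mechanical point; everything else is an almost line-by-line transcription of the proof of Theorem \ref{thm4.2}.
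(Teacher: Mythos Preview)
Your approach is correct and exactly matches the paper's: the paper literally says ``The deduction of the theorem is almost the same as that of Theorem \ref{thm4.2}, so we omit it here,'' and your plan---apply the Yuan--Zhang--Zhang formula, substitute the local factors from Proposition \ref{prop4.7}, use Theorem \ref{adjoint} for $L(1,\pi,\ad)$, and convert the period integral to a finite sum via the class-number lemma---is precisely that transcription. One small bookkeeping correction: the extra factor of $2$ (giving $2^{[F:\BQ]+1}$ versus $2^{[F:\BQ]}$) does not come from any $|Dd^2|^{1/2}$ versus $|Dd|^{1/2}$ discrepancy, but from the fact that the Waldspurger formula (Theorem \ref{thm4.1}) carries a prefactor $\tfrac12$ on the right-hand side which the Yuan--Zhang--Zhang formula does not.
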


The deduction of the theorem is almost the same as that of Theorem \ref{thm4.2}, so we omit it here. One can obtain the original Gross--Zagier formula under the Heegner hypothesis from the above formula.

\subsection{Special value formula of adjoint L-function}

In the proof of Theorem \ref{thm4.2} and Theorem \ref{EGZ}, we have used the following formula.

\begin{thm} \label{adjoint}
Let $F$ be a totally real field and $d_F$ the absolute discriminant of $F$.
Let $\sigma$ be a unitary cuspidal automorphic representation of
$\GL_2(\BA)$, $N\subset  O_F$ its conductor, and  $f$ the newform
in $\sigma$. Assume that $\sigma_v$ is discrete series of weight
$k_v$ for every $v|\infty$. Then
$$\frac{L^{S'}(1, \sigma, \ad) }{|d_F|^{1/2}\cdot
(f,f)_\Pet \cdot L(2, 1_F)}=2^{[F:\BQ]-1+\sum_{v|\infty}k_v}  \prod_{v|N}
(1+q_v^{-1}),$$
where $S'$ is the set of finite places $v$ of $F$
with conductor $n(\sigma_v)\geq 2$.
Equivalently,
$$\frac{L^{(N\infty)}(1, \sigma, \ad)}{|d_F|^{1/2}\cdot (f, f)_\Pet \cdot \zeta_F(2)
}=\frac{ (4\pi)^{\sum_v k_v}}{2\prod_v
\Gamma(k_v)}\cdot \prod_{v|N} (1+q_v^{-1})\prod_{v\| N}
(1-q_v^{-2}).$$
\end{thm}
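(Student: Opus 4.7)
The plan is to evaluate $(f,f)_\Pet$ by the Rankin--Selberg method applied to the convolution $L(s,\sigma\times\tilde\sigma)$, together with the classical factorization
\[
L(s,\sigma\times\tilde\sigma)=L(s,1_F)\cdot L(s,\sigma,\ad),
\]
and comparison of residues at $s=1$. I would introduce the global zeta integral
\[
Z(s,f,\bar f,\Phi)=\int_{Z(\BA)\GL_2(F)\bs\GL_2(\BA)}|f(g)|^2\,E(g,s,\Phi)\,dg,
\]
where $E(g,s,\Phi)$ is the Eisenstein series associated to a pure-tensor Schwartz--Bruhat function $\Phi=\otimes_v\Phi_v$ on $\BA^2$. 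Unfolding via the Whittaker expansion of the newform $f$ factors $Z(s,f,\bar f,\Phi)$ as a product $\prod_v Z_v(s,W_v,\overline{W_v},\Phi_v)$ of local Rankin--Selberg integrals, where $W_v$ denotes the local Whittaker coefficient of $f$ at $v$.

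At $s=1$, the Eisenstein series $E(g,s,\Phi)$ has a simple pole with residue a constant multiple of $\hat\Phi(0)$, so $\Res_{s=1}Z(s,f,\bar f,\Phi)$ recovers $(f,f)_\Pet$ up to explicit constants involving $|d_F|^{1/2}$ and $L(2,1_F)$ dictated by the Tamagawa normalization. Dually, $L(s,\sigma\times\tilde\sigma)$ has a simple pole at $s=1$ with residue proportional to $L(1,\sigma,\ad)$. Matching the two residues reduces the theorem to evaluating the local fudge factors
\[
c_v=\lim_{s\to 1}\frac{Z_v(s,W_v,\overline{W_v},\Phi_v)}{L_v(s,\sigma_v\times\tilde\sigma_v)}
\]
for a judicious choice of $\Phi_v$.

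The local computation splits into cases. At unramified finite $v$, choosing $\Phi_v=\mathbf 1_{ O_v^2}$ and $W_v$ the spherical Whittaker function gives $c_v=1$ by Macdonald's formula. At archimedean $v$, the discrete-series Whittaker function has the explicit form $y^{k_v/2}e^{-2\pi y}$ up to normalization, and the resulting integral evaluates to a product of $\Gamma(k_v)$ and powers of $\pi$ and $2$ that assemble into the claimed factor $(4\pi)^{\sum_v k_v}/\prod_v\Gamma(k_v)$. At finite $v|N$ with $n(\sigma_v)=1$, i.e.\ Steinberg up to an unramified twist, a direct computation with the explicit newform Whittaker function produces the factor $(1+q_v^{-1})$ (more precisely $(1-q_v^{-2})$ at special-type places, which accounts for the two products appearing in the equivalent formulation). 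At places $v\in S'$ with $n(\sigma_v)\geq 2$, the local integrals cannot be put in closed form, but the partial L-function $L^{S'}(1,\sigma,\ad)$ is precisely designed to omit these factors, so no detailed computation is needed there; the equivalence of the two displayed identities then follows by reinstating $L_v(1,\sigma_v,\ad)^{-1}$ at the archimedean and Steinberg-type places.

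The main technical obstacle will be the bookkeeping of measure normalizations, which determines the exponent $2^{[F:\BQ]-1+\sum_{v|\infty}k_v}$ on the right. Matching the Tamagawa measure on $Z(\BA)\bs\GL_2(\BA)$ against the local Haar measures at the finite places introduces a factor of $|d_F|^{1/2}$ through the self-dual additive measure on $\BA$, while at the archimedean places the combination of the Whittaker integral and the residue of $L(s,1_F)$ contributes gamma factors and powers of $2$. A safe route is to verify all constants in a known special case (say $F=\BQ$, squarefree $N$, weight $2$) and then appeal to the uniformity of the local computations at the remaining places to deduce the general formula.
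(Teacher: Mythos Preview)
Your approach is essentially the paper's: the Rankin--Selberg integral of $|f|^2$ against the Eisenstein series $E(s,g,\Phi)$, unfolded into local Whittaker integrals, with the identity obtained by comparing residues at $s=1$ and using $L(s,\sigma\times\tilde\sigma)=L(s,1_F)L(s,\sigma,\ad)$. The paper's key local input is that $Z_v(1,W_{1,v},W_{2,v},\Phi_v)=\hat\Phi_v(0)\,\langle W_{1,v},W_{2,v}\rangle$, reducing everything to computing $\langle W_v^\circ,W_v^\circ\rangle$ for the new vector at each place.

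Two points to tighten. First, at places $v\in S'$ with $n(\sigma_v)\geq 2$ you \emph{do} need a computation: the Kirillov model of the new vector restricted to the torus is $\mathbf 1_{O_v^\times}$, so $\langle W_v^\circ,W_v^\circ\rangle=|d_v|^{1/2}$, and it is precisely this together with $L(1,1_v)L(2,1_v)^{-1}=1+q_v^{-1}$ that produces the factor $(1+q_v^{-1})$ at these places in the first displayed formula. Omitting the local $L$-factor of $\ad$ at $S'$ does not excuse you from computing the local integral; it just means the answer is expressible without that $L$-factor. Second, the closing suggestion to fix constants by checking a special case is a heuristic, not a proof; the constants are all determined by the chosen Haar measures and the explicit Whittaker formulas, and the paper simply carries out the local computation in each case.
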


This formula can be found in the literature (probably under slightly different assumptions). We sketch a proof here for the readers.
Set $G=\GL_2$ over $F$. Let $N$ the unipotent subgroup of $G$ consisting of matrices $\matrixx{1}{a}{}{1}$ in $G$,  and $U=\prod_v U_v$
be a maximal compact subgroup of $G(\BA)$.
We follow \cite[\S 1.6]{YZZ} to normalize the non-trivial additive character $\psi: F\bs \BA\ra \BC^\times$ and Haar measures on $\BA$, $\BA^\times$ and
$G(\BA)$ and their local components.

The proof of Theorem \ref{adjoint} starts with the residue of an Eisenstein series.
For any $\Phi\in \CS(\BA^2)$, define the Eisenstein series
$$E(s, g, \Phi):=\sum_{\gamma \in P(F)\bs G(F)} P(s, \gamma g,
\Phi),$$
where
$$P(s, g, \Phi)=|\det g|^s \int_{\BA^\times} \Phi([0, b]g)|b|^{2s}
d^\times b.$$

\begin{lem} \label{residue}
The Eisenstein series $E(s, g, \Phi)$ has meromorphic continuation to the whole $s$-plane with
only possible poles at $s=1, 0$. In particular,
$$\Res_{s=1}E(s, g, \Phi)
=\frac{1}{2} \wh{\Phi}(0)\cdot \Res_{s=1} L(s, 1_F).$$
\end{lem}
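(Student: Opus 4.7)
The plan is to carry out the classical Iwasawa--Tate unfolding and apply Poisson summation, which is the standard route to meromorphic continuation and residues of degenerate Eisenstein series on $\GL_2$. The main identity driving everything is that the summand $P(s, \gamma g, \Phi)$ is well-defined on $P(F) \backslash G(F)$ because for $p = \bigl(\begin{smallmatrix} a & x \\ 0 & d\end{smallmatrix}\bigr) \in P(F)$ the naive transformation factor $|a/d|^s$ is trivial by the product formula applied to $a,d \in F^\times$. Once this is recognized, the unfolding below is legitimate.

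First I would unfold the sum to obtain the Tate-style expression
$$E(s,g,\Phi) \;=\; |\det g|^{s}\int_{F^\times \backslash \BA^\times} \sum_{v\in F^{2}\setminus\{0\}} \Phi(bvg)\, |b|^{2s}\, d^\times b,$$
valid for $\Re(s)$ large. The identification uses that $P(F)\backslash G(F)\cong \BP^{1}(F)$, i.e. the orbits of lines in $F^{2}$, and the inner sum of $\Phi(bcw_\gamma)$ over $c\in F^\times$ reassembles into a sum over all of $F^{2}\setminus\{0\}$. Next I would split the $b$-integral along $|b|\ge 1$ and $|b|<1$, completing the sum to one over all $v\in F^{2}$ (introducing a $-\Phi(0)$ term) on the $|b|<1$ piece, and applying Poisson summation on $F^{2}\subset \BA^{2}$ in the form
$$\sum_{v\in F^{2}} \Phi(bvg)\;=\;|b|^{-2}|\det g|^{-1}\sum_{v\in F^{2}} \widehat{\Phi}\bigl(b^{-1} v ({}^{t}g)^{-1}\bigr).$$
Substituting $b\mapsto b^{-1}$ on the nonzero terms of the dual side converts that contribution into an integral over $|b|\ge 1$ in terms of $\widehat{\Phi}$, which is entire in $s$.

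After reorganizing, all that remains on the $|b|<1$ region are the two boundary terms
$$-\Phi(0)\!\int_{|b|<1}\!|b|^{2s}\, d^\times b \;+\; |\det g|^{-1}\widehat{\Phi}(0)\!\int_{|b|<1}\!|b|^{2s-2}\, d^\times b.$$
Using the elementary identity $\int_{|b|<1}|b|^{t}\,d^\times b = \mathrm{vol}(F^\times\backslash \BA^{1})/t$ (valid after meromorphic continuation), these contribute simple poles at $s=0$ and $s=1$ respectively, and no other poles are possible because the remaining two integrals over $|b|\ge 1$ converge absolutely for every $s\in\BC$ by the rapid decay of $\Phi$ and $\widehat{\Phi}$. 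This establishes the meromorphic continuation and locates the poles.

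Finally I would extract the residue at $s=1$: only the $\widehat{\Phi}(0)$ boundary term contributes, giving
$$\Res_{s=1}E(s,g,\Phi)\;=\;|\det g|\cdot |\det g|^{-1}\widehat{\Phi}(0)\cdot \frac{\mathrm{vol}(F^\times\backslash \BA^{1})}{2}\;=\;\tfrac12\,\widehat{\Phi}(0)\,\mathrm{vol}(F^\times\backslash\BA^{1}).$$
Under the normalization of $d^\times a$ specified before Lemma \ref{residue}, one has $\mathrm{vol}(F^\times\backslash\BA^{1}) = \Res_{s=1}L(s,1_F)$, which yields the claimed formula. The only real subtlety in the whole argument is the verification that $P(s,\gamma g,\Phi)$ is truly a function on $P(F)\backslash G(F)$ despite its apparent non-invariance; the rest is bookkeeping, and I would flag the normalization of Haar measures (Tamagawa versus local splittings) as the place where one must be most careful to land the $\frac{1}{2}$.
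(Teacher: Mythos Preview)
Your argument is correct and is essentially identical to the paper's own proof: both unfold the Eisenstein series into the Tate-type integral $|\det g|^s\int_{F^\times\backslash\BA^\times}\sum_{\xi\in F^2\setminus\{0\}}\Phi(b\xi g)|b|^{2s}d^\times b$, split at $|b|=1$, apply Poisson summation on the small-$|b|$ piece, and read off the residue from the $\widehat{\Phi}(0)$ boundary term using the measure normalization $\Res_{s=1}\int_{|a|\le 1}|a|^{s-1}d^\times a=\Res_{s=1}L(s,1_F)$. Your write-up is in fact slightly more detailed than the paper's (you spell out the unfolding via $P(F)\backslash G(F)\cong\BP^1(F)$ and the invariance check for $P(s,\gamma g,\Phi)$), but the route is the same.
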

\begin{proof}

By the Poisson summation formula,
$$\begin{aligned}
E(s, g, \Phi)=&\ |\det g|^s \int_{F^\times \bs \BA^\times}
  \left(\sum_{\xi\in
F^2\setminus \{0\}} \Phi(a\xi g)\right) |a|^{2s}d^\times a\\
=&\ |\det g|^s\int_{|a|\geq 1}
\left(\sum_{\xi\in F^2\setminus \{0\}} \Phi(a\xi g)\right) |a|^{2s}d^\times a\\
&+|\det g|^{s-1} \int_{|a|\geq 1}\left(\sum_{\xi\in F^2\setminus
\{0\}} \wh{\Phi}(g^{-1}\xi {}^ta)\right) |a|^{2-2s}d^\times
a\\
&+|\det g|^{s-1}\wh{\Phi}(0)\int_{|a|\leq 1} |a|^{2s-2} d^\times a\\
&-|\det g|^s\Phi(0)\int_{|a|\leq 1} |a|^{2s}d^\times a.
\end{aligned}$$
Thus $E(s, g, \Phi)$ has meromorphic conti
Furthermore,
$$\Res_{s=1}E(s, g, \Phi)=\wh{\Phi}(0)\cdot \lim_{s\ra 1} (s-1)\int_{|a|\leq 1}
|a|^{2s-2} d^\times a
=\frac{1}{2} \wh{\Phi}(0)\cdot \Res_{s=1} L(s, 1_F).$$
\end{proof}

Let $\sigma$ be as in Theorem \ref{adjoint}.
Take any $f_1, f_2\in \sigma$.  Let $W_1, W_2\in \CW(\sigma, \psi)$ be the Whittaker functions
associated to them. Namely, for $i=1, 2$,
$$W_i(g)=\int_{N(F)\bs N(\BA)} f_i(ng) \ov{\psi(n)} dn,$$
where the Haar measure on $N(\BA)$ is the one on $\BA$ via the isomorphism $N(\BA)\cong \BA$.
As in \cite{JC}, consider the integral
$$Z(s, f_1, f_2, \Phi):=\int_{G(F)\bs G(\BA)/Z(\BA)} f_1(g) \ov{f_2(g)} E(s, g, \Phi)dg.$$
By unfolding the Eisenstein series, we obtain
$$Z(s, f_1, f_2, \Phi)
=\int_{N(\BA)\bs G(\BA)} |\det g|^s W_1(g)
\ov{W_2(g)}\Phi([0, 1]g)
dg.$$
It is a product of local factors.
The theorem will be obtained as the residue at $s=1$ of this expression.

For each place $v$ of $F$
and $\Phi_v\in \CS(F_v^2)$, denote the local factor
$$Z(s, W_{1, v}, W_{2, v}, \Phi_v)
:=\int_{N(F_v)\bs G(F_v)} |\det g|^s W_{1,v}(g) \ov{W_{2,v}(g)}\Phi_v([0,
1]g) dg,$$which has meromorphic continuation to the whole $s$-plane.
Moreover, for any $v\nmid \infty$, the fractional ideal of
$\BC[q_v^s, q_v^{-s}]$ generated by all $Z(s, W_{1, v}, W_{2, v}, \Phi_v)$
with $W_{i, v}\in \CW(\sigma_v, \psi_v)$ and $\Phi_v \in \CS(F_v^2)$ is a the same as that generated by $L(s, \sigma_v\times \wt{\sigma}_v)$.

To take the residue, we need to compute $Z(1, W_{1, v}, W_{2, v}, \Phi_v)$.
The following result assert that it is essentially the inner product on $\CW(\sigma_v, \psi_v)$ given by
$$\pair{W_{1, v}, W_{2, v}}=\int_{F_v^\times}W_{1,
v}\matrixx{a}{}{}{1}\ov{W_{2, v}\matrixx{a}{}{}{1}}d^\times a.$$

\begin{lem}\label{lemJC1}
For each $v$,
$$Z(1, W_{1, v}, W_{2, v}, \Phi_v)=\wh\Phi_v(0)\cdot\pair{W_{1, v}, W_{2, v}}.$$
\end{lem}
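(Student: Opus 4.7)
The plan is to use the Iwasawa decomposition $G(F_v)=NTK$, where $T$ is the diagonal torus and $K$ is the maximal compact subgroup, to reduce the local zeta integral to a product that cleanly splits into the Whittaker pairing and $\wh\Phi_v(0)$. Dropping the subscript $v$, the Haar measure on $N\bs G$ decomposes as $\delta(t)^{-1}dt\,dk$ with $\delta\matrixx{a}{}{}{b}=|a/b|$. Writing $t=\matrixx{a}{}{}{b}$, using $N$-invariance of the integrand (noting $[0,1]ntk=b\cdot[0,1]k$) and the identity $W_i\matrixx{a}{}{}{b}k=\omega_{\sigma_v}(b)\,W_i\matrixx{a/b}{}{}{1}k$ together with $|\omega_{\sigma_v}|=1$, the substitution $a\mapsto ab$ yields
$$Z(s,W_1,W_2,\Phi)=\int_K\int_{F_v^\times}|a|^{s-1}W_1\matrixx{a}{}{}{1}k\,\ov{W_2\matrixx{a}{}{}{1}k}\left(\int_{F_v^\times}|b|^{2s}\Phi(b[0,1]k)\,d^\times b\right)d^\times a\,dk.$$

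Specializing to $s=1$ kills the factor $|a|^{s-1}$. The inner $a$-integral equals $\langle\sigma_v(k)W_1,\sigma_v(k)W_2\rangle$, and since $\sigma_v$ is unitary the Hermitian form $\langle\cdot,\cdot\rangle$ on the Whittaker model is $G(F_v)$-invariant (this is the standard identification with the Kirillov inner product, and is in particular $K$-invariant), so it equals $\langle W_1,W_2\rangle$ independently of $k$. Pulling this scalar out leaves
$$Z(1,W_1,W_2,\Phi)=\langle W_1,W_2\rangle\cdot\int_K\int_{F_v^\times}|b|^2\Phi(b[0,1]k)\,d^\times b\,dk.$$

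It remains to identify the right-hand integral with $\wh\Phi_v(0)=\int_{F_v^2}\Phi(x)\,dx$, which I would obtain via the polar-coordinate change of variables $x=b\cdot[0,1]k$: the map $F_v^\times\times K\to F_v^2\setminus\{0\}$ is essentially surjective with a controlled fiber, and under the measure normalizations of \S1.6 of \cite{YZZ} the form $|b|^2 d^\times b\,dk$ transports to the additive Haar measure $dx$ on $F_v^2$. For $v$ non-archimedean this is immediate from $K=\GL_2( O_v)$ and the elementary divisor theorem; for $v$ archimedean it is the standard polar decomposition on $\BR^2$ (or $\BC^2$).

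The main obstacle is the bookkeeping in this polar identity: one must verify that the multiplicity of the map $(b,k)\mapsto b[0,1]k$ (for instance whether one uses $O(2)$ or $SO(2)$ at real places) and the Iwasawa measure on $K$ are exactly compensated by the \cite{YZZ} normalizations of $d^\times b$ and $dx$, so that no spurious constant appears in the final identity $Z(1,W_{1,v},W_{2,v},\Phi_v)=\wh\Phi_v(0)\,\pair{W_{1,v},W_{2,v}}$.
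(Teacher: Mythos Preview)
Your approach is correct and is essentially identical to the paper's: the paper cites \cite[p.~51]{JC} for the factorization at $s=1$ (your Iwasawa computation together with the $K$-invariance of the Kirillov inner product), and then cites \cite[Lemma~2.3]{JC} for the polar-coordinate identity $\int_{U_v}\int_{F_v^\times}|b|^2\Phi_v([0,b]k)\,d^\times b\,dk=\wh\Phi_v(0)$, which is exactly the step you flag as the ``main obstacle.'' So you have reconstructed the content of the cited reference; the only remaining work is indeed the measure normalization in that last identity, which the paper outsources to \cite{JC}.
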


\begin{proof}
This is a result of \cite{JC}.
For any place $v$ of $F$, let $d'k$ be the Haar measure on $U_v$ determined by the following measure identity on $G(F_v)$:
 $$dg=|b|dx d^\times a d^\times b d'k, \quad\
g=a\matrixx{1}{x}{}{1}\matrixx{1}{}{}{b} k\in
G(F_v).$$
By \cite[p. 51]{JC},
$$Z(1, W_{1, v}, W_{2, v}, \Phi_v)=\int_{F_v^\times} W_{1, v}\matrixx{a}{}{}{1}\ov{W_{2,v}\matrixx{a}{}{}{1}}d^\times a\cdot \iint_{F_v^\times\times U_v}
\Phi_v([0, b]k) |b|^2 d^\times b d'k.$$
By \cite[Lemma 2.3]{JC},
$$\iint_{F_v^\times \times U_v} \Phi([0, b]k) |b|^2 d^\times b dk
=\wh{\Phi}_v (0).$$
The result follows.
\end{proof}

Now we are ready to finish the proof of Theorem \ref{adjoint}.
Let
$\Phi=\otimes_v \Phi_v\in \CS(\BA^2)$ be any element with $\wh{\Phi}(0)\neq 0$,  and let $f_1, f_2$ be pure
tensors. Take  the residues at $s=1$ on the two sides of
$$Z(s, f_1, f_2, \Phi)=\prod_v Z(s, W_{1, v}, W_{2, v}, \Phi_v).$$
Applying Lemmas \ref{lemJC1},  we have
$$(f_1, f_2)_\Pet \cdot \Res_{s=1} E(s, g, \Phi)=
\wh{\Phi}(0) \cdot \Res_{s=1}L(s, \sigma\times
\wt{\sigma})\cdot  \prod_v\frac{\pair{W_{1, v}, W_{2, v}}}{L(1,
\sigma_v \times \wt{\sigma}_v)}.$$
We will see that the product on the right-hand side converges absolutely.
Applying Lemma \ref{residue}, we have
$$\frac{L(1, \sigma, \ad)}{(f_1, f_2)_\Pet}=\frac{1}{2}\prod_v
\frac{L(1, \sigma_v \times \wt{\sigma}_v)}{\pair{W_{1, v}, W_{2, v}}}.$$
Let $f_1=f_2=f$ be the newform and $W^\circ=\otimes_v W_v^\circ$ the
corresponding new vector.
Then
$$\frac{L(1, \sigma, \ad)}{(f, f)_\Pet}=\frac{1}{2}\prod_v
\frac{L(1, \sigma_v \times \wt{\sigma}_v)}{\pair{W_v^\circ, W_v^\circ}}.$$
The proof is complete by the following result on the local factor.

\begin{lem}
$$\frac{L(1, \sigma_v \times \wt{\sigma}_v) L(2, 1_{F_v})^{-1}
|d_v|^{1/2}}{\pair{W_v^\circ, W_v^\circ}}=
\begin{cases}
1, \qquad &\text{$n(\sigma_v)=0, v\nmid \infty$},\\
1+q_v^{-1}, \qquad &\text{$n(\sigma_v)=1, v\nmid \infty$,}\\
(1+q_v^{-1})L(1, \sigma_v, \ad), \ &\text{$n(\sigma_v)\geq 2, v\nmid
\infty$},\\
2^{k_v+1}, \quad &\text{$v|\infty$}.
\end{cases}$$
\end{lem}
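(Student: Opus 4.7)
The overall strategy is a case-by-case computation of $\pair{W_v^\circ, W_v^\circ}$ via the Kirillov-model identity
$$\pair{W_v^\circ, W_v^\circ} \;=\; \int_{F_v^\times}\bigl|W_v^\circ(\diag(a,1))\bigr|^2\,d^\times a,$$
combined with the factorization $L(s,\sigma_v\times\wt\sigma_v)=L(s,\sigma_v,\ad)\,L(s,1_{F_v})$.  At a finite place this produces the universal ratio $L(1,1_{F_v})/L(2,1_{F_v})=1+q_v^{-1}$, so the claimed identity reduces to an explicit evaluation of $\pair{W_v^\circ, W_v^\circ}$ in terms of $L(1,\sigma_v,\ad)$ and $|d_v|^{1/2}$ in each of the three conductor ranges; at an archimedean place it reduces to a gamma-function manipulation.

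For $v$ finite with $n(\sigma_v)=0$, Macdonald's formula gives $W_v^\circ(\diag(\varpi_v^m,1))=q_v^{-m/2}(\alpha_v^{m+1}-\beta_v^{m+1})/(\alpha_v-\beta_v)$ for $m\ge 0$ and $0$ otherwise, where $\{\alpha_v,\beta_v\}$ are the Satake parameters of $\sigma_v$; expanding $|W_v^\circ(\diag(\varpi_v^m,1))|^2$ and summing the resulting geometric series identifies the integral with $L(1,\sigma_v,\ad)\,L(1,1_{F_v})\,|d_v|^{1/2}$, cancelling the $(1+q_v^{-1})$ to give $1$.  For $n(\sigma_v)=1$, the representation is either an unramified twist of Steinberg or a principal series $\pi(\mu_1,\mu_2)$ with exactly one $\mu_i$ ramified of conductor $1$; in either sub-case the newform Whittaker function on the diagonal is an explicit single geometric series in the unramified parameter, whose sum equals $L(1,\sigma_v,\ad)\,|d_v|^{1/2}$.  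For $n(\sigma_v)\ge 2$, the standard description of the newform in the Kirillov model (due to Casselman) reduces $W_v^\circ|_{F_v^\times}$ to $\mathbf 1_{O_v^\times}$, whence $\pair{W_v^\circ, W_v^\circ}=\vol(O_v^\times,d^\times a)=|d_v|^{1/2}$, giving the factor $(1+q_v^{-1})L(1,\sigma_v,\ad)$ as required.

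For an archimedean place $v|\infty$, the newform in $\sigma_v$ is the lowest-weight vector of the weight-$k_v$ holomorphic discrete series, with Whittaker function $W_v^\circ(\diag(a,1))=a^{k_v/2}e^{-2\pi a}$ for $a>0$ and $0$ for $a<0$.  Hence
$$\pair{W_v^\circ, W_v^\circ}=\int_0^\infty a^{k_v-1}e^{-4\pi a}\,da=(4\pi)^{-k_v}\Gamma(k_v).$$
The standard archimedean L-factor formulas give $L(1,\sigma_v,\ad)=\Gamma_\BR(2)\Gamma_\BC(k_v)$, $L(1,1_{F_v})=\Gamma_\BR(1)=1$ and $L(2,1_{F_v})=\Gamma_\BR(2)=\pi^{-1}$, together with $|d_v|_v=1$.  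A short calculation then evaluates the left-hand side to $\Gamma_\BC(k_v)\cdot(4\pi)^{k_v}/\Gamma(k_v)=2\cdot(2\pi)^{-k_v}\cdot(4\pi)^{k_v}=2^{k_v+1}$, as claimed.

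The main obstacle is the case $n(\sigma_v)\ge 2$, which subdivides into supercuspidal and higher-conductor non-cuspidal sub-cases; in each sub-case one must match the normalization of the newform Whittaker function against the Tamagawa-measure conventions of \cite[\S 1.6]{YZZ} and verify the exact shape of $L(1,\sigma_v,\ad)$ (which in the supercuspidal case is simply $1$ in natural normalizations).  These are by now standard calculations in the Casselman--Shalika theory of new vectors, and the verification is essentially bookkeeping once the explicit Kirillov-model formulas are assembled.
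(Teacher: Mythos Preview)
Your approach is exactly the one the paper intends: the paper's own proof is a two-sentence reference to ``the explicit form of the Kirillov model for the new vector'' together with the archimedean Whittaker formula $W_v^\circ(g)=|y|^{k_v/2}e^{2\pi i(x+iy)}e^{ik_v\theta}\mathbf{1}_{\BR_+^\times}(\det g)$, and you have correctly fleshed out the case analysis and the archimedean gamma computation.

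One slip to correct. In the unramified case you assert that the Macdonald sum gives $\pair{W_v^\circ,W_v^\circ}=L(1,\sigma_v,\ad)\,L(1,1_{F_v})\,|d_v|^{1/2}$, but in fact summing $\sum_{m\ge 0}q_v^{-m}s_m^2$ with $s_m=(\alpha^{m+1}-\beta^{m+1})/(\alpha-\beta)$ yields
\[
\sum_{m\ge 0}s_m^2\,x^m=\frac{1+x}{(1-\alpha^2x)(1-\alpha\beta x)(1-\beta^2x)},
\]
so that $\pair{W_v^\circ,W_v^\circ}=|d_v|^{1/2}(1+q_v^{-1})L(1,\sigma_v,\ad)=|d_v|^{1/2}L(1,\sigma_v\times\wt\sigma_v)L(2,1_{F_v})^{-1}$, not $|d_v|^{1/2}L(1,\sigma_v\times\wt\sigma_v)$. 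Your concluding phrase ``cancelling the $(1+q_v^{-1})$ to give $1$'' is consistent with this correct value, so the error is only in the intermediate expression (you wrote $L(1,1_{F_v})$ where $(1+q_v^{-1})=L(1,1_{F_v})/L(2,1_{F_v})$ was meant). The remaining cases $n(\sigma_v)=1$, $n(\sigma_v)\ge 2$, and $v\mid\infty$ are computed correctly.
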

\begin{proof}
The result follows directly from the explicit
form of the Kirillov model for the new vector. Note that the new vector $W_v^\circ$ for $v|\infty$ is equal to
$$W_v^\circ(g)=|y|^{k/2}  e^{2\pi
i(x+iy)}e^{ik\theta}1_{\BR_+^\times}(\det g),$$ where
$g=a\matrixx{y}{x}{}{1}k_\theta\in \GL_2(\BR)$, which matches with
$\frac{1}{2}L(s, \sigma_\infty)$ so that the corresponding Hilbert form
is the (normalized) newform.
\end{proof}

\end{document}